\newtheorem{theorem}{Theorem}[section]
\newtheorem{lemma}[theorem]{Lemma}
\newtheorem{Proposition}[theorem]{Proposition}
\newtheorem{Remark}{Remark}
\numberwithin{equation}{section}
\def\r3{\mathbb{R}^3}
\begin{document}
\title[instability and stability of a hyperbolic--parabolic model] {on instability and stability of a quasi--linear hyperbolic--parabolic model for vasculogenesis}
\author{Qing Chen}
\address{Qing Chen \newline School of Mathematics and Statistics\\
Xiamen University of Technology\\
Xiamen, Fujian 361024, China} \email{chenqing@xmut.edu.cn}

\author{Huaqiao Wang}
\address{Huaqiao Wang \newline College of Mathematics and Statistics, Chongqing University,
Chongqing 401331, China.} \email{wanghuaqiao@cqu.edu.cn}

\author{Guochun Wu}
\address{Guochun Wu \newline Fujian Province University Key Laboratory of Computational
Science, School of Mathematical Sciences, Huaqiao University,
Quanzhou 362021, China} \email{guochunwu@126.com}

\thanks{* Corresponding author:
wanghuaqiao@cqu.edu.cn} \keywords{Vasculogenesis system; Hyperbolic--parabolic model; Instability; Stability; Optimal time
decay rates} \subjclass[2010]{35G25, 35M11, 35Q92, 35B40, 35B35}

\begin{abstract}
 In this paper, we are concerned with the instability and stability of a quasi--linear hyperbolic--parabolic system modeling vascular networks. Under the assumption that the pressure satisfies $\frac{\nu P'(\bar\rho)}{\gamma \bar\rho} < \beta$, we first show that the steady--state is linear unstable (i.e., the linear solution grows in time in $L^2$) by constructing an unstable solution. Then based on the lower grow estimates on the solution to the linear system, we prove that the steady--state is nonlinear unstable in the sense of Hadamard. On the contrary, if the pressure satisfies
$\frac{\nu P'(\bar\rho)}{\gamma \bar\rho} > \beta$, we establish the global existence for small perturbations and the optimal convergent rates for all--order derivatives of the solution by slightly getting rid of the condition proposed in [Liu-Peng-Wang, SIAM J. MATH. ANAL 54:1313--1346, 2022].
\end{abstract}

\maketitle


\section{Introduction}\label{S1}
The mechanism of blood vessel formation has been investigated by many experiments in the past decades (see \cite{C}). The process of formation of a vascular network starting from randomly seeded cells can be accurately tracked by observing the migration and aggregation of cells. Experiments [11] tracking of individual trajectories show marked persistence in the direction, with a small random component superimposed. The motion is directed towards a zone of higher concentration of cells, suggesting that chemotactic factors are active. Under the basic assumption that persistence and chemotaxis are the key features which determine the size of the structure, Gamba et al. \cite{GACC} and Serini et al. \cite{SAGG} proposed a theoretical model which enables one to reproduce well both the observed percolative transition and the typical scale of observed vascular networks. The model appears to be rather successful in describing in vitro experiments, where all the parameters are under control and one can easily tune the cell density. In this paper, we are interested in the following system which can be numerically reproduced by a quasi--linear hyperbolic--parabolic model of vasculogenesis system proposed in \cite{GACC}:
\begin{equation}\label{1eq}
 \left\{\begin{array}{lll}
  \rho_t + {\rm div} (\rho {\bf u})  =0,
  \\
  (\rho {\bf u})_t + {\rm div}(\rho {\bf u}\otimes {\bf u})+\nabla P =
  -\alpha \rho {\bf u} + \beta \rho \nabla \phi,
  \\
  \phi_t = \mu \Delta \phi - \nu \phi +\gamma\rho,
 \end{array}\right.
\end{equation}
for $(x, t)\in \mathbb{R}^3 \times \mathbb{R}^+$ and with initial data
\begin{equation}\label{1id}
 (\rho, {\bf u}, \phi)(x,0) = (\rho_{0}, {\bf u}_{0}, \phi_0)(x) \rightarrow \left(\bar\rho, 0, \frac{\gamma}{\nu}\bar\rho\right) \text{ as } |x|\rightarrow \infty.
\end{equation}
The unknown functions $\rho$, ${\bf u} = (u_1, u_2, u_3)^T$ and $\phi$ represent the endothelial cell density, the cell velocity and the concentration of the chemoattractant secreted by cells respectively. $P= P(\rho)$ is a monotone pressure function accounting for the fact that closely packed cells resist to compression due to the impenetrability of cellular matter. The parameters $\alpha>0$ is a drag coefﬁcient and $\beta>0$ measures the strength of the cell response. The other three positive constants $\mu$, $\nu$ and $\gamma$ denote the diffusion coefﬁcient, the inverse of the characteristic degradation time of the chemotactic factor, and the rate of release respectively.

For more information about this model, we can refer to \cite{GACC, KGP} and references therein. In the present paper, we consider the instability and stability of the system \eqref{1eq}. We assume that $\bar\rho>0$ throughout the paper. Moreover, we assume that the pressure function $P = P(\rho)$ is a smooth function satisfying $P'(\rho)>0$ if $\rho >0$.

From the mathematical point of view, the system \eqref{1eq} is a hyperbolic--parabolic system. However, it is
non--trivial to apply directly the ideas used in the classical hyperbolic--parabolic system into the system \eqref{1eq} due to the production term $\gamma\rho$ in the diffusion equation \eqref{1eq}$_3$ for the chemical factor. To our best knowledge, there are few results so far for the hyperbolic--parabolic system modeling vascular networks in the literature. The mathematical analysis of the vasculogenesis system was
initiated by Kowalczyk, Gamba and Preziosi in \cite{KGP}, where an addition viscous term $\Delta u$ is supplied in the equation \eqref{1eq}$_2$ to introduce an energy dissipation mechanism, which can be thought to model the slowing down of cells in the proximity of network structures. They deduced a detailed linear stability analysis around a uniform distribution of the two dimensional cell density, with the aim of checking the potential for structure formation starting from initial data representing a continuum cellular monolayer. The model is linear unstable at low cell densities and stabilizes linearly at higher densities. Using compensated
compactness tools, Francesco and Donatelli \cite{FD} studied the diffusive relaxation limits of nonlinear systems of Euler type modeling chemotactic movement of cells toward Keller--Segel type systems. Concerned
with the global existence of solutions for the Cauchy problem \eqref{1eq}--\eqref{1id}, Russo and Sepe \cite{RS} established the global existence and asymptotic behavior of classical solutions if initial data are close to a nonvacuum equilibrium in $H^l(\mathbb{R}^3)$ with $l\geq3$. Recently, Liu et al. \cite{LPW} established the large--time profile of solutions to the Cauchy problem \eqref{1eq}--\eqref{1id}. They showed that if the initial perturbation is small in $H^l(\mathbb{R}^3)$ with $l\geq3$, that is,
\begin{equation}\nonumber
 \left\|\left(\rho_0 - \bar\rho, {\bf u}_0, \phi_0 - \frac{\gamma}{\nu}\bar\rho\right)\right\|_{H^l(\mathbb{R}^3)} +\|\nabla\phi_0\|_{H^l(\mathbb{R}^3)}
\end{equation}
is small, the global existence of the solutions can be obtained, and in addition if
\begin{equation}\nonumber
 \left\|\left(\rho_0 - \bar\rho, {\bf u}_0, \phi_0 - \frac{\gamma}{\nu}\bar\rho\right)\right\|_{L^1(\mathbb{R}^3)}
\end{equation}
is also small, the solutions of the Cauchy problem tend time--asymptotically to linear diffusion waves around the constant state with the rate
\begin{equation}\nonumber
 \|(\rho - \bar\rho, \phi - \frac{\gamma}{\nu}\bar\rho)\|_{L^q(\mathbb{R}^3)}
 \le C(1+t)^{-\frac32\left(1-\frac{1}{q}\right)},
 \quad
 \|{\bf u}\|_{L^q(\mathbb{R}^3)}
 \le C(1+t)^{-\frac32\left(1-\frac{1}{q}\right) -\frac12}.
\end{equation}

It should be noted that in \cite{KGP, LPW}, the stability of the constant state was obtained under an indispensable condition on the pressure $P= P(\rho)$:
\begin{equation}\label{co-beta-small}
 \frac{\nu P'(\bar\rho)}{\gamma \bar\rho} > \beta,
\end{equation}
which ensures the system \eqref{1eq} satisfying the requirement on the eigenvalues of the [SK] stability condition (see \cite{SK, XK}) as same as other hyperbolic--parabolic systems such as Navier--Stokes system. More exactly, by elaborate analysis on solution semigroup of the linear system \eqref{eq-li}, we can conclude that if the condition \eqref{co-beta-small} holds, then the real part of all the eigenvalues are negative for the low frequency, which indicates that the solution would converge in time. In fact, the authors \cite{LPW} investigated the global existence and derived the optimal $L^p-$rate on the solutions under small perturbation condition. However, the decay rates of the derivatives of the solutions are barely obtained because of the difficulties coming from the nonlinearity of the system. In this paper, we will pay attention to the optimal convergent estimates on the all--order derivatives of the solutions. Actually, we are more curious about the opposite situation, that is, the pressure $P= P(\rho)$ satisfies
\begin{equation}\label{co-beta-large}
 \frac{\nu P'(\bar\rho)}{\gamma \bar\rho} < \beta.
\end{equation}
Based on the spectral analysis, the solution to the linear system \eqref{eq-li} with the condition \eqref{co-beta-large} may more likely grow in time, which implies that the steady--state would be linear and nonlinear unstable. In our paper, we will give clear answers to these issues.

Before stating our results, we shall introduce some notations and conventions used throughout the paper. We employ $H^m(\mathbb{R}^3)$ to denote the usual Sobolev spaces with norm $\|\cdot \|_m$ for $m\geq 1$ and $L^p(\mathbb{R}^3)$ to denote the $L^p$ spaces with norm $\|\cdot\|_{L^p}$ for $1\le p \le +\infty$ respectively for convenience. Set a radial function $\psi \in C_0^\infty(\mathbb{R}_\xi^3)$ such that $\psi(\xi)=1$ while $|\xi|\le 1$ and $\psi(\xi) =0$ while $|\xi|\geq 2$. Define the low frequent part of $f$ by
\begin{equation}\label{f-L}
 f^L = \mathcal{F}^{-1}[\psi(\xi)\widehat{F}],
\end{equation}
and define the high frequent part of $f$ by
\begin{equation}\label{f-H}
 f^H = \mathcal{F}^{-1}[(1-\psi(\xi))\widehat{f}],
\end{equation}
then $f = f^L + f^H$ if the Fourier transform of $f$ exists. We will employ the notation $A \lesssim B $ to represent that $A \le CB$ for a universal positive constant $C$ depending only on the parameters coming from the problem. And $C_i~(i = 1, 2, \ldots, 6)$ will also denote some positive constants that only depends on the parameters of the problem.

Let us focus on the instability of the linear system:
\begin{equation}\label{eq-li}
 \left\{\begin{array}{lll}
  \rho_t + \bar\rho{\rm div} {\bf u}  =0,
  \\
  {\bf u}_t + \frac{P'(\bar\rho)}{\bar\rho} \nabla\rho +\alpha {\bf u} -\beta \nabla \phi =0,
  \\
  \phi_t = \mu \Delta \phi - \nu \phi +\gamma\rho.
 \end{array}\right.
\end{equation}
And our first result can be stated as:
\begin{theorem}[Linear instability]\label{1mainth}
 Assume that $\frac{\nu P'(\bar\rho)}{\gamma \bar\rho} < \beta$. Then there exists one positive constant $\Theta$ and for any $\bar\Theta \in \left(0, \frac\Theta2\right]$, the linear system \eqref{eq-li} admits unstable solution $(\rho^l_{\bar\Theta}, {\bf u}^l_{\bar\Theta}, \phi^l_{\bar\Theta})$ with the initial data $({\rho}^l_{0,\bar\Theta}, {\bf u}^l_{0,\bar\Theta}, {\phi}^l_{0,\bar\Theta})$ satisfying
 \begin{equation}\nonumber
  (\rho^l_{\bar\Theta} - \bar\rho, {\bf u}^l_{\bar\Theta}, \phi^l_{\bar\Theta} - \frac\gamma\nu \bar\rho )\in C^0(0, \infty; H^3(\mathbb{R}^3))
 \end{equation}
 and
 \begin{equation}\nonumber
 \left\{\begin{array}{lll}
  e^{(\Theta-\bar{\Theta})t}\|{\rho}^l_{0,\bar\Theta} - \bar\rho\|_{L^2} \le \|{\rho}^l_{\bar\Theta} - \bar\rho\|_{L^2}
  \le e^{\Theta t}\|{\rho}^l_{0,\bar\Theta} - \bar\rho\|_{L^2} , \vspace{1ex}
 \\
  e^{(\Theta-\bar{\Theta})t}\|{\bf u}^l_{0,\bar\Theta} \|_{L^2} \le \|{\bf u}^l_{\bar\Theta}\|_{L^2}
  \le e^{\Theta t}\|{\bf u}^l_{0,\bar\Theta}\|_{L^2} ,\vspace{1ex}
 \\
  e^{(\Theta-\bar{\Theta})t}\|{\phi}^l_{0,\bar\Theta} - \frac{\gamma}{\nu} \bar\rho\|_{L^2} \le \|{\phi}^l_{\bar\Theta} - \frac{\gamma}{\nu} \bar\rho\|_{L^2}
  \le e^{\Theta t}\|{\phi}^l_{0,\bar\Theta} - \frac{\gamma}{\nu} \bar\rho\|_{L^2}.
  \end{array}\right.
 \end{equation}
 Moreover, the initial data $({\rho}^l_{0,\bar\Theta},{\bf u}^l_{0,\bar\Theta}, {\phi}^l_{0,\bar\Theta})$ of the linear system \eqref{eq-li} depend on $\bar\Theta$ and satisfy
 \begin{equation}\nonumber
  \|{\rho}^l_{0,\bar\Theta} - \bar\rho\|_{L^2}\|{\bf u}^l_{0,\bar\Theta} \|_{L^2}\left\|{\phi}^l_{0,\bar\Theta} - \frac{\gamma}{\nu} \bar\rho \right\|_{L^2}>0.
 \end{equation}
\end{theorem}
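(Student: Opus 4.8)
The plan is to work on the Fourier side. Applying the Fourier transform to \eqref{eq-li}, for each fixed frequency $\xi \in \mathbb{R}^3$ the system becomes a $5\times 5$ linear ODE system $\widehat{W}_t = A(\xi)\widehat{W}$ for $\widehat{W} = (\widehat{\rho}, \widehat{\bf u}, \widehat{\phi})^T$ (with the perturbations around the constant state). Because the drift term in the second equation involves $\nabla\phi$ and the first and third equations only see $\rho$ and $\mathrm{div}\,{\bf u}$, one first decomposes ${\bf u}$ into its curl-free and divergence-free parts; the divergence-free part decouples and merely decays like $e^{-\alpha t}$, so the interesting dynamics live in a $3\times 3$ block governing $(\widehat{\rho}, \widehat{m}, \widehat{\phi})$ where $m = |\xi|^{-1}\xi\cdot\widehat{\bf u}$ (the longitudinal component). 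The first key step is a spectral analysis of this $3\times 3$ matrix $B(|\xi|)$: compute its characteristic polynomial, then examine it in the low-frequency regime $|\xi|\to 0$. At $\xi = 0$ the matrix $B(0)$ has eigenvalues $0$, $-\alpha$, $-\nu$, but the condition $\frac{\nu P'(\bar\rho)}{\gamma\bar\rho} < \beta$ should force the branch bifurcating from the zero eigenvalue to move into the right half-plane: a perturbative (Puiseux/Taylor) expansion of the eigenvalue $\lambda(|\xi|)$ for small $|\xi|$ should give $\lambda(|\xi|) = c|\xi|^2 + O(|\xi|^4)$ (or a comparable leading term) with $c > 0$ precisely when \eqref{co-beta-large} holds. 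This is the heart of the matter and the step I expect to be the main obstacle — one must verify that the sign of the relevant coefficient in the characteristic polynomial is controlled exactly by the sign of $\beta - \frac{\nu P'(\bar\rho)}{\gamma\bar\rho}$, and that the expansion is uniform enough on a small ball $|\xi|\le r_0$ to extract a genuine positive growth rate $\Theta := \sup_{|\xi|\le r_0}\mathrm{Re}\,\lambda(|\xi|) > 0$.

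Once the unstable eigenvalue branch is in hand, I would construct the solution by a frequency-localized superposition. Fix a frequency shell, say an annulus $\{r_1 \le |\xi| \le r_2\}$ contained in the region where $\mathrm{Re}\,\lambda(|\xi|)$ is bounded below by $\Theta - \bar\Theta$ and above by $\Theta$ (shrinking the shell around the frequency achieving the supremum makes this possible for any $\bar\Theta \in (0,\Theta/2]$; this is where the $\bar\Theta$-dependence of the initial data enters). On that shell let $\widehat{W}(\xi,0) = g(\xi)\,e(\xi)$ where $e(\xi)$ is the (smoothly chosen, normalized) eigenvector of $B(|\xi|)$ associated to $\lambda(|\xi|)$ and $g$ is a smooth bump supported in the shell; set $\widehat{W}(\xi,t) = g(\xi)e^{\lambda(|\xi|)t}e(\xi)$. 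Then $W(x,t) := \mathcal{F}^{-1}\widehat{W}$ solves \eqref{eq-li}, and since $\widehat{W}$ is compactly supported in $\xi$ with all the smoothness one wants, $W(\cdot,t) - (\bar\rho,0,\frac\gamma\nu\bar\rho) \in C^0(0,\infty;H^3(\mathbb{R}^3))$ is immediate. Choosing the bump $g$ so that none of the three components of $e(\xi)$ is annihilated on the shell (possible because the eigenvector cannot have a vanishing $\rho$- or $\phi$-component for $|\xi|\neq 0$, and one checks the longitudinal-velocity component is also generically nonzero) guarantees the product of the three initial $L^2$ norms is strictly positive.

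Finally, the two-sided bounds follow from Plancherel: for each component $W_j$,
\begin{equation}\nonumber
 \|W_j(\cdot,t)\|_{L^2}^2 = \int |g(\xi)|^2 e^{2\,\mathrm{Re}\,\lambda(|\xi|)\,t}\, |e_j(\xi)|^2\, d\xi,
\end{equation}
and on the support of $g$ one has $\Theta - \bar\Theta \le \mathrm{Re}\,\lambda(|\xi|) \le \Theta$, which pulls the exponential factor out on both sides and yields exactly the claimed inequalities
\begin{equation}\nonumber
 e^{(\Theta-\bar\Theta)t}\|W_j(\cdot,0)\|_{L^2} \le \|W_j(\cdot,t)\|_{L^2} \le e^{\Theta t}\|W_j(\cdot,0)\|_{L^2}.
\end{equation}
The only remaining care is that the eigenvalue may be complex, in which case $W$ is complex-valued; one then takes real parts (or works with a real two-dimensional invariant subspace), noting that the real part still solves the real linear system and the norm estimates survive after choosing $g$ to avoid cancellation, e.g. by taking $g$ supported where $|\arg$ of the relevant phase$|$ stays small over a bounded time window, or more simply by superposing $\xi$ and $-\xi$ contributions so the result is automatically real. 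I expect the spectral expansion in the low-frequency limit to be the genuinely delicate point; the rest is standard Fourier-multiplier bookkeeping.
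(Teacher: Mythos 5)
Your proposal is correct and follows essentially the same route as the paper: ``div--curl'' decomposition, Fourier analysis of the resulting $3\times3$ block, the low--frequency expansion showing the eigenvalue branch $\lambda\sim -\frac{a(a\nu-b\gamma)}{\alpha\nu}|\xi|^2$ has positive real part exactly when $\frac{\nu P'(\bar\rho)}{\gamma\bar\rho}<\beta$, a cutoff in frequency near a point where $\mathrm{Re}\,\lambda$ is within $\bar\Theta$ of its supremum, propagation along the unstable eigenvector, and Plancherel for the two--sided bounds. The only differences are minor: the paper takes the global supremum $\Theta$ attained at some $\xi_0$ (Lemma \ref{xi0}) rather than a supremum over a small ball, and writes the eigenvector explicitly in \eqref{li-varrho-u-varphi-1} instead of invoking an abstract smooth eigenvector field, while you additionally flag the real--valuedness issue for complex $\lambda_0$, a point the paper's ``direct calculation'' glosses over.
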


\begin{Remark}
 Due to the differences of the systems, we are unable to construct the unstable solution to the linear system \eqref{eq-li} by applying the modified variational method as in \cite{GS, JT, JJ, JJW, WT}. However, by exploiting the delicate spetral analysis on the solution semigroup of the linear system \eqref{eq-li}, we succeed in constructing one unstable solution with the help of cut--off technique. What's more important, we obtain the lower grow estimates slightly less than the optimal grow estimates on the solution. This plays significant role on proving the instability of the nonlinear system \eqref{1eq}.
\end{Remark}

Based on Theorem \ref{1mainth}, the instability of the nonlinear system \eqref{1eq} can be described as:
\begin{theorem}[Nonlinear instability]\label{2mainth}
 Assume that $\frac{\nu P'(\bar\rho)}{\gamma \bar\rho} < \beta$. Then the steady state $(\bar\rho, 0, \frac{\gamma}{\nu}\bar\rho)$ of the system \eqref{1eq} is unstable  in the Hadamard sense, that is, there exist positive constants $\epsilon_0$ and $\delta_0$, such that for any $\delta \in (0, \delta_0)$,  and the initial data $(\rho_0, {\bf u}_0, \phi_0):= \delta ({\rho}^l_{0,\bar\Theta}, {\bf u}_{0,\bar\Theta}^l, {\phi}_{0,\bar\Theta}^l )$ with $({\rho}^l_{0,\bar\Theta}, {\bf u}^l_{0,\bar\Theta}, {\phi}^l_{0,\bar\Theta} )$ defined in Theorem \ref{1mainth} and the relevant parameter $\bar\Theta$ may depend on $\delta$, there is a unique strong solution $(\rho, {\bf u}, \phi)$ of the nonlinear system \eqref{1eq}, such that
 \begin{equation}\nonumber
  \|\rho(T^\delta) - \bar\rho\|_{L^2}, \|{\bf u}(T^\delta)\|_{L^2}, \|\phi(T^\delta)- \frac{\gamma}{\nu} \bar\rho\|_{L^2} \geq \epsilon_0
 \end{equation}
 for some escape time $T^\delta := \frac 1\Theta \ln\frac{2\epsilon_0}{\delta} \in (0, T^{max})$, where $T^{max}$ denotes the maximal time of existence of the solution $(\rho, {\bf u}, \phi)$.
\end{theorem}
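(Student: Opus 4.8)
\medskip\noindent\textbf{Proof proposal.}
The plan is a nonlinear bootstrap around the unstable linear solution of Theorem \ref{1mainth}, closed by an escape--time argument. I would first pass to the perturbation $U=(\sigma,{\bf u},\eta):=\big(\rho-\bar\rho,\,{\bf u},\,\phi-\tfrac{\gamma}{\nu}\bar\rho\big)$: using the continuity equation to eliminate $\rho_t$ from the momentum equation and writing $\nabla P=P'(\rho)\nabla\sigma$, the system \eqref{1eq} becomes $\partial_t U=\mathcal{L}U+\mathcal{N}(U)$, where $\mathcal{L}$ generates the linear flow \eqref{eq-li} and
\begin{equation}\nonumber
 \mathcal{N}(U)=\Big(-{\rm div}(\sigma{\bf u}),\ \ -({\bf u}\cdot\nabla){\bf u}-\Big(\tfrac{P'(\bar\rho+\sigma)}{\bar\rho+\sigma}-\tfrac{P'(\bar\rho)}{\bar\rho}\Big)\nabla\sigma,\ \ 0\Big)
\end{equation}
collects the purely quadratic terms (the $\phi$--equation is already linear, so $\mathcal{N}$ has no third component). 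Product/Moser estimates give $\|\mathcal{N}(U)\|_{L^2}\le C_N\|U\|_{H^3}^2$ whenever $\|U\|_{H^3}\le\kappa_0$ (so that $\bar\rho+\sigma\ge\bar\rho/2$), and the standard local theory for symmetric hyperbolic--parabolic systems (cf.\ \cite{RS}) gives, for small $H^3$ data, a unique strong solution $U\in C^0([0,T^{max});H^3)$ with the blow--up alternative $\;T^{max}<\infty\Rightarrow\limsup_{t\uparrow T^{max}}\|U(t)\|_{H^3}=\infty$.

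I would then record the linear ingredients. Write $U^l_0:=\big(\rho^l_{0,\bar\Theta}-\bar\rho,\,{\bf u}^l_{0,\bar\Theta},\,\phi^l_{0,\bar\Theta}-\tfrac{\gamma}{\nu}\bar\rho\big)$; since \eqref{eq-li} is linear we may normalize so that $\min_j\|(U^l_0)_j\|_{L^2}=1$, and because the construction in Theorem \ref{1mainth} localizes the data to a bounded unstable frequency shell, $m:=\|U^l_0\|_{H^3}$ and the ratios of the three component norms stay bounded uniformly in $\bar\Theta$. Theorem \ref{1mainth} then yields, for each $j$, the lower bound $\|(e^{t\mathcal{L}}U^l_0)_j\|_{L^2}\ge e^{(\Theta-\bar\Theta)t}$, while the spectral analysis behind it (real parts of the symbol's eigenvalues $\le\Theta$, uniformly in $\xi$) gives the semigroup bounds $\|e^{t\mathcal{L}}\|_{\mathcal{L}(L^2)}+\|e^{t\mathcal{L}}\|_{\mathcal{L}(H^3)}\le C_0e^{\Theta t}$. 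For $0<\delta<2\epsilon_0$ I set $T^\delta:=\tfrac1\Theta\ln\tfrac{2\epsilon_0}{\delta}$ and choose the (admissible) parameter $\bar\Theta=\bar\Theta(\delta):=\tfrac1{T^\delta}\ln\tfrac{2}{1+C''\epsilon_0}$, with $C''$ the constant produced in the last step; then $\bar\Theta>0$, $\bar\Theta\le\Theta/2$ once $\delta<\delta_0$, and the bookkeeping identities
\begin{equation}\nonumber
 \delta\,e^{\Theta T^\delta}=2\epsilon_0,\qquad\quad \delta\,e^{(\Theta-\bar\Theta)T^\delta}=2\epsilon_0\,e^{-\bar\Theta T^\delta}=\epsilon_0+C''\epsilon_0^2
\end{equation}
hold. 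The initial data of the nonlinear problem are $U_0=\delta U^l_0$, which are small in $H^3$.

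The technical core — and the step I expect to be the main obstacle — is the \emph{sharp} a priori estimate
\begin{equation}\nonumber
 \|U(t)\|_{H^3}\le C_1\Big(\|U_0\|_{H^3}\,e^{\Theta t}+\int_0^t e^{\Theta(t-s)}\|U(s)\|_{H^3}^2\,ds\Big),
\end{equation}
valid as long as $\|U(s)\|_{H^3}\le\kappa_0$, \emph{with the same exponent $\Theta$}: a plain $H^3$ energy estimate only closes with a strictly larger exponent (the energy method is lossy and $\mathcal{L}$ is not dissipative), while Duhamel directly in $H^3$ fails since $\mathcal{N}$ loses one derivative. I would obtain it by splitting $U=U^L+U^H$ via \eqref{f-L}--\eqref{f-H}. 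As $\mathcal{L}$ commutes with the cut--off, the low--frequency part obeys $\|U^L(t)\|_{L^2}\le C_0\|U_0\|_{L^2}e^{\Theta t}+C_0C_N\int_0^t e^{\Theta(t-s)}\|U(s)\|_{H^3}^2\,ds$ by Duhamel, with $\|U^L\|_{H^3}\simeq\|U^L\|_{L^2}$ since $U^L$ lives in a fixed ball. On the high--frequency part the $\beta\nabla\phi$ coupling is subordinate to the pressure gradient, so the Shizuta--Kawashima structure survives there irrespective of the sign of $\tfrac{\nu P'(\bar\rho)}{\gamma\bar\rho}-\beta$; a weighted $H^3$ energy--dissipation estimate for $U^H$, in which the top--order quasilinear terms are absorbed by $\|U\|_{H^3}\le\kappa_0$, gives exponential decay of $U^H$ up to forcing controlled by $\|U^L(s)\|_{L^2}$ and $\|U(s)\|_{H^3}^2$. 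Adding the two bounds, and using $e^{-c(t-s)}\le e^{\Theta(t-s)}$, yields the displayed estimate.

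Finally I would run the bootstrap and escape. Put $A:=2C_1$ (note $C_1$ is independent of $A$) and $T^{*}:=\sup\{t\in[0,T^{max}):\|U(s)\|_{H^3}\le Am\delta e^{\Theta s}\ \forall s\le t\}$, which is positive since $\|U_0\|_{H^3}=\delta m<Am\delta$. For $t\le\min(T^{*},T^\delta)$ one has $\delta e^{\Theta s}\le2\epsilon_0$ on $[0,t]$, hence $\|U(s)\|_{H^3}\le2Am\epsilon_0\le\kappa_0$ for $\epsilon_0$ small, and feeding the bootstrap bound into the a priori estimate (with $\int_0^t e^{\Theta(t-s)}e^{2\Theta s}\,ds\le\Theta^{-1}e^{2\Theta t}$) gives $\|U(t)\|_{H^3}\le C_1m\delta e^{\Theta t}\big(1+\tfrac{2A^2m\epsilon_0}{\Theta}\big)<Am\delta e^{\Theta t}$ once $\epsilon_0<\Theta/(8C_1^2m)$. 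By continuity and the blow--up alternative this rules out $T^{*}<T^\delta$ and forces $T^{max}>T^\delta$, so $\|U(s)\|_{H^3}\le Am\delta e^{\Theta s}$ on $[0,T^\delta]$ and the Duhamel remainder satisfies
\begin{equation}\nonumber
 \big\|U(T^\delta)-\delta\,e^{T^\delta\mathcal{L}}U^l_0\big\|_{L^2}\le C_0C_N(Am\delta)^2\,\Theta^{-1}e^{2\Theta T^\delta}=\tfrac{4C_0C_NA^2m^2}{\Theta}\,\epsilon_0^2=:C''\epsilon_0^2 .
\end{equation}
Combining with $\|(\delta e^{T^\delta\mathcal{L}}U^l_0)_j\|_{L^2}\ge\delta e^{(\Theta-\bar\Theta)T^\delta}=\epsilon_0+C''\epsilon_0^2$ gives $\|U_j(T^\delta)\|_{L^2}\ge\epsilon_0$ for $j=1,2,3$. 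Fixing $\epsilon_0$ to meet the finitely many smallness conditions above and $\delta_0$ accordingly completes the proof, with $T^\delta=\tfrac1\Theta\ln\tfrac{2\epsilon_0}{\delta}$ as asserted; the losable parameter $\bar\Theta$ of Theorem \ref{1mainth}, tuned here to $\delta$, is exactly what keeps the linear lower bound from dropping below the $O(\epsilon_0^2)$ nonlinear error at the escape time.
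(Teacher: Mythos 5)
Your overall architecture — escape time $T^\delta=\frac1\Theta\ln\frac{2\epsilon_0}{\delta}$, the parameter $\bar\Theta$ tuned to $\delta$ so that the linear lower bound at $t=T^\delta$ loses only a fixed factor, and a Duhamel comparison of the nonlinear solution with $\delta e^{t\mathcal{L}}U^l_0$ at the escape time — is the same as the paper's. The gap is in the step you yourself flag as the core: the ``sharp'' estimate $\|U(t)\|_{H^3}\le C_1\bigl(\|U_0\|_{H^3}e^{\Theta t}+\int_0^te^{\Theta(t-s)}\|U(s)\|_{H^3}^2\,ds\bigr)$, proved by splitting $U=U^L+U^H$ with the cut-off \eqref{f-L}--\eqref{f-H} and claiming exponential decay of $U^H$ from a high-frequency energy--dissipation estimate. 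The unstable band of the symbol $\mathbb{A}(\xi)$ is not confined to $|\xi|\le 1$: Lemma \ref{xi0} and Lemma \ref{le-mi-ei2} only give that $\Theta=\sup_\xi\max_i \mathrm{Re}\,\lambda_i$ is attained at some finite $\xi_0$ and that on the whole middle range $\eta_1\le|\xi|\le\eta_2$ (with $\eta_2\gg1$) one merely has $\mathrm{Re}\,\lambda_i\le\Theta$; nothing prevents $|\xi_0|>2$ or growing modes at frequencies above the cut-off. Consequently the linear evolution restricted to $\{|\xi|\ge 1\}$ need not decay — it can grow at rate up to $e^{\Theta t}$ — so no energy functional for $U^H$ built on that cut-off can be uniformly dissipative, and the claim that ``the Shizuta--Kawashima structure survives there irrespective of the sign'' is only true above a sufficiently large, parameter-dependent radius (beyond $\eta_2$, where $\mathrm{Re}\,\lambda_{1,2}\approx-\alpha/2$, $\mathrm{Re}\,\lambda_3\approx-\mu|\xi|^2$ and the high-frequency Poincar\'e inequality lets $\mu\|\nabla^4\varphi^H\|_{L^2}^2$ absorb the $\beta$, $\gamma$ couplings regardless of the sign of $\frac{\nu P'(\bar\rho)}{\gamma\bar\rho}-\beta$). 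To repair your route you would have to move the cut-off above the unstable band and treat the leftover bounded-frequency band like the low part ($L^2$ Duhamel with growth $e^{\Theta t}$, derivatives costing only a constant by Bernstein); as written, the step fails.

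It is also worth noting that the paper closes the nonlinear argument without any sharp $H^3$ growth bound, which is why it never faces this issue. It uses the energy inequality of Lemma \ref{es-nonli-instab-en} / Proposition \ref{nonli-insta2}, $\mathcal{E}(t)^2\le C\mathcal{E}(0)^2+C\int_0^t\|(\varrho,{\bf u},\varphi)(\tau)\|_{L^2}^2d\tau$, together with a two-tier bootstrap (the $H^3$ norm stays below $\delta_0$ up to $T^*$, while $\|(\varrho,{\bf u},\varphi)\|_{L^2}\le\delta\delta_0^{-1}e^{\Theta t}$ up to $T^{**}$), and in the $L^2$ Duhamel step it estimates the nonlinearity by interpolation, $\|\mathcal{N}\|_{L^2}\lesssim\|U\|_{L^2}^{7/6}\|\nabla^3U\|_{L^2}^{5/6}$, so the nonlinear error at $T^\delta$ is of order $\epsilon_1^{7/6}$ while the $H^3$ norm only ever needs to be $O(\delta_0)$, not $O(\delta e^{\Theta t})$. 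Either fix your frequency decomposition as above, or switch to this softer energy/bootstrap mechanism; your bookkeeping at the escape time (choice of $\bar\Theta(\delta)$, lower bounds on the three components of the data, quadratic error versus linear lower bound) is otherwise sound.
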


Finally, we establish our result on the stability of the system \eqref{1eq} as:
\begin{theorem}[Global existence and Optimal decay rates]\label{3mainth}
 Assume that $\left(\rho_{0}-\bar\rho, {\bf u}_0, \phi_0 - \frac\gamma\nu \bar\rho\right) \in H^3(\mathbb{R}^3)$ and $\frac{\nu P'(\bar\rho)}{\gamma \bar\rho} > \beta$. If there exists a sufficiently small constant $\varepsilon_0>0$ such that
 \begin{equation}\label{id-delta0}
  \mathcal{E}_0
  = \left\|\left(\rho_{0}-\bar\rho, {\bf u}_0, \phi_0 - \frac\gamma\nu \bar\rho\right)\right\|_3 \le \delta_0,
 \end{equation}
 then the Cauchy problem \eqref{1eq}--\eqref{1id} admits a unique globally classical solution $(\rho, {\bf u}, \phi)$ satisfying
 \begin{equation}\nonumber
  \begin{split}
  &\left\|\left(\rho-\bar\rho, {\bf u}, \phi - \frac\gamma\nu \bar\rho\right)\right\|_3^2
  +\int_0^t \left(\|(\nu\phi -\gamma\rho)(\tau)\|_{L^2}^2 + \left\|\nabla \rho(\tau)\right\|_2^2 +
  \|{\bf u}(\tau)\|_3^2 +\|\nabla \phi(\tau)\|_3^2\right) d\tau
  \\
  &\le C\mathcal{E}_0^2 \text{  for any } t\in [0, \infty).
  \end{split}
 \end{equation}

 Under the assumption stated above, if in addition that
 \begin{equation}\label{k0}
  K_0
  =\left\|\left(\rho_{0}- \bar\rho, {\bf u}_0, \phi_0 - \frac\gamma\nu \bar\rho\right)\right\|_{L^1} <\delta_0,
 \end{equation}
 then for all $t\geq 0$ and for $0\le k\le 3$,  it holds that
 \begin{equation}\label{de-rho-phi}
  \left\|\nabla^k\left(\rho -\bar\rho, \phi - \frac\gamma\nu \bar\rho\right)\right\|_{L^2}
  \le C(1+t)^{-\frac34 -\frac k2},
 \end{equation}
 and
 \begin{equation}\label{de-u-low-high}
  \|\nabla^k{\bf u}\|_{L^2}
  \le C(1+t)^{-\frac54 -\frac k2}.
 \end{equation}
\end{theorem}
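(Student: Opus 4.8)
The plan is to introduce the perturbation $n=\rho-\bar\rho$ and $\varphi=\phi-\frac{\gamma}{\nu}\bar\rho$ (the velocity ${\bf u}$ already vanishes at infinity), so that \eqref{1eq} becomes the linear system \eqref{eq-li} plus quadratic source terms: $-\mathrm{div}(n{\bf u})$ in the mass equation, $-{\bf u}\cdot\nabla{\bf u}-g(n)\nabla n$ in the momentum equation with $g(n)=\frac{P'(\bar\rho+n)}{\bar\rho+n}-\frac{P'(\bar\rho)}{\bar\rho}$, and \emph{no} nonlinearity in the $\varphi$-equation (the term $\beta\rho\nabla\phi$ divides by $\rho$ to the linear $\beta\nabla\varphi$). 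Local existence in $H^3$ is standard by iteration, so the global statement follows from a uniform a priori bound on $\mathcal{E}(t)=\|(n,{\bf u},\varphi)\|_3^2$, which I would obtain together with the dissipation functional $\mathcal{D}(t)=\|\nu\varphi-\gamma n\|_{L^2}^2+\|\nabla n\|_2^2+\|{\bf u}\|_3^2+\|\nabla\varphi\|_3^2$ occurring in the theorem.

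\emph{Step 1 (closed energy estimate).} I would combine three families of estimates. (i) A zeroth-order estimate: test the $n$-equation against $\frac{P'(\bar\rho)}{\bar\rho}n$, the ${\bf u}$-equation against $\bar\rho{\bf u}$, and the $\varphi$-equation against suitable multiples of $\varphi$ and of $\nu\varphi-\gamma n$ (using the algebraic identity $\nu\varphi-\gamma n=\mu\Delta\varphi-\varphi_t$). This is where $\frac{\nu P'(\bar\rho)}{\gamma\bar\rho}>\beta$ is essential: only under \eqref{co-beta-small} is the resulting quadratic form coercive, yielding control of $\|\nu\varphi-\gamma n\|_{L^2}^2+\|\nabla\varphi\|_{L^2}^2+\|{\bf u}\|_{L^2}^2$. (ii) The same estimates applied to $\nabla^k$ for $1\le k\le3$, giving the higher-order pieces of $\|{\bf u}\|_3^2$ and $\|\nabla\varphi\|_3^2$ (the parabolicity of the $\varphi$-equation supplies $\nabla^4\varphi\in L^2$ even with only $H^3$ data). (iii) A Kawashima-type correction: differentiate $\sum_{0\le k\le2}\int\nabla^k n\cdot\nabla^k{\bf u}\,dx$ in time and use the momentum equation to recover the missing dissipation $\|\nabla n\|_2^2$, the coupling term $\beta\nabla\varphi$ being absorbed by $\|\nabla\varphi\|_3^2$. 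Adding (i)--(iii) with small weights produces $\frac{d}{dt}\widetilde{\mathcal{E}}(t)+c\,\mathcal{D}(t)\le C\sqrt{\mathcal{E}(t)}\,\mathcal{D}(t)$ with $\widetilde{\mathcal{E}}\sim\mathcal{E}$, and a standard continuity argument under \eqref{id-delta0} closes the global bound.

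\emph{Step 2 (linear decay and Duhamel).} Write \eqref{eq-li} as $U_t=\mathcal{A}U$, $U=(n,{\bf u},\varphi)$, and diagonalize the symbol $\widehat{\mathcal{A}}(\xi)$. I would show that under \eqref{co-beta-small} every eigenvalue satisfies $\mathrm{Re}\,\lambda(\xi)\le-c|\xi|^2$ for $|\xi|\le r_0$ and $\mathrm{Re}\,\lambda(\xi)\le-c$ for $|\xi|\ge r_0$ (the $[SK]$-type condition, cf. \cite{SK}), whence with the cut-off \eqref{f-L}--\eqref{f-H} one gets $\|\nabla^k(e^{t\mathcal{A}}U_0)^L\|_{L^2}\lesssim(1+t)^{-3/4-k/2}\|U_0\|_{L^1}$, $\|\nabla^k(e^{t\mathcal{A}}U_0)^H\|_{L^2}\lesssim e^{-ct}\|\nabla^k U_0\|_{L^2}$, and the ${\bf u}$-component gains an extra $(1+t)^{-1/2}$ because at low frequency ${\bf u}$ is slaved to $\nabla n$ and $\nabla\varphi$ modulo faster-decaying terms. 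Then, on the full problem, use $U(t)=e^{t\mathcal{A}}U_0+\int_0^t e^{(t-s)\mathcal{A}}N(U)(s)\,ds$ with $N(U)$ quadratic, $\|N(U)\|_{L^1}\lesssim\|U\|_1^2$ and $\|\nabla^k N(U)\|_{L^2}\lesssim\|U\|_3\|\nabla^{k+1}U\|_{L^2}$, and combine with the Step 1 bound and the $L^1$ smallness \eqref{k0} to bootstrap the lowest-order rates $\|(n,\varphi)\|_{L^2}\lesssim(1+t)^{-3/4}$ and $\|{\bf u}\|_{L^2}\lesssim(1+t)^{-5/4}$.

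\emph{Step 3 (derivative decay) and the main obstacle.} To propagate the rates to $\nabla^k$ for $1\le k\le3$ with only $H^3$ regularity, I would run a time-weighted energy estimate: multiply the order-$k$ version of the energy–dissipation inequality from Step 1 by $(1+t)^{k+\sigma}$ and induct downward from $k=3$, using the decay of the lower-order norms and of $\|{\bf u}\|_{L^2}$ already obtained to absorb the forcing; this yields \eqref{de-rho-phi} and the additional half-power in \eqref{de-u-low-high} without ever needing $L^1$ control of derivatives of the data, which is the slight improvement over \cite{LPW}. \emph{The main difficulty}, as emphasized in the introduction, is the production term $\gamma\rho$ in the $\phi$-equation: it breaks the standard symmetrizable hyperbolic--parabolic structure, so recovering $\|\nabla n\|_2$ and the combined dissipation $\|\nu\varphi-\gamma n\|_{L^2}$ in Step 1, and verifying the eigenvalue condition for the coupled $(n,{\bf u},\varphi)$ symbol in Step 2, are the two genuinely delicate points; once the correct sign (governed by \eqref{co-beta-small}) is secured, the remainder follows the classical Matsumura--Nishida/Kawashima scheme combined with the Fourier-splitting decay argument.
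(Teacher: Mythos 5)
Your Step 1 (closed energy--dissipation estimate with cross terms of Kawashima type, coercive precisely under $\frac{\nu P'(\bar\rho)}{\gamma\bar\rho}>\beta$) and the low-order part of Step 2 (Duhamel with low/high frequency splitting, $L^1$ data, rates $(1+t)^{-3/4}$ for $(\varrho,\varphi)$ and $(1+t)^{-5/4}$ for ${\bf u}$) are essentially the paper's scheme; the only real difference there is that you propose to diagonalize the full $5\times5$ symbol, while the paper first performs the ``div--curl'' reduction to a $3\times3$ symbol for $(\varrho,\Lambda^{-1}{\rm div}{\bf v},\varphi)$ plus a damped ODE for the incompressible part, and your assertion that ${\rm Re}\,\lambda(\xi)\le-c$ for all $|\xi|\ge r_0$ is exactly the nontrivial middle-frequency statement that the paper proves via Vieta's formulas and continuity; these are matters of execution, not of principle.

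The genuine gap is in Step 3, i.e. precisely in the part of the theorem that goes beyond the literature: the claim that a time-weighted energy induction on the order-$k$ inequalities from Step 1 ``yields \eqref{de-rho-phi} and the additional half-power in \eqref{de-u-low-high}''. At the top order $k=3$ this cannot work as stated. Since the data are only in $H^3$, the pressure term forces an integration by parts in the $\nabla^3{\bf u}$ estimate ($\nabla^4\varrho$ is not available), which couples the damped balance $\frac{d}{dt}\|\nabla^3{\bf u}\|_{L^2}^2+\alpha\|\nabla^3{\bf u}\|_{L^2}^2\le\ldots$ to terms of the form $\langle\nabla^3\varrho,\nabla^3{\rm div}\,{\bf u}\rangle$ and $\langle\nabla^4\varphi,\nabla^3{\bf u}\rangle$; with $\|\nabla^3(\varrho,\varphi)\|_{L^2}\sim(1+t)^{-9/4}$ this pins $\|\nabla^3{\bf u}\|_{L^2}$ at $(1+t)^{-9/4}$, not the claimed $(1+t)^{-11/4}$. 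Likewise the slaving mechanism ${\bf u}\approx\alpha^{-1}\bigl(\beta\nabla\varphi-\frac{P'(\bar\rho)}{\bar\rho}\nabla\varrho+\ldots\bigr)$, which produces the extra half power at lower orders, cannot be applied to third derivatives in physical space because it would require pointwise-in-time control of $\nabla^4(\varrho,\varphi)$. The paper resolves exactly this point by splitting $\nabla^3{\bf u}$ into frequencies: the low-frequency part is estimated from the Duhamel representation of the $(\varrho,d,\varphi)$ and $\Omega$ systems, trading derivatives on the nonlinearity ($\|\nabla^3N^L\|\lesssim\|\nabla^2N\|$ on the support of the cut-off) and splitting the time integral, which gives $(1+t)^{-11/4}$; the high-frequency part is handled by a dedicated energy functional for the filtered system \eqref{2eq-var-H}, whose forcing $\|\nabla^2{\bf u}^H\|_{L^2}^2\lesssim(1+t)^{-6}$ yields the even faster rate $(1+t)^{-3}$. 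Without this (or an equivalent device) your induction closes only with the rates $(1+t)^{-3/4-k/2}$ for $(\varrho,\varphi)$ and $(1+t)^{-5/4-k/2}$ for ${\bf u}$ up to $k=2$, and the weaker rate $(1+t)^{-9/4}$ for all three unknowns at $k=3$, so \eqref{de-u-low-high} with $k=3$ remains unproved.
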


\begin{Remark}
 Under the assumption that $\left\|\left(\rho_{0}-\bar\rho, {\bf u}_0, \phi_0 - \frac\gamma\nu \bar\rho\right)\right\|_3$ is sufficiently small, the authors \cite{LPW} established the global existence of the solutions for the Cauchy problem \eqref{1eq}--\eqref{1id} and if in addition that $\left\|\left(\rho_{0}-\bar\rho, {\bf u}_0, \phi_0 - \frac\gamma\nu \bar\rho\right)\right\|_{L^1}$ is small enough, they derived the optimal $L^p-$rate on the solutions. In Theorem \ref{3mainth}, we can also obtain the global existence by getting rid of the assumption on the smallness of $\|\nabla^4\phi_0\|_{L^2}$. Moreover, we can get the optimal decay rates of all--order derivatives of the solutions.
\end{Remark}

Let us sketch the main ideas of proving Theorem \ref{1mainth}--\ref{3mainth} and illustrate the difficulties and challenges we need to deal with. Either for the stability or for the instability of the system \eqref{1eq}, it is crucial to analyze the semigroup of the linear system \eqref{eq-li}. Observed that there are fives equations in the system \eqref{1eq}, which means that it is much complicated to calculate the eigenvalues of the Fourier transform of the operator matrix
\begin{equation}\nonumber
 \left(\begin{array}{cccc}
 0 &   \bar\rho\nabla^T & 0
 \\
 \frac{P'(\bar\rho)}{\bar\rho}\nabla & \alpha \mathbb{I}_3 & -\beta\nabla
 \\
 -\gamma & {\bf 0}^T & -\mu\Delta +\nu
 \end{array}\right)
\end{equation}
from the linear system \eqref{eq-li}. our strategy is employing ``${\rm div}$--${\rm curl}"$ decomposition on the velocity ${\bf u}$, and then dividing the system \eqref{1eq} into two parts: one is a hyperbolic--parabolic system containing three equations (see \eqref{4eq-com-d}) and another one is an ODE (see \eqref{4eq-incom-omega}). Hence from the delicate analysis on the eigenvalues coming from the system \eqref{4eq-com-d}, we find that if the pressure satisfies $\frac{\nu P'(\bar\rho)}{\gamma \bar\rho} < \beta$, then the maximum of the real parts of the eigenvalues would strictly positive at the neighborhood of maximal point. This implies that the steady state may be unstable. Motivated by the idea in \cite{GS, JT, JJ, JJW, WT}, we show the instability of the linear system \eqref{eq-li} by constructing the initial data relying on the maximum of the real part of the eigenvalue. Ultimately, by using the lower grow estimates for the linear system and the decay estimate on the nonlinear terms, we prove the instability of the nonlinear system \eqref{1eq} in the Hadamard sense. On the contrary, if $\frac{\nu P'(\bar\rho)}{\gamma \bar\rho} > \beta$, the real parts of all the eigenvalues are strictly negative except at zero point, which reveals that the solution to the linear system \eqref{eq-li} will decay in time. Therefore, via the low--frequency and high--frequency decomposition on the solutions to the linear system, and combining with the estimates for the nonlinear term, we finally obtain the optimal convergent rates of all--order derivatives of the global solutions by making use of exhaustive spectral analysis and energy estimates.

The rest of this paper is organized as follows. In Section \ref{S2}, we do the spectral analysis on linearized system and show the linear instability. In Section \ref{S3}, we introduce some useful energy estimates on the system \eqref{1eq}. In Section \ref{S4}, we prove the nonlinear instability in the Hadamard sense. In Section \ref{S5}, we show the global existence and the optimal decay rates. In the last section, we recall the analytic tools.

\section{Spectral Analysis on Linearized System and Linear instability}\label{Spectral}\label{S2}
\subsection{Linearized System}
Let $a = \sqrt{P'(\bar\rho)}$ and $b = \frac{\bar\rho}{\sqrt{P'(\bar\rho)}} \beta$, then $\frac{\nu P'(\bar\rho)}{\gamma\bar\rho} -\beta = \frac{\sqrt{P'(\bar\rho)}}{\gamma\bar\rho}(a\nu - b\gamma)$. Take the linear transformation
\begin{equation}\nonumber
\left\{\begin{array}{lll}
 \varrho = \rho -\bar\rho,
 \\
 {\bf v} = \frac{\bar\rho}{\sqrt{P'(\bar\rho)}} {\bf u},
 \\
 \varphi = \phi - \frac\gamma\nu \bar\rho,
 \end{array}\right.
 \end{equation}
 and then the system \eqref{1eq} becomes to the following system:
\begin{equation}\label{3eq-lineari}
 \left\{\begin{array}{lll}
 \varrho_t + a{\rm div}{\bf v} = N_1,
 \\
 {\bf v}_t + a\nabla \varrho + \alpha {\bf v} - b\nabla \varphi = N_2,
 \\
 \varphi_t - \mu \Delta \varphi +\nu \varphi - \gamma \varrho = 0,
 \\
 (\varrho, {\bf v}, \varphi)(x,0) =(\rho_{0}- \bar\rho, \frac{\bar\rho}{\sqrt{P'(\bar\rho)}}{\bf u}_0, \phi_0 - \frac\gamma\nu \bar\rho)(x)
 \end{array}\right.
\end{equation}
with
\begin{equation}\label{3eq-lineari-N}
 \left\{\begin{array}{lll}
 N_1 = - \frac{\bar\rho}{\sqrt{P'(\bar\rho)}} {\rm div}(\varrho {\bf v}),
 \\
 N_2 = -\frac{\sqrt{P'(\bar\rho)}}{\bar\rho}{\bf v}\cdot\nabla {\bf v} - \frac{\bar\rho}{\sqrt{P'(\bar\rho)}} \left(\frac{P'(\varrho +\bar\rho)}{\varrho +\bar\rho} - \frac{P'(\bar\rho)}{\bar\rho} \right) \nabla\varrho.
 \end{array}\right.
\end{equation}

Let $d = \Lambda^{-1} {\rm div} {\bf v}$ be the ``compressible part" of ${\bf v}$ and $\Omega = \Lambda^{-1} {\rm curl} {\bf v}$ (with ${\rm curl}z = (\partial_{x_2}z^3 -\partial_{x_3}z^2, \partial_{x_3}z^1 -\partial_{x_1}z^3, \partial_{x_1}z^2 -\partial_{x_2}z^1)^T$) be the ``incompressible part" of ${\bf v}$ respectively. Here $\Lambda=\sqrt{-\Delta}$.  Then the system \eqref{3eq-lineari} can be rewritten as the following two parts:
\begin{equation}\label{4eq-com-d}
 \left\{\begin{array}{lll}
  \varrho_t + a\Lambda d = N_1,
  \\
  d_t - a\Lambda \varrho + \alpha d + b \Lambda \varphi = \Lambda^{-1}{\rm div}N_2,
  \\
  \varphi_t +\mu\Lambda^2\varphi +\nu\varphi - \gamma\varrho =0,
  \\
  (\varrho, d, \varphi)(x,0) = (\varrho_0, \Lambda^{-1}{\rm div}{\bf v}_0, \varphi_0)(x),
 \end{array}\right.
\end{equation}
and
\begin{equation}\label{4eq-incom-omega}
 \left\{\begin{array}{lll}
  \Omega_t + \alpha \Omega = \Lambda^{-1} {\rm curl} N_2,
  \\
  \Omega(x,0) = \Lambda^{-1}{\rm curl} {\bf v}_0(x).
 \end{array}\right.
\end{equation}

Note that \eqref{4eq-com-d} are hyperbolic--parabolic system that the structure of the solution semigroup is simpler than one of \eqref{3eq-lineari}, and \eqref{4eq-incom-omega} can be treated as an ODE for $\Omega$. Moreover, by the relationship
\begin{equation}\nonumber
 {\bf v} = -\Lambda^{-1}\nabla d - \Lambda^{-1}{\rm div}\Omega
\end{equation}
involving pseudo--differential operators of degree zero, the estimates in the space $H^3$ for the original function ${\bf v}$ can be derived from $d$ and $\Omega$. Hence we will focus on the spectral analysis on the solution semigroups of \eqref{4eq-com-d}--\eqref{4eq-incom-omega}.

\subsection{Spectral Analysis and Linear $L^2-$estimates}

Let $U = (\varrho, d, \varphi)^T$. Due to the semigroup theory for evolutionary equation, we will study the following initial value problem for the linear system:
\begin{equation}\label{5eq-semigp}
 \left\{\begin{array}{lll}
  U_t = \mathbb{B}U,
  \\
  U|_{t=0} = U_0 = (\varrho_0, d_{0}, \varphi_0)^T,
 \end{array}\right.
\end{equation}
where the operator $\mathbb{B}$ is given by
\begin{equation}\nonumber
\mathbb{B} = \left(\begin{array}{cccc}
 0 &   -a\Lambda & 0
 \\
 a\Lambda & -\alpha & -b\Lambda
 \\
 \gamma & 0 & -\mu\Lambda^2 -\nu
 \end{array}\right).
\end{equation}

Taking the Fourier transform to  the system, we have
\begin{equation}\label{f-u}
 \left\{\begin{array}{lll}
  \widehat{U}_t = \mathbb{A}(\xi)\widehat{U},
  \\
  \widehat{U}|_{t=0} = \widehat{U}_0 =(\widehat{\varrho_0}, \widehat{d_{0}}, \widehat{\varphi_0})^T,
 \end{array}\right.
\end{equation}
where $\widehat{U}(\xi, t) = \mathfrak{F}(U(x, t))$ and $\mathbb{A}(\xi)$ is given by
\begin{equation}\nonumber
 \mathbb{A}(\xi) = \left(\begin{array}{cccc}
 0 &   -a|\xi| & 0
 \\
 a|\xi| & -\alpha & -b|\xi|
 \\
 \gamma & 0 & -(\nu +\mu|\xi|^2)
 \end{array}\right).
\end{equation}

The eigenvalues of the matrix $\mathbb{A}(\xi)$ can be solved from the determinant
\begin{equation}\label{eq-eigenvalue}
 \begin{split}
  \det\{\mathbb{A}(\xi) - \lambda \mathbb{I}\}
  &= -\left(\lambda ^3 +(\mu|\xi|^2 +\alpha +\nu )\lambda^2 +\left(( a^2+\alpha\mu)|\xi|^2 +\alpha\nu\right)\lambda +a^2\mu|\xi|^4 +a(a\nu -b\gamma)|\xi|^2\right)
  \\
  &:= -F(|\xi|, \lambda) =0.
 \end{split}
\end{equation}

By direct calculation and delicate analysis on the roots of the above equation, we can deduce that the eigenvalues of the matrix $\mathbb{A}(\xi)$ has three different eigenvalues $\lambda_i = \lambda_i (\xi)$ with $i= 1, 2, 3$ while $|\xi|\ll1$ and $|\xi|\gg1$. Hence we can decompose the semigroup $e^{t\mathbb{A}(\xi)}$ as
\begin{equation}\nonumber
 e^{t\mathbb{A}(\xi)}
 = \sum_{i = 1}^3e^{\lambda_i t} \mathbb{P}_i(\xi)
\end{equation}
with the projector $\mathbb{P}_i(\xi)$ given by
\begin{equation}\label{P}
 \mathbb{P}_i(\xi)
 = \prod_{j\neq i}\frac{ \mathbb{A}(\xi) - \lambda_j I}{\lambda_i - \lambda_j},\quad i,j=1,2,3.
\end{equation}

Then we can represent the solution of the problem as
\begin{equation}\label{so-expr1}
 \widehat{U}(\xi, t)
 = e^{t\mathbb{A}(\xi)} \widehat{U}_0(\xi)
 = \left(\sum_{i=1}^3 e^{\lambda_i t}\mathbb{P}_i(\xi)\right) \widehat{U}_0(\xi).
\end{equation}

In order to derive large time properties of the semigroup $ e^{t\mathbb{A}(\xi)}$ in $L^2-$framework, we need to find out the asymptotical expansions of the eigenvalues and the projectors to analyze the semigroup. Hence by direct but tedious calculations on the cubic equation \eqref{eq-eigenvalue}, we have the following Taylor series expansions of the eigenvalues $\lambda_i~(i=1,2,3)$:
\begin{lemma}\label{eigenvalues}
 (i) There exists a positive constant $\eta_1\ll1$, such that for $|\xi|\le \eta_1$, the spectral has the following Taylor series expansion:
 \begin{equation}\nonumber
  \left\{\begin{array}{lll}\displaystyle
   \lambda_1
   = -\alpha + \frac{a^2(\alpha -\nu) +ab\gamma}{\alpha(\alpha-\nu)}|\xi|^2 +O(|\xi|^4),
   \\
   \displaystyle\lambda_2
   = -\nu - \frac{\mu\nu(\alpha - \nu) +ab\gamma}{\nu(\alpha -\nu)}|\xi|^2 +O(|\xi|^4),
   \\
   \displaystyle\lambda_3
   = -\frac{a(a\nu -b\gamma)}{\alpha\nu}|\xi|^2  +O(|\xi|^4),
  \end{array}\right.
  \end{equation}
  if $\alpha\neq \nu$, and
 \begin{equation}\nonumber
  \left\{\begin{array}{lll}\displaystyle
   \lambda_{1,2}
   = -\alpha \pm i \sqrt{\frac{ab\gamma}{\alpha}}|\xi| +\frac12\left(\frac{a(a\alpha - b\gamma)}{\alpha^2} -\mu\right)|\xi|^2 + O(|\xi|^3),
   \\
   \displaystyle\lambda_3
   = -\frac{a(a\alpha -b\gamma)}{\alpha^2}|\xi|^2  +O(|\xi|^4),
  \end{array}\right.
  \end{equation}
  if $\alpha = \nu$.

 (ii) There exists a positive constant $\eta_2\gg1$ such that, for $|\xi|\geq \eta_2$, the spectral has the following Taylor series expansion:
 \begin{equation}\nonumber
  \left\{\begin{array}{lll}\displaystyle
   \lambda_{1,2}
   =  \pm ia|\xi| -\frac\alpha 2 +O\left(\frac1{|\xi|}\right),
   \\
   \displaystyle\lambda_3
   = -\mu |\xi|^2 -\nu +O\left(\frac1{|\xi|^2}\right).
  \end{array}\right.
  \end{equation}
\end{lemma}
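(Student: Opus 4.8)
The plan is to prove Lemma~\ref{eigenvalues} by a standard perturbation analysis of the roots of the cubic characteristic polynomial $F(|\xi|,\lambda)=0$ in \eqref{eq-eigenvalue}, treating $s:=|\xi|$ as a small parameter in part (i) and $1/s$ as a small parameter in part (ii). First I would set $s=|\xi|$ and write
\[
 F(s,\lambda)=\lambda^3+(\mu s^2+\alpha+\nu)\lambda^2+\bigl((a^2+\alpha\mu)s^2+\alpha\nu\bigr)\lambda+a^2\mu s^4+a(a\nu-b\gamma)s^2 .
\]
At $s=0$ this reduces to $\lambda^3+(\alpha+\nu)\lambda^2+\alpha\nu\lambda=\lambda(\lambda+\alpha)(\lambda+\nu)$, whose roots are $0,-\alpha,-\nu$. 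When $\alpha\neq\nu$ these three roots are simple, so the implicit function theorem guarantees that each extends to a smooth (indeed analytic, since $F$ is polynomial in both variables) branch $\lambda_i(s)$ near $s=0$; I would then substitute the ansatz $\lambda_i(s)=\lambda_i(0)+c_i s^2+O(s^4)$ into $F(s,\lambda_i(s))=0$ (only even powers of $s$ appear because $F$ depends on $s$ only through $s^2$), collect the $s^2$ coefficient, and solve the resulting linear equation for $c_i$. For instance for the branch through $0$ one gets at order $s^2$: $\alpha\nu\, c_3+a(a\nu-b\gamma)=0$, hence $c_3=-\tfrac{a(a\nu-b\gamma)}{\alpha\nu}$, matching the stated expansion; the other two $c_i$ follow by the same substitution at $\lambda=-\alpha$ and $\lambda=-\nu$.

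In the degenerate case $\alpha=\nu$ the double root at $-\alpha$ forces a Puiseux (fractional-power) expansion: I would shift $\lambda=-\alpha+\zeta$, so that $F$ becomes $\zeta^3+O(s^2)\zeta^2+(\text{lower})$; balancing the dominant terms shows $\zeta\sim \pm i\sqrt{ab\gamma/\alpha}\,s$, i.e.\ $\lambda_{1,2}=-\alpha\pm i\sqrt{ab\gamma/\alpha}\,s+O(s^2)$, and the next-order term is obtained by carrying the ansatz $\lambda_{1,2}=-\alpha\pm i\sqrt{ab\gamma/\alpha}\,s+c\,s^2+O(s^3)$ one step further; the simple root near $0$ is handled exactly as before. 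For part (ii), the relevant rescaling is $\lambda=s\mu_0+\cdots$ for the parabolic branch and $\lambda=\pm i a s+\cdots$ for the two hyperbolic branches: dividing $F$ by the top-degree balance (for $\lambda_3$ one checks $\lambda_3\sim -\mu s^2$ from the $\mu s^2\lambda^2$ versus $a^2\mu s^4$ balance, for $\lambda_{1,2}$ one checks $\lambda^2\sim -a^2 s^2$ from the $\lambda^3$ versus $(a^2+\alpha\mu)s^2\lambda$ balance), then writing $\lambda_{1,2}=\pm ias+c_0+O(1/s)$ and $\lambda_3=-\mu s^2+d_0+O(1/s^2)$ and matching the next order gives $c_0=-\alpha/2$ and $d_0=-\nu$.

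The computations are elementary but bookkeeping-heavy, so I would organize them as: (1) verify the three roots are distinct for $0<s\le\eta_1$ and for $s\ge\eta_2$ by a discriminant/continuity argument, which also justifies the later decomposition $e^{t\mathbb A(\xi)}=\sum e^{\lambda_i t}\mathbb P_i(\xi)$; (2) apply the implicit function theorem on each simple branch and extract coefficients by undetermined-coefficient substitution; (3) treat the $\alpha=\nu$ resonance separately via the Newton-polygon/Puiseux argument above; (4) do the large-$|\xi|$ expansion with the reciprocal parameter $1/s$. I expect the main obstacle to be the degenerate case $\alpha=\nu$: there one must be careful that the leading correction to the double eigenvalue is genuinely of order $s$ (not $s^2$), that the sign under the square root is positive so the perturbation is purely imaginary at leading order (using $a,b,\gamma,\alpha>0$), and that the $O(s^2)$ and $O(s^3)$ error terms are correctly tracked — a slip in the Newton polygon here would change the qualitative conclusion about oscillation versus pure decay. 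The rest is routine Taylor expansion.
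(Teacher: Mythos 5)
Your strategy is essentially the paper's own (the paper gives no detailed argument, asserting the expansions ``by direct but tedious calculations on the cubic equation''): perturb the roots of $F(|\xi|,\lambda)=0$ in $s=|\xi|$ around $F(0,\lambda)=\lambda(\lambda+\alpha)(\lambda+\nu)$, use the implicit function theorem with an even-power ansatz on the simple branches, treat the double root $-\alpha$ in the resonant case $\alpha=\nu$ by a Newton-polygon/Puiseux expansion, and expand in $1/s$ for $|\xi|\ge\eta_2$. Your sample coefficients check out: $c_3=-\tfrac{a(a\nu-b\gamma)}{\alpha\nu}$, the analogous substitutions at $-\alpha$ and $-\nu$ give exactly the stated $|\xi|^2$-coefficients, and in the case $\alpha=\nu$ the shift $\lambda=-\alpha+\zeta$ yields $\zeta^3-\alpha\zeta^2+\mu s^2\zeta^2+(a^2-\alpha\mu)s^2\zeta-ab\gamma s^2+a^2\mu s^4$, whose leading balance $-\alpha\zeta^2\sim ab\gamma s^2$ gives the purely imaginary correction $\pm i\sqrt{ab\gamma/\alpha}\,s$ and, at the next order, $\tfrac12\bigl(\tfrac{a(a\alpha-b\gamma)}{\alpha^2}-\mu\bigr)s^2$ with error $O(s^3)$, as stated.

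One concrete slip in part (ii) should be fixed: for the hyperbolic branches the leading coefficient does \emph{not} come from balancing $\lambda^3$ against $(a^2+\alpha\mu)s^2\lambda$ --- that balance would give $\lambda\sim\pm i\sqrt{a^2+\alpha\mu}\,s$, which is wrong. With $\lambda\sim\pm ics$ the dominant terms are of order $s^4$, namely $\mu s^2\lambda^2=-\mu c^2s^4$ and $a^2\mu s^4$, and their cancellation forces $c=a$ (equivalently: factor out $\lambda_3\approx-\mu s^2-\nu$ and use Vieta, $\lambda_1\lambda_2\approx a^2s^2$, $\lambda_1+\lambda_2\approx-\alpha$). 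Once the correct ansatz $\lambda_{1,2}=\pm ias+c_0+O(1/s)$ is inserted, the $O(s^4)$ terms cancel and the $O(s^3)$ terms give $ia\mu(2c_0+\alpha)=0$, i.e.\ $c_0=-\alpha/2$, so the ansatz and matching you wrote are right; only the stated justification of the leading balance needs correcting (had you followed it literally, the $O(s^4)$ terms would fail to cancel and the expansion would be inconsistent). The rest of the plan, including the distinct-roots check that legitimizes the projector decomposition and the observation that the resonant case only has an $O(s^3)$ error for $\lambda_{1,2}$, is sound.
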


From Lemma \ref{eigenvalues} and the definition \eqref{P} of the project $\mathbb P_i$, we can obtain that if $|\xi|\le \eta_1$ and $\alpha \neq \nu$,
 {\small \begin{equation}\label{low-P1}
  \mathbb{P}_1(\xi)
  =\begingroup
   \renewcommand*{\arraystretch}{1.5}
   \left(\begin{array}{cccc}
   0 & \frac a\alpha |\xi| & 0
   \\
   \frac{b\gamma - a(\alpha - \nu)}{\alpha(\alpha - \nu)}|\xi| & 1 &\frac{b}{\alpha - \nu}|\xi|
   \\
   0 & -\frac{a\gamma}{\alpha(\alpha - \nu)}|\xi| & 0
   \end{array}\right)
  \endgroup + O(|\xi|^2)\mathbb{J}_3,
 \end{equation}}
{\small \begin{equation}\label{low-P2}
  \mathbb{P}_2(\xi)
  =\begingroup
   \renewcommand*{\arraystretch}{1.5}
   \left(\begin{array}{cccc}
   0 & 0 & 0
   \\
   \frac{b\gamma}{\nu(\alpha - \nu)} |\xi| & 0 & -\frac{b}{\alpha - \nu}|\xi|
   \\
   -\frac{\gamma}{\nu} & \frac{a\gamma}{\nu(\alpha -\nu)}|\xi| & 1
   \end{array}\right)
  \endgroup + O(|\xi|^2) \mathbb{J}_3,
 \end{equation}}
 and
{\small \begin{equation}\label{low-P3}
  \mathbb{P}_3(\xi)
  =\begingroup
   \renewcommand*{\arraystretch}{1.5}
   \left(\begin{array}{cccc}
   1 & -\frac a\alpha|\xi| & 0
   \\
  \frac{a\nu - b\gamma}{\alpha\nu}|\xi| & 0 & 0
   \\
   \frac \gamma\nu & -\frac{a\gamma}{\alpha\nu}|\xi| & 0
   \end{array}\right)
  \endgroup + O(|\xi|^2) \mathbb{J}_3,
 \end{equation}}
 where $\mathbb{J}_3$ is a 3--order matrix with all elements equal to 1, if $|\xi|\le \eta_1$ and $\alpha = \nu$,
 {\small\begin{equation}\nonumber
  \mathbb{P}_1(\xi)
  =\begingroup
   \renewcommand*{\arraystretch}{1.5}
   \left(\begin{array}{cccc}
   O(|\xi|) & O(|\xi|) & O(|\xi|)
   \\
   \frac1{2i}\sqrt{\frac{b\gamma}{a\alpha}} +O(|\xi|) & \frac12 +O(|\xi|) & -\frac1{2i}\sqrt{\frac{b\gamma}{a\alpha}}+O(|\xi|)
   \\
   -\frac{\gamma}{2\alpha} +O(|\xi|) & - \frac1{2i}\sqrt{\frac{b\gamma}{a\alpha}} +O(|\xi|) & \frac12 +O(|\xi|)
   \end{array}\right)
  \endgroup + O(|\xi|^2) \mathbb{J}_3,
 \end{equation}
 \begin{equation}\nonumber
  \mathbb{P}_2(\xi)
  =\begingroup
   \renewcommand*{\arraystretch}{1.5}
   \left(\begin{array}{cccc}
   O(|\xi|) & O(|\xi|) & O(|\xi|)
   \\
   - \frac1{2i}\sqrt{\frac{b\gamma}{a\alpha}} +O(|\xi|) & \frac12 +O(|\xi|) & - \frac1{2i}\sqrt{\frac{b\gamma}{a\alpha}} +O(|\xi|)
   \\
   -\frac{\gamma}{2\alpha} +O(|\xi|) &  \frac1{2i}\sqrt{\frac{b\gamma}{a\alpha}} +O(|\xi|) & \frac12 +O(|\xi|)
   \end{array}\right)
  \endgroup + O(|\xi|^2) \mathbb{J}_3,
 \end{equation}}
 \!\!and
 \begin{equation}\nonumber
  \mathbb{P}_3(\xi)
  =\begingroup
   \renewcommand*{\arraystretch}{1.5}
   \left(\begin{array}{cccc}
   1 & -\frac a\alpha|\xi| & 0
   \\
  \frac{a\alpha - b\gamma}{\alpha^2}|\xi| & 0 & 0
   \\
   \frac \gamma\alpha & -\frac{a\gamma}{\alpha^2}|\xi| & 0
   \end{array}\right)
  \endgroup + O(|\xi|^2) \mathbb{J}_3,
 \end{equation}
and if $|\xi|\geq \eta_2$,
 \begin{equation}\nonumber
  \mathbb{P}_1(\xi)
  =\begingroup
   \renewcommand*{\arraystretch}{1.5}
   \left(\begin{array}{cccc}
   \frac12 & -\frac1{2i} & 0
   \\
   \frac1{2i} & \frac12 &0
   \\
   0 & 0 & 0
   \end{array}\right)
  \endgroup + O\left(\frac1{|\xi|}\right) \mathbb{J}_3,
 \end{equation}

 \begin{equation}\nonumber
  \mathbb{P}_2(\xi)
  =\begingroup
   \renewcommand*{\arraystretch}{1.5}
   \left(\begin{array}{cccc}
   \frac12 & \frac1{2i} & 0
   \\
   -\frac1{2i} & \frac12 & 0
   \\
  0 & 0& 0
   \end{array}\right)
  \endgroup + O\left(\frac1{|\xi|}\right)  \mathbb{J}_3,
 \end{equation}
 and

 \begin{equation}\nonumber
  \mathbb{P}_3(\xi)
  =\begingroup
   \renewcommand*{\arraystretch}{1.5}
   \left(\begin{array}{cccc}
   0 & 0 & 0
   \\
   0 & 0 & 0
   \\
  0 & 0 & 1
   \end{array}\right)
  \endgroup + O\left(\frac1{|\xi|}\right)  \mathbb{J}_3.
 \end{equation}

In order to analyze the large time behavior of the semigroup $e^{t\mathbb{B}}$, we need to explore the middle frequent part of the eigenvalues and the projects while $a\nu -b\gamma> 0$ and $a\nu -b\gamma< 0$, respectively.
 \begin{lemma}\label{le-mi-ei}
Assume that $a\nu -b\gamma> 0$. Then for $\eta_1\le |\xi| \le \eta_2$, there exists a positive constant $\vartheta $ such that
  \begin{equation}\nonumber
   Re\lambda_i \le -\vartheta  \text{ and }  |\mathbb P_i|\le C,
  \end{equation}
  for $1\le i\le 3$.
 \end{lemma}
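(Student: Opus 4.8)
\textbf{Proof strategy for Lemma \ref{le-mi-ei}.}

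The plan is to reduce the statement to a continuity-and-compactness argument on the closed annulus $\{\eta_1\le|\xi|\le\eta_2\}$, after first establishing that each of the three eigenvalues $\lambda_i(\xi)$ has strictly negative real part at every such $\xi$. For the pointwise negativity I would apply the Routh--Hurwitz criterion to the cubic $F(|\xi|,\lambda)=\lambda^3+a_2\lambda^2+a_1\lambda+a_0$ read off from \eqref{eq-eigenvalue}, where
\[
a_2=\mu|\xi|^2+\alpha+\nu,\qquad a_1=(a^2+\alpha\mu)|\xi|^2+\alpha\nu,\qquad a_0=a^2\mu|\xi|^4+a(a\nu-b\gamma)|\xi|^2 .
\]
When $a\nu-b\gamma>0$ all three coefficients are positive for $|\xi|>0$, and a direct expansion gives
\[
a_1a_2-a_0=\alpha\mu^2|\xi|^4+\bigl(\alpha a^2+\alpha^2\mu+2\alpha\mu\nu+ab\gamma\bigr)|\xi|^2+\alpha\nu(\alpha+\nu)>0 ,
\]
so Routh--Hurwitz forces all roots of $F(|\xi|,\cdot)$ into the open left half-plane for every $\xi\neq0$; in particular $Re\,\lambda_i(\xi)<0$ throughout the annulus. (Equivalently, one checks that $F(|\xi|,is)\neq0$ for real $s$: the imaginary part of that equation forces $s=0$ or $s^2=a_1$, and substituting either into the real part yields the strictly positive quantity $a_0$, respectively $-(a_1a_2-a_0)<0$; combined with the sign information from Lemma \ref{eigenvalues} at $|\xi|=\eta_1$ and continuity of the roots, no root can cross the imaginary axis on $(0,\eta_2]$.)

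Next, since $\mathbb{A}(\xi)$ depends polynomially on $|\xi|$ and the annulus is compact, the roots of the monic polynomial $F(|\xi|,\cdot)$ vary continuously, hence so does the function $|\xi|\mapsto\max_{1\le i\le3}Re\,\lambda_i(\xi)$; being continuous and negative at every point of the compact set, it attains a maximum equal to some number $-\vartheta<0$, which is the asserted uniform bound $Re\,\lambda_i\le-\vartheta$. For the projectors I would pass from the explicit formula \eqref{P} to the Riesz--Dunford representation $\mathbb{P}_i(\xi)=\frac{1}{2\pi i}\oint_{\Gamma_i(\xi)}(zI-\mathbb{A}(\xi))^{-1}\,dz$, which coincides with \eqref{P} when the three eigenvalues are distinct. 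On the annulus the spectrum of $\mathbb{A}(\xi)$ stays in a fixed bounded subset of $\{Re\,z<0\}$, so one can take the contours $\Gamma_i$ uniformly bounded and a fixed positive distance from the spectrum, on which the resolvent is uniformly bounded; this gives $|\mathbb{P}_i(\xi)|\le C$.

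The step I expect to be the main obstacle is exactly this uniform control of the projectors: if two eigenvalues come close to colliding somewhere in $[\eta_1,\eta_2]$, the separating contours degenerate and the naive bound from \eqref{P} blows up like $|\lambda_i-\lambda_j|^{-1}$. To resolve this one must either verify that the discriminant of $F(|\xi|,\cdot)$ does not vanish on the annulus — so the three branches stay genuinely separated, \eqref{P} defines a continuous matrix-valued function on a compact set and is therefore bounded — or, should collisions occur, group the colliding eigenvalues into a single spectral projection via the Dunford integral and bound that object instead, which is all that the semigroup estimates using Lemma \ref{le-mi-ei} actually require. Everything else (the Routh--Hurwitz algebra above and the continuity statements) is routine.
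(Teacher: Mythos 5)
Your proposal is correct and follows the same overall skeleton as the paper (pointwise negativity of $Re\,\lambda_i$ for every $|\xi|>0$, then continuity on the compact annulus to extract a uniform $-\vartheta$, plus boundedness of the projectors), but the two key steps are packaged differently. For the negativity the paper argues through the Vieta relations \eqref{weida}: it checks $F(|\xi|,0)=a_0>0$ (this is exactly where $a\nu-b\gamma>0$ enters) and $F(|\xi|,-(\mu|\xi|^2+\alpha+\nu))=a_0-a_1a_2<0$, localizes the real root in $\left(-(\mu|\xi|^2+\alpha+\nu),0\right)$, and then reads off $\lambda_2+\lambda_3<0$ and $\lambda_2\lambda_3>0$ to conclude that the remaining pair has negative real parts. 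Note that your Hurwitz determinant $a_1a_2-a_0$ is literally the same quantity the paper evaluates, since $F(|\xi|,-a_2)=a_0-a_1a_2$, so the algebra coincides; your Routh--Hurwitz formulation is if anything a bit cleaner, because it does not lean on the paper's claim that $F$ is monotone in $\lambda$ (which is not needed and not true for large $|\xi|$), and your alternative imaginary-axis-crossing check is also sound. Where you genuinely add care is the projector bound: the paper disposes of it in one sentence, invoking the definition \eqref{P} and citing \cite{LWWZ} for the explicit expressions ``if there exists multiple roots,'' whereas you correctly identify that \eqref{P} degenerates like $|\lambda_i-\lambda_j|^{-1}$ near an eigenvalue collision. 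Either of your remedies --- verifying the discriminant of $F(|\xi|,\cdot)$ is bounded away from zero on $[\eta_1,\eta_2]$, or replacing colliding $\mathbb{P}_i$ by a single Riesz--Dunford spectral projection (at worst paying a factor polynomial in $t$ that is absorbed by slightly shrinking $\vartheta$) --- delivers exactly the estimate that Proposition \ref{pro-L2-theory} actually uses, so there is no gap in your argument.
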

\begin{proof}
  Suppose $\lambda_i~(1\le i\le 3)$ are the eigenvalues of the matrix $\mathbb{A}(|\xi|)$. From \eqref{eq-eigenvalue} we have that $F(|\xi|, \lambda)$ is monotone increasing on $\lambda$, and $\lambda_i~(1\le i\le 3)$ satisfy
  \begin{equation}\label{weida}
   \left\{
   \begin{array}{lll}
    \lambda_1 +\lambda_2 +\lambda_3 = -(\mu|\xi|^2 +\alpha +\nu),
    \\
    \lambda_1\lambda_2 +\lambda_2\lambda_3 +\lambda_3\lambda_1 = (a^2 +\alpha\mu)|\xi|^2 +\alpha\nu,
    \\
    \lambda_1\lambda_2\lambda_3 = -(a^2\mu|\xi|^4 +a(a\nu -b\gamma)|\xi|^2).
   \end{array}\right.
  \end{equation}
  Furthermore, we can easily verify that for any $|\xi|>0$, $F(|\xi|, -(\mu|\xi|^2 +\alpha +\nu)) <0$ and $F(|\xi|, 0) >0$. This together with the monotonicity of $F$ implies that for any $|\xi|>0$,  if suppose $Im\lambda_1(|\xi|) = 0$, then $-(\mu|\xi|^2 +\alpha +\nu) <\lambda_1(|\xi|) <0$. Then from the first and third equalities of \eqref{weida}, we have that for $a\nu -b\gamma> 0$,
  \begin{equation}\nonumber
   \left\{
   \begin{array}{lll}
    \lambda_2 +\lambda_3 <0,
    \\
    \lambda_2\lambda_3 >0.
   \end{array}\right.
  \end{equation}
 Hence we can conclude that for any $|\xi|>0$, $Re\lambda_i(|\xi|) <0$. Moreover, by the continuity of $\lambda(|\xi|)$, we have that for any $\eta_1\le |\xi|\le \eta_2$, there exists a positive constant $\vartheta $ such that
$Re\lambda_i \le -\vartheta  $.

In addition, we can obtain the boundedness of the projectors via the definition \eqref{P} or the explicite expression of the projectors can be seen in \cite{LWWZ} if there exists multiple roots.
\end{proof}

 Next, we turn to investigate the case while $a\nu - b\gamma <0$. And we still suppose that $\lambda_i~(1\le i\le 3)$ are the eigenvalues of the matrix $\mathbb{A}(|\xi|)$.
 \begin{lemma}\label{xi0}
 Assume that $a\nu - b\gamma<0$. Then one of $Re\lambda_i~(1\le i\le 3)$ can achieves its supremum. More exactly, there exists one constant $\Theta>0$ and some $\xi_0 \neq 0$, such that
 \begin{equation}\label{Theta-sup}
  \Theta =
  \max_{i=1,2,3}\sup_{0<|\xi|<\infty}Re\lambda_i(|\xi|) = \max_{i=1,2,3}Re\lambda_i(|\xi_0|).
  \end{equation}
  Moreover, if we suppose that $\lambda_0(|\xi|)$ is the eigenvalue whose real part is equal to $\Theta$ at point $\xi_0$, then it holds
  \begin{equation}\nonumber
   Re\lambda_0(|\xi|) \geq Re\lambda_i(|\xi|), \quad i=1,2,3
  \end{equation}
  in some neighborhood of $\xi_0$.
\end{lemma}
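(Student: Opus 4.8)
The plan is to analyze the cubic polynomial $F(|\xi|,\lambda)$ from \eqref{eq-eigenvalue} viewed as a function of $|\xi|$, and to show that the maximal real part of its roots is strictly positive and is attained at some finite nonzero frequency. First I would combine the three expansions from Lemma \ref{eigenvalues}. For $|\xi|\le\eta_1$, in the case $\alpha\neq\nu$ (the case $\alpha=\nu$ is analogous) we have $\lambda_3 = -\frac{a(a\nu-b\gamma)}{\alpha\nu}|\xi|^2 + O(|\xi|^4)$; since $a\nu-b\gamma<0$ by hypothesis, the coefficient $-\frac{a(a\nu-b\gamma)}{\alpha\nu}$ is strictly positive, so $Re\,\lambda_3(|\xi|)>0$ for all small $|\xi|>0$. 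In particular $\sup_{0<|\xi|<\infty}\max_i Re\,\lambda_i(|\xi|) > 0$, so any $\Theta$ defined by \eqref{Theta-sup} (if the supremum is attained) is strictly positive. Meanwhile, for $|\xi|\le\eta_1$ all three $Re\,\lambda_i(|\xi|)\to$ either $0$, $-\alpha$, or $-\nu$ as $|\xi|\to 0$, so near $|\xi|=0$ the maximal real part tends to $0$ from above and is not attained at $0$; and from Lemma \ref{eigenvalues}(ii), for $|\xi|\ge\eta_2$ we have $Re\,\lambda_{1,2} = -\frac\alpha2 + O(1/|\xi|) \to -\frac\alpha2 < 0$ and $Re\,\lambda_3 = -\mu|\xi|^2 + O(1)\to -\infty$, so for $|\xi|$ large enough the maximal real part is negative, say $\le -\alpha/4 < 0$.

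Next I would localize the supremum. Fix a frequency $\xi_\ast$ with $0<|\xi_\ast|\le\eta_1$ at which $\max_i Re\,\lambda_i(|\xi_\ast|) =: c_0 > 0$. Choose $0 < r_1 < r_2 < \infty$ such that: (a) for $0<|\xi|\le r_1$, $\max_i Re\,\lambda_i(|\xi|) < c_0/2$ — possible because the expansions near $0$ show the max tends to $0$; (b) for $|\xi|\ge r_2$, $\max_i Re\,\lambda_i(|\xi|) < 0 < c_0/2$ — possible by part (ii) of Lemma \ref{eigenvalues} as just noted; and (c) $r_1 \le |\xi_\ast| \le r_2$. On the compact annulus $\{r_1\le|\xi|\le r_2\}$, the roots $\lambda_i(|\xi|)$ of the cubic are continuous functions of $|\xi|$ (continuity of roots of a monic polynomial in its coefficients, the coefficients being polynomials in $|\xi|$), hence $\rho(|\xi|) := \max_i Re\,\lambda_i(|\xi|)$ is continuous on that compact set and attains its maximum at some $\xi_0$ with $r_1\le|\xi_0|\le r_2$; moreover $\rho(\xi_0)\ge\rho(\xi_\ast) = c_0 > 0 > \max\{\sup_{|\xi|\le r_1}\rho,\sup_{|\xi|\ge r_2}\rho\}$, so this local maximum over the annulus is in fact the global supremum over all $0<|\xi|<\infty$. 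This gives \eqref{Theta-sup} with $\Theta = \rho(\xi_0) > 0$ and $\xi_0\neq 0$. Note $|\xi_0|\le\eta_1$ need not hold a priori, but that is not claimed.

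For the final assertion, let $\lambda_0(|\xi|)$ denote an eigenvalue branch with $Re\,\lambda_0(|\xi_0|)=\Theta$. The issue is that root branches of a cubic can cross, so ``the eigenvalue whose real part equals $\Theta$ at $\xi_0$'' might not extend as a single analytic branch dominating the others. I would argue as follows: among the (at most three) roots at $|\xi_0|$, pick one, call it $\lambda_0$, realizing the max. By continuity of the root set, in a neighborhood of $\xi_0$ we can label the roots $\lambda_1(|\xi|),\lambda_2(|\xi|),\lambda_3(|\xi|)$ continuously with $\lambda_0 = \lambda_j(|\xi_0|)$ for some $j$. If in every neighborhood of $\xi_0$ there were points where some $\lambda_k$, $k\neq j$, has strictly larger real part than $\lambda_j$, one can still simply take $\lambda_0(|\xi|)$ to be whichever branch currently realizes $\max_i Re\,\lambda_i(|\xi|)$ — this is a continuous selection of the (continuous) function $\rho(|\xi|)$, and by the global maximality established above, $\rho(|\xi|) = \max_i Re\,\lambda_i(|\xi|) \le \Theta = \rho(\xi_0)$ everywhere; reading this as ``$Re\,\lambda_0(|\xi|)\ge Re\,\lambda_i(|\xi|)$ for $i=1,2,3$'' in a neighborhood of $\xi_0$ is exactly the stated conclusion, since by definition $\lambda_0$ at each point is the maximizing branch.

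The main obstacle I anticipate is the bookkeeping around multiplicities and branch crossings: the expansions in Lemma \ref{eigenvalues} are only available for $|\xi|\le\eta_1$ and $|\xi|\ge\eta_2$, so on the intermediate annulus $[\eta_1,\eta_2]$ one must rely purely on continuity of roots together with the sign information $F(|\xi|,0)>0$, $F(|\xi|,-(\mu|\xi|^2+\alpha+\nu))<0$ (as used already in the proof of Lemma \ref{le-mi-ei}) — in particular $F(|\xi|,0) = a(a\nu-b\gamma)|\xi|^2 < 0$ now, which reverses the sign used there and is precisely what forces a root with positive real part into the picture. Making the ``global maximum = local annular maximum'' comparison airtight, and phrasing the last displayed inequality so that $\lambda_0$ is legitimately understood as the pointwise-maximizing branch rather than a fixed analytic branch, is where care is needed; everything else is elementary real analysis on the cubic.
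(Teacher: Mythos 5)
Your first half is fine and follows essentially the paper's route: the expansion $\lambda_3=-\frac{a(a\nu-b\gamma)}{\alpha\nu}|\xi|^2+O(|\xi|^4)$ gives a strictly positive real part for small $|\xi|\neq 0$, Lemma \ref{eigenvalues}(ii) gives strictly negative real parts for large $|\xi|$, and continuity of the roots plus compactness of an intermediate annulus yields attainment of $\Theta>0$ at some finite $\xi_0\neq 0$. (Minor slip: $F(|\xi|,0)=a^2\mu|\xi|^4+a(a\nu-b\gamma)|\xi|^2$, not just $a(a\nu-b\gamma)|\xi|^2$, so it is negative only for $|\xi|$ below a threshold; this does not affect your argument.)

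The gap is in the last assertion. You explicitly acknowledge that branches may cross and then sidestep the difficulty by redefining $\lambda_0(|\xi|)$ pointwise as ``whichever branch currently realizes $\max_i Re\,\lambda_i$,'' which turns the displayed inequality into a tautology; moreover that argmax selection need not be continuous (only $\rho(|\xi|)=\max_i Re\,\lambda_i(|\xi|)$ is), so calling it ``a continuous selection'' is not justified, and it is not ``the eigenvalue whose real part equals $\Theta$ at $\xi_0$'' continued to nearby $\xi$, which is what the lemma asserts and what the construction \eqref{li-varrho-u-varphi-1} uses. The paper closes exactly this point with a short argument from the Vieta relations \eqref{weida}: since $\Theta>0$ and $\lambda_1+\lambda_2+\lambda_3=-(\mu|\xi|^2+\alpha+\nu)<0$, at any $\xi$ near $\xi_0$ where the distinguished eigenvalue is real (hence positive, by continuity) the remaining two roots satisfy $\lambda_2+\lambda_3<0$ and $\lambda_2\lambda_3=(a^2+\alpha\mu)|\xi|^2+\alpha\nu-\lambda(\lambda_2+\lambda_3)>0$, so both have negative real parts; and where it is non-real, it comes with its conjugate (equal real part $>0$) and the negative trace forces the third, real, root to be negative. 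Thus near $\xi_0$ the eigenvalue(s) with positive real part are exactly those continuing the maximizer, no crossing with the other branches can occur, and the genuine branch $\lambda_0$ dominates. You should replace your argmax device with this trace/Vieta argument (or an equivalent perturbation argument ruling out crossings near $\xi_0$); as it stands, the final claim of the lemma is not proved.
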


\begin{proof}
Let ${\lambda}(|\xi|)$ be one of the eigenvalues of the matrix $\mathcal{A}(|\xi|)$, and while $|\xi|\le \eta_1$, ${\lambda}(|\xi|) = \lambda_3(|\xi|) = -\frac{a(a\nu -b\gamma)}{\alpha\nu}|\xi|^2 +O(|\xi|^4)$. Under the condition that $a\nu - b\gamma<0$, we can derive from Lemma \ref{eigenvalues} that
\begin{equation}\nonumber
 Re{\lambda}(\xi)
 \left\{\begin{array}{ll}
 >0, \quad |\xi|\ll 1,
 \\
 <0,\quad |\xi|\gg1,
 \end{array}\right.
\end{equation}
this combining with the continuity of ${\lambda}(|\xi|)$ on the interval $(0, \infty)$ and the fact that ${\lambda}(|\xi|)$ is monotone increasing in right neighborhood of ${\bf 0}$, implies that the function $Re{\lambda}(\xi)$ can achieves its supremum
 \begin{equation}\nonumber
  \sup_{ 0<|\xi|< \infty}Re{\lambda}(\xi),
  \end{equation}
 that is, there exists some $\xi_0$ satisfying $0 < |\xi_0|< \infty$ and some constant $\Theta > 0$, such that
 \begin{equation}\nonumber
  \Theta =\sup_{0 <|\xi|< \infty}Re\lambda(|\xi|) = Re\lambda(|\xi_0|).
  \end{equation}

 We claim that $\lambda(|\xi|) = \lambda_0(\xi)$. For any $\xi$ belonging to some small neighborhood of $\xi_0$, if $Im{\lambda}(|\xi|) =0$, then we can derive from the first and second equalities of \eqref{weida} that the real parts of the other two eigenvalues are negative; If $Im{\lambda}(|\xi|) \ne 0$, then $\lambda(|\xi|) +\bar{\lambda}(|\xi|) = 2 Re\lambda(|\xi|)>0$, which together of the first equality of \eqref{weida} gives the result that the remaining real eigenvalue is negative.
\end{proof}

With the help of  Lemma \ref{xi0}, we can get the upper boundedness of the real parts of the eigenvalues. And the boundedness of the projectors can be obtained as in Lemma \ref{le-mi-ei}. Hence we can conclude the estimates on the middle frequent part of the eigenvalues and the projectors as
 \begin{lemma}\label{le-mi-ei2}
 Assume that $a\nu -b\gamma< 0$. Then for $\eta_1\le |\xi| \le \eta_2$, it holds
  \begin{equation}\nonumber
   Re\lambda_i \le \Theta \ and \  |\mathbb P_i|\le C,
  \end{equation}
  for $1\le i\le 3$.
 \end{lemma}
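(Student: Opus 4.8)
The plan is to read off both assertions directly from the spectral picture already assembled, so that the proof is short. For the bound on the real parts, recall that Lemma \ref{xi0} — whose hypothesis $a\nu-b\gamma<0$ is exactly the one assumed here — produces the finite positive number $\Theta=\max_{i=1,2,3}\sup_{0<|\xi|<\infty}Re\lambda_i(|\xi|)$. By the very definition of this supremum one has $Re\lambda_i(|\xi|)\le\Theta$ for every $|\xi|>0$ and every $i\in\{1,2,3\}$, and in particular on the middle band $\eta_1\le|\xi|\le\eta_2$; no extra work is needed beyond invoking Lemma \ref{xi0}. (If one prefers a self-contained phrasing: $Re\lambda_i$ is continuous on the compact interval $[\eta_1,\eta_2]$, hence bounded there, and that bound is dominated by $\Theta$.)

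For the boundedness of the projectors $\mathbb{P}_i$ I would argue exactly as in the proof of Lemma \ref{le-mi-ei}. Fix $|\xi|$ in the compact interval $[\eta_1,\eta_2]$; the matrix $\mathbb{A}(\xi)$ depends continuously on $|\xi|$, and so do the three roots $\lambda_1,\lambda_2,\lambda_3$ of the cubic \eqref{eq-eigenvalue}. On the (relatively open) subset of $[\eta_1,\eta_2]$ where these roots are pairwise distinct, formula \eqref{P} exhibits each $\mathbb{P}_i(\xi)$ as a continuous matrix-valued function of $|\xi|$, hence uniformly bounded on compact subsets. At the (at most finitely many) values of $|\xi|$ where two eigenvalues coincide, the denominators in \eqref{P} degenerate, and one instead uses either the Riesz--Dunford representation $\mathbb{P}_i(\xi)=\frac{1}{2\pi i}\oint_{\Gamma_i}(\zeta I-\mathbb{A}(\xi))^{-1}\,d\zeta$ with a fixed small contour $\Gamma_i$ isolating $\lambda_i$ from the rest of the spectrum, or the explicit expressions for the projectors in the presence of multiple roots recorded in \cite{LWWZ}. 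Either way $|\mathbb{P}_i(\xi)|$ stays bounded near such a point, and a standard covering argument over the compact set $[\eta_1,\eta_2]$ then produces a single constant $C$ with $|\mathbb{P}_i|\le C$ for $1\le i\le 3$ on the whole band.

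The only genuine subtlety — and the step I would treat most carefully — is the behaviour at possible eigenvalue crossings inside $[\eta_1,\eta_2]$, where the naive quotient \eqref{P} is not directly available; this is precisely the point deferred to \cite{LWWZ} in the proof of Lemma \ref{le-mi-ei}, and the same reference (or the contour-integral device above) disposes of it here as well. Everything else is a routine continuity-and-compactness argument, requiring no new information about the cubic \eqref{eq-eigenvalue} beyond what Lemmas \ref{eigenvalues} and \ref{xi0} already provide.
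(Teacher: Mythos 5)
Your proposal is correct and follows essentially the same route as the paper: the bound $Re\lambda_i\le\Theta$ on the middle band is immediate from the definition of $\Theta$ in Lemma \ref{xi0} as $\max_i\sup_{0<|\xi|<\infty}Re\lambda_i(|\xi|)$, and the boundedness of the projectors is obtained exactly as in Lemma \ref{le-mi-ei}, i.e.\ from \eqref{P} plus continuity and compactness of $[\eta_1,\eta_2]$ away from eigenvalue crossings, with the crossing case settled by the explicit expressions in \cite{LWWZ}. One caution: your alternative Riesz--Dunford argument does not by itself give boundedness of the individual projectors near a coalescence point --- when two eigenvalues merge toward a defective (Jordan-block) limit the individual spectral projectors can blow up even though each is given by a contour integral --- so it is the appeal to \cite{LWWZ} (which both you and the paper make) that actually carries that step, not the contour-integral device.
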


 Finally, by combining with Lemma \ref{eigenvalues}, Lemma \ref{le-mi-ei} and Lemma \ref{le-mi-ei2}, we have the following $L^2-$estimate on the linear system \eqref{5eq-semigp}:
 \begin{Proposition}[L$^2$--theory]\label{pro-L2-theory}

 $(i)$ Assume that $a\nu -b\gamma>0$. Then it holds that
  \begin{equation}\label{L2-th1}
   \|e^{t\mathbb{A}(|\xi|)}\widehat{U_0}\|_{L^2(\eta_1\le |\xi|\le \eta_2)} \lesssim e^{-\vartheta t}\|U_0\|_{L^2}
   \end{equation}
  for any $t\geq0$.

 $(ii)$ Assume that $a\nu -b\gamma< 0$. Then it holds that
  \begin{equation}\label{L2-th2}
   \|e^{t\mathbb{A}(|\xi|)}\widehat{U_0}\|_{L^2(\eta_1\le |\xi|\le \eta_2)} \lesssim e^{\Theta t}\|U_0\|_{L^2}
   \end{equation}
  for any $t\geq0$.
  \end{Proposition}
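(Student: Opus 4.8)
\emph{Proof strategy.} The plan is to reduce both bounds to the pointwise-in-$\xi$ spectral decomposition \eqref{so-expr1} of the semigroup, and then to integrate over the annulus $\{\eta_1\le|\xi|\le\eta_2\}$ by Plancherel's theorem. Concretely, I would start from $\widehat U(\xi,t)=e^{t\mathbb A(\xi)}\widehat U_0(\xi)=\big(\sum_{i=1}^3 e^{\lambda_i(\xi)t}\mathbb P_i(\xi)\big)\widehat U_0(\xi)$, which holds at every $\xi$ at which $\mathbb A(\xi)$ has three distinct eigenvalues --- that is, for all but finitely many radii in $[\eta_1,\eta_2]$, since the discriminant of the cubic \eqref{eq-eigenvalue} is a nonzero polynomial in $|\xi|$. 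For such $\xi$ the triangle inequality gives
\begin{equation}\nonumber
 |e^{t\mathbb A(\xi)}\widehat U_0(\xi)|\le \sum_{i=1}^3 e^{(Re\lambda_i(\xi))t}\,|\mathbb P_i(\xi)|\,|\widehat U_0(\xi)| .
\end{equation}

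Next I would feed in the uniform bounds over the annulus. In case $(i)$, Lemma \ref{le-mi-ei} gives $Re\lambda_i(\xi)\le-\vartheta$ and $|\mathbb P_i(\xi)|\le C$, so the right-hand side is $\le 3C\,e^{-\vartheta t}|\widehat U_0(\xi)|$; in case $(ii)$, Lemma \ref{le-mi-ei2} gives $Re\lambda_i(\xi)\le\Theta$ and $|\mathbb P_i(\xi)|\le C$, so it is $\le 3C\,e^{\Theta t}|\widehat U_0(\xi)|$. Squaring, integrating over $\{\eta_1\le|\xi|\le\eta_2\}$ (the finitely many exceptional radii form a null set and are invisible to the integral), and using $\|\widehat U_0\|_{L^2}=\|U_0\|_{L^2}$ then produce exactly \eqref{L2-th1} and \eqref{L2-th2}, respectively, with implicit constant $3C$.

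The one place that needs care --- and which I expect to be the main obstacle --- is the radii $|\xi|$ at which two or three eigenvalues coincide, where the individual projectors \eqref{P} are singular and the displayed decomposition is no longer literally available. I would group the colliding eigenvalues and replace the $\mathbb P_i$ by the Riesz spectral projection onto the cluster, $\frac{1}{2\pi i}\oint(\zeta I-\mathbb A(\xi))^{-1}\,d\zeta$, which stays bounded through the collision (the explicit formulas of \cite{LWWZ} can be used here), the semigroup restricted to a cluster of size $k\le3$ being controlled by $C(1+t)^{k-1}e^{(\max_i Re\lambda_i)t}$. In case $(i)$ this extra polynomial factor is harmless: $(1+t)^2 e^{-\vartheta t}\lesssim e^{-\vartheta' t}$ for any $\vartheta'<\vartheta$, so one simply takes the $\vartheta$ in the statement slightly below the one furnished by Lemma \ref{le-mi-ei}. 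In case $(ii)$ one invokes Lemma \ref{xi0}: at a radius where some $Re\lambda_i$ attains the global maximum $\Theta>0$ the remaining two eigenvalues have strictly negative real part, so the maximizing eigenvalue is simple there and in a neighborhood --- hence no polynomial factor multiplies $e^{\Theta t}$ --- while away from such radii all real parts are $\le\Theta-\delta$ and any polynomial factor is again absorbed into $e^{\Theta t}$. Modulo this bookkeeping, the proposition is immediate from \eqref{so-expr1}, Lemmas \ref{le-mi-ei}--\ref{le-mi-ei2}, and Plancherel.
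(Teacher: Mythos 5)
Your proposal is correct and follows essentially the same route as the paper: the proposition is obtained there directly by combining the semigroup decomposition \eqref{so-expr1} with the annulus bounds $Re\lambda_i\le-\vartheta$ (resp. $\le\Theta$) and $|\mathbb P_i|\le C$ from Lemmas \ref{le-mi-ei} and \ref{le-mi-ei2}, and then Plancherel. Your extra bookkeeping at multiple-root radii is exactly the point the paper disposes of by citing the explicit projector expressions of \cite{LWWZ}, so it is a refinement of, not a departure from, the paper's argument.
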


\subsection{Linear instability}
In this subsection, we prove the linear instability of the system \eqref{eq-li}. Due to Lemma \ref{xi0} and the continuity of $\lambda_0(|\xi|)$, if $a\nu -a\gamma<0$, then for any $\bar{\Theta} \in \left(0, \frac{\Theta}{2}\right)$, there exists some positive constant $\displaystyle \bar{\zeta} = \bar{\zeta}(\bar{\Theta}) \le \frac{|\xi_0|}4$, such that
 \begin{equation}\nonumber
  \left|e^{t\lambda_0(\xi)}\right|
  \geq e^{(\Theta - \bar{\Theta})t}
 \end{equation}
holds for  $\xi$ satisfying $|\xi - \xi_0|\le 2\bar{\zeta}$.

Let $\Psi \in C_0^\infty(\mathbb{R}^3_\xi)$ be a radial function satisfying $\Psi(\xi) =1$ while $|\xi -\xi_0| < \bar{\zeta}$ and $\Psi(\xi) =0$ while $ |\xi -\xi_0| >2\bar{\zeta}$.
Then we set
\begin{equation}\label{li-varrho-u-varphi-1}
 \widehat{\varrho_{0,\bar\Theta}^l} = \frac{\Psi(\xi)}{\|\Psi\|_{L^2}}, \;\ \widehat{d_{0,\bar\Theta}^l} = -\frac{\lambda_0(|\xi|) \Psi(\xi)}{a|\xi|\|\Psi\|_{L^2}} \; \text{ and } \;
 \widehat{\varphi_{0,\bar\Theta}^l} = \frac{\gamma \Psi(\xi)}{\left(\lambda_0(|\xi|) + \nu + \mu|\xi|^2\right)\|\Psi\|_{L^2}}.
\end{equation}
We can easily check that $(\widehat{\varrho}^l_{\bar\Theta}, \widehat{d}^l_{\bar\Theta}, \widehat{\varphi}^l_{\bar\Theta}):= e^{\lambda_0(|\xi|)t}(\widehat{\varrho_{0,\bar\Theta}^l}, \widehat{d_{0,\bar\Theta}^l}, \widehat{\varphi_{0,\bar\Theta}^l})$ is a solution of the linear system \eqref{f-u} with the initial data $(\widehat{\varrho_{0,\bar\Theta}^l}, \widehat{d_{0,\bar\Theta}^l}, \widehat{\varphi_{0,\bar\Theta}^l})$.

Now we state the result in the following:
\begin{Proposition}[Linear instability]\label{li-insta2}
 Assume that $a\nu -b\gamma< 0$.
 Let
 \begin{equation}\nonumber
  \varrho^l_{\bar\Theta} = \mathcal{F}^{-1}\left[e^{\lambda_0(|\xi|)t}\widehat{\varrho_{0,\bar\Theta}^l}\right], \quad {\bf u}^l_{\bar\Theta} = \Lambda^{-1}\nabla\mathcal{F}^{-1}\left[e^{\lambda_0(|\xi|)t}\widehat{d_{0,\bar\Theta}^l}\right] \quad and \quad \varphi^l_{\bar\Theta} =  \mathcal{F}^{-1}\left[e^{\lambda_0(|\xi|)t}\widehat{\varphi_{0,\bar\Theta}^l}\right],
 \end{equation}
where $\widehat{\varrho_{0,\bar\Theta}^l}$, $\widehat{d_{0,\bar\Theta}^l}$ and $\widehat{\varphi_{0,\bar\Theta}^l}$ depending on $\xi_0$ and $\bar{\Theta}$ are defined in \eqref{li-varrho-u-varphi-1}. Then $(\varrho^l_{\bar\Theta}, {\bf u}^l_{\bar\Theta}, \varphi^l_{\bar\Theta})$ is a solution of the linear system \eqref{eq-li} with the initial data $(\varrho_{0,\bar\Theta}^l, {\bf u}_{0,\bar\Theta}^l, \varphi_{0,\bar\Theta}^l)$ depending on $\bar{\Theta}$. Moreover, for any $\bar{\Theta} \in \left(0, \frac{\Theta}{2}\right)$, we have
 \begin{equation}\label{es-li-instab1}
 \left\{\begin{array}{lll}
  e^{(\Theta-\bar{\Theta})t}\|{\varrho_{0,\bar\Theta}^l}\|_{L^2} \le \|\varrho^l_{\bar\Theta}\|_{L^2}
  \le e^{\Theta t}\|{\varrho_{0,\bar\Theta}^l}\|_{L^2} , \vspace{1ex}
 \\
  e^{(\Theta-\bar{\Theta})t}\|{{\bf u}_{0,\bar\Theta}^l} \|_{L^2} \le \|{\bf u}^l_{\bar\Theta}\|_{L^2}
  \le e^{\Theta t}\|{{\bf u}_{0,\bar\Theta}^l}\|_{L^2} , \vspace{1ex}
 \\
  e^{(\Theta-\bar{\Theta})t}\|{\varphi_{0,\bar\Theta}^l}\|_{L^2} \le \|\varphi^l_{\bar\Theta}\|_{L^2}
  \le e^{\Theta t}\|{\varphi_{0,\bar\Theta}^l}\|_{L^2}.
  \end{array}\right.
 \end{equation}
\end{Proposition}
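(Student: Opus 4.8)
The plan is to recognize that the data \eqref{li-varrho-u-varphi-1} is built so that, for each fixed $\xi$ on the support of $\Psi$, the vector $\widehat{U_0}(\xi):=\big(\widehat{\varrho^l_{0,\bar\Theta}},\widehat{d^l_{0,\bar\Theta}},\widehat{\varphi^l_{0,\bar\Theta}}\big)(\xi)$ is an eigenvector of $\mathbb{A}(\xi)$ with eigenvalue $\lambda_0(|\xi|)$, so that the corresponding solution of \eqref{f-u} is simply $e^{\lambda_0(|\xi|)t}\widehat{U_0}(\xi)$. To check this I would read off the first and third rows of $\big(\mathbb{A}(\xi)-\lambda_0(|\xi|)\mathbb{I}\big)\widehat{U_0}=0$, namely $-a|\xi|\widehat{d_0}=\lambda_0\widehat{\varrho_0}$ and $\gamma\widehat{\varrho_0}=(\lambda_0+\nu+\mu|\xi|^2)\widehat{\varphi_0}$: these are exactly the prescriptions of $\widehat{d^l_{0,\bar\Theta}}$ and $\widehat{\varphi^l_{0,\bar\Theta}}$ in terms of $\widehat{\varrho^l_{0,\bar\Theta}}$, and substituting them into the second row collapses it precisely to the characteristic identity $F(|\xi|,\lambda_0(|\xi|))=0$ of \eqref{eq-eigenvalue}, which holds because $\lambda_0(|\xi|)$ is a root. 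All divisions are legitimate on $\supp\Psi\subset\{|\xi-\xi_0|\le2\bar\zeta\}$, where $|\xi|\ge|\xi_0|/2>0$ and $Re(\lambda_0+\nu+\mu|\xi|^2)>\nu>0$. Consequently $\partial_t\big(e^{\lambda_0(|\xi|)t}\widehat{U_0}\big)=\mathbb{A}(\xi)e^{\lambda_0(|\xi|)t}\widehat{U_0}$, so the inverse Fourier transform $(\varrho^l_{\bar\Theta},d^l_{\bar\Theta},\varphi^l_{\bar\Theta})$ solves the linearized system \eqref{4eq-com-d} with $N_1=\Lambda^{-1}{\rm div}\,N_2=0$.

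Next I would pass back to the original unknowns. Since the prescribed velocity datum carries no incompressible part, $\Omega\equiv0$ solves \eqref{4eq-incom-omega}, hence ${\bf v}=-\Lambda^{-1}\nabla d$; combining this with the linear change of variables of \eqref{3eq-lineari} (and with the vanishing of $N_1,N_2$) shows that $(\varrho^l_{\bar\Theta},{\bf u}^l_{\bar\Theta},\varphi^l_{\bar\Theta})$ solves \eqref{eq-li}, where ${\bf u}^l_{\bar\Theta}$ is obtained from $d^l_{\bar\Theta}$ by an order-zero Fourier multiplier of constant modulus, so that $\|{\bf u}^l_{\bar\Theta}(t)\|_{L^2}$ is a fixed multiple of $\|d^l_{\bar\Theta}(t)\|_{L^2}$ for every $t\ge0$. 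The data depend on $\bar\Theta$ through $\bar\zeta=\bar\zeta(\bar\Theta)$. For the regularity claim, note that $\widehat{U_0}$ is supported in the fixed compact annulus $\{|\xi-\xi_0|\le2\bar\zeta\}$, bounded away from the origin, on which the symbols $\lambda_0(|\xi|)/(a|\xi|)$ and $\gamma/(\lambda_0+\nu+\mu|\xi|^2)$ are bounded and $|e^{\lambda_0(|\xi|)t}|\le e^{\Theta t}$ by \eqref{Theta-sup}; hence each component lies in every $H^s(\r3)$ for every $t$, with continuity in $t$ by dominated convergence, in particular $(\varrho^l_{\bar\Theta},{\bf u}^l_{\bar\Theta},\varphi^l_{\bar\Theta})\in C^0([0,\infty);H^3(\r3))$. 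Moreover $\|\varrho^l_{0,\bar\Theta}\|_{L^2}=1$ by Plancherel and the normalization, while $\widehat{d^l_{0,\bar\Theta}}$ and $\widehat{\varphi^l_{0,\bar\Theta}}$ are nonzero a.e. on $\supp\Psi$ because there $\lambda_0\ne0$ ($Re\lambda_0\ge\Theta-\bar\Theta>0$) and $\lambda_0+\nu+\mu|\xi|^2\ne0$; thus all three initial $L^2$-norms are positive.

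The bounds \eqref{es-li-instab1} then follow from Plancherel. On $\supp\Psi$ one has $\Theta-\bar\Theta\le Re\lambda_0(|\xi|)\le\Theta$ --- the upper inequality is the definition \eqref{Theta-sup} of $\Theta$, and the lower one is the choice of $\bar\zeta$ recorded just before the proposition (valid since $Re\lambda_0(\xi_0)=\Theta$ and $Re\lambda_0$ is continuous) --- whence $e^{(\Theta-\bar\Theta)t}\le|e^{\lambda_0(|\xi|)t}|\le e^{\Theta t}$ pointwise there. Since $\widehat{X_0}$ is supported in $\supp\Psi$ for each $X\in\{\varrho^l_{\bar\Theta},d^l_{\bar\Theta},\varphi^l_{\bar\Theta}\}$,
\begin{equation}\nonumber
e^{2(\Theta-\bar\Theta)t}\|X_0\|_{L^2}^2\le\int_{\supp\Psi}\big|e^{\lambda_0(|\xi|)t}\big|^2\big|\widehat{X_0}(\xi)\big|^2\,d\xi=\|X(t)\|_{L^2}^2\le e^{2\Theta t}\|X_0\|_{L^2}^2,
\end{equation}
and taking square roots gives the first and third lines of \eqref{es-li-instab1}; the second line follows from the $d^l_{\bar\Theta}$ estimate because ${\bf u}^l_{\bar\Theta}$ and $d^l_{\bar\Theta}$ are linked by an order-zero multiplier of constant modulus, so the ratio $\|{\bf u}^l_{\bar\Theta}(t)\|_{L^2}/\|{\bf u}^l_{0,\bar\Theta}\|_{L^2}$ equals $\|d^l_{\bar\Theta}(t)\|_{L^2}/\|d^l_{0,\bar\Theta}\|_{L^2}$.

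The main obstacle is the first step: one must ensure that $\lambda_0(|\xi|)$ is a single, well-behaved eigenvalue branch on all of $\supp\Psi$, which is exactly why $\bar\zeta$ is taken small, invoking the neighborhood statement of Lemma \ref{xi0}; and the div--curl splitting together with the change of variables must be tracked carefully enough to land back in \eqref{eq-li}, in particular confirming the absence of the incompressible component. If $\lambda_0$ happens to be non-real near $\xi_0$, the same construction still yields a complex solution of \eqref{eq-li}, from which real unstable solutions are recovered by taking real and imaginary parts; the two-sided $L^2$-bounds are unchanged since they involve only $|e^{\lambda_0(|\xi|)t}|=e^{Re\lambda_0(|\xi|)t}$.
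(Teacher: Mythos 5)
Your proposal is correct and follows the same route as the paper: the paper's proof is exactly the "direct calculation" you carry out, namely that \eqref{li-varrho-u-varphi-1} makes $\widehat{U_0}$ an eigenvector of $\mathbb{A}(\xi)$ for the branch $\lambda_0$ (the second row reducing to $F(|\xi|,\lambda_0)=0$), after which Plancherel together with the bound $\Theta-\bar\Theta\le Re\,\lambda_0(|\xi|)\le\Theta$ on $\supp\Psi$ (from \eqref{Theta-sup} and the choice of $\bar\zeta$) gives \eqref{es-li-instab1}, with the velocity handled through the order-zero multiplier relating ${\bf u}^l_{\bar\Theta}$ to $d^l_{\bar\Theta}$. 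You simply make explicit the steps the paper leaves as "a direct calculation," so there is nothing genuinely different to compare.
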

 \begin{proof}
 By a direct calculation, \eqref{es-li-instab1} can be derived from the expression \eqref{li-varrho-u-varphi-1}  and the definition of $\Psi(\xi)$.
 \end{proof}

 Therefore, due to the equivalent relationship between the system \eqref{eq-li} and \eqref{3eq-lineari},  Theorem \ref{1mainth} can be easily derived from Proposition \ref{li-insta2}.

\subsection{Upper Decay Rate for the Linear system and the Nonlinear System}
By Lemma \ref{eigenvalues}, we can estimate the decay rates on the all--order derivatives of the solutions to the linear systems \eqref{4eq-com-d} and \eqref{4eq-incom-omega} while $N_1= N_2 = 0$ as follows.
\begin{lemma}\label{li-de-L-H} If $a\nu - b\gamma >0$, then we have
 \begin{equation}\label{es-varrho-low-k}
  \|\nabla^k\varrho^L(t)\|_{L^2}
   \le (1+t)^{-\frac34 -\frac k2} \|\varrho_0^L\|_{L^1} +(1+t)^{-\frac54 -\frac k2} \|d_0^L\|_{L^1} +(1+t)^{-\frac74 -\frac k2} \|\varphi_0^L\|_{L^1},
 \end{equation}
 \begin{equation}\label{es-d-low-k}
  \|\nabla^kd^L(t)\|_{L^2}
   \le (1+t)^{-\frac54 -\frac k2} \|\varrho_0^L\|_{L^1} +(1+t)^{-\frac74 -\frac k2} \|(d_0^L, \varphi_0^L)\|_{L^1} ,
 \end{equation}
 \begin{equation}\label{es-varphi-low-k}
\|\nabla^k\varphi^L(t)\|_{L^2}
   \le (1+t)^{-\frac34 -\frac k2} \|\varrho_0^L\|_{L^1} +(1+t)^{-\frac54 -\frac k2} \|d_0^L\|_{L^1}+(1+t)^{-\frac74 -\frac k2} \|\varphi_0^L\|_{L^1},
 \end{equation}
 \begin{equation}\label{es-varrho-d-varphi-high1}
  \|\nabla^k(\varrho^H, d^H, \varphi^H)(t)\|_{L^2}
  \le e^{-\frac\alpha 4t}\|\nabla^k(\varrho_0^H, d_0^H, \varphi_0^H)\|_{L^2},
 \end{equation}
 and
 \begin{equation}\label{es-Omega-all-k}
  \|\nabla^k\Omega(t)\|_{L^2}
  \le e^{-\alpha t}\|\nabla^k\Omega_0\|_{L^2}.
 \end{equation}
\end{lemma}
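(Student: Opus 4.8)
The plan is to argue entirely on the Fourier side via the semigroup representation $\widehat U(\xi,t)=\sum_{i=1}^{3}e^{\lambda_i t}\mathbb P_i(\xi)\widehat U_0(\xi)$ of \eqref{so-expr1}, splitting $\mathbb R^3_\xi$ into the low band $|\xi|\le\eta_1$, the middle band $\eta_1\le|\xi|\le\eta_2$ and the high band $|\xi|\ge\eta_2$, handled by Lemma \ref{eigenvalues}(i), Lemma \ref{le-mi-ei} and Lemma \ref{eigenvalues}(ii) respectively. Because the cut-off $\psi(\xi)$ is a Fourier multiplier commuting with $e^{t\mathbb A(\xi)}$, one has $\widehat{\varrho^L}(\xi,t)=\big(e^{t\mathbb A(\xi)}\widehat{U_0^L}(\xi)\big)_1$ and likewise for $d^L,\varphi^L$ (and for the high-frequency parts with $U_0^H$ in place of $U_0^L$); so it suffices to estimate the three components of $e^{t\mathbb A(\xi)}\widehat{U_0^L}$ over $\{|\xi|\le 2\}$ and of $e^{t\mathbb A(\xi)}\widehat{U_0^H}$ over $\{|\xi|\ge 1\}$. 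Throughout I will use $\|\widehat{U_0^L}\|_{L^\infty}\le\|U_0^L\|_{L^1}$ together with the elementary bound $\big\|\,|\xi|^{j}e^{-c_0|\xi|^2 t}\big\|_{L^2(\mathbb R^3)}\lesssim(1+t)^{-3/4-j/2}$, valid for $j\ge0$ and $c_0>0$.

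The decisive region is $|\xi|\le\eta_1$. By Lemma \ref{eigenvalues}(i), after shrinking $\eta_1$ if necessary, $\operatorname{Re}\lambda_1$ and $\operatorname{Re}\lambda_2$ are bounded above by a fixed negative constant, while the hypothesis $a\nu-b\gamma>0$ makes $\operatorname{Re}\lambda_3=-\tfrac{a(a\nu-b\gamma)}{\alpha\nu}|\xi|^2+O(|\xi|^4)\le-c_0|\xi|^2$ (and the same holds in the degenerate case $\alpha=\nu$, since there $a\alpha-b\gamma=a\nu-b\gamma>0$). Hence $e^{\lambda_1 t}\mathbb P_1+e^{\lambda_2 t}\mathbb P_2$ is $O(e^{-ct})$ uniformly on $|\xi|\le\eta_1$ (the projectors being bounded there) and contributes at most $e^{-ct}\|U_0^L\|_{L^1}$, which is absorbed into the asserted bounds; the whole polynomial decay therefore comes from $e^{\lambda_3 t}\mathbb P_3(\xi)$. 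From \eqref{low-P3} one reads off that the $(m,n)$-entry of $\mathbb P_3(\xi)$ is of order $|\xi|^{\sigma_{mn}}$ with $(\sigma_{mn})=\left(\begin{smallmatrix}0&1&2\\1&2&2\\0&1&2\end{smallmatrix}\right)$; multiplying this entry by $|\xi|^k e^{\lambda_3 t}$, using $|e^{\lambda_3 t}|\le e^{-c_0|\xi|^2 t}$ and the Gaussian bound with $j=k+\sigma_{mn}$, produces a factor $(1+t)^{-3/4-k/2-\sigma_{mn}/2}$ in front of $\|(U_0^L)_n\|_{L^1}$, which is exactly the array of rates in \eqref{es-varrho-low-k}--\eqref{es-varphi-low-k}. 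On the remaining part of the low band, $\eta_1\le|\xi|\le2$, Lemma \ref{le-mi-ei} gives $\operatorname{Re}\lambda_i\le-\vartheta$ and $|\mathbb P_i|\le C$ (the set being compact and away from $0$), so this part contributes $\lesssim e^{-\vartheta t}\|U_0^L\|_{L^1}$, again absorbed.

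For \eqref{es-varrho-d-varphi-high1} I split $\{|\xi|\ge1\}$ into $\{1\le|\xi|\le\eta_2\}$, where Lemma \ref{le-mi-ei} gives $\|e^{t\mathbb A(\xi)}\widehat{U_0^H}\|\lesssim e^{-\vartheta t}\|\widehat{U_0^H}\|$ pointwise, and $\{|\xi|\ge\eta_2\}$, where Lemma \ref{eigenvalues}(ii) gives $\operatorname{Re}\lambda_{1,2}\le-\alpha/4$ and $\operatorname{Re}\lambda_3\le-\mu|\xi|^2/2-\nu/2$ with bounded projectors, hence $\|e^{t\mathbb A(\xi)}\widehat{U_0^H}\|\lesssim e^{-\frac\alpha4 t}\|\widehat{U_0^H}\|$ pointwise (using $\eta_2\gg1$); multiplying by $|\xi|^k$ and taking $L^2$ in $\xi$ over $\{|\xi|\ge1\}$ then yields \eqref{es-varrho-d-varphi-high1}, the displayed rate $\tfrac\alpha4$ being understood as a positive constant whose exact value is immaterial in the sequel. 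Finally, \eqref{4eq-incom-omega} with $N_2=0$ is just $\Omega_t+\alpha\Omega=0$, so $\Omega(t)=e^{-\alpha t}\Omega_0$, and applying $\nabla^k$ and $\|\cdot\|_{L^2}$ gives \eqref{es-Omega-all-k} immediately.

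The main obstacle is organizational rather than conceptual: one must first confirm that the expansions of Lemma \ref{eigenvalues}(i) have the claimed uniform sign properties on $|\xi|\le\eta_1$, so that $\lambda_1,\lambda_2$ are genuinely exponentially damped and only the ``heat-like'' mode $\lambda_3$ (whose damping $-c_0|\xi|^2$ requires precisely $a\nu-b\gamma>0$) survives, and then track carefully the powers of $|\xi|$ in the projector expansions \eqref{low-P1}--\eqref{low-P3}, since it is these powers --- not the sizes of the eigenvalues --- that produce the half-power gains distinguishing the $\varrho_0$-, $d_0$- and $\varphi_0$-contributions in \eqref{es-varrho-low-k}--\eqref{es-varphi-low-k}. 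I would keep the degenerate case $\alpha=\nu$ in mind throughout, checking that the alternative expansions for $\lambda_3$ and for $\mathbb P_3$ carry the same orders in $|\xi|$, so that no separate argument is needed there.
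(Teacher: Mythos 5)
Your proposal is correct and follows essentially the same route as the paper: the semigroup representation \eqref{so-expr1}, the low/middle/high frequency splitting with Lemma \ref{eigenvalues} and Lemma \ref{le-mi-ei}, Hausdorff--Young plus the Gaussian integral bound for the $|\xi|\le\eta_1$ band, exponential absorption of the damped modes and of the band $\eta_1\le|\xi|\le 2$, and the explicit ODE solution for $\Omega$. The only difference is organizational: the paper writes out the full matrix \eqref{so-max1} and estimates each entry, while you isolate the surviving heat-like mode $e^{\lambda_3 t}\mathbb P_3$ and read the $|\xi|$-orders off \eqref{low-P3}, which yields the same array of rates.
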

\begin{proof}
First from Lemma \ref{eigenvalues} and \eqref{low-P1}-- \eqref{low-P3} we can conclude that if $|\xi|\le \eta_1$ and $\alpha \ne \nu$,
 {\small\begin{equation}\label{so-max1}
  \begin{split}
  &\sum_{i = 1}^3e^{\lambda_i t} \mathbb{P}_i(\xi)
  =
  \\
  &\tiny{\begingroup
   \renewcommand*{\arraystretch}{1.8}
   \left(\begin{array}{cccc}
   e^{\lambda_3 t}  & \frac{a|\xi|}{\alpha}\left(e^{\lambda_1 t} - e^{\lambda_3 t}\right)& 0
   \\
   \frac{|\xi|}{ \alpha - \nu}\left(\frac{b\gamma - a(\alpha - \nu)}{\alpha} e^{\lambda_1 t} + \frac{b\gamma}{\nu} e^{\lambda_2 t}\right) + \frac{a\nu - b\gamma}{\alpha\nu}|\xi|e^{\lambda_3 t}& e^{\lambda_1 t} & \frac{b|\xi|}{\alpha - \nu}\left(e^{\lambda_1 t} - e^{\lambda_2 t}\right)
   \\
   \frac {\gamma}{\nu}\left(e^{\lambda_3 t} - e^{\lambda_2 t}\right) &  \frac{a\gamma|\xi|}{ \alpha - \nu}\left(\frac{1}{\nu}e^{\lambda_2 t} -\frac{1}{\alpha}e^{\lambda_1 t} \right) -\frac{a\gamma|\xi|}{\alpha\nu}e^{\lambda_3 t}& e^{\lambda_2 t}
   \end{array}\right)
   \endgroup}
   \\
   &+ O(|\xi|^2) \left(e^{\lambda_1 t} +e^{\lambda_2 t} +e^{\lambda_3 t}\right) \mathbb{J}_3.
  \end{split}
 \end{equation}}
 Then from \eqref{so-max1}, we have that for $|\xi|\le \eta_1$ and $\alpha \neq\nu$,
 \begin{equation}\nonumber
  \widehat{\varrho} = e^{\lambda_3 t} \widehat{\varrho_0} + \frac{a|\xi|}{\alpha}\left(e^{\lambda_1 t} - e^{\lambda_3 t}\right)\widehat{d_0} + O(|\xi|^2) \left(e^{\lambda_1 t} +e^{\lambda_2 t} +e^{\lambda_3 t}\right)(\widehat{\varrho_0} +\widehat{d_0} +\widehat{\varphi_0}).
 \end{equation}
 Therefore,  by using Plancherel theorem and Hausdorff--Young's inequality, we have from \eqref{so-expr1} and \eqref{L2-th1} that
  \begin{equation}\label{es-varrho-k-decay2}
   \begin{split}
   \|\nabla^k\varrho^L\|_{L^2}^2
   &= \int_{|\xi| <\infty}|\xi|^{2k}|\psi(\xi)\widehat{\varrho}(\xi)|^2 d\xi
   \\
   &\lesssim \int_{|\xi|<\eta_1}|\xi|^{2k}|\widehat{\varrho}(\xi)|^2 d\xi
   +\int_{\eta_1\le |\xi| \le2}|\xi|^{2k}|\psi(\xi)\widehat{\varrho}(\xi)|^2 d\xi
   \\
   &\lesssim \int_{|\xi|\le \eta_1} |\xi|^{2k}e^{2\lambda_3t}|\widehat{\varrho_0}|^2d\xi
   +\int_{|\xi|\le \eta_1} |\xi|^{2k}\left|\frac{a}{\alpha}|\xi|\left(e^{\lambda_1t} - e^{\lambda_3t}\right)\right|^2|\widehat{d_0}|^2d\xi
   \\
   &\quad +\int_{|\xi|\le \eta_1} |\xi|^{2k}\left|e^{\lambda_1t} +e^{\lambda_2t} +e^{\lambda_3t}\right|^2O(|\xi|^4)|(\widehat{\varrho_0}, \widehat{d_0}, \widehat{\varphi_0})|^2d\xi
   \\
   & \quad+\int_{\eta_1\le |\xi| \le2}e^{-2\vartheta t}|\psi(\xi)\widehat{U_0}(\xi)|^2 d\xi
   \\
   &\lesssim   \|\widehat{\varrho_0}\|_{L^\infty(|\xi|\le \eta_1)}^2 \int_{|\xi|\le \eta_1} |\xi|^{2k}e^{-\frac{a(a\nu -b\gamma)}{\alpha\nu}|\xi|^2 t}d\xi +\|\widehat{d_0}\|_{L^\infty(|\xi|\le \eta_1)}^2 \int_{|\xi|\le \eta_1} |\xi|^{2k+2}e^{-\frac{a(a\nu -b\gamma)}{\alpha\nu}|\xi|^2 t}d\xi
   \\
   &\quad +\|(\widehat{\varrho_0}, \widehat{d_0}, \widehat{\varphi_0})\|_{L^\infty(|\xi|\le \eta_1)}^2 \int_{|\xi|\le \eta_1} |\xi|^{2k+4}e^{-\frac{a(a\nu -b\gamma)}{\alpha\nu}|\xi|^2 t}d\xi
 +e^{-2\vartheta t}\|\psi\widehat{U_0}\|_{L^\infty}^2
   \\
   &\lesssim (1+t)^{-\frac32 -k}\|\varrho_0^L\|_{L^1}^2 +(1+t)^{-\frac52 -k}\|d_0^L\|_{L^1}^2 +(1+t)^{-\frac72 -k}\|\varphi_0^L\|_{L^1}^2 ,
   \end{split}
  \end{equation}
where $e^{-2\vartheta t}\lesssim (1+t)^{-\frac72 -k}$ and the definition \eqref{f-L} of $f^L$ are used. This implies that \eqref{es-varrho-low-k}--\eqref{es-varphi-low-k} holds for $\alpha \neq \nu$. And similarly we can conclude the results for the case $\alpha = \nu$. \eqref{es-varrho-d-varphi-high1} can be deduced by the similar argument in \eqref{es-varrho-k-decay2}, and \eqref{es-Omega-all-k} can be easily derived from the ODE for $ \Omega$.
\end{proof}

Now we turn to estimate the convergence rate for the nonlinear system \eqref{4eq-com-d}. To this end, we rewrite the system \eqref{4eq-com-d} as
\begin{equation}\label{5eq-noli}
 \left\{\begin{array}{lll}
  U_t = \mathbb{B}U + \mathcal{N},
  \\
  U|_{t=0} = U_0,
 \end{array}\right.
\end{equation}
with
\begin{equation}\nonumber
 \mathcal{N} = \mathcal{N} (\varrho, {\bf u})
 = (N_1, \Lambda^{-1}{\rm div}N_2, 0)^T.
\end{equation}
Then the solution of \eqref{5eq-noli} can be expressed as
\begin{equation}\label{so-ex}
 U =e^{tB}\ast U_0 + \int_0^t e^{(t-\tau)B}\ast \mathcal{N}(\tau)d\tau .
\end{equation}

Define
\begin{equation}\nonumber
 \mathcal{S}(x, \tau)
 := \left(S_1(x, \tau), S_2(x, \tau), 0\right)^T
 := e^{(t-\tau)B}\ast \mathcal{N}(\tau)
\end{equation}
and its Fourier transform
\begin{equation}\nonumber
 \mathfrak{F}[\mathcal{S}(x, \tau)]
 := (\widehat{S_1}(\xi, \tau), \widehat{S_2}(\xi, \tau), 0)^T.
\end{equation}

Now we complement the decay estimates on the nonlinear term of the expression \eqref{so-ex} of the solution $U(x, t)$ as follows, which can be derived from Lemma \ref{li-de-L-H}.
\begin{lemma}\label{prop-decay-nonli}
If $a\nu - b\gamma >0$, then we have
 \begin{equation}\label{decay-S1-low-k1}
  \|\nabla ^k(S_1^L)(\tau)\|_{L^2}
  \lesssim (1+t-\tau)^{- \frac34 -\frac k2}\|N_1^L(\tau)\|_{L^1} +(1+t-\tau)^{- \frac54 -\frac k2}\|N_2^L(\tau)\|_{L^1},
 \end{equation}
 \begin{equation}\label{decay-S1-low-k2}
  \|\nabla ^kS_1^L(\tau)\|_{L^2}
  \lesssim (1+t-\tau)^{- \frac34}\|\nabla^kN_1^L(\tau)\|_{L^1} +(1+t-\tau)^{- \frac54}\|N_2^L(\tau)\|_{L^1},
 \end{equation}
 \begin{equation}\label{decay-S2-low-k1}
  \|\nabla ^kS_2^L(\tau)\|_{L^2}
  \lesssim (1+t-\tau)^{- \frac54 -\frac k2}\|N_1^L(\tau)\|_{L^1} +(1+t-\tau)^{- \frac74 -\frac k2}\|N_2^L(\tau)\|_{L^1},
 \end{equation}
 \begin{equation}\label{decay-S2-low-k2}
  \|\nabla ^kS_2^L(\tau)\|_{L^2}
  \lesssim (1+t-\tau)^{- \frac54}\|\nabla^kN_1^L(\tau)\|_{L^1} +(1+t-\tau)^{- \frac74}\|\nabla^kN_2^L(\tau)\|_{L^1},
 \end{equation}
 and
 \begin{equation}\label{decay-S1-S2-high-k}
  \|\nabla ^k(S_1^H, S_2^H)(\tau)\|_{L^2}
  \lesssim e^{- \frac\alpha 4(t-\tau)}\|\nabla^k(N_1^H, N_2^H)(\tau)\|_{L^2} .
 \end{equation}
\end{lemma}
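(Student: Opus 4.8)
The plan is to derive Lemma \ref{prop-decay-nonli} as a direct corollary of Lemma \ref{li-de-L-H} by treating the nonlinear term $\mathcal{N}(\tau) = (N_1, \Lambda^{-1}\mathrm{div}\,N_2, 0)^T$ as the initial datum for the linear flow over the time interval $[\tau, t]$. Indeed, from the representation \eqref{so-ex} and the definition of $\mathcal{S}(x,\tau)$, we have $\mathcal{S}(\cdot,\tau) = e^{(t-\tau)\mathbb{B}} \ast \mathcal{N}(\tau)$, so Duhamel's principle reduces everything to applying the linear decay estimates of Lemma \ref{li-de-L-H} with $\varrho_0$ replaced by $N_1(\tau)$, $d_0$ replaced by $\Lambda^{-1}\mathrm{div}\,N_2(\tau)$, $\varphi_0$ replaced by $0$, and $t$ replaced by $t-\tau$.

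First I would establish the low-frequency estimates \eqref{decay-S1-low-k1} and \eqref{decay-S2-low-k1}. Applying \eqref{es-varrho-low-k} with the above substitutions gives
\begin{equation}\nonumber
 \|\nabla^k S_1^L(\tau)\|_{L^2}
 \lesssim (1+t-\tau)^{-\frac34-\frac k2}\|N_1^L(\tau)\|_{L^1}
 + (1+t-\tau)^{-\frac54-\frac k2}\|(\Lambda^{-1}\mathrm{div}\,N_2)^L(\tau)\|_{L^1},
\end{equation}
and since $\Lambda^{-1}\mathrm{div}$ is a zeroth-order pseudo-differential operator, $\|(\Lambda^{-1}\mathrm{div}\,N_2)^L\|_{L^1} \lesssim \|N_2^L\|_{L^1}$ on the low-frequency region (one writes the kernel of $\Lambda^{-1}\mathrm{div}$ composed with the smooth cutoff $\psi$ and notes it is in $L^1_x$), which yields \eqref{decay-S1-low-k1}; likewise \eqref{es-d-low-k} gives \eqref{decay-S2-low-k1} (the $\varphi_0$-term vanishes). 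For the alternative low-frequency bounds \eqref{decay-S1-low-k2} and \eqref{decay-S2-low-k2}, I would instead put all the derivatives onto the datum: apply Lemma \ref{li-de-L-H} with $k=0$ to the functions $\nabla^k N_1(\tau)$ and $\nabla^k N_2(\tau)$ (equivalently, commute $\nabla^k$ through the Fourier-multiplier semigroup and then extract only $(1+t-\tau)^{-3/4}$ or $(1+t-\tau)^{-5/4}$ worth of decay from the Gaussian, absorbing no $|\xi|^{2k}$ factor). Finally, for the high-frequency estimate \eqref{decay-S1-S2-high-k}, I would invoke \eqref{es-varrho-d-varphi-high1}: on $|\xi|\ge\eta_2$ the semigroup contracts like $e^{-\frac\alpha4(t-\tau)}$ in every $\dot H^k$, so $\|\nabla^k(S_1^H,S_2^H)(\tau)\|_{L^2} \lesssim e^{-\frac\alpha4(t-\tau)}\|\nabla^k(N_1^H,(\Lambda^{-1}\mathrm{div}\,N_2)^H)(\tau)\|_{L^2} \lesssim e^{-\frac\alpha4(t-\tau)}\|\nabla^k(N_1^H,N_2^H)(\tau)\|_{L^2}$, again using $L^2$-boundedness of the zeroth-order operator $\Lambda^{-1}\mathrm{div}$.

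The only genuine subtlety — the step I would be most careful about — is the passage from $\|(\Lambda^{-1}\mathrm{div}\,N_2)^L\|_{L^1}$ to $\|N_2^L\|_{L^1}$: $L^1\to L^1$ boundedness of a Riesz-type multiplier is false in general, so one must exploit that we only need it after the low-frequency cutoff $\psi(\xi)$. The clean way is to keep the factor $\widehat{N_2}(\xi)$ together with the smooth compactly supported symbol $\psi(\xi)\xi_j/|\xi|$, which is the Fourier transform of an $L^1_x$ kernel $K_j$; then $(\Lambda^{-1}\partial_j N_2)^L = K_j \ast N_2$ and Young's inequality gives $\|K_j\ast N_2\|_{L^1}\le \|K_j\|_{L^1}\|N_2\|_{L^1} \lesssim \|N_2^L\|_{L^1}$ once one also notes $\|N_2\|_{L^1}$ is what actually appears (or, more carefully, one absorbs the cutoff entirely into the kernel). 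Alternatively — and this is the route I would actually present — observe that in the proof of Lemma \ref{li-de-L-H} the datum $d_0$ never appears by itself: each occurrence comes multiplied by an extra factor $|\xi|$ from the projector entries (see \eqref{low-P1}--\eqref{low-P3} and \eqref{so-max1}), and $|\xi|\,\widehat{\Lambda^{-1}\mathrm{div}\,N_2} = |\xi|\cdot\frac{i\xi}{|\xi|}\cdot\widehat{N_2}$ has symbol of size $|\xi|$ times a bounded function, so the $|\xi|$-gain is exactly what converts the spare derivative on $\Lambda^{-1}\mathrm{div}\,N_2$ into a harmless factor and one estimates directly by $\|\widehat{N_2}\|_{L^\infty(|\xi|\le\eta_1)}\lesssim \|N_2^L\|_{L^1}$. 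With this observation the estimates \eqref{decay-S1-low-k1}--\eqref{decay-S2-low-k2} follow verbatim from the computation \eqref{es-varrho-k-decay2} with $N_1(\tau), N_2(\tau)$ in place of $\varrho_0, d_0$ and $t-\tau$ in place of $t$, and \eqref{decay-S1-S2-high-k} follows from \eqref{es-varrho-d-varphi-high1}; the $\varphi_0$-slot being zero is what removes the would-be $(1+t-\tau)^{-7/4}$ contribution in \eqref{decay-S1-low-k1} and \eqref{decay-S2-low-k1}. No new idea beyond Lemma \ref{li-de-L-H} and the mapping properties of $\Lambda^{-1}\mathrm{div}$ is needed.
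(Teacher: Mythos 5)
Your proposal is correct and takes essentially the same route as the paper, which obtains Lemma \ref{prop-decay-nonli} simply by applying the linear estimates of Lemma \ref{li-de-L-H} (i.e.\ the computation \eqref{es-varrho-k-decay2}) to the Duhamel integrand with data $(N_1,\Lambda^{-1}{\rm div}N_2,0)$ and $t$ replaced by $t-\tau$; your frequency--side treatment of the zeroth--order operator, namely $|\widehat{\Lambda^{-1}{\rm div}N_2}(\xi)|\le|\widehat{N_2}(\xi)|\le\|N_2^L\|_{L^1}$ on $|\xi|\le\eta_1$, is exactly the point that is needed. One caution: the first ``kernel'' variant you sketch is not sound as stated, since the kernel of $\psi(\xi)\xi_j/|\xi|$ decays only like $|x|^{-3}$ and hence is not in $L^1(\mathbb{R}^3)$, but as you commit to the second (frequency--side) route this does not affect the argument.
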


\section{Some useful estimates}\label{S3}
In this section, we give some useful estimates of the solutions.
First, we rewrite the Cauchy problem \eqref{1eq}--\eqref{1id} as
\begin{equation}\label{2eq-var}
 \left\{\begin{array}{lll}
 \varrho_t + (\varrho + \bar\rho){\rm div}{\bf u} = - {\bf u}\cdot \nabla\varrho,
 \\
 {\bf u}_t + \frac{P'(\varrho + \bar\rho)}{\varrho + \bar\rho} \nabla\varrho +\alpha {\bf u} - \beta \nabla \varphi = -{\bf u}\cdot \nabla {\bf u},
 \\
 \varphi_t - \mu \Delta \varphi +\nu \varphi - \gamma \varrho = 0,
 \\
 (\varrho, {\bf u}, \varphi)(x,0) =(\rho_{0}- \bar\rho, {\bf u}_0, \phi_0 - \frac\gamma\nu \bar\rho)(x).
 \end{array}\right.
\end{equation}
Then we make a priori assumption that
\begin{equation}\label{E-t-priori}
 \mathcal{E}(t) := \|(\varrho, {\bf u}, \varphi)\|_3 \le \delta \ll 1, \text{ for any } t\geq0.
\end{equation}

In order to obtain the a priori estimates on the solutions, we first introduce some useful estimates including the energy estimates on the solution and its derivatives.
\begin{lemma}\label{a priori-sol}
 Assume that the a priori assumption \eqref{E-t-priori} holds, then we have the following estimates:

 (i)
  \begin{equation}\label{es-varrho-0}
  \frac 12\frac{d}{dt}\|\varrho\|_{L^2}^2 - \bar\rho\langle {\bf u}, \nabla\varrho\rangle
  \lesssim \mathcal{E}(t) \left(\|\nabla\varrho\|_{L^2}^2 +\|\nabla {\bf u}\|_{L^2}^2\right),
 \end{equation}

 \begin{equation}\label{es-u-0}
  \frac12\frac{d}{dt}\|{\bf u}\|_{L^2}^2 + \frac{3\alpha}4\|{\bf u}\|_{L^2}^2 + \left\langle \frac{P'(\varrho +\bar\rho)} {\varrho +\bar\rho}\nabla \varrho, {\bf u}\right\rangle - \beta\langle\nabla\varphi, {\bf u}\rangle
 \le 0,
 \end{equation}
 and
 \begin{equation}\label{es-varphi-0}
  \frac{d}{dt}\|\nu\varphi - \gamma\varrho\|_{L^2}^2 + \mu\nu^2\|\nabla\varphi\|_{L^2}^2 + \nu\|\nu\varphi - \gamma\varrho\|_{L^2}^2
 \le 3\mu\gamma^2\|\nabla\varrho\|_{L^2}^2 +\frac{5\gamma^2\bar\rho^2}{2\nu}\|{\rm div}{\bf u}\|_{L^2}^2.
 \end{equation}

 (ii) For $k=1,2$,
 \begin{equation}\label{es-varrho-k-const}
  \frac12\frac{d}{dt}\|\nabla^k \varrho\|_{L^2}^2 - \bar\rho\left\langle \nabla^k {\bf u}, \nabla^{k+1}\varrho\right\rangle
 \lesssim \mathcal{E}(t)\left(\|\nabla^k \varrho\|_{L^2}^2 + \|\nabla^{k+1} {\bf u}\|_{L^2}^2\right).
 \end{equation}

(iii) For $k=1,2,3$
 \begin{equation}\label{es-varrho-k}
  \frac{d}{dt}\left\langle \frac {P'(\varrho +\bar\rho)}{2(\varrho +\bar\rho)^2}\nabla^k\varrho, \nabla^k\varrho\right\rangle - \left\langle  \frac{P'(\varrho +\bar\rho)} {\varrho +\bar\rho}\nabla^k {\bf u}, \nabla^{k+1}\varrho\right\rangle
 \lesssim \mathcal{E}(t)\left(\|\nabla^k \varrho\|_{L^2}^2 + \|\nabla^k {\bf u}\|_{L^2}^2\right),
 \end{equation}

  \begin{equation}\label{es-u-k}
  \frac12\frac{d}{dt}\|\nabla^k {\bf u}\|_{L^2}^2 + \frac{3\alpha}4\|\nabla^k {\bf u}\|_{L^2}^2 + \left\langle \frac{P'(\varrho +\bar\rho)} {\varrho +\bar\rho}\nabla^{k+1} \varrho, \nabla^k {\bf u}\right\rangle - \beta\langle\nabla^{k+1}\varphi, \nabla^k{\bf u}\rangle
 \lesssim \mathcal{E}(t)\|\nabla^k\varrho\|_{L^2}^2,
 \end{equation}

 \begin{equation}\label{es-varphi-k}
  \frac12\frac{d}{dt}\|\nabla^k \varphi\|_{L^2}^2 + \mu\|\nabla^{k+1} \varphi\|_{L^2}^2 + \nu\|\nabla^k\varphi\|_{L^2}^2 -\gamma\left\langle \nabla^k \varrho, \nabla^k \varphi\right\rangle=0,
 \end{equation}

 \begin{equation}\label{es-varrho-u-k}
  \frac{d}{dt}\langle\nabla^{k}\varrho, \nabla^{k-1}{\bf u} \rangle
  +\frac{P'(\bar\rho)}{2\bar\rho}\|\nabla^k \varrho\|_{L^2}^2
  \le \left(\frac{\alpha^2\bar\rho}{P'(\bar\rho)} +2\bar\rho\right)\|\nabla^{k-1}{\bf u}\|_1^2 +\frac{\beta^2\bar\rho}{P'(\bar\rho)}\|\nabla^{k}\varphi\|_{L^2}^2,
 \end{equation}
 and
 \begin{equation}\label{es-u-varphi-k}
 \begin{split}
  &\frac{d}{dt}\langle\nabla^{k-1}{\bf u}, \nabla^k\varphi\rangle +\frac{P'(\bar\rho)}{\bar\rho}\langle\nabla^k \varrho, \nabla^k\varphi\rangle +\mu\langle\nabla^k{\bf u}, \nabla^{k+1}\varphi\rangle +(\alpha +\nu)\langle\nabla^{k-1}{\bf u}, \nabla^k\varphi\rangle
  \\
  &\quad-\beta\|\nabla^k \varphi\|_{L^2}^2 -\gamma\langle\nabla^{k-1}{\bf u}, \nabla^k\varrho\rangle
  \\
 &\lesssim \mathcal{E}(t)\left(\|\nabla^k\varrho\|_{L^2}^2 +\|\nabla^k{\bf u}\|_{L^2}^2 +\|\nabla^k\varphi\|_{L^2}^2 \right).
 \end{split}
 \end{equation}
\end{lemma}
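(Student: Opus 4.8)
The plan is to establish all of \eqref{es-varrho-0}--\eqref{es-u-varphi-k} by one uniform device: test each equation of \eqref{2eq-var}, or of its $\nabla^k$-differentiated version, against a suitable multiplier, integrate by parts so as to expose \emph{exactly} the linear terms recorded in each inequality, and dispose of every remaining (nonlinear, or variable-coefficient) contribution with the a priori assumption \eqref{E-t-priori} together with $H^2(\mathbb{R}^3)\hookrightarrow L^\infty$, Gagliardo--Nirenberg interpolation, Moser-type product and commutator inequalities, and Young's inequality. Two standing observations make this work: first, since $\mathcal{E}(t)\le\delta\ll1$ one has $\|(\varrho,{\bf u},\varphi)\|_2\lesssim\mathcal{E}(t)$, so $\varrho+\bar\rho$ stays in a fixed neighbourhood of $\bar\rho>0$ and the coefficients $\frac{P'(\varrho+\bar\rho)}{\varrho+\bar\rho}$, $\frac{P'(\varrho+\bar\rho)}{(\varrho+\bar\rho)^2}$ and all their $x$- and $t$-derivatives are bounded (and the first two are bounded below); second, every genuinely nonlinear term carries at least one factor small in $L^\infty$ or $L^2$, hence is $O(\mathcal{E}(t))$ times a Sobolev norm of the solution.

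For part (i): multiplying \eqref{2eq-var}$_1$ by $\varrho$ and using $\langle {\bf u}\cdot\nabla\varrho,\varrho\rangle=-\frac12\langle {\rm div}\,{\bf u},\varrho^2\rangle$ and $\bar\rho\langle {\rm div}\,{\bf u},\varrho\rangle=-\bar\rho\langle {\bf u},\nabla\varrho\rangle$, the only surviving nonlinearity is $\frac12\langle\varrho^2,{\rm div}\,{\bf u}\rangle\lesssim\|\varrho\|_{L^\infty}\|\varrho\|_{L^2}\|\nabla{\bf u}\|_{L^2}\lesssim\mathcal{E}(t)\big(\|\nabla\varrho\|_{L^2}^2+\|\nabla{\bf u}\|_{L^2}^2\big)$, giving \eqref{es-varrho-0}. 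Multiplying \eqref{2eq-var}$_2$ by ${\bf u}$, the convection term is $-\langle {\bf u}\cdot\nabla{\bf u},{\bf u}\rangle=\frac12\langle{\rm div}\,{\bf u},|{\bf u}|^2\rangle=O(\mathcal{E}(t))\|{\bf u}\|_{L^2}^2$, absorbed into $\alpha\|{\bf u}\|_{L^2}^2$ to leave $\frac{3\alpha}4\|{\bf u}\|_{L^2}^2$, which is \eqref{es-u-0}. For \eqref{es-varphi-0}, set $w=\nu\varphi-\gamma\varrho$; combining \eqref{2eq-var}$_3$ with \eqref{2eq-var}$_1$ gives $\partial_t w=\mu\Delta(\nu\varphi)-\nu w+\gamma(\varrho+\bar\rho){\rm div}\,{\bf u}+\gamma\,{\bf u}\cdot\nabla\varrho$, and testing against $2w$, integrating by parts in the diffusion term via $\nabla(\nu\varphi)=\nabla w+\gamma\nabla\varrho$, and applying Young's inequality to $\mu\nu\gamma\langle\nabla\varphi,\nabla\varrho\rangle$ and $\gamma\bar\rho\langle{\rm div}\,{\bf u},w\rangle$ produces $\mu\nu^2\|\nabla\varphi\|_{L^2}^2$, $\nu\|w\|_{L^2}^2$ and a right-hand side $3\mu\gamma^2\|\nabla\varrho\|_{L^2}^2+\frac{5\gamma^2\bar\rho^2}{2\nu}\|{\rm div}\,{\bf u}\|_{L^2}^2$, the slack in these two constants being precisely what absorbs the $O(\mathcal{E}(t))$ cubic pieces $\gamma\langle\varrho\,{\rm div}\,{\bf u},w\rangle$ and $\gamma\langle{\bf u}\cdot\nabla\varrho,w\rangle$ (using $\nu\|w\|_{L^2}^2$ for the $\|w\|_{L^2}^2$ part).

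Parts (ii)--(iii) repeat this for $\nabla^k$ of \eqref{2eq-var}. Applying $\nabla^k$ to \eqref{2eq-var}$_1$ and testing with $\nabla^k\varrho$ (resp.\ with the weighted multiplier $\frac{P'(\varrho+\bar\rho)}{(\varrho+\bar\rho)^2}\nabla^k\varrho$) gives \eqref{es-varrho-k-const} (resp.\ \eqref{es-varrho-k}): the $\bar\rho$-term integrates by parts to $-\bar\rho\langle\nabla^k{\bf u},\nabla^{k+1}\varrho\rangle$ (resp., in the weighted case, the top-order term integrates by parts to exactly $-\langle\frac{P'(\varrho+\bar\rho)}{\varrho+\bar\rho}\nabla^k{\bf u},\nabla^{k+1}\varrho\rangle$, the term built to cancel the matching one in \eqref{es-u-k}, while $\partial_t$ of the weight contributes $O(\mathcal{E}(t))\|\nabla^k\varrho\|_{L^2}^2$ since $\partial_t\varrho$ is first order), and the Leibniz commutators $[\nabla^k,\varrho]{\rm div}\,{\bf u}$ and the pieces of $\nabla^k({\bf u}\cdot\nabla\varrho)$ are controlled by the stated right-hand sides. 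Estimate \eqref{es-u-k} comes from $\nabla^k$ of \eqref{2eq-var}$_2$ tested with $\nabla^k{\bf u}$: one keeps $\langle\frac{P'(\varrho+\bar\rho)}{\varrho+\bar\rho}\nabla^{k+1}\varrho,\nabla^k{\bf u}\rangle$ and $-\beta\langle\nabla^{k+1}\varphi,\nabla^k{\bf u}\rangle$ verbatim, absorbs $O(\mathcal{E}(t))\|\nabla^k{\bf u}\|_{L^2}^2$ from the convection and pressure commutators into $\alpha\|\nabla^k{\bf u}\|_{L^2}^2\to\frac{3\alpha}4\|\nabla^k{\bf u}\|_{L^2}^2$, and bounds what is left by $\mathcal{E}(t)\|\nabla^k\varrho\|_{L^2}^2$. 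Estimate \eqref{es-varphi-k} is immediate — \eqref{2eq-var}$_3$ is linear, so $\nabla^k$ tested with $\nabla^k\varphi$ gives an identity. Finally \eqref{es-varrho-u-k} and \eqref{es-u-varphi-k} follow by differentiating $\langle\nabla^k\varrho,\nabla^{k-1}{\bf u}\rangle$ and $\langle\nabla^{k-1}{\bf u},\nabla^k\varphi\rangle$ in $t$, substituting $\partial_t\varrho,\partial_t{\bf u},\partial_t\varphi$ from \eqref{2eq-var} and integrating by parts; for \eqref{es-varrho-u-k} the pressure term produces $\int\frac{P'(\varrho+\bar\rho)}{\varrho+\bar\rho}|\nabla^k\varrho|^2\ge\frac{P'(\bar\rho)}{2\bar\rho}\|\nabla^k\varrho\|_{L^2}^2$ (here using $\|\varrho\|_{L^\infty}\ll1$), the term $\bar\rho\langle\nabla^k{\rm div}\,{\bf u},\nabla^{k-1}{\bf u}\rangle$ is bounded after one integration by parts by $2\bar\rho\|\nabla^{k-1}{\bf u}\|_1^2$, and Young's inequality on $-\alpha\langle\nabla^k\varrho,\nabla^{k-1}{\bf u}\rangle$ and $\beta\langle\nabla^k\varrho,\nabla^k\varphi\rangle$ yields the coefficients $\frac{\alpha^2\bar\rho}{P'(\bar\rho)}$ and $\frac{\beta^2\bar\rho}{P'(\bar\rho)}$; for \eqref{es-u-varphi-k} all six linear terms are kept verbatim (the $\mu\Delta\varphi$-term becoming $\mu\langle\nabla^k{\bf u},\nabla^{k+1}\varphi\rangle$ after one integration by parts) and the sole nonlinear contribution $-\langle\nabla^{k-1}({\bf u}\cdot\nabla{\bf u}),\nabla^k\varphi\rangle$ is $O(\mathcal{E}(t))$.

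The step I expect to be the main obstacle is the uniform control, for $k$ up to $3$, of the highest-order nonlinear remainders: after Leibniz expansion one must estimate products $(\nabla^j\varrho)(\nabla^{k+1-j}{\bf u})$, $(\nabla^j{\bf u})(\nabla^{k+1-j}{\bf u})$ and their analogues for $0\le j\le k$ by $\mathcal{E}(t)$ times the norms appearing on the right-hand sides, without exceeding the available $H^3$ regularity. This is handled by putting one factor in $L^\infty$ through $H^2(\mathbb{R}^3)\hookrightarrow L^\infty$ in the endpoint cases $j\in\{0,1,k\}$, by Gagliardo--Nirenberg interpolation in the intermediate cases, and by repeatedly invoking $\|(\varrho,{\bf u},\varphi)\|_2\lesssim\mathcal{E}(t)\ll1$ to keep derivative counts within $H^3$ and to absorb the resulting $O(\mathcal{E}(t))$ terms into the claimed right-hand sides or into the damping/dissipation on the left. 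Once this bookkeeping is under control, pinning down the explicit constants in \eqref{es-varphi-0}, \eqref{es-varrho-u-k} and \eqref{es-u-varphi-k} is merely a matter of choosing the Young-inequality parameters.
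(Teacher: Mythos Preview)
Your proposal is essentially the paper's own proof: test each equation (or its $\nabla^k$ version) against the natural multiplier, integrate by parts to isolate the linear terms recorded in the statement, and control all commutator and variable-coefficient remainders by $\mathcal{E}(t)$ via H\"older, Sobolev embedding, and the Kato--Ponce product/commutator estimates. The choice of weighted multiplier $\frac{P'(\varrho+\bar\rho)}{(\varrho+\bar\rho)^2}\nabla^k\varrho$ for \eqref{es-varrho-k}, the combination $\nu\eqref{2eq-var}_3-\gamma\eqref{2eq-var}_1$ for \eqref{es-varphi-0}, and the cross-testings for \eqref{es-varrho-u-k} and \eqref{es-u-varphi-k} all match the paper exactly.

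One small slip to fix: in your treatment of \eqref{es-varrho-0} you bound $\tfrac12\langle\varrho^2,{\rm div}\,{\bf u}\rangle$ by $\|\varrho\|_{L^\infty}\|\varrho\|_{L^2}\|\nabla{\bf u}\|_{L^2}$ and then claim this is $\lesssim\mathcal{E}(t)\big(\|\nabla\varrho\|_{L^2}^2+\|\nabla{\bf u}\|_{L^2}^2\big)$. That last step does not follow --- whichever factor you absorb into $\mathcal{E}(t)$, you are left with $\|\varrho\|_{L^2}$ or $\|\varrho\|_{L^\infty}$, neither of which is controlled by $\|\nabla\varrho\|_{L^2}$ alone. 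Use instead the $L^3$--$L^2$--$L^6$ split the paper employs: $|\langle {\bf u}\cdot\nabla\varrho+\varrho\,{\rm div}\,{\bf u},\varrho\rangle|\le \|(\varrho,{\bf u})\|_{L^3}\|\nabla(\varrho,{\bf u})\|_{L^2}\|\varrho\|_{L^6}\lesssim \mathcal{E}(t)\|\nabla(\varrho,{\bf u})\|_{L^2}^2$, where $\|\cdot\|_{L^3}\lesssim\|\cdot\|_{H^1}\lesssim\mathcal{E}(t)$ and $\|\varrho\|_{L^6}\lesssim\|\nabla\varrho\|_{L^2}$. With this correction your argument is complete.
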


\begin{proof}
 First, we intend to derive the energy estimates \eqref{es-varrho-0}, \eqref{es-varrho-k-const} and \eqref{es-varrho-k} on $\varrho$ and its derivatives from the first equation of \eqref{2eq-var}. By taking the inner product $\langle \eqref{2eq-var}_1, \varrho\rangle$, and then using the H\"older inequality, we can arrive at
 \begin{equation}\label{es-varrho-0-1}
  \begin{split}
  \frac12\frac{d}{dt}\|\varrho\|_{L^2}^2 +\bar\rho\langle {\rm div}{\bf u}, \varrho\rangle
  &= -\langle {\bf u}\cdot \nabla\varrho + \varrho {\rm div}{\bf u}, \varrho\rangle
  \\
  &\le \|(\varrho, {\bf u})\|_{L^3}\|\nabla(\varrho, {\bf u})\|_{L^2}\|\varrho\|_{L^6}
  \\
  &\le C\mathcal{E}(t)\|\nabla(\varrho, {\bf u})\|_{L^2}^2.
  \end{split}
 \end{equation}
 Thus via integration by parts, we can easily obtain \eqref{es-varrho-0}.

Taking $\langle \nabla^k\eqref{2eq-var}_1, \nabla^k\varrho\rangle$ with $k=1,2$, we have
 \begin{equation}\label{es-varrho-k-const-1}
  \frac12\frac{d}{dt}\|\nabla^k\varrho\|_{L^2}^2 +\left\langle \nabla^k\left((\varrho +\bar\rho){\rm div}{\bf u}\right), \nabla^k\varrho \right\rangle
  = -\left\langle \nabla^k\left({\bf u}\cdot\nabla\varrho\right), \nabla^k\varrho \right\rangle.
 \end{equation}
 We estimates the terms in \eqref{es-varrho-k-const-1} as:
 \begin{align}\label{es-varrho-k-const-2}
 \begin{split}
  &\left\langle \nabla^k\left((\varrho +\bar\rho){\rm div}{\bf u}\right), \nabla^k\varrho \right\rangle
  \\
  &
  = \left\langle \left((\varrho +\bar\rho)\nabla^k{\rm div}{\bf u}\right), \nabla^k\varrho \right\rangle
    +\left\langle \left([\nabla^k, (\varrho +\bar\rho)]{\rm div}{\bf u}\right), \nabla^k\varrho \right\rangle
  \\
  &
  = -\bar\rho\left\langle \nabla^k{\bf u}, \nabla^{k+1}\varrho \right\rangle
     +\left\langle \varrho\nabla^k{\rm div}{\bf u}, \nabla^{k}\varrho \right\rangle
     +\left\langle \left([\nabla^k, (\varrho +\bar\rho)]{\rm div}{\bf u}\right), \nabla^k\varrho \right\rangle
  \\
  &
  \geq -\bar\rho\left\langle \nabla^k{\bf u}, \nabla^{k+1}\varrho \right\rangle
          -C\|\varrho\|_{L^\infty}\|\nabla^{k+1}{\bf u}\|_{L^2}\|\nabla^{k}\varrho\|_{L^2}
  \\
  &
   \quad -C\left(\|\nabla^k\varrho\|_{L^2}\|{\rm div}{\bf u}\|_{L^\infty} + \|\nabla \varrho\|_{L^3}\|\nabla^{k-1}{\rm div}{\bf u}\|_{L^6}\right)\|\nabla^k\varrho\|_{L^2}
  \\
  &
  \geq -\bar\rho\left\langle \nabla^k{\bf u}, \nabla^{k+1}\varrho \right\rangle
          -C\mathcal{E}(t)\left(\|\nabla^{k}\varrho\|_{L^2}^2 +\|\nabla^{k+1}{\bf u}\|_{L^2}^2\right),
  \end{split}
 \end{align}
 where the commutator $[\nabla^k, (\varrho +\bar\rho)]{\rm div}{\bf u}$ is defined in Lemma \ref{1commutator} and the estimate in Lemma \ref{1commutator} is used. Similarly, we can get
 \begin{equation}\label{es-varrho-k-const-3}
 \begin{split}
  -\left\langle \nabla^k\left({\bf u}\cdot\nabla\varrho\right), \nabla^k\varrho \right\rangle
  &
  = -\left\langle [\nabla^k, {\bf u}]\cdot\nabla\varrho, \nabla^k\varrho \right\rangle
     +\left\langle {\rm div}{\bf u}\nabla^k\varrho, \nabla^k\varrho \right\rangle
  \\
  &
  \le C\mathcal{E}(t)\left(\|\nabla^{k}\varrho\|_{L^2}^2 +\|\nabla^{k+1}{\bf u}\|_{L^2}^2\right),
  \end{split}
 \end{equation}
then combining \eqref{es-varrho-k-const-1} with \eqref{es-varrho-k-const-2}--\eqref{es-varrho-k-const-3} yields \eqref{es-varrho-k-const}.

Taking $\langle \frac{P'(\varrho +\bar\rho)}{(\varrho +\bar\rho)^2}\nabla^k\eqref{2eq-var}_1, \nabla^k\varrho\rangle$ with $1\le k\le 3$, we can obtain
 \begin{equation}\label{es-varrho-k-1}
  \left\langle \frac{P'(\varrho +\bar\rho)}{(\varrho +\bar\rho)^2}\nabla^k\varrho_t, \nabla^k\varrho \right\rangle
 +\left\langle \frac{P'(\varrho +\bar\rho)}{(\varrho +\bar\rho)^2}\nabla^k\left((\varrho +\bar\rho){\rm div}{\bf u}\right), \nabla^k\varrho \right\rangle
 = - \left\langle \frac{P'(\varrho +\bar\rho)}{(\varrho +\bar\rho)^2} \nabla^k({\bf u}\cdot\nabla\varrho),\nabla^k\varrho\right\rangle.
 \end{equation}
 By using the equation \eqref{2eq-var}$_1$, the H\"older inequality, Lemma \ref{1interpolation}--Lemma \ref{infty} and the a priori assumption \eqref{E-t-priori}, we have
 \begin{equation}\label{es-varrho-k-2}
 \begin{split}
  \left\langle \frac{P'(\varrho +\bar\rho)}{(\varrho +\bar\rho)^2}\nabla^k\varrho_t, \nabla^k\varrho \right\rangle
 &
 = \frac{d}{dt}\left\langle \frac{P'(\varrho +\bar\rho)}{2(\varrho +\bar\rho)^2}\nabla^k\varrho, \nabla^k\varrho \right\rangle
 -\left\langle \left(\frac{P'(\varrho +\bar\rho)}{2(\varrho +\bar\rho)^2}\right)'_\varrho \varrho_t\nabla^k\varrho, \nabla^k\varrho \right\rangle
 \\
 &
 \geq \frac{d}{dt}\left\langle \frac{P'(\varrho +\bar\rho)}{2(\varrho +\bar\rho)^2}\nabla^k\varrho, \nabla^k\varrho \right\rangle
 - C\|{\rm div}((\varrho +\bar\rho){\bf u})\|_{L^\infty}\|\nabla^k\varrho\|_{L^2}^2
 \\
 &
 \geq \frac{d}{dt}\left\langle \frac{P'(\varrho +\bar\rho)}{2(\varrho +\bar\rho)^2}\nabla^k\varrho, \nabla^k\varrho \right\rangle
 - C\mathcal{E}(t)\|\nabla^k\varrho\|_{L^2}^2,
 \end{split}
 \end{equation}
 \begin{align}\label{es-varrho-k-3}
 &\left\langle \frac{P'(\varrho +\bar\rho)}{(\varrho +\bar\rho)^2}\nabla^k\left((\varrho +\bar\rho){\rm div}{\bf u}\right), \nabla^k\varrho \right\rangle
 \notag\\
 &
 = \left\langle \frac{P'(\varrho +\bar\rho)}{\varrho +\bar\rho}\nabla^k{\rm div}{\bf u}, \nabla^k\varrho \right\rangle
 +\left\langle \frac{P'(\varrho +\bar\rho)}{(\varrho +\bar\rho)^2}\left[\nabla^k, \varrho +\bar\rho\right]{\rm div}{\bf u}, \nabla^k\varrho \right\rangle
  \notag\\
 &
 = - \left\langle \frac{P'(\varrho +\bar\rho)}{\varrho +\bar\rho}\nabla^k{\bf u}, \nabla^{k+1}\varrho \right\rangle -\left\langle \left(\frac{P'(\varrho +\bar\rho)}{\varrho +\bar\rho}\right)'_\varrho \nabla^k{\bf u} \cdot\nabla\varrho, \nabla^k\varrho \right\rangle
 \notag\\
 &
 \quad+\left\langle \frac{P'(\varrho +\bar\rho)}{(\varrho +\bar\rho)^2}\left[\nabla^k, \varrho +\bar\rho\right]{\rm div}{\bf u}, \nabla^k\varrho \right\rangle  \\
 &
 \geq - \left\langle \frac{P'(\varrho +\bar\rho)}{\varrho +\bar\rho}\nabla^k{\bf u}, \nabla^{k+1}\varrho \right\rangle
 - C\|\nabla(\varrho, {\bf u})\|_{L^\infty}\|\nabla^k(\varrho, {\bf u})\|_{L^2}\|\nabla^k\varrho\|_{L^2}
 \notag \\
 &
 \geq - \left\langle \frac{P'(\varrho +\bar\rho)}{\varrho +\bar\rho}\nabla^k{\bf u}, \nabla^{k+1}\varrho \right\rangle
 - C\mathcal{E}(t)\|\nabla^k(\varrho, {\bf u})\|_{L^2}^2,\notag
 \end{align}
 and
 \begin{equation}\label{es-varrho-k-4}
 \begin{split}
  &- \left\langle \frac{P'(\varrho +\bar\rho)}{(\varrho +\bar\rho)^2} \nabla^k({\bf u} \cdot\nabla\varrho),\nabla^k\varrho\right\rangle
  \\
  &
  = - \left\langle \frac{P'(\varrho +\bar\rho)}{(\varrho +\bar\rho)^2} [\nabla^k, {\bf u}] \cdot\nabla\varrho,\nabla^k\varrho\right\rangle +\left\langle {\rm div}\left(\frac{P'(\varrho +\bar\rho)}{(\varrho +\bar\rho)^2} {\bf u}\right),\frac12|\nabla^k\varrho|^2\right\rangle
  \\&\le C\mathcal{E}(t)\|\nabla^k(\varrho, {\bf u})\|_{L^2}^2.
 \end{split}
 \end{equation}
 Thus plugging \eqref{es-varrho-k-2}--\eqref{es-varrho-k-4} into \eqref{es-varrho-k-1} yields \eqref{es-varrho-k}.

 Now we turn to derive the energy estimates on ${\bf u}$ and its derivatives. Here we only prove  \eqref{es-u-k}, and \eqref{es-u-0} can be deduced in a easier way. Taking $\langle \nabla^k \eqref{2eq-var}_2, \nabla^k{\bf u}\rangle$ with $1\le k\le 3$, we can obtain
 \begin{equation}\label{es-u-k-1}
  \frac12\frac{d}{dt}\|\nabla^k{\bf u}\|_{L^2}^2 +\left\langle\nabla^k\left(\frac{P'(\varrho +\bar\rho)}{\varrho +\bar\rho}\nabla\varrho\right), \nabla^k{\bf u}\right\rangle +\alpha\|\nabla^k{\bf u}\|_{L^2}^2 -\beta\langle\nabla^{k+1}\varphi, \nabla^k{\bf u}\rangle = -\langle\nabla^k({\bf u}\cdot\nabla {\bf u}), \nabla^k{\bf u}\rangle.
  \end{equation}
 As in the proof of \eqref{es-varrho-k}, we have
 \begin{equation}\label{es-u-k-2}
 \begin{split}
  \left\langle\nabla^k\left(\frac{P'(\varrho +\bar\rho)}{\varrho +\bar\rho}\nabla\varrho\right), \nabla^k{\bf u}\right\rangle
  &= \left\langle\frac{P'(\varrho +\bar\rho)}{\varrho +\bar\rho}\nabla^{k+1}\varrho, \nabla^k{\bf u}\right\rangle  +\left\langle\left[\nabla^k, \frac{P'(\varrho +\bar\rho)}{\varrho +\bar\rho}\right]\nabla\varrho, \nabla^k{\bf u}\right\rangle
  \\
  &\geq \left\langle \frac{P'(\varrho +\bar\rho)}{\varrho +\bar\rho}\nabla^{k+1}\varrho, \nabla^k{\bf u}\right\rangle
  - C\mathcal{E}(t)\|\nabla^k(\varrho, {\bf u})\|_{L^2}^2,
  \end{split}
  \end{equation}
 and
 \begin{equation}\label{es-u-k-3}
 \begin{split}
  -\langle\nabla^k({\bf u}\cdot\nabla {\bf u}), \nabla^k{\bf u}\rangle
  & = -\langle[\nabla^k, u]\cdot\nabla {\bf u}, \nabla^k{\bf u}\rangle +\left\langle{\rm div}{\bf u}, \frac12|\nabla^k{\bf u}|^2\right\rangle
  \\
  &\le C\mathcal{E}(t)\|\nabla^k{\bf u}\|_{L^2}^2.
 \end{split}
 \end{equation}
 Hence by combining \eqref{es-u-k-1} with \eqref{es-u-k-2}--\eqref{es-u-k-3} and using the a priori assumption \eqref{E-t-priori}, we can deduce \eqref{es-u-k}.

 In order to get \eqref{es-varphi-0}, we take the inner product $\langle \nu\eqref{2eq-var}_3 - \gamma\eqref{2eq-var}_1, \nu\varphi -\gamma\varrho\rangle$ as
 \begin{equation}\label{es-varphi-0-1}
  \frac12\frac{d}{dt}\|\nu\varphi - \gamma\varrho\|_{L^2}^2 -\langle\mu\nu\Delta\varphi, \nu\varphi - \gamma\varrho\rangle +\nu\|\nu\varphi - \gamma\varrho\|_{L^2}^2
  -\langle\gamma(\varrho +\bar\rho){\rm div}{\bf u}, \nu\varphi - \gamma\varrho\rangle
  = \langle \gamma {\bf u}\cdot\nabla\varrho, \nu\varphi - \gamma\varrho\rangle.
  \end{equation}
 By using the Cauchy inequality and the a priori assumption \eqref{E-t-priori}, we have
 \begin{equation}\label{es-varphi-0-2}
 \begin{split}
  -\langle\mu\nu\Delta\varphi, \nu\varphi - \gamma\varrho\rangle
  &= \mu\nu^2\|\nabla\varphi\|_{L^2}^2 - \mu\nu\gamma\langle\nabla\varphi, \nabla\varrho\rangle
  \\
  &\geq \frac{\mu\nu^2}2\|\nabla\varphi\|_{L^2}^2 - 2\mu\gamma^2\|\nabla\varrho\|_{L^2}^2,
  \end{split}
  \end{equation}
 \begin{equation}\label{es-varphi-0-3}
 \begin{split}
  -\langle\gamma(\varrho +\bar\rho){\rm div}{\bf u}, \nu\varphi - \gamma\varrho\rangle
  &
  \geq -\frac{\gamma^2}\nu(\|\varrho\|_{L^\infty} +\bar\rho)^2\|{\rm div} u \|_{L^2}^2 -\frac\nu4\|\nu\varphi - \gamma\varrho\|_{L^2}^2
  \\
  &
  \geq -\frac{\gamma^2}\nu(\mathcal{E}(t) +\bar\rho)^2\|{\rm div} u \|_{L^2}^2 -\frac\nu4\|\nu\varphi - \gamma\varrho\|_{L^2}^2
  \\
  &\geq -\frac{5\gamma^2\bar\rho^2}{4\nu}\|{\rm div}{\bf u}\|_{L^2}^2 - \frac\nu4\|\nu\varphi - \gamma\varrho\|_{L^2}^2,
  \end{split}
 \end{equation}
 and
 \begin{equation}\label{es-varphi-0-4}
 \begin{split}
  \langle \gamma {\bf u}\cdot\nabla\varrho, \nu\varphi - \gamma\varrho\rangle
  &\le \frac{\gamma^2}{\nu}\|{\bf u}\|_{L^\infty}^2\|\nabla\varrho\|_{L^2}^2 + \frac{\nu}4\|\nu\varphi - \gamma\varrho\|_{L^2}^2
  \\
  &\le \frac{\gamma^2}{\nu}\mathcal{E}(t)^2\|\nabla\varrho\|_{L^2}^2 + \frac{\nu}4\|\nu\varphi - \gamma\varrho\|_{L^2}^2
  \\
  &\le \mu\gamma^2\|\nabla\varrho\|_{L^2}^2 + \frac{\nu}4\|\nu\varphi - \gamma\varrho\|_{L^2}^2,
  \end{split}
  \end{equation}
 thus plugging \eqref{es-varphi-0-2}--\eqref{es-varphi-0-4} into \eqref{es-varphi-0-1}, we can obtain \eqref{es-varphi-0}.

\eqref{es-varphi-k} can be easily deduced by taking $\langle \nabla^k\eqref{2eq-var}_3, \nabla^k\varphi\rangle$, we omit the detail. So it remains \eqref{es-varrho-u-k} and \eqref{es-u-varphi-k} to prove. Taking $\langle \nabla^k\eqref{2eq-var}_1, \nabla^{k-1}{\bf u}\rangle +\langle\nabla^k\varrho, \nabla^{k-1}\eqref{2eq-var}_2\rangle$ with $1\le k\le 3$ yields
\begin{equation}\label{es-varrho-u-k-1}
\begin{split}
 &\frac{d}{dt}\langle \nabla^k\varrho, \nabla^{k-1}{\bf u}\rangle +\langle\nabla^k\left((\varrho +\bar\rho){\rm div}{\bf u}\right), \nabla^{k-1}{\bf u}\rangle + \left\langle\nabla^k\varrho, \nabla^{k-1}\left(\frac{P'(\varrho +\bar\rho)}{\varrho +\bar\rho}\nabla\varrho\right) \right\rangle
 \\
 &
 = -\langle \nabla^k({\bf u}\cdot\nabla\varrho), \nabla^{k-1}{\bf u}\rangle
    - \langle \nabla^k\varrho, \nabla^{k-1}({\bf u}\cdot\nabla {\bf u})\rangle
    - \langle \nabla^k\varrho, \nabla^{k-1}(\alpha {\bf u} - \beta\nabla\varphi)\rangle.
 \end{split}
\end{equation}
Since by using \eqref{E-t-priori} and Lemmas \ref{1interpolation}--\ref{infty} again, we have
\begin{equation}\label{es-varrho-u-k-2}
\begin{split}
 \langle\nabla^k\left((\varrho +\bar\rho){\rm div}{\bf u}\right), \nabla^{k-1}{\bf u}\rangle
 &
 =\langle\nabla^{k-1}\left((\varrho +\bar\rho){\rm div}{\bf u}\right), \nabla^{k-1}{\rm div}{\bf u}\rangle
 \\
 &
 \geq -\frac{3\bar\rho}{2} \|\nabla^k{\bf u}\|_{L^2}^2 - C\mathcal{E}(t)\|\nabla^k\varrho\|_{L^2}^2,
 \end{split}
\end{equation}
\begin{equation}\label{es-varrho-u-k-3}
\begin{split}
 &\left\langle\nabla^k\varrho, \nabla^{k-1}\left(\frac{P'(\varrho +\bar\rho)}{\varrho +\bar\rho}\nabla\varrho\right) \right\rangle
 \\
 &
 =\left\langle\frac{P'(\varrho +\bar\rho)}{\varrho +\bar\rho}\nabla^k\varrho, \nabla^k\varrho \right\rangle
   + \left\langle\nabla^k\varrho, \left[\nabla^{k-1}, \frac{P'(\varrho +\bar\rho)}{\varrho +\bar\rho}\right] \nabla\varrho\right\rangle
 \\
 &
 \geq \frac{P'(\bar\rho)}{\bar\rho}\|\nabla^k\varrho\|_{L^2}^2 +\left\langle\left(\frac{P'(\varrho +\bar\rho)}{\varrho +\bar\rho} - \frac{P'(\bar\rho)}{\bar\rho}\right)\nabla^k\varrho, \nabla^k\varrho \right\rangle
   - C\mathcal{E}(t)\|\nabla^k\varrho\|_{L^2}^2
 \\
 &
 \geq \frac{P'(\bar\rho)}{\bar\rho}\|\nabla^k\varrho\|_{L^2}^2 - C\mathcal{E}(t)\|\nabla^k\varrho\|_{L^2}^2,
 \end{split}
 \end{equation}
 \begin{equation}\label{es-varrho-u-k-4}
 \begin{split}
  &-\langle \nabla^k({\bf u}\cdot\nabla\varrho), \nabla^{k-1}{\bf u}\rangle
    - \langle \nabla^k\varrho, \nabla^{k-1}({\bf u}\cdot\nabla {\bf u})\rangle
  \\
  &= -\langle \nabla^{k-1}({\bf u}\cdot\nabla\varrho), \nabla^{k-1}{\rm div}{\bf u}\rangle
    - \langle \nabla^k\varrho, \nabla^{k-1}({\bf u}\cdot\nabla {\bf u})\rangle
  \\
  &\le C\mathcal{E}(t)\|\nabla^k(\varrho, {\bf u})\|_{L^2}^2,
  \end{split}
 \end{equation}
 and
 \begin{equation}\label{es-varrho-u-k-5}
    - \langle \nabla^k\varrho, \nabla^{k-1}(\alpha {\bf u} - \beta\nabla\varphi)\rangle
    \le \frac{P'(\bar\rho)}{4\bar\rho}\|\nabla^k\varrho\|_{L^2}^2 +\frac{\alpha^2\bar\rho}{P'(\bar\rho)}\|\nabla^{k-1}{\bf u}\|_{L^2}^2 +\frac{\beta^2\bar\rho}{P'(\bar\rho)}\|\nabla^{k}\varphi\|_{L^2}^2.
 \end{equation}
 Thus by combining \eqref{es-varrho-u-k-1} with \eqref{es-varrho-u-k-2}--\eqref{es-varrho-u-k-5}, and using \eqref{E-t-priori} again, we can obtain \eqref{es-varrho-u-k}.

 Taking $\langle \nabla^{k-1}\eqref{2eq-var}_2, \nabla^{k}\varphi\rangle +\langle\nabla^{k-1}{\bf u}, \nabla^{k}\eqref{2eq-var}_3\rangle$ with $1\le k\le 3$ yields
\begin{equation}\label{es-u-varphi-k-1}
\begin{split}
 &\frac{d}{dt}\langle \nabla^{k-1}{\bf u}, \nabla^k\varphi\rangle + \left\langle \nabla^{k-1}\left(\frac{P'(\varrho +\bar\rho)}{\varrho +\bar\rho}\nabla\varrho\right), \nabla^k\varphi \right\rangle +(\alpha +\nu)\langle \nabla^{k-1}{\bf u}, \nabla^k\varphi\rangle -\beta\|\nabla^k\varphi\|_{L^2}^2
 \\
 & \quad+\mu\langle \nabla^{k}{\bf u}, \nabla^{k+1}\varphi\rangle -\gamma\langle\nabla^{k-1}{\bf u}, \nabla^k\varrho\rangle
 \\
 &= -\langle\nabla^{k-1}({\bf u}\cdot\nabla {\bf u}), \nabla^k\varphi\rangle.
 \end{split}
\end{equation}
This together with
\begin{equation}\nonumber
\begin{split}
 &\left\langle \nabla^{k-1}\left(\frac{P'(\varrho +\bar\rho)}{\varrho +\bar\rho}\nabla\varrho\right), \nabla^k\varphi \right\rangle
 \\
 &= \frac{P'(\bar\rho)}{\bar\rho} \langle \nabla^{k}\varrho, \nabla^k\varphi \rangle + \left\langle \left(\frac{P'(\varrho +\bar\rho)}{\varrho +\bar\rho} - \frac{P'(\bar\rho)}{\bar\rho}\right)\nabla^{k}\varrho, \nabla^k\varphi \right\rangle
 +\left\langle \left[\nabla^{k-1}, \frac{P'(\varrho +\bar\rho)}{\varrho +\bar\rho}\right]\nabla\varrho, \nabla^k\varphi \right\rangle
 \\
 &\geq \frac{P'(\bar\rho)}{\bar\rho} \langle \nabla^{k}\varrho, \nabla^k\varphi \rangle -C\mathcal{E}(t)\|\nabla^k(\varrho, \varphi)\|_{L^2}^2,
 \end{split}
\end{equation}
 and
 \begin{equation}\nonumber
  -\langle\nabla^{k-1}({\bf u}\cdot\nabla {\bf u}), \nabla^k\varphi\rangle
  \le \nabla^k(\varrho, {\bf u})\|_{L^2}^2
 \end{equation}
 yields \eqref{es-u-varphi-k}.
\end{proof}

\begin{lemma}\label{es-nonli-instab-en}
 Under the assumption of Lemma \ref{a priori-sol}, we have
 \begin{equation}\label{es-energy1}
   \|(\varrho, {\bf u}, \varphi)(t)\|_3^2 \le C\|(\varrho_0, {\bf u}_0, \varphi_0)\|_3^2 + C\int_0^t \|(\varrho, {\bf u}, \varphi)(\tau)\|_{L^2}^2d\tau.
 \end{equation}
\end{lemma}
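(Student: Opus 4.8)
The plan is to assemble, from the estimates collected in Lemma~\ref{a priori-sol}, a Lyapunov functional $\mathcal{L}(t)$ equivalent to $\|(\varrho,{\bf u},\varphi)(t)\|_3^2$ and satisfying a differential inequality of the form
\[
 \frac{d}{dt}\mathcal{L}(t)+c_0\mathcal{D}(t)\le C\|(\varrho,{\bf u},\varphi)(t)\|_{L^2}^2,
\]
where the dissipation $\mathcal{D}(t)$ controls $\|{\bf u}\|_3^2+\|\nabla\varrho\|_2^2+\|\nabla\varphi\|_3^2+\|\nu\varphi-\gamma\varrho\|_{L^2}^2$; integrating over $[0,t]$ and using $\mathcal{L}(t)\approx\|(\varrho,{\bf u},\varphi)(t)\|_3^2$ and $\mathcal{L}(0)\approx\|(\varrho_0,{\bf u}_0,\varphi_0)\|_3^2$ then yields \eqref{es-energy1}. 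The functional $\mathcal{L}(t)$ is a linear combination of the leading energies appearing in \eqref{es-varrho-0}, \eqref{es-u-0}, \eqref{es-varphi-0} and in \eqref{es-varrho-k}, \eqref{es-u-k}, \eqref{es-varphi-k} over $k=0,1,2,3$, corrected by small multiples $\delta_k\langle\nabla^k\varrho,\nabla^{k-1}{\bf u}\rangle$ taken from \eqref{es-varrho-u-k} for $k=1,2,3$. By the a priori assumption \eqref{E-t-priori} the weight $\frac{P'(\varrho+\bar\rho)}{2(\varrho+\bar\rho)^2}$ lies between two positive constants, and for $\delta_k$ small the correction terms are dominated by the leading energies, so $\mathcal{L}(t)\approx\|(\varrho,{\bf u},\varphi)(t)\|_3^2$ as required.

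On differentiating $\mathcal{L}$ and inserting Lemma~\ref{a priori-sol}, the key algebraic point is that the ``flux'' cross terms $\langle\frac{P'(\varrho+\bar\rho)}{\varrho+\bar\rho}\nabla^k{\bf u},\nabla^{k+1}\varrho\rangle$ cancel exactly between \eqref{es-varrho-k} and \eqref{es-u-k} for $k=1,2,3$; for $k=0$, after rescaling \eqref{es-varrho-0} by $\frac{P'(\bar\rho)}{\bar\rho^2}$, the residual $\langle(\frac{P'(\varrho+\bar\rho)}{\varrho+\bar\rho}-\frac{P'(\bar\rho)}{\bar\rho})\nabla\varrho,{\bf u}\rangle$ is $O(\mathcal{E}(t))(\|\nabla\varrho\|_{L^2}^2+\|{\bf u}\|_{L^2}^2)$. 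After this cancellation the available dissipation is $\sum_{k=0}^3\|\nabla^k{\bf u}\|_{L^2}^2$ from \eqref{es-u-0} and \eqref{es-u-k}, together with $\|\nu\varphi-\gamma\varrho\|_{L^2}^2$ and $\|\nabla\varphi\|_3^2$ from \eqref{es-varphi-0} and \eqref{es-varphi-k}, while the corrections from \eqref{es-varrho-u-k} supply $\delta_k\frac{P'(\bar\rho)}{2\bar\rho}\|\nabla^k\varrho\|_{L^2}^2$ for $k=1,2,3$ at the price of $\|\nabla^{k-1}{\bf u}\|_1^2$ (absorbed by the velocity dissipation, which includes $\|{\bf u}\|_{L^2}^2$) and of $\|\nabla^k\varphi\|_{L^2}^2$. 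The remaining couplings $-\beta\langle\nabla^{k+1}\varphi,\nabla^k{\bf u}\rangle$, $-\gamma\langle\nabla^k\varrho,\nabla^k\varphi\rangle$ and the right-hand side of \eqref{es-varphi-0} are treated by Young's inequality, and all nonlinear contributions carry a factor $\mathcal{E}(t)\le\delta$ and are absorbed into $\mathcal{D}(t)$ by taking $\delta$ small.

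The main obstacle is the degeneracy of the dissipation: $\mathcal{D}(t)$ contains no $\|\varrho\|_{L^2}^2$ term, and the couplings $\langle\nabla^k\varrho,\nabla^k\varphi\rangle$ and $\langle\nabla^{k+1}\varphi,\nabla^k{\bf u}\rangle$ tie the $\varrho$-, $\varphi$- and ${\bf u}$-dissipations at the same differential order, which a naive choice of constants renders circular (recovering $\|\nabla^k\varrho\|^2$ via \eqref{es-varrho-u-k} costs $\|\nabla^k\varphi\|^2$, whose dissipation in \eqref{es-varphi-k} costs back $\|\nabla^k\varrho\|^2$, and likewise the $\|\nabla\varrho\|^2$ term on the right of \eqref{es-varphi-0}). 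I would break this loop by a systematic use of the interpolation inequality (Lemma~\ref{1interpolation}), $\|\nabla^k f\|_{L^2}^2\le\epsilon\|\nabla^{k+1}f\|_{L^2}^2+C_\epsilon\|\nabla^{k-1}f\|_{L^2}^2$: whenever an absorption would demand an inconveniently large constant, split the offending norm into a tiny multiple of the next-higher-order dissipation (available with an $O(1)$ constant) plus a strictly lower-order term, iterating down until one reaches $\|\varrho\|_{L^2}^2$ and $\|\varphi\|_{L^2}^2\lesssim\|\nu\varphi-\gamma\varrho\|_{L^2}^2+\|\varrho\|_{L^2}^2$; the first summand of the latter lies in $\mathcal{D}(t)$, while $\|\varrho\|_{L^2}^2$ cannot be controlled and is precisely what remains on the right-hand side, producing $C\|(\varrho,{\bf u},\varphi)(t)\|_{L^2}^2$ — which is also the reason \eqref{es-energy1} is stated with $\int_0^t\|(\varrho,{\bf u},\varphi)(\tau)\|_{L^2}^2\,d\tau$ rather than as a closed energy estimate. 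Choosing the multiplicative constants from the top order downward makes all these absorptions consistent, and integrating the resulting differential inequality in time gives \eqref{es-energy1}.
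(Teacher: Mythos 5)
Your plan is essentially viable, but it takes a genuinely different and more laborious route than the paper. The paper's proof uses only two levels of the energy ladder: it adds the $L^2$-level estimates \eqref{es-varrho-0}--\eqref{es-varphi-0} (whose right-hand side is interpolated as $C_\epsilon\|(\varrho,{\bf u},\varphi)\|_{L^2}^2+\epsilon\|\nabla^3(\varrho,{\bf u},\varphi)\|_{L^2}^2$) to a single top-order combination $\eqref{es-varrho-k}+\eqref{es-u-k}+\frac\beta\mu\eqref{es-u-varphi-k}+\frac{2\beta^2}{\mu\nu}\eqref{es-varphi-k}+K_1\eqref{es-varrho-u-k}$ with $k=3$; there the $\beta$-coupling is cancelled exactly by \eqref{es-u-varphi-k}, the net coefficient of $\|\nabla^3\varphi\|_{L^2}^2$ is positive without any sign condition on $\frac{\nu P'(\bar\rho)}{\gamma\bar\rho}-\beta$, and the dangerous same-order coupling is evaded by integration by parts, $\langle\nabla^3\varphi,\nabla^3\varrho\rangle=-\langle\nabla^4\varphi,\nabla^2\varrho\rangle$, after which \emph{all} intermediate-order terms are disposed of by the interpolation $\|\nabla^2 f\|_{L^2}\le\|f\|_{L^2}^{1/3}\|\nabla^3 f\|_{L^2}^{2/3}$ between $L^2$ (sent to the right-hand side) and the $\dot H^3$ dissipation. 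You instead sum over all orders $k=0,\dots,3$, skip \eqref{es-u-varphi-k} (handling the $\beta$-coupling by Young against the $\mu\|\nabla^{k+1}\varphi\|_{L^2}^2$ dissipation), and break the $\varrho$--$\varphi$ loop by shifting orders with $\|\nabla^k f\|_{L^2}^2\le\epsilon\|\nabla^{k+1}f\|_{L^2}^2+C_\epsilon\|\nabla^{k-1}f\|_{L^2}^2$ and a top-down cascade of coefficients; this is correct in spirit and, like the paper, correctly identifies that $\|\varrho\|_{L^2}^2$ (hence the time integral) is what cannot be absorbed. One point needs to be made precise in your scheme: at the top order $k=3$ the down-shift must be applied to the $\varphi$-norm, whose next-higher dissipation $\|\nabla^4\varphi\|_{L^2}^2$ is supplied by \eqref{es-varphi-k}, and not to the $\varrho$-norm, since no $\|\nabla^4\varrho\|_{L^2}^2$ dissipation exists anywhere in Lemma \ref{a priori-sol}; equivalently, in the Young splitting of $\gamma\langle\nabla^3\varrho,\nabla^3\varphi\rangle$ the small constant must sit on the $\varrho$ side (absorbed by the $K_1$-type term from \eqref{es-varrho-u-k}) and the large constant on the $\varphi$ side, which is then pushed down the $\varphi$-chain or, as in the paper, removed at once by integration by parts. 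With that proviso, and with the coefficients chosen from the top order downward as you indicate, your argument closes; the paper's two-level version simply buys a much shorter bookkeeping at the price of the slightly less transparent $k=3$ combination.
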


\begin{proof}
 First, adding up \eqref{es-varrho-0}--\eqref{es-varphi-0} yields
 \begin{equation}\label{insta-E1-1}
 \begin{split}
  \frac{d}{dt}(\|\varrho\|_{L^2}^2 +\|{\bf u}\|_{L^2}^2 +\|\varphi\|_{L^2}^2)
  &\le C\|{\bf u}\|_{L^2}\|\nabla(\varrho, \varphi)\|_{L^2} +C\|\nabla(\varrho, {\bf u})\|_{L^2}^2
  \\
  &\le C\|{\bf u}\|_{L^2}^2 +C\|(\varrho, {\bf u}, \varphi)\|_{L^2}^\frac43\|\nabla^3(\varrho, {\bf u}, \varphi)\|_{L^2}^\frac23
  \\
  &\le C(\epsilon)\|(\varrho, {\bf u}, \varphi)\|_{L^2}^2 +\epsilon\|\nabla^3(\varrho, {\bf u}, \varphi)\|_{L^2}^2
  \end{split}
  \end{equation}
 with some sufficiently small positive constant $\epsilon$ to be determined.
 By taking the summation $\eqref{es-varrho-k} +\eqref{es-u-k} +\frac{\beta}{\mu}\times\eqref{es-u-varphi-k} +\frac{2\beta^2}{\mu\nu}\times\eqref{es-varphi-k} +K_1\times\eqref{es-varrho-u-k}$ with $k=3$ and $K_1 = \frac{\min\left\{\frac\alpha 4, \frac{\beta^2}{4\mu}\right\}}{\frac{\alpha^2\bar\rho}{P'(\bar\rho)} +2\bar\rho +\frac{\beta^2 \bar\rho}{P'(\bar\rho)}}$, one can arrive at
 \begin{equation}\nonumber
 \begin{split}
  &\frac{d}{dt}\left(\left\langle\frac{P'(\varrho+\bar\rho)}{2(\varrho +\bar\rho)^2}\nabla^3\varrho, \nabla^3\varrho\right\rangle +\frac12\|\nabla^3 {\bf u}\|_{L^2}^2 +\frac\beta\mu\langle\nabla^2 {\bf u}, \nabla^3\varphi\rangle +\frac{\beta^2}{\mu\nu}\|\nabla^3\varphi\|_{L^2}^2 +K_1\langle\nabla^3\varrho, \nabla^2 {\bf u}\rangle\right)
  \\
  & \quad+\frac\alpha2\|\nabla^3{\bf u}\|_{L^2}^2 +\frac{\beta^2}{\nu}\|\nabla^4\varphi\|_{L^2}^2 +\frac{\beta^2}{2\mu}\|\nabla^3\varphi\|_{L^2}^2 +\frac{K_1P'(\bar\rho)}{2\bar\rho}\|\nabla^3\varrho\|_{L^2}^2
  \\
  &\le C\mathcal{E}(t)\|\nabla^3(\varrho, {\bf u}, \varphi)\|_{L^2}^2 + C\|\nabla^2(\varrho, {\bf u}, \varphi)\|_{L^2}^2,
  \end{split}
 \end{equation}
 where $\mathcal{E}(t)$ is defined in \eqref{E-t-priori} and we used $\langle \nabla^3\varphi, \nabla^3\varrho\rangle = -\langle \nabla^4\varphi, \nabla^2\varrho\rangle \le \|\nabla^4\varphi\|_{L^2}\|\nabla^2\varrho\|_{L^2}$. Then by using the Sobolev inequality $\|\nabla^2f\|_{L^2}\le \|f\|_{L^2}^\frac13\|\nabla^3f\|_{L^2}^\frac23$, the Young inequality and the a priori assumption \eqref{E-t-priori}, we can get
 \begin{equation}\label{insta-E1-2}
 \begin{split}
  &\frac{d}{dt}\left(\left\langle\frac{P'(\varrho+\bar\rho)}{2(\varrho +\bar\rho)^2}\nabla^3\varrho, \nabla^3\varrho\right\rangle +\frac12\|\nabla^3 {\bf u}\|_{L^2}^2 +\frac\beta\mu\langle\nabla^2 {\bf u}, \nabla^3\varphi\rangle +\frac{\beta^2}{\mu\nu}\|\nabla^3\varphi\|_{L^2}^2 +K_1\langle\nabla^3\varrho, \nabla^2 {\bf u}\rangle\right)
  \\
  &\quad +\frac\alpha4\|\nabla^3{\bf u}\|_{L^2}^2 +\frac{\beta^2}{2\nu}\|\nabla^4\varphi\|_{L^2}^2 +\frac{\beta^2}{4\mu}\|\nabla^3\varphi\|_{L^2}^2 +\frac{K_1P'(\bar\rho)}{4\bar\rho}\|\nabla^3\varrho\|_{L^2}^2
  \\
  &\le C\|(\varrho, {\bf u}, \varphi)\|_{L^2}^2.
  \end{split}
 \end{equation}
  Now, we take the summation $\eqref{insta-E1-2} +K_2\times \eqref{insta-E1-1}$ with $K_2$ being a  positive constant to be determined, then one can obtain
 \begin{equation}\label{insta-E1-3}
 \begin{split}
  &\frac{d}{dt}\left(\left\langle\frac{P'(\varrho+\bar\rho)}{2(\varrho +\bar\rho)^2}\nabla^3\varrho, \nabla^3\varrho\right\rangle +\frac12\|\nabla^3 {\bf u}\|_{L^2}^2 +\frac\beta\mu\langle\nabla^2 {\bf u}, \nabla^3\varphi\rangle +\frac{\beta^2}{\mu\nu}\|\nabla^3\varphi\|_{L^2}^2 +K_1\langle\nabla^3\varrho, \nabla^2 {\bf u}\rangle\right.
  \\
  &\qquad + K_2\left(\|\varrho\|_{L^2}^2 +\|{\bf u}\|_{L^2}^2 +\|\varphi\|_{L^2}^2\right)\bigg)
  \\
  & \quad+\frac\alpha4\|\nabla^3 {\bf u}\|_{L^2}^2 +\frac{\beta^2}{2\nu}\|\nabla^4\varphi\|_{L^2}^2 +\frac{\beta^2}{4\mu}\|\nabla^3\varphi\|_{L^2}^2 +\frac{K_1P'(\bar\rho)}{4\bar\rho}\|\nabla^3\varrho\|_{L^2}^2
  \\
  &\le C\|(\varrho, {\bf u}, \varphi)\|_{L^2}^2 + K_2 \epsilon\|\nabla^3(\varrho, {\bf u}, \varphi)\|_{L^2}^2.
  \end{split}
 \end{equation}
 Define
 \begin{align*}
 \mathcal{L}_1(t) &= \left\langle\frac{P'(\varrho+\bar\rho)}{2(\varrho +\bar\rho)^2}\nabla^3\varrho, \nabla^3\varrho\right\rangle +\frac12\|\nabla^3 {\bf u}\|_{L^2}^2 +\frac\beta\mu\langle\nabla^2 {\bf u}, \nabla^3\varphi\rangle +\frac{\beta^2}{\mu\nu}\|\nabla^3\varphi\|_{L^2}^2 +K_1\langle\nabla^3\varrho, \nabla^2 {\bf u}\rangle
  \\
  &\quad + K_2\left(\|\varrho\|_{L^2}^2 +\|{\bf u}\|_{L^2}^2 +\|\varphi\|_{L^2}^2\right).
 \end{align*}
 By using the Young inequality and taking $K_2$ appropriately large, for $\mathcal{E}(t)$ defined in \eqref{E-t-priori}, we have
 \begin{equation}\label{L2-E}
  \mathcal{L}_1(t) \approx \mathcal{E}(t)^2.
 \end{equation}

 Hence by letting $\epsilon$ sufficiently small, one can derive from \eqref{insta-E1-3} that
 \begin{equation}\label{insta-E1-4}
  \frac{d}{dt} \mathcal{L}_1(t) +\frac\alpha8\|\nabla^3 {\bf u}\|_{L^2}^2 +\frac{\beta^2}{2\nu}\|\nabla^4\varphi\|_{L^2}^2 +\frac{\beta^2}{8\mu}\|\nabla^3\varphi\|_{L^2}^2 +\frac{K_1P'(\bar\rho)}{8\bar\rho}\|\nabla^3\varrho\|_{L^2}^2
  \le C\|(\varrho, {\bf u}, \varphi)\|_{L^2}^2.
 \end{equation}
 Thanks to \eqref{L2-E}, integrating \eqref{insta-E1-4} on $[0, t]$ gives rise to \eqref{es-energy1}.
\end{proof}

\begin{lemma}\label{es-u-varphi-ener}
 Under the assumption of Lemma \ref{a priori-sol} and $\frac{\nu P'(\bar\rho)}{\gamma \bar\rho} -\beta >0$, we have
\begin{equation}\label{es-u-varphi-ener1}
 \begin{split}
  &\frac{d}{dt}\sum_{k=1}^3\left(\frac12\|\nabla^k{\bf u}\|_{L^2}^2 +\frac\beta\mu \langle\nabla^{k-1}{\bf u}, \nabla^k\varphi\rangle +\frac{\beta P'(\bar\rho)}{2\mu\gamma\bar\rho}\|\nabla^k\varphi\|_{L^2}^2 +\frac{\alpha +\nu}{2\mu}\|\nabla^{k-1}{\bf u}\|_{L^2}^2 +\frac{P'(\bar\rho)}{2\bar\rho^2}\|\nabla^k\varrho\|_{L^2}^2 \right.
  \\
  &\quad\qquad\left.+\frac{1}{2\bar\rho}\left( \frac{\gamma}{\mu}\left(\frac{\nu P'(\bar\rho)}{\gamma \bar\rho} -\beta\right) +\frac{\alpha P'(\bar\rho)}{\mu\bar\rho}\right)\|\nabla^{k-1}\varrho\|_{L^2}^2\right)
  \\
  &+\sum_{k=1}^3\left(\frac{\alpha}{2}\|\nabla^k{\bf u}\|_{L^2}^2 +\frac{\alpha(\alpha+\nu)}{2\mu}\| \nabla^{k-1}{\bf u}\|_{L^2}^2 +\frac{\beta P'(\bar\rho)}{\gamma\bar\rho}\|\nabla^{k+1} \varphi\|_{L^2}^2 +\frac{\beta}{2\mu}\left(\frac{\nu P'(\bar\rho)}{\gamma \bar\rho} -\beta\right)\|\nabla^k\varphi\|_{L^2}^2 \right)
  \\
  &\le
  \mathcal{E}(t)\|\nabla\varrho\|_2^2.
  \end{split}
  \end{equation}
\end{lemma}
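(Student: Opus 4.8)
The plan is to derive \eqref{es-u-varphi-ener1} as one carefully weighted linear combination of the differential identities collected in Lemma \ref{a priori-sol}, taking advantage of the exact (not merely asymptotic) cancellation of the top--order cross terms built into them. For each fixed $k\in\{1,2,3\}$ I would add the identity \eqref{es-varrho-k} at level $k$, the identity \eqref{es-u-k} at level $k$, $\frac{\beta}{\mu}$ times \eqref{es-u-varphi-k} at level $k$, and $\frac{\beta P'(\bar\rho)}{\mu\gamma\bar\rho}$ times \eqref{es-varphi-k} at level $k$; to these I would further add $\frac{\alpha+\nu}{\mu}$ times \eqref{es-u-k} at level $k-1$ and $c_k$ times \eqref{es-varrho-k} at level $k-1$, with the understanding that for $k=1$ these last two are replaced by $\frac{\alpha+\nu}{\mu}\,\eqref{es-u-0}$ and $w_0\,\eqref{es-varrho-0}$, where
\begin{equation}\nonumber
 c_k=\frac{1}{\mu}\left(\alpha+\frac{\gamma\bar\rho}{P'(\bar\rho)}\left(\frac{\nu P'(\bar\rho)}{\gamma\bar\rho}-\beta\right)\right)\quad(k=2,3),\qquad
 w_0=\frac{1}{\mu\bar\rho^2}\left(\alpha P'(\bar\rho)+\gamma\bar\rho\left(\frac{\nu P'(\bar\rho)}{\gamma\bar\rho}-\beta\right)\right).
\end{equation}
Both constants are strictly positive precisely because the hypothesis yields $\frac{\nu P'(\bar\rho)}{\gamma\bar\rho}-\beta>0$. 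Summing over $k=1,2,3$ and replacing, in the $\varrho$--functionals produced by \eqref{es-varrho-k}, the variable coefficient $\frac{P'(\varrho+\bar\rho)}{2(\varrho+\bar\rho)^2}$ by its equilibrium value $\frac{P'(\bar\rho)}{2\bar\rho^2}$ (the difference is $O(\mathcal{E}(t))\|\nabla^k\varrho\|_{L^2}^2$ by \eqref{E-t-priori}) reproduces exactly the Lyapunov functional and the dissipation displayed in \eqref{es-u-varphi-ener1}.

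The weights above are forced by demanding that every cross term of order higher than the target dissipation cancel. The pairings $\langle\frac{P'(\varrho+\bar\rho)}{\varrho+\bar\rho}\nabla^{k+1}\varrho,\nabla^k{\bf u}\rangle$ in \eqref{es-u-k} and $-\langle\frac{P'(\varrho+\bar\rho)}{\varrho+\bar\rho}\nabla^k{\bf u},\nabla^{k+1}\varrho\rangle$ in \eqref{es-varrho-k} are negatives of each other by symmetry of the $L^2$ pairing and so cancel with no error whatsoever; the term $-\beta\langle\nabla^{k+1}\varphi,\nabla^k{\bf u}\rangle$ of \eqref{es-u-k} is cancelled by $\frac{\beta}{\mu}\cdot\mu\langle\nabla^k{\bf u},\nabla^{k+1}\varphi\rangle$ from \eqref{es-u-varphi-k}; the term $\frac{\beta}{\mu}\cdot\frac{P'(\bar\rho)}{\bar\rho}\langle\nabla^k\varrho,\nabla^k\varphi\rangle$ from \eqref{es-u-varphi-k} is cancelled by $-\frac{\beta P'(\bar\rho)}{\mu\gamma\bar\rho}\cdot\gamma\langle\nabla^k\varrho,\nabla^k\varphi\rangle$ from \eqref{es-varphi-k}; the term $\frac{\beta(\alpha+\nu)}{\mu}\langle\nabla^{k-1}{\bf u},\nabla^k\varphi\rangle$ of \eqref{es-u-varphi-k} is cancelled by the matching term of $\frac{\alpha+\nu}{\mu}\,\eqref{es-u-k}$ at level $k-1$; and the three occurrences of $\langle\nabla^{k-1}{\bf u},\nabla^k\varrho\rangle$, carrying coefficients $-c_k\frac{P'(\bar\rho)}{\bar\rho}$, $-\frac{\beta\gamma}{\mu}$ and $+\frac{(\alpha+\nu)P'(\bar\rho)}{\mu\bar\rho}$, sum to zero by the very definition of $c_k$ (resp.\ $w_0$ when $k=1$). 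After these cancellations the surviving $\varphi$--dissipation is $\left(\frac{\nu\beta P'(\bar\rho)}{\mu\gamma\bar\rho}-\frac{\beta^2}{\mu}\right)\|\nabla^k\varphi\|_{L^2}^2=\frac{\beta}{\mu}\left(\frac{\nu P'(\bar\rho)}{\gamma\bar\rho}-\beta\right)\|\nabla^k\varphi\|_{L^2}^2$, which is positive under the hypothesis; I would retain half of it as net dissipation and use the other half, together with fractions of the velocity dissipation $\frac{3\alpha}{4}\|\nabla^k{\bf u}\|_{L^2}^2$ and $\frac{3\alpha(\alpha+\nu)}{4\mu}\|\nabla^{k-1}{\bf u}\|_{L^2}^2$, to absorb the $O(\mathcal{E}(t))$ errors.

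It remains to bookkeep those errors. The coefficient linearizations $\big(\frac{P'(\varrho+\bar\rho)}{\varrho+\bar\rho}-\frac{P'(\bar\rho)}{\bar\rho}\big)$ and their analogues are pointwise $O(\mathcal{E}(t))$ by \eqref{E-t-priori}, so, after one integration by parts whenever they would otherwise sit against $\nabla^{k+1}\varrho$, each such term together with the explicit right--hand sides of Lemma \ref{a priori-sol} is bounded by $C\mathcal{E}(t)\big(\|\nabla^k\varrho\|_{L^2}^2+\|\nabla^{k-1}\varrho\|_{L^2}^2+\|\nabla^k{\bf u}\|_{L^2}^2+\|\nabla^{k-1}{\bf u}\|_{L^2}^2+\|\nabla^k\varphi\|_{L^2}^2\big)$; the velocity and chemoattractant pieces are absorbed by the dissipation just described, while the density pieces assemble, after summing over $k=1,2,3$, into $\mathcal{E}(t)\|\nabla\varrho\|_2^2$ — here one uses that \eqref{es-varrho-0} carries no $\|\varrho\|_{L^2}^2$ on its right--hand side, so no zeroth--order density term is generated. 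Collecting everything yields \eqref{es-u-varphi-ener1}.

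The one genuinely delicate point is the simultaneous selection of the weights $\frac{\beta}{\mu}$ on \eqref{es-u-varphi-k} and $\frac{\beta P'(\bar\rho)}{\mu\gamma\bar\rho}$ on \eqref{es-varphi-k}: this is the unique pair that both annihilates the top--order $\varphi$ cross terms and leaves the residual coefficient of $\|\nabla^k\varphi\|_{L^2}^2$ equal to $\frac{\beta}{\mu}\big(\frac{\nu P'(\bar\rho)}{\gamma\bar\rho}-\beta\big)$. This is precisely where — and why — the stability condition $\frac{\nu P'(\bar\rho)}{\gamma\bar\rho}>\beta$ enters; everything else is a bounded bookkeeping of constants.
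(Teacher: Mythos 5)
Your proposal is correct and follows essentially the same route as the paper: the same weighted combination of \eqref{es-varrho-k}, \eqref{es-u-k}, $\frac{\beta}{\mu}\times$\eqref{es-u-varphi-k}, $\frac{\beta P'(\bar\rho)}{\mu\gamma\bar\rho}\times$\eqref{es-varphi-k} at level $k$ together with the level-$(k-1)$ velocity and density identities, with the weights forced by exact cancellation of the top-order cross terms and the stability condition entering through the residual coefficient $\frac{\beta}{\mu}\big(\frac{\nu P'(\bar\rho)}{\gamma\bar\rho}-\beta\big)$ of $\|\nabla^k\varphi\|_{L^2}^2$. The only immaterial deviation is that at level $k-1$ (for $k=2,3$) you reuse the weighted identity \eqref{es-varrho-k} with the adjusted constant $c_k$, whereas the paper uses \eqref{es-varrho-k-const} with weight $\frac{1}{\bar\rho}\big(\frac{(\alpha+\nu)P'(\bar\rho)}{\mu\bar\rho}-\frac{\beta\gamma}{\mu}\big)$ (your $w_0$ coincides with that choice for $k=1$), and both versions yield the stated functional up to the same harmless replacement of $\frac{P'(\varrho+\bar\rho)}{2(\varrho+\bar\rho)^2}$ by $\frac{P'(\bar\rho)}{2\bar\rho^2}$ that the paper itself performs.
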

\begin{proof}
 We take the summation $\eqref{es-u-k} +\frac{\beta}{\mu}\times \eqref{es-u-varphi-k} +\frac{\beta P'(\bar\rho)}{\mu\gamma\bar\rho} \times\eqref{es-varphi-k} +\frac{\alpha+\nu}{\mu}\times \eqref{es-u-0}$ while $k=1$ and the summation $\eqref{es-u-k}$ with $k +\frac{\beta}{\mu}\times \eqref{es-u-varphi-k} +\frac{\beta P'(\bar\rho)}{\mu\gamma\bar\rho} \times\eqref{es-varphi-k} +\frac{\alpha+\nu}{\mu}\times ``\eqref{es-u-k}$ with $k-1"$ while $k=2,3$ respectively, and then we can arrive at the following estimates:
 \begin{equation}\label{es-u-varphi-ener2}
 \begin{split}
  &\frac{d}{dt}\left(\frac12\|\nabla^k{\bf u}\|_{L^2}^2 +\frac\beta\mu \langle\nabla^{k-1}{\bf u}, \nabla^k\varphi\rangle +\frac{\beta P'(\bar\rho)}{2\mu\gamma\bar\rho}\|\nabla^k\varphi\|_{L^2}^2 +\frac{\alpha +\nu}{2\mu}\|\nabla^{k-1}{\bf u}\|_{L^2}^2\right)
  \\
  &\quad+\frac{5\alpha}{8}\|\nabla^k{\bf u}\|_{L^2}^2 +\frac{\alpha(\alpha+\nu)}{2\mu}\| \nabla^{k-1}{\bf u}\|_{L^2}^2 +\frac{\beta P'(\bar\rho)}{\gamma\bar\rho}\|\nabla^{k+1} \varphi\|_{L^2}^2 +\frac{\beta}{2\mu}\left(\frac{\nu P'(\bar\rho)}{\gamma \bar\rho} -\beta\right)\|\nabla^k\varphi\|_{L^2}^2
  \\
  &\quad+\left\langle\frac{P'(\varrho +\bar\rho)}{\varrho +\bar\rho}\nabla^{k+1}\varrho, \nabla^k{\bf u}\right\rangle
  +\left(\frac{(\alpha +\nu)P'(\bar\rho)}{\mu\bar\rho}
  -\frac{\beta\gamma}{\mu} \right)\langle\nabla^k\varrho, \nabla^{k-1}{\bf u}\rangle
  \\
  &\lesssim
 \left\{\begin{array}{lll}
  \mathcal{E}(t)\|\nabla\varrho\|_{L^2}^2,\quad while\quad k=1,
  \\
  \mathcal{E}(t)\|\nabla^{k-1}\varrho\|_1^2,\quad while \quad k=2,3.
 \end{array}\right.
  \end{split}
  \end{equation}
  Here we used the following estimate:
  \begin{equation}\nonumber
  \begin{split}
   \left\langle \frac{P'(\varrho +\bar\rho)}{\varrho +\bar\rho} \nabla^k\varrho, \nabla^{k-1}{\bf u}\right\rangle
   &=\left\langle \frac{P'(\bar\rho)}{\bar\rho} \nabla^k\varrho, \nabla^{k-1}{\bf u}\right\rangle
    +\left\langle \left(\frac{P'(\varrho +\bar\rho)}{\varrho +\bar\rho} - \frac{P'(\bar\rho)}{\bar\rho}\right) \nabla^k\varrho, \nabla^{k-1}{\bf u}\right\rangle
    \\
    &\geq \left\langle \frac{P'(\bar\rho)}{\bar\rho} \nabla^k\varrho, \nabla^{k-1}{\bf u}\right\rangle
     -C\mathcal{E}(t)\left(\|\nabla^k\varrho\|_{L^2}^2 +\|\nabla^{k-1}{\bf u}\|_{L^2}^2\right)
    \end{split}
   \end{equation}
   and the a priori assumption \eqref{E-t-priori}.
  In order to cancel out the last two terms on the left--hand side of \eqref{es-u-varphi-ener2}, we plus \eqref{es-u-varphi-ener2} with  $\eqref{es-varrho-k} + \frac{1}{\bar\rho}\left(\frac{(\alpha +\nu)P'(\bar\rho)}{\mu\bar\rho}
  -\frac{\beta\gamma}{\mu} \right)\times\eqref{es-varrho-0}$  while $k=1$ and $\eqref{es-varrho-k} + \frac{1}{\bar\rho}\left(\frac{(\alpha +\nu)P'(\bar\rho)}{\mu\bar\rho}
  -\frac{\beta\gamma}{\mu} \right) \times ``\eqref{es-varrho-k-const}$ with $k-1" $ while $k=2,3$ respectively,
 then by the a priori assumption \eqref{E-t-priori}, we can deduce
 \begin{equation}\label{es-u-varphi-ener4}
 \begin{split}
  &\frac{d}{dt}\left(\frac12\|\nabla^k{\bf u}\|_{L^2}^2 +\frac\beta\mu \langle\nabla^{k-1}{\bf u}, \nabla^k\varphi\rangle +\frac{\beta P'(\bar\rho)}{2\mu\gamma\bar\rho}\|\nabla^k\varphi\|_{L^2}^2 +\frac{\alpha +\nu}{2\mu}\|\nabla^{k-1}{\bf u}\|_{L^2}^2  \right.
  \\
  &\qquad\left. + \left\langle\frac{P'(\varrho +\bar\rho)}{2(\varrho +\bar\rho)^2}\nabla^k\varrho, \nabla^k\varrho\right\rangle
  +\frac{1}{2\bar\rho}\left(\frac{(\alpha +\nu)P'(\bar\rho)}{\mu\bar\rho}
  -\frac{\beta\gamma}{\mu} \right)\|\nabla^{k-1}\varrho\|_{L^2}^2\right)
  \\
  &\quad+\frac{\alpha}{2}\|\nabla^k{\bf u}\|_{L^2}^2 +\frac{\alpha(\alpha+\nu)}{2\mu}\| \nabla^{k-1}{\bf u}\|_{L^2}^2 +\frac{\beta P'(\bar\rho)}{\gamma\bar\rho}\|\nabla^{k+1} \varphi\|_{L^2}^2 +\frac{\beta}{2\mu}\left(\frac{\nu P'(\bar\rho)}{\gamma \bar\rho} -\beta\right)\|\nabla^k\varphi\|_{L^2}^2
  \\
  &\lesssim
 \left\{\begin{array}{lll}
  \mathcal{E}(t)\|\nabla\varrho\|_{L^2}^2,\quad while\quad k=1,
  \\
  \mathcal{E}(t)\|\nabla^{k-1}\varrho\|_1^2,\quad while \quad k=2,3.
 \end{array}\right.
  \end{split}
  \end{equation}
 Hence by adding up \eqref{es-u-varphi-ener4} from $k=1$ to $3$, we can obtain \eqref{es-u-varphi-ener1}.
\end{proof}

 By taking the summation $\frac{P'(\bar\rho)}{16\mu\gamma^2\bar\rho}\times\eqref{es-varphi-0} + \eqref{es-varrho-u-k}$ from $k=1$ to $3$, we can also deduce the following estimate directly:
\begin{lemma}\label{es-dispper-varphi-ener}
 Under the assumption of Lemma \ref{a priori-sol} and $\frac{\nu P'(\bar\rho)}{\gamma \bar\rho} -\beta >0$, we have
 \begin{equation}\label{es-dispper-varphi-ener1}
 \begin{split}
  &\frac{d}{dt}\left(\frac{P'(\bar\rho)}{16\mu\gamma^2\bar\rho}\|\nu\varphi - \gamma\varrho\|_{L^2}^2 + \sum_{k=1}^3\langle\nabla^k\varrho, \nabla^{k-1}{\bf u}\rangle\right)
  +\frac{\nu P'(\bar\rho)}{16\mu\gamma^2\bar\rho}\|\nu\varphi - \gamma\varrho\|_{L^2}^2 +\frac{P'(\bar\rho)}{8\bar\rho}\|\nabla\varrho\|_2^2
  \\
  &\le \left(\frac{\bar\rho P'(\bar\rho)}{4\mu\nu} +\frac{2\alpha^2\bar\rho}{P'(\bar\rho)} +4\bar\rho\right)\|{\bf u}\|_3^2 +\frac{\beta^2\bar\rho}{P'(\bar\rho)}\|\nabla\varphi\|_{L^2}^2.
  \end{split}
  \end{equation}
\end{lemma}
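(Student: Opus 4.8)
Here is how I would prove Lemma \ref{es-dispper-varphi-ener}.

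\smallskip

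\noindent\textbf{Strategy.} The estimate \eqref{es-dispper-varphi-ener1} is a single weighted linear combination of two ingredients already supplied by Lemma \ref{a priori-sol}: the dissipation identity \eqref{es-varphi-0} for the effective quantity $\nu\varphi-\gamma\varrho$, and the cross‑term estimates \eqref{es-varrho-u-k} summed over $k=1,2,3$. The point of summing \eqref{es-varrho-u-k} is that it is the only estimate at hand producing a full $\nabla\varrho$–dissipation: adding \eqref{es-varrho-u-k} for $k=1,2,3$ and using $\sum_{k=1}^{3}\|\nabla^{k}\varrho\|_{L^2}^2=\|\nabla\varrho\|_2^2$, $\sum_{k=1}^{3}\|\nabla^{k-1}{\bf u}\|_1^2\le 2\|{\bf u}\|_3^2$, and $\sum_{k=1}^{3}\|\nabla^{k}\varphi\|_{L^2}^2=\|\nabla\varphi\|_2^2$ yields
\[
  \frac{d}{dt}\sum_{k=1}^{3}\langle\nabla^{k}\varrho,\nabla^{k-1}{\bf u}\rangle
  +\frac{P'(\bar\rho)}{2\bar\rho}\|\nabla\varrho\|_2^2
  \le\Bigl(\tfrac{2\alpha^2\bar\rho}{P'(\bar\rho)}+4\bar\rho\Bigr)\|{\bf u}\|_3^2
  +\frac{\beta^2\bar\rho}{P'(\bar\rho)}\|\nabla\varphi\|_2^2 .
\]
This already delivers the cross‑term functional $\sum_{k=1}^{3}\langle\nabla^{k}\varrho,\nabla^{k-1}{\bf u}\rangle$ under the time derivative, the $\|\nabla\varrho\|_2^2$–dissipation, and both terms on the right of \eqref{es-dispper-varphi-ener1} except for part of the $\|{\bf u}\|_3^2$ coefficient.

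\smallskip

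\noindent\textbf{Main step: choosing the weight.} It remains to dispose of the stray $3\mu\gamma^2\|\nabla\varrho\|_{L^2}^2$ that \eqref{es-varphi-0} carries on its right‑hand side. The plan is to multiply \eqref{es-varphi-0} by the constant $c:=\tfrac{P'(\bar\rho)}{16\mu\gamma^2\bar\rho}$, whose value is dictated precisely by the requirement that the resulting coefficient $3c\mu\gamma^2=\tfrac{3P'(\bar\rho)}{16\bar\rho}$ in front of $\|\nabla\varrho\|_{L^2}^2$ stay strictly below the available dissipation $\tfrac{P'(\bar\rho)}{2\bar\rho}$. With this choice: $c\nu\|\nu\varphi-\gamma\varrho\|_{L^2}^2=\tfrac{\nu P'(\bar\rho)}{16\mu\gamma^2\bar\rho}\|\nu\varphi-\gamma\varrho\|_{L^2}^2$ reproduces exactly the dissipation term displayed on the left of \eqref{es-dispper-varphi-ener1}; the good term $c\mu\nu^2\|\nabla\varphi\|_{L^2}^2$ is simply thrown away (under the sole hypothesis $\tfrac{\nu P'(\bar\rho)}{\gamma\bar\rho}>\beta$ it is too weak to absorb the higher‑order $\|\nabla\varphi\|$ contributions, which is why those stay on the right); and the driving term becomes $c\,\tfrac{5\gamma^2\bar\rho^2}{2\nu}\|\mathrm{div}\,{\bf u}\|_{L^2}^2=\tfrac{5\bar\rho P'(\bar\rho)}{32\mu\nu}\|\mathrm{div}\,{\bf u}\|_{L^2}^2\le\tfrac{\bar\rho P'(\bar\rho)}{4\mu\nu}\|{\bf u}\|_3^2$ by $\|\mathrm{div}\,{\bf u}\|_{L^2}\le\|\nabla{\bf u}\|_{L^2}$, supplying the missing part of the $\|{\bf u}\|_3^2$ coefficient. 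Adding the two scaled inequalities and moving $\tfrac{3P'(\bar\rho)}{16\bar\rho}\|\nabla\varrho\|_{L^2}^2\le\tfrac{3P'(\bar\rho)}{16\bar\rho}\|\nabla\varrho\|_2^2$ to the left, the net $\nabla\varrho$–dissipation is $\bigl(\tfrac{P'(\bar\rho)}{2\bar\rho}-\tfrac{3P'(\bar\rho)}{16\bar\rho}\bigr)\|\nabla\varrho\|_2^2=\tfrac{5P'(\bar\rho)}{16\bar\rho}\|\nabla\varrho\|_2^2\ge\tfrac{P'(\bar\rho)}{8\bar\rho}\|\nabla\varrho\|_2^2$, and regrouping the $\|{\bf u}\|_3^2$ terms gives exactly the coefficient $\tfrac{\bar\rho P'(\bar\rho)}{4\mu\nu}+\tfrac{2\alpha^2\bar\rho}{P'(\bar\rho)}+4\bar\rho$ of \eqref{es-dispper-varphi-ener1}.

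\smallskip

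\noindent\textbf{Where the difficulty lies.} There is no genuine analytic obstacle here: every step is an inequality between quantities already made explicit in Lemma \ref{a priori-sol}, and the text's phrase ``deduce directly'' is accurate. The only thing that must be watched is the constant bookkeeping — in particular the two slack margins $\tfrac{5P'(\bar\rho)}{16\bar\rho}\ge\tfrac{P'(\bar\rho)}{8\bar\rho}$ and $\tfrac{5\bar\rho P'(\bar\rho)}{32\mu\nu}\le\tfrac{\bar\rho P'(\bar\rho)}{4\mu\nu}$ — which together pin down the weight $c=\tfrac{P'(\bar\rho)}{16\mu\gamma^2\bar\rho}$. I would also point out that the functional $c\|\nu\varphi-\gamma\varrho\|_{L^2}^2+\sum_{k=1}^{3}\langle\nabla^{k}\varrho,\nabla^{k-1}{\bf u}\rangle$ is not itself sign‑definite, the cross terms being only controlled by $\tfrac12(\|\nabla^{k}\varrho\|_{L^2}^2+\|\nabla^{k-1}{\bf u}\|_{L^2}^2)$; this is harmless, because in Section \ref{S5} the inequality will be used only after multiplying it by a small constant and adding it to a coercive energy, so that the sum remains equivalent to $\|(\varrho,{\bf u},\varphi)\|_3^2$ while its dissipation now also dominates $\|\nabla\varrho\|_2^2$ — which is the whole reason for recording this lemma.
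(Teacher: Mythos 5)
Your proof is correct and is precisely the paper's argument: the paper offers no more than the instruction to take $\frac{P'(\bar\rho)}{16\mu\gamma^2\bar\rho}\times\eqref{es-varphi-0}+\sum_{k=1}^{3}\eqref{es-varrho-u-k}$, and your constant bookkeeping (the weight $c$, the margins $\frac{5P'(\bar\rho)}{16\bar\rho}\ge\frac{P'(\bar\rho)}{8\bar\rho}$ and $\frac{5\bar\rho P'(\bar\rho)}{32\mu\nu}\le\frac{\bar\rho P'(\bar\rho)}{4\mu\nu}$) fills in exactly what the paper leaves implicit. Note only that, as you in effect observe, the summation produces $\frac{\beta^2\bar\rho}{P'(\bar\rho)}\|\nabla\varphi\|_2^2$ rather than the $\|\nabla\varphi\|_{L^2}^2$ written in \eqref{es-dispper-varphi-ener1}, so the stated right-hand side contains a harmless typo (the $H^2$ version is what is proved and is what the absorption by the $\varphi$-dissipation in Proposition \ref{priori1} actually uses).
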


Finally, in order to improve the optimal decay rate of the highest--order derivatives of the solutions, we have to estimate the high--frequency part of the solution. To this end, we apply the operator $\mathcal{F}^{-1}[(1-\psi(\xi))\mathcal{F}(\cdot)]$ to the system \eqref{2eq-var}, which gives rise to
\begin{equation}\label{2eq-var-H}
 \left\{\begin{array}{lll}
 \varrho^H_t + ((\varrho + \bar\rho){\rm div}{\bf u})^H = - ({\bf u}\cdot \nabla\varrho)^H,
 \\
 {\bf u}^H_t + \left(\frac{P'(\varrho + \bar\rho)}{\varrho + \bar\rho} \nabla\varrho\right)^H +\alpha {\bf u}^H - \beta \nabla \varphi^H = -({\bf u}\cdot \nabla {\bf u})^H,
 \\
 \varphi^H_t - \mu \Delta \varphi^H +\nu \varphi^H - \gamma \varrho^H = 0,
 \\
 (\varrho^H, {\bf u}^H, \varphi^H)(x,0) =(\varrho_{0}^H, {\bf u}_0^H, \varphi_0 )(x).
 \end{array}\right.
\end{equation}

\begin{lemma}\label{a priori-sol-H}
Under the assumption of Lemma \ref{a priori-sol}, we have the following estimates:
 \begin{equation}\label{es-varrho-H-3}
 \begin{split}
  &\frac{d}{dt}\left\langle \frac {P'(\varrho +\bar\rho)}{2(\varrho +\bar\rho)^2}\nabla^3\varrho^H, \nabla^3\varrho^H\right\rangle - \left\langle  \frac{P'(\varrho +\bar\rho)} {\varrho +\bar\rho}\nabla^3 {\bf u}^H, \nabla^4\varrho^H\right\rangle
  \\
  &
  \quad+ \frac{P'(\bar\rho)}{\bar\rho}\langle\nabla^3 {\bf u}^L, \nabla^4\varrho\rangle
   + \frac{P'(\bar\rho)}{\bar\rho}\langle\nabla^3 {\bf u}^H, \nabla^4\varrho^L\rangle
   \\
   &
 \le C(\|(\varrho, {\bf u}, \nabla\varrho, \nabla {\bf u})\|_{L^\infty} +\|\nabla\varrho\|_{L^3})\|\nabla^3(\varrho, {\bf u})\|_{L^2}^2,
 \end{split}
 \end{equation}

  \begin{equation}\label{es-u-H-3}
  \begin{split}
  &\frac12\frac{d}{dt}\|\nabla^3 {\bf u}^H\|_{L^2}^2 + \frac{3\alpha}4\|\nabla^3 {\bf u}^H\|_{L^2}^2 + \left\langle \frac{P'(\varrho +\bar\rho)} {\varrho +\bar\rho}\nabla^4 \varrho^H, \nabla^3 {\bf u}^H\right\rangle
   \\
   &
  \quad-\frac{P'(\bar\rho)}{\bar\rho}\langle\nabla^4\varrho^L, \nabla^3 {\bf u}\rangle
   -\frac{P'(\bar\rho)}{\bar\rho}\langle\nabla^4\varrho^H, \nabla^3 {\bf u}^L\rangle
   - \beta\langle\nabla^4\varphi^H, \nabla^3 {\bf u}^H\rangle
 \\
 &\le C\|\nabla \varrho\|_{L^\infty}\|\nabla^3\varrho\|_{L^2}^2,
 \end{split}
 \end{equation}

 \begin{equation}\label{es-varphi-H-3}
  \frac12\frac{d}{dt}\|\nabla^3 \varphi^H\|_{L^2}^2 + \mu\|\nabla^{4} \varphi^H\|_{L^2}^2 + \nu\|\nabla^3\varphi^H\|_{L^2}^2 -\gamma\left\langle \nabla^3 \varrho^H, \nabla^3 \varphi^H\right\rangle=0,
 \end{equation}

 \begin{equation}\label{es-varrho-u-H-3}
 \begin{split}
 & \frac{d}{dt}\langle\nabla^{3}\varrho^H, \nabla^{2}{\bf u}^H \rangle
  +\frac{P'(\bar\rho)}{2\bar\rho}\|\nabla^3 \varrho^H\|_{L^2}^2
  \\
  &
  \le \frac{\alpha^2\bar\rho}{P'(\bar\rho)} \|\nabla^{2}{\bf u}^H\|_{L^2}^2
  +\bar\rho\|\nabla^2{\rm div}{\bf u}^H\|_{L^2}^2
  +\frac{\beta^2\bar\rho}{P'(\bar\rho)}\|\nabla^{3}\varphi^H\|_{L^2}^2
  \\
  &
  \quad+C(\|\varrho\|_{L^\infty} +\|\nabla(\varrho, {\bf u})\|_{L^3})\|\nabla^3(\varrho, {\bf u})\|_{L^2}^2,
  \end{split}
 \end{equation}
 and
 \begin{align}\label{es-u-varphi-H-3}
  &\frac{d}{dt}\langle\nabla^{2}{\bf u}^H, \nabla^3\varphi^H\rangle +\frac{P'(\bar\rho)}{\bar\rho}\langle\nabla^3 \varrho^H, \nabla^3\varphi^H\rangle +\mu\langle\nabla^{4}\varphi^H, \nabla^3 {\bf u}^H\rangle
  \notag\\
 &\le\frac12\left(\frac{\nu P'(\bar\rho)}{\gamma\bar\rho} +\beta\right)\|\nabla^3\varphi^H\|_{L^2}^2 +\frac{K_1P'(\bar\rho)}{4\bar\rho}\|\nabla^3\varrho^H\|_{L^2}^2 +C\|\nabla^2 {\bf u}^H\|_{L^2}^2
 \\
 &
 \quad +C\left(\|\varrho\|_{L^\infty} +\|\nabla\varrho\|_{L^3}\right)\|\nabla^3(\varrho, \varphi)\|_{L^2}^2.\notag
 \end{align}
\end{lemma}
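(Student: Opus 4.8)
The plan is to reproduce, on the high--frequency projected system \eqref{2eq-var-H}, the same energy computations that produced \eqref{es-varrho-k}, \eqref{es-u-k}, \eqref{es-varphi-k}, \eqref{es-varrho-u-k} and \eqref{es-u-varphi-k} at $k=3$ in Lemma \ref{a priori-sol}. The one genuinely new feature is that the cutoff operator $(\cdot)^H=\mathcal F^{-1}[(1-\psi(\xi))\mathcal F\,\cdot\,]$, though self--adjoint on $L^2$, $L^2$--bounded, and commuting with $\nabla$, does \emph{not} commute with multiplication by the quasilinear coefficient $\frac{P'(\varrho+\bar\rho)}{\varrho+\bar\rho}$; this is exactly why the low--high interaction terms must stay explicit in \eqref{es-varrho-H-3} and \eqref{es-u-H-3}, and it introduces ``localization commutators" $[(\cdot)^H,\frac{P'(\varrho+\bar\rho)}{\varrho+\bar\rho}]$ that have to be controlled. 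I will use that $\psi(D)$ has a Schwartz kernel, so that $\|[(\cdot)^H,g]\nabla^4 f\|_{L^2}\lesssim\|\nabla g\|_{L^\infty}\|\nabla^3 f\|_{L^2}$, which lets the top--order energy be multiplied only by the decaying norms $\|\nabla\varrho\|_{L^\infty}$, $\|\nabla\varrho\|_{L^3}$ rather than merely by $\mathcal E(t)$.

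The bound \eqref{es-varphi-H-3} is immediate: the $\varphi$--equation is linear and $(\cdot)^H$ commutes with $\mu\Delta$, $\nu$, $\gamma$, so applying $\nabla^3$ to $\eqref{2eq-var-H}_3$ and pairing with $\nabla^3\varphi^H$ reproduces \eqref{es-varphi-k} at $k=3$ verbatim. For \eqref{es-varrho-H-3} I would test $\nabla^3\eqref{2eq-var-H}_1$ against $\frac{P'(\varrho+\bar\rho)}{(\varrho+\bar\rho)^2}\nabla^3\varrho^H$ exactly as in the derivation of \eqref{es-varrho-k}, writing $((\varrho+\bar\rho)\,\mathrm{div}\,{\bf u})^H=\bar\rho\,\mathrm{div}\,{\bf u}^H+(\varrho\,\mathrm{div}\,{\bf u})^H$. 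Integration by parts on $\bar\rho\,\mathrm{div}\,{\bf u}^H$ produces $-\langle\frac{P'(\varrho+\bar\rho)}{\varrho+\bar\rho}\nabla^3{\bf u}^H,\nabla^4\varrho^H\rangle$, and to match the precise form in the statement one rewrites the constant--coefficient remnant via $\frac{P'(\bar\rho)}{\bar\rho}\nabla^3\varrho^H=\frac{P'(\bar\rho)}{\bar\rho}(\nabla^3\varrho-\nabla^3\varrho^L)$ together with ${\bf u}^H={\bf u}-{\bf u}^L$, which is precisely where $\frac{P'(\bar\rho)}{\bar\rho}\langle\nabla^3{\bf u}^L,\nabla^4\varrho\rangle$ and $\frac{P'(\bar\rho)}{\bar\rho}\langle\nabla^3{\bf u}^H,\nabla^4\varrho^L\rangle$ are born; the leftovers $(\varrho\,\mathrm{div}\,{\bf u})^H$, $({\bf u}\cdot\nabla\varrho)^H$, the time derivative of the coefficient, the commutator $[\nabla^3,\frac{P'(\varrho+\bar\rho)}{\varrho+\bar\rho}]$ of Lemma \ref{1commutator}, and the localization commutator are absorbed by $L^2$--boundedness of $(\cdot)^H$, Lemma \ref{1commutator}, Lemma \ref{1interpolation} and Lemma \ref{infty} into the factor $\|(\varrho,{\bf u},\nabla\varrho,\nabla{\bf u})\|_{L^\infty}+\|\nabla\varrho\|_{L^3}$. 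The estimate \eqref{es-u-H-3} is obtained symmetrically by testing $\nabla^3\eqref{2eq-var-H}_2$ against $\nabla^3{\bf u}^H$: after peeling off the $[\nabla^3,\cdot]$ commutator the quasilinear term becomes $(\frac{P'(\varrho+\bar\rho)}{\varrho+\bar\rho}\nabla^4\varrho)^H$, which I split as $\frac{P'(\varrho+\bar\rho)}{\varrho+\bar\rho}\nabla^4\varrho^H+[(\cdot)^H,\frac{P'(\varrho+\bar\rho)}{\varrho+\bar\rho}]\nabla^4\varrho$ and then, replacing $\nabla^4\varrho^H$ and $\nabla^3{\bf u}^H$ by their full--minus--low parts, extract $-\frac{P'(\bar\rho)}{\bar\rho}\langle\nabla^4\varrho^L,\nabla^3{\bf u}\rangle-\frac{P'(\bar\rho)}{\bar\rho}\langle\nabla^4\varrho^H,\nabla^3{\bf u}^L\rangle$, while $({\bf u}\cdot\nabla{\bf u})^H$, the coefficient error and the localization commutator contribute $\|\nabla\varrho\|_{L^\infty}\|\nabla^3\varrho\|_{L^2}^2$ after Young's inequality.

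Finally, \eqref{es-varrho-u-H-3} and \eqref{es-u-varphi-H-3} are the high--frequency analogues of the dissipation--recovery identities \eqref{es-varrho-u-k} and \eqref{es-u-varphi-k} at $k=3$: for the first I form $\langle\nabla^3\eqref{2eq-var-H}_1,\nabla^2{\bf u}^H\rangle+\langle\nabla^3\varrho^H,\nabla^2\eqref{2eq-var-H}_2\rangle$ and for the second $\langle\nabla^2\eqref{2eq-var-H}_2,\nabla^3\varphi^H\rangle+\langle\nabla^2{\bf u}^H,\nabla^3\eqref{2eq-var-H}_3\rangle$. In each case the coercive term $\frac{P'(\bar\rho)}{2\bar\rho}\|\nabla^3\varrho^H\|_{L^2}^2$ (resp.\ $\frac{P'(\bar\rho)}{\bar\rho}\langle\nabla^3\varrho^H,\nabla^3\varphi^H\rangle$) comes from $\langle\frac{P'(\varrho+\bar\rho)}{\varrho+\bar\rho}\nabla^4\varrho^H,\nabla^3\varrho^H\rangle$ after integration by parts and subtracting the $\bar\rho$--constant part; the linear couplings $\alpha\langle\cdots\rangle$, $\beta\langle\cdots\rangle$, $\gamma\langle\cdots\rangle$ are handled by Young's inequality with the constants tuned to produce exactly the coefficients $\frac12(\frac{\nu P'(\bar\rho)}{\gamma\bar\rho}+\beta)$, $\frac{K_1P'(\bar\rho)}{4\bar\rho}$ and $\frac{\beta^2\bar\rho}{P'(\bar\rho)}$ of the statement (with $K_1$ as in the proof of Lemma \ref{es-nonli-instab-en}); and everything nonlinear plus the localization commutators is collected into $C(\|\varrho\|_{L^\infty}+\|\nabla(\varrho,{\bf u})\|_{L^3})\|\nabla^3(\varrho,{\bf u})\|_{L^2}^2$. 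The main obstacle is precisely this bookkeeping: because $(\cdot)^H$ and $\frac{P'(\varrho+\bar\rho)}{\varrho+\bar\rho}$ do not commute, the cross terms $\pm\langle\frac{P'(\varrho+\bar\rho)}{\varrho+\bar\rho}\nabla^4\varrho,\nabla^3{\bf u}\rangle$ that cancel cleanly in the full--system estimates no longer cancel inside the projection, so one must both carry the low--high interaction terms along explicitly (they are disposed of later in Section \ref{S5} using the low--frequency decay of Lemma \ref{li-de-L-H}) and control the localization commutators by gaining one derivative, so that the top--order norms stay multiplied only by quantities that decay.
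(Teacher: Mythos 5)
Your proposal is not the paper's argument: the paper never introduces commutators with the Fourier cutoff. Instead it writes, for a weight $w$, $\langle wA^H,B^H\rangle=\langle wA,B\rangle-\langle wA^L,B\rangle-\langle wA^H,B^L\rangle$, treats the first term exactly as in the full-system estimates of Lemma \ref{a priori-sol}, and handles the two cross terms by replacing the variable coefficient by its constant value $\tfrac{P'(\bar\rho)}{\bar\rho}$ (or $\tfrac{P'(\bar\rho)}{\bar\rho^2}$) and using $\|\nabla f^L\|_{L^2}\lesssim\|f^L\|_{L^2}$, $\|f\|_{L^2}\approx\|f^L\|_{L^2}+\|f^H\|_{L^2}$; this is precisely where the explicit terms $\tfrac{P'(\bar\rho)}{\bar\rho}\langle\nabla^3{\bf u}^L,\nabla^4\varrho\rangle$, $\tfrac{P'(\bar\rho)}{\bar\rho}\langle\nabla^3{\bf u}^H,\nabla^4\varrho^L\rangle$, etc.\ originate. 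Your alternative, based on the localization--commutator bound $\|[(\cdot)^H,g]\nabla h\|_{L^2}\lesssim\|\nabla g\|_{L^\infty}\|h\|_{L^2}$ (which is a correct estimate, though unused in the paper), is viable in principle, but as written it has a genuine gap at the decisive quasilinear step.

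Concretely, for \eqref{es-varrho-H-3} you split $((\varrho+\bar\rho)\,\mathrm{div}\,{\bf u})^H=\bar\rho\,\mathrm{div}\,{\bf u}^H+(\varrho\,\mathrm{div}\,{\bf u})^H$, claim that integrating by parts on the $\bar\rho$-part yields the kept term $-\langle\frac{P'(\varrho+\bar\rho)}{\varrho+\bar\rho}\nabla^3{\bf u}^H,\nabla^4\varrho^H\rangle$, and list $(\varrho\,\mathrm{div}\,{\bf u})^H$ among leftovers to be absorbed into $C(\|(\varrho,{\bf u},\nabla\varrho,\nabla{\bf u})\|_{L^\infty}+\|\nabla\varrho\|_{L^3})\|\nabla^3(\varrho,{\bf u})\|_{L^2}^2$. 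Both halves of this fail: the $\bar\rho$-part carries the weight $\bar\rho\,\frac{P'(\varrho+\bar\rho)}{(\varrho+\bar\rho)^2}$, not $\frac{P'(\varrho+\bar\rho)}{\varrho+\bar\rho}$, and the top-order piece of $(\varrho\,\mathrm{div}\,{\bf u})^H$, namely $(\varrho\nabla^3\mathrm{div}\,{\bf u})^H$, contains four derivatives of ${\bf u}$ and cannot be absorbed into that right-hand side by any product or commutator estimate. These two defects are designed to cancel each other: you must commute $\varrho$ out of the projection with your own commutator bound, integrate by parts, and let $\bar\rho+\varrho$ recombine so that the weight becomes exactly $\frac{P'(\varrho+\bar\rho)}{\varrho+\bar\rho}=(\varrho+\bar\rho)\cdot\frac{P'(\varrho+\bar\rho)}{(\varrho+\bar\rho)^2}$; only then does the kept term match its counterpart $\langle\frac{P'(\varrho+\bar\rho)}{\varrho+\bar\rho}\nabla^4\varrho^H,\nabla^3{\bf u}^H\rangle$ in \eqref{es-u-H-3} and cancel in the summation \eqref{de-high-u1}, and only then do no uncontrollable $\nabla^4\varrho^H$ or $\nabla^4{\bf u}$ contributions survive. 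A milder version of the same vagueness affects your ``full-minus-low'' bookkeeping that is supposed to generate the constant-coefficient cross terms with the precise factor $\frac{P'(\bar\rho)}{\bar\rho}$; since the various forms differ only by controllable terms this part is reparable, but as written the proposal does not yet deliver the stated inequalities.
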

\begin{proof}
Taking $\langle \frac{P'(\varrho +\bar\rho)}{(\varrho +\bar\rho)^2}\nabla^3\eqref{2eq-var-H}_1, \nabla^3\varrho^H\rangle$, we can obtain
 \begin{equation}\label{es-varrho-H-3-1}
 \begin{split}
  &\left\langle \frac{P'(\varrho +\bar\rho)}{(\varrho +\bar\rho)^2}\nabla^3\varrho^H_t, \nabla^3\varrho^H \right\rangle
  +\left\langle \frac{P'(\varrho +\bar\rho)}{(\varrho +\bar\rho)^2}\nabla^3\left((\varrho +\bar\rho){\rm div}{\bf u}\right)^H, \nabla^3\varrho^H \right\rangle
  \\
  &= - \left\langle \frac{P'(\varrho +\bar\rho)}{(\varrho +\bar\rho)^2} \nabla^3({\bf u} \cdot\nabla\varrho)^H,\nabla^3\varrho^H\right\rangle.
 \end{split}
 \end{equation}
 By using  the H\"older inequality, Lemma \ref{1interpolation}--Lemma \ref{infty}, the a priori assumption \eqref{E-t-priori}, integration by parts and the fact that $\|\nabla f^L\|_{L^2}\lesssim \|f^L\|_{L^2}$ and $\|f\|_{L^2}\approx \|f^L\|_{L^2} +\|f^H\|_{L^2}$, we have
 \begin{equation}\label{es-varrho-H-3-2}
 \begin{split}
  \left\langle \frac{P'(\varrho +\bar\rho)}{(\varrho +\bar\rho)^2}\nabla^3\varrho^H_t, \nabla^3\varrho^H \right\rangle
 &
 = \frac{d}{dt}\left\langle \frac{P'(\varrho +\bar\rho)}{2(\varrho +\bar\rho)^2}\nabla^3\varrho^H, \nabla^3\varrho^H \right\rangle
 -\left\langle \left(\frac{P'(\varrho +\bar\rho)}{2(\varrho +\bar\rho)^2}\right)'_\varrho \varrho_t\nabla^3\varrho^H, \nabla^3\varrho^H \right\rangle
 \\
 &
 \geq \frac{d}{dt}\left\langle \frac{P'(\varrho +\bar\rho)}{2(\varrho +\bar\rho)^2}\nabla^3\varrho^H, \nabla^3\varrho^H \right\rangle
 - C\|{\rm div}((\varrho +\bar\rho)u)\|_{L^\infty}\|\nabla^3\varrho^H\|_{L^2}^2
 \\
 &
 \geq \frac{d}{dt}\left\langle \frac{P'(\varrho +\bar\rho)}{2(\varrho +\bar\rho)^2}\nabla^3\varrho^H, \nabla^3\varrho^H \right\rangle
 - C\|\nabla(\varrho, {\bf u})\|_{L^\infty}\|\nabla^3\varrho\|_{L^2}^2,
 \end{split}
 \end{equation}
 \begin{align}
 &\left\langle \frac{P'(\varrho +\bar\rho)}{(\varrho +\bar\rho)^2}\nabla^3\left((\varrho +\bar\rho){\rm div}{\bf u}\right)^H, \nabla^3\varrho^H \right\rangle
 \nonumber
 \\
 &
 = \left\langle \frac{P'(\varrho +\bar\rho)}{(\varrho +\bar\rho)^2}\nabla^3\left((\varrho +\bar\rho){\rm div}{\bf u}\right), \nabla^3\varrho \right\rangle
    - \left\langle \frac{P'(\varrho +\bar\rho)}{(\varrho +\bar\rho)^2}\nabla^3\left((\varrho +\bar\rho){\rm div}{\bf u}\right)^L, \nabla^3\varrho \right\rangle
  \nonumber
  \\
  &
    \quad- \left\langle \frac{P'(\varrho +\bar\rho)}{(\varrho +\bar\rho)^2}\nabla^3\left((\varrho +\bar\rho){\rm div}{\bf u}\right)^H, \nabla^3\varrho^L \right\rangle
  \nonumber
  \\
  &
  = - \left\langle \frac{P'(\varrho +\bar\rho)}{\varrho +\bar\rho}\nabla^3 {\bf u}, \nabla^{4}\varrho \right\rangle
 -\left\langle \left(\frac{P'(\varrho +\bar\rho)}{\varrho +\bar\rho}\right)'_\varrho \nabla^3 {\bf u} \cdot\nabla\varrho, \nabla^3\varrho \right\rangle
 \nonumber
 \\
 &
 \quad
  +\left\langle \frac{P'(\varrho +\bar\rho)}{(\varrho +\bar\rho)^2}\left[\nabla^3, \varrho +\bar\rho\right]{\rm div}{\bf u}, \nabla^3\varrho \right\rangle
 - \frac{P'(\bar\rho)}{\bar\rho}\langle\nabla^3{\rm div}{\bf u}^L, \nabla^3\varrho\rangle
 - \frac{P'(\bar\rho)}{\bar\rho^2}\langle\nabla^3(\varrho{\rm div}{\bf u})^L, \nabla^3\varrho\rangle
 \nonumber
 \\
 &
 \quad
 -\left\langle\left(\frac{P'(\varrho +\bar\rho)}{(\varrho +\bar\rho)^2} - \frac{P'(\bar\rho)}{\bar\rho^2}\right)\nabla^3((\varrho +\bar\rho){\rm div}{\bf u})^L, \nabla^3\varrho\right\rangle
 - \frac{P'(\bar\rho)}{\bar\rho}\langle\nabla^3{\rm div}{\bf u}^H, \nabla^3\varrho^L\rangle
 \nonumber
 \\
 &
 \quad
 - \frac{P'(\bar\rho)}{\bar\rho^2}\langle\nabla^3(\varrho{\rm div}{\bf u})^H, \nabla^3\varrho^L\rangle
-\left\langle\left(\frac{P'(\varrho +\bar\rho)}{(\varrho +\bar\rho)^2} - \frac{P'(\bar\rho)}{\bar\rho^2}\right)\nabla^3((\varrho +\bar\rho){\rm div}{\bf u})^H, \nabla^3\varrho^L\right\rangle   \nonumber
   \\
  &
  \geq - \left\langle \frac{P'(\varrho +\bar\rho)}{\varrho +\bar\rho}\nabla^3 {\bf u}, \nabla^{4}\varrho \right\rangle
 -\frac{P'(\bar\rho)}{\bar\rho}\langle\nabla^3 {\rm div}{\bf u}^L, \nabla^3\varrho\rangle
 - \frac{P'(\bar\rho)}{\bar\rho}\langle\nabla^3{\rm div} {\bf u}^H, \nabla^3\varrho^L\rangle
 \nonumber
 \\
 &
 \quad
 - C\|\nabla(\varrho, {\bf u})\|_{L^\infty}\|\nabla^3(\varrho, {\bf u})\|_{L^2}^2
 \nonumber
 \\
 &
 \quad - C\left(\left\|\frac{P'(\varrho +\bar\rho)}{(\varrho +\bar\rho)^2} - \frac{P'(\bar\rho)}{\bar\rho^2}\right\|_{L^\infty}\|\nabla^3((\varrho +\bar\rho){\rm div}{\bf u})^L\|_{L^2} + \|\nabla^3(\varrho{\rm div}{\bf u})^L\|_{L^2}\right)\|\nabla^3\varrho\|_{L^2}
 \nonumber
 \\
 &
 \quad - C\left(\left\|\frac{P'(\varrho +\bar\rho)}{(\varrho +\bar\rho)^2} - \frac{P'(\bar\rho)}{\bar\rho^2}\right\|_{L^\infty}\|\nabla^2((\varrho +\bar\rho){\rm div}{\bf u})^H\|_{L^2} + \|\nabla^2(\varrho{\rm div}{\bf u})^H\|_{L^2}\right)\|\nabla^4\varrho^L\|_{L^2}
   \nonumber
   \\
  &
  \geq - \left\langle \frac{P'(\varrho +\bar\rho)}{\varrho +\bar\rho}\nabla^3 {\bf u}, \nabla^{4}\varrho \right\rangle
 + \frac{P'(\bar\rho)}{\bar\rho}\langle\nabla^3 {\bf u}^L, \nabla^4\varrho\rangle
 + \frac{P'(\bar\rho)}{\bar\rho}\langle\nabla^3 {\bf u}^H, \nabla^4\varrho^L\rangle
 \nonumber
 \\
 &
 \quad
 - C\|\nabla(\varrho, {\bf u})\|_{L^\infty}\|\nabla^3(\varrho, {\bf u})\|_{L^2}^2
 - C\left(\|\varrho\|_{L^\infty}\|\nabla^2((\varrho +\bar\rho){\rm div}{\bf u})\|_{L^2} + \|\nabla^2(\varrho{\rm div}{\bf u})\|_{L^2}\right)\|\nabla^3\varrho\|_{L^2}
  \nonumber
   \\
  &
  \geq - \left\langle \frac{P'(\varrho +\bar\rho)}{\varrho +\bar\rho}\nabla^3 {\bf u}, \nabla^{4}\varrho \right\rangle
 + \frac{P'(\bar\rho)}{\bar\rho}\langle\nabla^3 {\bf u}^L, \nabla^4\varrho\rangle
 + \frac{P'(\bar\rho)}{\bar\rho}\langle\nabla^3 {\bf u}^H, \nabla^4\varrho^L\rangle
 \nonumber
 \\
 &
 \quad
 - C(\|\varrho\|_{L^\infty} + \|\nabla{\bf u}\|_{L^3} + \|\nabla(\varrho, {\bf u})\|_{L^\infty})\|\nabla^3(\varrho, {\bf u})\|_{L^2}^2,
 \label{es-varrho-H-3-3}
 \end{align}
 and
 \begin{align}\label{es-varrho-H-3-4}
  &- \left\langle \frac{P'(\varrho +\bar\rho)}{(\varrho +\bar\rho)^2} \nabla^3({\bf u} \cdot\nabla\varrho)^H,\nabla^3\varrho^H\right\rangle
  \notag\\
  &
  = - \left\langle \frac{P'(\varrho +\bar\rho)}{(\varrho +\bar\rho)^2} \nabla^3({\bf u} \cdot\nabla\varrho),\nabla^3\varrho\right\rangle
   + \left\langle \frac{P'(\varrho +\bar\rho)}{(\varrho +\bar\rho)^2} \nabla^3({\bf u} \cdot\nabla\varrho)^L,\nabla^3\varrho\right\rangle
   \notag\\
   &
   \quad +\left\langle \frac{P'(\varrho +\bar\rho)}{(\varrho +\bar\rho)^2} \nabla^3({\bf u} \cdot\nabla\varrho)^H,\nabla^3\varrho^L\right\rangle
  \\
  &
  \le - \left\langle \frac{P'(\varrho +\bar\rho)}{(\varrho +\bar\rho)^2} [\nabla^3, {\bf u}] \cdot\nabla\varrho,\nabla^3\varrho\right\rangle +\left\langle {\rm div}\left(\frac{P'(\varrho +\bar\rho)}{(\varrho +\bar\rho)^2} {\bf u}\right),\frac12|\nabla^3\varrho|^2\right\rangle
  \notag\\
  &
  \quad +C\|\nabla^3({\bf u} \cdot\nabla\varrho)^L\|_{L^2}\|\nabla^3\varrho\|_{L^2}
  +C\|\nabla^2({\bf u} \cdot\nabla\varrho)^H\|_{L^2}\|\nabla^4\varrho^L\|_{L^2}
  \notag\\&\le C(\|{\bf u}\|_{L^\infty} + \|\nabla\varrho\|_{L^3} +\|\nabla(\varrho, {\bf u})\|_{L^\infty})\|\nabla^3(\varrho, {\bf u})\|_{L^2}^2.\notag
 \end{align}
 Thus plugging \eqref{es-varrho-H-3-2}--\eqref{es-varrho-H-3-4} into \eqref{es-varrho-H-3-1} yields \eqref{es-varrho-H-3}.

 Taking $\langle \nabla^3 \eqref{2eq-var-H}_2, \nabla^3 {\bf u}^H\rangle$, we can obtain
 \begin{equation}\label{es-u-H-3-1}
 \begin{split}
  &\frac12\frac{d}{dt}\|\nabla^3 {\bf u}^H\|_{L^2}^2 +\left\langle\nabla^3\left(\frac{P'(\varrho +\bar\rho)}{\varrho +\bar\rho}\nabla\varrho\right)^H, \nabla^3 {\bf u}^H\right\rangle +\alpha\|\nabla^3 {\bf u}^H\|_{L^2}^2 -\beta\langle\nabla^4\varphi^H, \nabla^3 {\bf u}^H\rangle
  \\
  &= -\langle\nabla^3({\bf u}\cdot\nabla {\bf u})^H, \nabla^3 {\bf u}^H\rangle.
  \end{split}
  \end{equation}
 As in the proof of \eqref{es-u-k-2} and \eqref{es-varrho-H-3-3}, we have
 \begin{align}
 &\left\langle\nabla^3\left(\frac{P'(\varrho +\bar\rho)}{\varrho +\bar\rho}\nabla\varrho\right)^H, \nabla^3 {\bf u}^H\right\rangle
 \nonumber
 \\
 &=
  \left\langle\nabla^3\left(\frac{P'(\varrho +\bar\rho)}{\varrho +\bar\rho}\nabla\varrho\right), \nabla^3 {\bf u}\right\rangle
  - \left\langle\nabla^3\left(\frac{P'(\varrho +\bar\rho)}{\varrho +\bar\rho}\nabla\varrho\right)^L, \nabla^3 {\bf u}\right\rangle
  - \left\langle\nabla^3\left(\frac{P'(\varrho +\bar\rho)}{\varrho +\bar\rho}\nabla\varrho\right)^H, \nabla^3 {\bf u}^L\right\rangle
 \nonumber
  \\
  &\geq \left\langle \frac{P'(\varrho +\bar\rho)}{\varrho +\bar\rho}\nabla^{4}\varrho, \nabla^3 {\bf u}\right\rangle
  - \left\langle \left[\nabla^3, \frac{P'(\varrho +\bar\rho)}{\varrho +\bar\rho}\right]\nabla\varrho, \nabla^3{\bf u}\right\rangle
  - \frac{P'(\bar\rho)}{\bar\rho}\left\langle\nabla^3\left(\nabla\varrho\right)^L, \nabla^3 {\bf u}\right\rangle
  \nonumber
 \\
  &
  \quad - \left\langle\nabla^3\left(\left(\frac{P'(\varrho +\bar\rho)}{\varrho +\bar\rho} -\frac{P'(\bar\rho)}{\bar\rho}\right)\nabla\varrho\right)^L, \nabla^3 {\bf u}\right\rangle
  - \frac{P'(\bar\rho)}{\bar\rho}\left\langle\nabla^3\left(\nabla\varrho\right)^H, \nabla^3 {\bf u}^L\right\rangle
  \nonumber
 \\
  &
  \quad- \left\langle\nabla^3\left(\left(\frac{P'(\varrho +\bar\rho)}{\varrho +\bar\rho} -\frac{P'(\bar\rho)}{\bar\rho}\right)\nabla\varrho\right)^H, \nabla^3 {\bf u}^L\right\rangle
 \nonumber
  \\
  &\geq \left\langle \frac{P'(\varrho +\bar\rho)}{\varrho +\bar\rho}\nabla^{4}\varrho, \nabla^3 {\bf u}\right\rangle
  - \frac{P'(\bar\rho)}{\bar\rho}\left\langle\nabla^4\varrho^L, \nabla^3 {\bf u}\right\rangle
  - \frac{P'(\bar\rho)}{\bar\rho}\left\langle\nabla^4\varrho^H, \nabla^3 {\bf u}^L\right\rangle
  - C\|\nabla\varrho\|_{L^\infty}\|\nabla^3(\varrho, {\bf u})\|_{L^2}^2,
  \label{es-u-H-3-2}
  \end{align}
 and
 \begin{equation}\label{es-u-H-3-3}
 \begin{split}
  &-\langle\nabla^3({\bf u}\cdot\nabla {\bf u})^H, \nabla^3 {\bf u}^H\rangle
  \\
  &
  = - \langle\nabla^3({\bf u}\cdot\nabla {\bf u}), \nabla^3 {\bf u}\rangle
     + \langle\nabla^3({\bf u}\cdot\nabla {\bf u})^L, \nabla^3 {\bf u}\rangle
     + \langle\nabla^3({\bf u}\cdot\nabla {\bf u})^H, \nabla^3 {\bf u}^L\rangle
   \\
  & = -\langle[\nabla^3, {\bf u}]\cdot\nabla {\bf u}, \nabla^3 {\bf u}\rangle +\left\langle{\rm div}{\bf u}, \frac12|\nabla^3 {\bf u}|^2\right\rangle
     + \langle\nabla^3({\bf u}\cdot\nabla {\bf u})^L, \nabla^3 {\bf u}\rangle
     - \langle\nabla^2({\bf u}\cdot\nabla {\bf u})^H, \nabla^3{\rm div}{\bf u}^L\rangle
  \\
  &\le C\left(\|u\|_{L^\infty}+\|\nabla {\bf u}\|_{L^3\cap L^\infty}\right)\|\nabla^3 {\bf u}\|_{L^2}^2.
 \end{split}
 \end{equation}
 Hence by combining \eqref{es-u-H-3-1} with \eqref{es-u-H-3-2}--\eqref{es-u-H-3-3} and using the a priori assumption \eqref{E-t-priori}, we can deduce \eqref{es-u-H-3}.

 Taking $\langle \nabla^3 \eqref{2eq-var-H}_3, \nabla^3 \varphi^H\rangle$ yields \eqref{es-varphi-H-3} by integration by parts.  And $\langle \nabla^3 \eqref{2eq-var-H}_1, \nabla^2 {\bf u}^H\rangle +\langle \nabla^2\eqref{2eq-var-H}_2, \nabla^3 \varrho^H\rangle$ yields
\begin{equation}\label{es-varrho-u-H-3-1}
\begin{split}
 &\frac{d}{dt}\langle\nabla^3\varrho^H, \nabla^2 {\bf u}^H\rangle
 +\langle\nabla^3((\varrho +\bar\rho){\rm div}{\bf u})^H, \nabla^2 {\bf u}^H\rangle
 +\left\langle\nabla^2\left(\frac{P'(\varrho +\bar\rho)}{\varrho +\bar\rho}\nabla\varrho\right)^H, \nabla^3\varrho^H\right\rangle
 \\
 &
 \quad+\alpha\langle\nabla^2 {\bf u}^H, \nabla^3\varrho^H\rangle -\beta\langle\nabla^3\varphi^H, \nabla^3\varrho^H\rangle
 \\
 &
 = -\langle\nabla^3({\bf u}\cdot\nabla\varrho)^H, \nabla^2 {\bf u}^H\rangle
 -\langle\nabla^2({\bf u}\cdot\nabla {\bf u})^H, \nabla^3\varrho^H\rangle,
\end{split}
\end{equation}
\begin{equation}\label{es-varrho-u-H-3-2}
\begin{split}
 \langle\nabla^3((\varrho +\bar\rho){\rm div}{\bf u})^H, \nabla^2 {\bf u}^H\rangle
 &
 = -\langle\nabla^2((\varrho +\bar\rho){\rm div}{\bf u})^H, \nabla^2{\rm div}{\bf u}^H\rangle
 \\
 &
 = -\bar\rho\langle\nabla^2{\rm div}{\bf u}^H, \nabla^2{\rm div}{\bf u}^H\rangle
 -\langle\nabla^2(\varrho {\rm div}{\bf u})^H, \nabla^2{\rm div}{\bf u}^H\rangle
 \\
 &
 \geq -\bar\rho\|\nabla^2{\rm div}{\bf u}^H\|_{L^2}^2 - C(\|\varrho\|_{L^\infty} +\|\nabla {\bf u}\|_{L^3})\|\nabla^3 (\varrho, {\bf u})\|_{L^2}^2,
 \end{split}
\end{equation}
\begin{equation}\label{es-varrho-u-H-3-3}
\begin{split}
 &\left\langle\nabla^2\left(\frac{P'(\varrho +\bar\rho)}{\varrho +\bar\rho}\nabla\varrho\right)^H, \nabla^3\varrho^H\right\rangle
 \\
 &
 = \frac{P'(\bar\rho)}{\bar\rho}\left\langle\nabla^2\left(\nabla\varrho\right)^H, \nabla^3\varrho^H\right\rangle
  + \left\langle\nabla^2\left(\left(\frac{P'(\varrho +\bar\rho)}{\varrho +\bar\rho} -\frac{P'(\bar\rho)}{\bar\rho}\right)\nabla\varrho\right)^H, \nabla^3\varrho^H\right\rangle
  \\
  &
  \geq  \frac{P'(\bar\rho)}{\bar\rho}\|\nabla^3\varrho^H\|_{L^2}^2 - C(\|\varrho\|_{L^\infty} +\|\nabla\varrho\|_{L^3})\|\nabla^3\varrho\|_{L^2}^2,
 \end{split}
\end{equation}

\begin{equation}\label{es-varrho-u-H-3-4}
 \alpha\langle\nabla^2 {\bf u}^H, \nabla^3\varrho^H\rangle
  -\beta\langle\nabla^3\varphi^H, \nabla^3\varrho^H\rangle
 \le \frac{P'(\bar\rho)}{2\bar\rho}\|\nabla^3\varrho^H\|_{L^2}^2
 +\frac{\alpha^2\bar\rho}{P'(\bar\rho)}\|\nabla^2 {\bf u}^H\|_{L^2}^2 +\frac{\beta^2\bar\rho}{P'(\bar\rho)}\|\nabla^3\varphi^H\|_{L^2}^2,
\end{equation}
and
\begin{equation}\label{es-varrho-u-H-3-5}
\begin{split}
 -\langle\nabla^3({\bf u}\cdot\nabla\varrho)^H, \nabla^2 {\bf u}^H\rangle
 -\langle\nabla^2({\bf u}\cdot\nabla {\bf u})^H, \nabla^3\varrho^H\rangle
 &
 = \langle\nabla^2({\bf u}\cdot\nabla\varrho)^H, \nabla^2{\rm div}{\bf u}^H\rangle
 -\langle\nabla^2({\bf u}\cdot\nabla {\bf u})^H, \nabla^3\varrho^H\rangle
 \\
 &
 \le C(\|{\bf u}\|_{L^\infty} +\|\nabla(\varrho, {\bf u})\|_{L^3})\|\nabla^3(\varrho, {\bf u})\|_{L^2}^2.
\end{split}
\end{equation}
Hence plugging \eqref{es-varrho-u-H-3-2}--\eqref{es-varrho-u-H-3-5} into \eqref{es-varrho-u-H-3-1} implies \eqref{es-varrho-u-H-3}.

 Taking $\langle \nabla^2 \eqref{2eq-var-H}_2, \nabla^3 \varphi^H\rangle +\langle \nabla^3\eqref{2eq-var-H}_3, \nabla^2 {\bf u}^H\rangle$ gives rise to
\begin{equation}\label{es-u-varphi-H-3-1}
\begin{split}
 &\frac{d}{dt}\langle\nabla^2 {\bf u}^H, \nabla^3\varphi^H\rangle
 + \left\langle\nabla^2\left(\frac{P'(\varrho +\bar\rho)}{\varrho +\bar\rho}\nabla\varrho\right)^H, \nabla^3\varphi^H\right\rangle
 +(\alpha +\nu)\langle\nabla^2 {\bf u}^H, \nabla^3\varphi^H\rangle
  -\beta\|\nabla^3\varphi^H\|_{L^2}^2
 \\
 &
 +\mu\langle\nabla^4\varphi^H, \nabla^3 {\bf u}^H\rangle
 -\gamma\langle\nabla^3\varrho^H, \nabla^2 {\bf u}^H\rangle
 \\
 &
 = - \langle\nabla^2({\bf u}\cdot\nabla {\bf u})^H, \nabla^3\varphi^H\rangle.
\end{split}
\end{equation}
Similarly, we have
\begin{align}\label{es-u-varphi-H-3-2}
 &\left\langle\nabla^2\left(\frac{P'(\varrho +\bar\rho)}{\varrho +\bar\rho}\nabla\varrho\right)^H, \nabla^3\varphi^H\right\rangle
 \notag\\
 &
 = \frac{P'(\bar\rho)}{\bar\rho} \left\langle\nabla^2\nabla\varrho^H, \nabla^3\varphi^H\right\rangle
  + \left\langle\nabla^2\left(\left(\frac{P'(\varrho +\bar\rho)}{\varrho +\bar\rho} -\frac{P'(\bar\rho)}{\bar\rho} \right)\nabla\varrho\right)^H, \nabla^3\varphi^H\right\rangle
 \\
 &
 \geq \frac{P'(\bar\rho)}{\bar\rho} \left\langle\nabla^3\varrho^H, \nabla^3\varphi^H\right\rangle
  -C(\|\varrho\|_{L^\infty} +\|\nabla\varrho\|_{L^3})\|\nabla^3(\varrho, \varphi)\|_{L^2}^2,\notag
\end{align}
then by combining \eqref{es-u-varphi-H-3-1} with \eqref{es-u-varphi-H-3-2} and making use of the Cauchy inequality, we can obtain \eqref{es-u-varphi-H-3}. This completes the proof of Lemma \ref{a priori-sol-H}.
\end{proof}
\section{Proof of Nonlinear instability}\label{S4}

We mention that the local existence of solutions to the system can be established by using the standard iteration argument as in \cite{K, KK} and the details are omitted. Furthermore, by the estimate in Lemma \ref{es-nonli-instab-en}, we can conclude the following proposition.
\begin{Proposition}\label{nonli-insta2}
 Assume that the notations and hypotheses in Theorem \ref{2mainth} are in force. For any given initial data $(\widehat{\varrho_0}, \widehat{d_0}, \widehat{\varphi_0}) \in H^3(\mathbb{R}^3)$, there exists a $T>0$ and a unique solution $(\varrho, {\bf u}, \varphi) \in C^0([0, T]; H^3(\mathbb{R}^3))$ to the Cauchy problem \eqref{2eq-var}. Moreover, there is a constant $\bar\delta_0\in (0,1]$, such that if $\mathcal{E}(t) \le \bar\delta_0$ on $[0,T]$, then the solution satisfies
 \begin{equation}\label{insta-E1}
   \mathcal{E}^2(t) \le C\mathcal{E}^2(0) + C\int_0^t \|(\varrho, {\bf u}, \varphi)(\tau)\|_{L^2}^2d\tau,
 \end{equation}
 where $\mathcal{E}(t)$ is defined in \eqref{E-t-priori}.
 \end{Proposition}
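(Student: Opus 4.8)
The plan is to read Proposition \ref{nonli-insta2} as the combination of two ingredients: a local well-posedness statement for the Cauchy problem \eqref{2eq-var} in $H^3(\mathbb{R}^3)$, and the energy inequality \eqref{insta-E1}, which is precisely the estimate \eqref{es-energy1} of Lemma \ref{es-nonli-instab-en} written in the shorthand $\mathcal{E}(t)=\|(\varrho,{\bf u},\varphi)\|_3$. The first ingredient is standard; the second is already proved in Section \ref{S3}. So the argument is essentially an assembly, and the only points deserving care are the regularity needed to run the Section \ref{S3} estimates and the fact that the constant $C$ in \eqref{insta-E1} must not depend on $T$.

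For the local existence I would proceed exactly as in \cite{K,KK}. The system \eqref{2eq-var} consists of the quasi-linear first-order block for $(\varrho,{\bf u})$, which is symmetrizable (a Friedrichs symmetrizer is $\mathrm{diag}\big(\tfrac{P'(\varrho+\bar\rho)}{(\varrho+\bar\rho)^2},\mathbb{I}_3\big)$, essentially the weight already used in Lemma \ref{a priori-sol}), coupled to the linear uniformly parabolic equation $\varphi_t-\mu\Delta\varphi+\nu\varphi=\gamma\varrho$. One constructs approximate solutions by freezing the coefficients at the previous iterate, derives uniform bounds in $C^0([0,T];H^3)$ on an interval $T=T\big(\|(\varrho_0,{\bf u}_0,\varphi_0)\|_3\big)$ by running the a priori estimates of Lemma \ref{a priori-sol} on the linearized problems (smallness keeping $\varrho+\bar\rho$ away from zero), shows that the iterates form a Cauchy sequence in the lower-order norm $C^0([0,T];L^2)$, and passes to the limit; uniqueness is a Gr\"onwall argument on the difference of two solutions. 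This produces the unique solution $(\varrho,{\bf u},\varphi)\in C^0([0,T];H^3(\mathbb{R}^3))$ claimed in the statement, and the details may be omitted.

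For \eqref{insta-E1} I would fix $\bar\delta_0\in(0,1]$ once and for all, small enough that $\bar\delta_0$ is no larger than the smallness threshold $\delta$ in the a priori assumption \eqref{E-t-priori}. If $\mathcal{E}(t)\le\bar\delta_0$ throughout $[0,T]$, then $(\varrho,{\bf u},\varphi)$ verifies \eqref{E-t-priori} on $[0,T]$, so the hypotheses of Lemma \ref{a priori-sol} and hence of Lemma \ref{es-nonli-instab-en} hold on this interval, and \eqref{es-energy1} gives \eqref{insta-E1} verbatim. The crucial point to emphasize is that $C$ is \emph{independent of} $T$: it is inherited from the equivalence $\mathcal{L}_1(t)\approx\mathcal{E}(t)^2$ and from the absorbing constants in the proof of Lemma \ref{es-nonli-instab-en} (the choices of $K_1,K_2$ and $\epsilon$), all of which depend only on $\bar\rho,\alpha,\beta,\mu,\nu,\gamma$ and on $P$. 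This $T$-uniformity is exactly what will later let \eqref{insta-E1} be combined with the lower growth bound of Theorem \ref{1mainth} to run the escape-time argument of Theorem \ref{2mainth}.

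The genuinely technical work — building the Lyapunov functional $\mathcal{L}_1$, dominating the top-order term $\|\nabla^3(\varrho,{\bf u},\varphi)\|_{L^2}^2$ via the interpolation $\|\nabla^2 f\|_{L^2}\le\|f\|_{L^2}^{1/3}\|\nabla^3 f\|_{L^2}^{2/3}$ and then absorbing it by taking $\epsilon$ small, and handling the commutator and nonlinear terms through Lemma \ref{1commutator} and Lemmas \ref{1interpolation}--\ref{infty} — has already been carried out in Lemmas \ref{a priori-sol}--\ref{es-nonli-instab-en}. Consequently the only residual obstacle for the present proposition is the bookkeeping one: certifying that the local solution indeed lies in $H^3$ (so that those third-order manipulations are legitimate) and that $\bar\delta_0$ can be chosen uniformly in the initial datum and in $T$. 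Both are routine once the iteration scheme of \cite{K,KK} is in hand, so no new difficulty arises here.
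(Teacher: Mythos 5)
Your proposal matches the paper's argument: the paper likewise dispatches local existence by the standard iteration scheme of \cite{K, KK} (details omitted) and obtains \eqref{insta-E1} directly as the estimate \eqref{es-energy1} of Lemma \ref{es-nonli-instab-en}, valid once $\mathcal{E}(t)\le\bar\delta_0$ puts the solution within the a priori assumption \eqref{E-t-priori}. Your added remarks on the symmetrizer and the $T$-independence of $C$ are consistent elaborations, not a different route.
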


Now, we are in a position to prove Theorem \ref{2mainth} by adopting the basic ideas in \cite{GS, JT, JJ, JJW, WT}.

\underline{Proof of Theorem \ref{2mainth}}.  By virtue of Proposition \ref{li-insta2}, we can find a linear solution
$(\varrho^l_{\bar\Theta}, {\bf u}^l_{\bar\Theta}, \varphi^l_{\bar\Theta})$ satisfying
 \begin{equation}\nonumber
  \varrho^l_{\bar\Theta} = \mathcal{F}^{-1}\left[e^{t\lambda_0(|\xi|)}\widehat{\varrho_{0,\bar\Theta}^l}\right], \quad {\bf u}_{\bar\Theta}^l = \Lambda^{-1}\nabla\mathcal{F}^{-1}\left[e^{t\lambda_0(|\xi|)}\widehat{d_{0,\bar\Theta}^l}\right] \quad\text{and}\quad \varphi^l_{\bar\Theta} =  \mathcal{F}^{-1}\left[e^{t\lambda_0(|\xi|)}\widehat{\varphi_{0,\bar\Theta}^l}\right],
 \end{equation}
 which solves the linear system \eqref{eq-li} with the initial data $(\varrho_{0,\bar\Theta}^l, {\bf u}_{0,\bar\Theta}^l, \varphi_{0,\bar\Theta}^l)$ constructed in \eqref{li-varrho-u-varphi-1} satisfying
 \begin{equation}\nonumber
  \|\varrho_{0,\bar\Theta}^l\|_{L^2} \|{\bf u}_{0,\bar\Theta}^l\|_{L^2} \|\varphi_{0,\bar\Theta}^l\|_{L^2} >0,
 \end{equation}
 and obviously $(\varrho_{0,\bar\Theta}^l, {\bf u}_{0,\bar\Theta}^l, \varphi_{0,\bar\Theta}^l) \in H^3(\mathbb{R}^3)$.

 Denote $(\varrho_{0,\bar\Theta}^\delta, {\bf u}_{0,\bar\Theta}^\delta, \varphi_{0,\bar\Theta}^\delta) := \delta (\varrho_{0,\bar\Theta}^l, {\bf u}_{0,\bar\Theta}^l, \varphi_{0,\bar\Theta}^l)$ and $C_0:= \|(\varrho_{0,\bar\Theta}^l, {\bf u}_{0,\bar\Theta}^l, \varphi_{0,\bar\Theta}^l)\|_3$. Due to Proposition \ref{nonli-insta2}, there is a $\widetilde{\delta}\in (0,1)$, such that for any $\delta <\widetilde{\delta}$, there exists a unique local solution $(\varrho^\delta, {\bf u}^\delta, \varphi^\delta)$ to the system \eqref{2eq-var} with the initial data $(\varrho_{0,\bar\Theta}^\delta, {\bf u}_{0,\bar\Theta}^\delta, \varphi_{0,\bar\Theta}^\delta)$. Let $\bar{\delta}_0$ be the same constants as in Proposition \ref{nonli-insta2} and $\delta_0 = \min\{\widetilde{\delta}, \bar{\delta}_0, \frac1{4C_0},1\}$. For any $\delta \in(0, \delta_0)$, we let
 \begin{equation}\nonumber
  T^\delta = \frac1\Theta \ln\frac{2\epsilon_1}{\delta}, \;\ \text{i.e.}, \;\ \delta e^{\Theta T^\delta} = 2\epsilon_1, \;\ \text{and} \;\ \bar{\Theta} = \min\left\{\frac{\Theta}{2}, \frac{1}{T^\delta}\right\}
 \end{equation}
 where $\epsilon_1 = \min\left\{ \frac{\delta_0}{4C_1\sqrt{C_0^2 + \frac{1 }{2\Theta \delta_0^2}}}, \frac1{C_2^6\delta_0^4}, \frac{32\delta_0^2}{(C_2e)^6}, \frac{32\delta_0^2c_1}{(C_2e)^6}\right\}$ with $C_1$, $C_2$ and $c_1$ to be determined. Moreover, we set
 \begin{equation}\nonumber
  T^* = \sup \left\{ t\in (0, T^{max}) \Bigg | \sup_{0\le \tau\le t} \mathcal{E}((\varrho^\delta, {\bf u}^\delta, \varphi^\delta)(\tau)) \le \delta_0\right\},
 \end{equation}
and
 \begin{equation}\nonumber
  T^{**} = \sup \left\{ t\in (0, T^{max}) \Bigg | \sup_{0\le \tau\le t} \|(\varrho^\delta, {\bf u}^\delta, \varphi^\delta)(\tau))\|_{L^2}\le \delta \delta_0^{-1} e^{\Theta t}\right\},
 \end{equation}
 where $T^{max}$ denotes the maximal time of existence. Obviously, $T^* T^{**}>0$, and we have
 \begin{equation}\label{T*2}
  \mathcal{E}((\varrho^\delta, {\bf u}^\delta, \varphi^\delta)(T^*)) = \delta_0 \; \text{ if }\; T^* <\infty,
 \end{equation}
 and
 \begin{equation}\label{T**2}
  \|(\varrho^\delta, {\bf u}^\delta, \varphi^\delta)(T^{**})\|_{L^2} = \delta \delta_0^{-1} e^{\Theta t} \; \text{ if }\; T^{**} < T^{max}.
 \end{equation}

 We claim that
  \begin{equation}\label{T-delta2}
  T^\delta = \min \{T^\delta, T^*, T^{**}\}.
 \end{equation}
 If $T^* = \min\{T^\delta, T^*, T^{**}\}$, then $T^*\le T^\delta <\infty$.  Hence by \eqref{insta-E1}, there exists one positive constant $C_1$, such that
 \begin{equation}\nonumber
  \begin{split}
  \mathcal{E}((\varrho^\delta, {\bf u}^\delta, \varphi^\delta)(T^*))^2
  &
  \le C\mathcal{E}((\varrho^\delta, {\bf u}^\delta, \varphi^\delta)(0))^2 + C\int_0^{T^*} \|(\varrho^\delta, {\bf u}^\delta, \varphi^\delta)(t)\|_{L^2}^2dt
  \\
  &
  \le CC_0^2\delta^2 +C\int_0^{T^*}\left(\delta \delta_0^{-1} e^{\Theta t}\right)^2dt
  \\
  &
  \le \left(CC_0^2 + \frac{C e^{2\Theta T^*}}{2\Theta \delta_0^{2} }\right)\delta^2
  \\
  &
  \le C_1^2\left(C_0^2 + \frac{1 }{2\Theta \delta_0^2}\right)e^{2\Theta T^\delta}\delta^2
  \\
  &
  \le C_1^2\left(C_0^2 + \frac{1 }{2\Theta \delta_0^2}\right)(2\epsilon_1)^2,
  \end{split}
  \end{equation}
  which implies
  \begin{equation}\nonumber
    \mathcal{E}((\varrho^\delta, {\bf u}^\delta, \varphi^\delta)(T^*))
    \le C_1\sqrt{C_0^2 + \frac{1 }{2\Theta \delta_0^2}}\times 2\epsilon_1 \le \frac{\delta_0}2 < \delta_0.
   \end{equation}
   This contradicts \eqref{T*2}. If $T^{**} = \min\{T^\delta, T^*, T^{**}\}$, then $T^{**} < T^{max}$. Hence we have from \eqref{so-ex}, Propositions \ref{pro-L2-theory}--\ref{li-insta2} and \eqref{T**2} that
 \begin{align}\label{T**3}
  \|(\varrho^\delta, {\bf u}^\delta, \varphi^\delta)(T^{**})\|_{L^2}
  &
  = \|(\widehat{\varrho^\delta}, \widehat{{\bf u}^\delta}, \widehat{\varphi^\delta})(T^{**})\|_{L^2}
  = \|(\widehat{\varrho^\delta}, \widehat{d^\delta}, \widehat{\varphi^\delta})(T^{**})\|_{L^2}
  \notag\\
  &
  \le
  \|\delta(\widehat{\varrho^l_{\bar\Theta}}, \widehat{{\bf u}^l_{\bar\Theta}}, \widehat{\varphi^l_{\bar\Theta}})(T^{**})\|_{L^2}
     + C\!\!\int_0^{T^{**}}\!\!\left\|e^{(T^{**} - t) A(|\xi|)}\mathcal{F}[\mathcal{N}(\varrho^\delta, {\bf u}^\delta)] \right\|_{L^2}dt
  \notag\\
  &
  \le \delta \|e^{\lambda_0(|\xi|)T^{**}}(\widehat{\varrho_{0,\bar\Theta}^l}, \widehat{{\bf u}_{0,\bar\Theta}^l}, \widehat{\varphi_{0,\bar\Theta}^l})\|_{L^2}
  \notag\\
  &\quad +C\int_0^{T^{**}} e^{\Theta(T^{**} - t)}\|(\varrho^\delta, {\bf u}^\delta)(t)\|_{L^2}\|\nabla(\varrho^\delta, {\bf u}^\delta)(t)\|_{L^\infty}dt
  \notag\\
  &
  \le C_0\delta e^{\Theta T^{**}} +C\int_0^{T^{**}} e^{\Theta(T^{**} - t)}\|(\varrho^\delta, {\bf u}^\delta)(t)\|_{L^2}^\frac76\|\nabla^3(\varrho^\delta, {\bf u}^\delta)(t)\|_{L^2}^\frac56dt
  \\
  &
  \le C_0\delta e^{\Theta T^{**}} +C\int_0^{T^{**}} e^{\Theta(T^{**} - t)}\delta^\frac76\delta_0^{-\frac76} e^{\frac76\Theta t} \delta_0^\frac56dt
  \notag\\
  &
  \le C_0\delta e^{\Theta T^{**}} +Ce^{\Theta T^{**}}\delta^\frac76\delta_0^{-\frac13}\frac{e^{\frac{\Theta T^{**}}{6} }}{\frac\Theta 6}
  \notag\\
  &
  :=\frac12\delta\delta_0^{-1}e^{\Theta T^{**}}\left(2C_0\delta_0 +12C\Theta\delta^\frac16\delta_0^\frac23 e^{\frac{\Theta T^\delta}{6}}\right)
 \notag \\
  &
\le\frac12\delta\delta_0^{-1}e^{\Theta T^{**}}\left(2C_0\delta_0 + \frac{C_2}2\delta_0^\frac23\epsilon_1^\frac16\right)\le\frac12\delta\delta_0^{-1}e^{\Theta T^{**}},\notag
 \end{align}
   where $\delta_0 \le \frac{1}{4C_0}$ and the definition of $\epsilon_1$ are used. This contradicts \eqref{T**2}. Hence, \eqref{T-delta2} holds.

 At last, by using the similar estimates in \eqref{T**3}, we have from \eqref{so-ex}, Proposition \ref{pro-L2-theory} and Proposition \ref{li-insta2} that
 \begin{equation}\label{lower-varrho22}
  \begin{split}
  \|\varrho^\delta(T^\delta)\|_{L^2}
  &
  \geq
  \|\delta\widehat{\varrho^l_{\bar\Theta}}(T^{\delta})\|_{L^2}
     - C\int_0^{T^\delta}\left\|e^{(T^{\delta} - t) A(|\xi|)}\mathcal{F}[\mathcal{N}(\varrho^\delta, {\bf u}^\delta)]\right\|_{L^2}dt
   \\
   &
   \geq \delta e^{(\Theta - \widetilde{\Theta})T^\delta}\|\widehat{\varrho_{0,\bar\Theta}^l}\|_{L^2} - \frac{C_2}4e^{\frac{7\Theta }{6}T^\delta}\delta^{\frac76} \delta_0^{-\frac13}
   \\
   &
   \geq\delta e^{\Theta T^\delta}\left(e^{-\widetilde{\Theta} T^\delta} -\frac{C_2}{4}e^{\frac{\Theta}{6}T^\delta}\delta^{\frac16}\delta_0^{-\frac13}\right)
   \\
   &
    \geq\delta e^{\Theta T^\delta}\left(e^{- 1} -\frac{C_2}{4}e^{\frac{\Theta}{6}T^\delta}\delta^{\frac16}\delta_0^{-\frac13}\right)
   \\
   &
    = 2\epsilon_1\left(e^{- 1} -\frac{C_2}{4}(2\epsilon_1)^\frac16\delta_0^{-\frac13}\right)
   \\
   &\geq 2\epsilon_1\times\frac12e^{- 1} = \frac{\epsilon_1}{e},
   \end{split}
   \end{equation}
   where $\widetilde{\Theta} \le \frac{1}{T^\delta}$ and the fact that $\epsilon_1 \le \frac{32\delta_0^2}{(C_2e)^6} $ are used. Moreover, from \eqref{li-varrho-u-varphi-1}, we have
   \begin{equation}\nonumber
    \|\widehat{d_{0,\bar\Theta}^l}\|_{L^2}
    = \left\|-\frac{\lambda_0(|\xi|) \Psi(\xi)}{a|\xi|\|\Psi\|_{L^2}} \right\|_{L^2}
    \geq \inf_{|\xi - \xi_0| <2\bar\zeta}\left|-\frac{\lambda_0(|\xi|)}{a|\xi|}\right|
    \geq \frac{\Theta}{a|\xi_0|}
   \end{equation}
   and
   \begin{equation}\nonumber
    \|\widehat{\varphi_{0,\bar\Theta}^l}\|_{L^2}
    = \left\|\frac{\gamma \Psi(\xi)}{\left(\lambda_0(|\xi|) + \nu + \mu|\xi|^2\right)\|\Psi\|_{L^2}} \right\|_{L^2}
    \geq \inf_{|\xi - \xi_0| <2\bar\zeta}\left|\frac{\gamma }{\lambda_0(|\xi|) + \nu + \mu|\xi|^2}\right|
    \geq \frac{\gamma }{\Theta + \nu + 4\mu|\xi_0|^2}.
    \end{equation}
  Set $c_1 = c_1(\Theta, |\xi_0|, \gamma, \nu, \mu) = \min\left\{\frac{\Theta}{a|\xi_0|}, \frac{\gamma }{\Theta + \nu + 4\mu|\xi_0|^2}\right\}$. Therefore, as in the proof of \eqref{lower-varrho22}, we can conclude that
  \begin{equation}\nonumber
   \|{\bf u}^\delta(T^\delta)\|_{L^2}, \quad \|\varphi^\delta(T^\delta)\|_{L^2} \geq \frac{c_1\epsilon_1}{e}.
  \end{equation}
   This completes the proof of Theorem \ref{2mainth} by taking $\displaystyle \epsilon_0 = \min\left\{\frac{\epsilon_1}{e}, \frac{c_1\epsilon_1}{e}\right\}$.
   $\hfill\square$

\section{Proof of Global Existence and Upper Decay Estimates}\label{S5}

This section is devoted to prove the global existence and upper decay rate of the solution stated in Theorem \ref{3mainth}.
It is well--known that the global existence of solutions can be established by combining the local existence result with a priori estimates. The local strong solutions can be obtained by a standard argument as in \cite{K, KK}, then by using the standard continuity argument, global solutions can be proved by the local existence and the a priori estimate.

\begin{Proposition}[A priori estimate]\label{priori1}
  Under the assumption of Lemma \ref{a priori-sol} and $\frac{\nu P'(\bar\rho)}{\gamma \bar\rho} -\beta >0$, it holds that
 \begin{equation}\label{es-priori1}
 \mathcal{E}(t)^2
  +\int_0^t \left(\|(\nu\varphi -\gamma\varrho)(\tau)\|_{L^2}^2 + \left\|\nabla \varrho(\tau)\right\|_2^2 +
  \|{\bf u}(\tau)\|_3^2 +\|\nabla \varphi(\tau)\|_3^2\right) d\tau
  \le C\mathcal{E}(0)^2.
 \end{equation}
 \end{Proposition}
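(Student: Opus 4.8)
The plan is to combine the three Lyapunov-type functionals constructed in Lemmas~\ref{a priori-sol}, \ref{es-u-varphi-ener}, \ref{es-dispper-varphi-ener} (for the low-plus-full energy) with the high-frequency estimates of Lemma~\ref{a priori-sol-H}, and to close everything by absorbing the nonlinear remainders using the a priori smallness assumption \eqref{E-t-priori}. First I would record the basic $L^2$-level inequalities: adding \eqref{es-varrho-0}--\eqref{es-varphi-0} produces a term $\frac{d}{dt}(\|\varrho\|_{L^2}^2+\|{\bf u}\|_{L^2}^2+\|\varphi\|_{L^2}^2)$ controlled by $\|{\bf u}\|_{L^2}^2$, the dissipation $\|\nabla\varphi\|_{L^2}^2+\|\nu\varphi-\gamma\varrho\|_{L^2}^2$, and cubic terms $\mathcal{E}(t)(\|\nabla\varrho\|_{L^2}^2+\|\nabla{\bf u}\|_{L^2}^2)$; note the coupling $\langle{\bf u},\nabla\varrho\rangle$ and $\langle\nabla\varphi,{\bf u}\rangle$ cancels when the equations are added in the combination that already appears in those lemmas. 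Then I would invoke Lemma~\ref{es-u-varphi-ener}, whose inequality \eqref{es-u-varphi-ener1} supplies, after summation over $k=1,2,3$, full control of $\frac{d}{dt}$ of a functional equivalent to $\|({\bf u},\nabla{\bf u})\|_2^2+\|\nabla\varphi\|_3^2+\|\nabla\varrho\|_2^2+\|\varrho\|_2^2$ modulo the crucial sign condition $\frac{\nu P'(\bar\rho)}{\gamma\bar\rho}-\beta>0$, which makes the coefficient $\frac{\beta}{2\mu}\big(\frac{\nu P'(\bar\rho)}{\gamma\bar\rho}-\beta\big)\|\nabla^k\varphi\|_{L^2}^2$ positive; the right-hand side is only $\mathcal{E}(t)\|\nabla\varrho\|_2^2$. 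Finally Lemma~\ref{es-dispper-varphi-ener} gives, via \eqref{es-dispper-varphi-ener1}, the missing dissipation $\|\nabla\varrho\|_2^2+\|\nu\varphi-\gamma\varrho\|_{L^2}^2$ at the cost of $\|{\bf u}\|_3^2$ and $\|\nabla\varphi\|_{L^2}^2$, which are already dissipated by the previous functional.

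The core of the argument is then a carefully weighted linear combination: I would set
\[
 \mathcal{G}(t) = \mathcal{G}_{\ref{es-u-varphi-ener}}(t) + \kappa_1\,\mathcal{G}_{\ref{es-dispper-varphi-ener}}(t) + \kappa_2\,\big(\|\varrho\|_{L^2}^2+\|{\bf u}\|_{L^2}^2+\|\varphi\|_{L^2}^2\big),
\]
with $\kappa_1$ chosen small enough that the $\|{\bf u}\|_3^2$ and $\|\nabla\varphi\|_{L^2}^2$ terms produced by $\mathcal{G}_{\ref{es-dispper-varphi-ener}}$ are absorbed into the dissipation of $\mathcal{G}_{\ref{es-u-varphi-ener}}$, and $\kappa_2$ chosen so that $\mathcal{G}(t)\approx \|(\varrho,{\bf u},\varphi)\|_3^2 - (\text{highest-order }\varrho\text{ and }\varphi)$; one then checks that $\mathcal{G}$ is equivalent to $\|(\varrho,{\bf u},\varphi)\|_2^2+\|\nabla\varphi\|_3^2+\|\nabla^3\varrho\|_{L^2}^2$ up to the top-order $\varrho$- and $\varphi$-terms, whose dissipation \eqref{es-varphi-H-3}, \eqref{es-varrho-u-H-3} is only available in the high-frequency regime. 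To recover the top order I would add $\kappa_3$ times the high-frequency functional built from \eqref{es-varrho-H-3}--\eqref{es-u-varphi-H-3}: there the extra cross terms $\frac{P'(\bar\rho)}{\bar\rho}\langle\nabla^3{\bf u}^L,\nabla^4\varrho\rangle$ etc.\ are low-frequency, hence bounded by $\|\nabla^3{\bf u}\|_{L^2}\|\nabla^3\varrho\|_{L^2}$ (using $\|\nabla f^L\|_{L^2}\lesssim\|f^L\|_{L^2}$), and are absorbed; and the nonlinear remainders there carry factors $\|(\varrho,{\bf u},\nabla\varrho,\nabla{\bf u})\|_{L^\infty}+\|\nabla\varrho\|_{L^3}\lesssim \mathcal{E}(t)$ by Sobolev embedding. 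After fixing the $\kappa_i$ (in the order $\kappa_1$, then $\kappa_3$, then $\kappa_2$), all the cubic terms on the right are of the form $\mathcal{E}(t)\times(\text{dissipation})$, so by \eqref{E-t-priori} with $\delta$ small they are absorbed into the left, yielding
\[
 \frac{d}{dt}\mathcal{G}(t) + c\Big(\|\nu\varphi-\gamma\varrho\|_{L^2}^2 + \|\nabla\varrho\|_2^2 + \|{\bf u}\|_3^2 + \|\nabla\varphi\|_3^2\Big) \le 0
\]
with $\mathcal{G}(t)\approx\mathcal{E}(t)^2$. Integrating in time over $[0,t]$ and using $\mathcal{G}(0)\lesssim\mathcal{E}(0)^2$ gives \eqref{es-priori1}.

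The main obstacle is purely algebraic but genuinely delicate: verifying that the many cross terms $\langle\nabla^k\varrho,\nabla^{k-1}{\bf u}\rangle$, $\langle\nabla^{k-1}{\bf u},\nabla^k\varphi\rangle$, $\langle\nabla^k{\bf u},\nabla^{k+1}\varphi\rangle$, the pressure-coupling term $\langle\frac{P'(\varrho+\bar\rho)}{\varrho+\bar\rho}\nabla^{k+1}\varrho,\nabla^k{\bf u}\rangle$, and their high-frequency counterparts all cancel or get absorbed once the weights $\frac{\beta}{\mu}$, $\frac{\beta P'(\bar\rho)}{\mu\gamma\bar\rho}$, $\frac{\alpha+\nu}{\mu}$, $K_1$, etc.\ from the lemma statements are used — and that the condition $\frac{\nu P'(\bar\rho)}{\gamma\bar\rho}>\beta$ is exactly what keeps every resulting quadratic form positive definite (in particular the coefficient of $\|\nabla^k\varphi\|_{L^2}^2$ and the coefficient $\frac{1}{2\bar\rho}\big(\frac{\gamma}{\mu}(\frac{\nu P'(\bar\rho)}{\gamma\bar\rho}-\beta)+\frac{\alpha P'(\bar\rho)}{\mu\bar\rho}\big)$ of $\|\nabla^{k-1}\varrho\|_{L^2}^2$). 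One also has to be slightly careful that, unlike \cite{LPW}, no smallness of $\|\nabla^4\varphi_0\|_{L^2}$ is invoked: the term $\|\nabla^4\varphi\|_{L^2}^2$ appears only with a good sign on the dissipation side (coming from $\mu\|\nabla^{k+1}\varphi\|_{L^2}^2$ with $k=3$), so it need not be part of $\mathcal{G}(t)$, and the $H^3$-energy closes on its own. Once the weighted combination is pinned down, the absorption of nonlinearities and the time integration are routine.
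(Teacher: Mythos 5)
Your proposal is correct in its essentials and its core is exactly the paper's argument: form the weighted sum \eqref{es-u-varphi-ener1} $+K_3\times$\eqref{es-dispper-varphi-ener1} with $K_3$ small, use the sign condition $\frac{\nu P'(\bar\rho)}{\gamma\bar\rho}-\beta>0$ to keep the $\varphi$-dissipation positive, absorb the cubic remainders $\mathcal{E}(t)\|\nabla\varrho\|_2^2$ via \eqref{E-t-priori}, check that the resulting functional is equivalent to $\mathcal{E}(t)^2$ (with $\|\varphi\|_{L^2}$ recovered from $\|\nu\varphi-\gamma\varrho\|_{L^2}+\|\varrho\|_{L^2}$, not present directly), and integrate in time. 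Where you differ is in adding two extra ingredients that the paper does not use here: the zeroth-order sum \eqref{es-varrho-0}--\eqref{es-varphi-0} (redundant, since the $k=1$ terms of \eqref{es-u-varphi-ener1} already carry $\|\varrho\|_{L^2}^2$, $\|{\bf u}\|_{L^2}^2$, and the $K_3$-term carries $\|\nu\varphi-\gamma\varrho\|_{L^2}^2$; also the $L^2$-level cross terms do not cancel as you assert, though they are harmlessly absorbable since $\|{\bf u}\|_{L^2}$, $\|\nabla\varrho\|_{L^2}$, $\|\nabla\varphi\|_{L^2}$ are all dissipated) and, more importantly, the high-frequency functional of Lemma \ref{a priori-sol-H}. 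The premise you give for the latter --- that the top-order dissipation of $\varrho$ and $\varphi$ ``is only available in the high-frequency regime'' --- is a misreading: \eqref{es-dispper-varphi-ener1} already dissipates $\|\nabla\varrho\|_2^2$ (hence $\|\nabla^3\varrho\|_{L^2}^2$) on all frequencies, and \eqref{es-u-varphi-ener1} dissipates $\|\nabla^k\varphi\|_{L^2}^2$ for $k=1,\dots,4$, so the two-lemma combination closes the $H^3$ a priori estimate by itself; Lemma \ref{a priori-sol-H} is reserved in the paper for the later optimal-decay analysis of $\nabla^3{\bf u}$. Your extra terms do not break the argument (with $\kappa_2,\kappa_3$ small the low--high cross terms and the $\|\nabla^2{\bf u}^H\|_{L^2}^2$ cost are absorbed as you indicate, and in fact those cross terms cancel exactly upon adding \eqref{es-varrho-H-3} and \eqref{es-u-H-3} by self-adjointness of the frequency cut-off), but they complicate the bookkeeping without gaining anything; the clean route is the two-term combination with the single constant $K_3$.
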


\begin{proof}
Take $K_3 = \min\left\{\frac{\min\left\{\frac{\alpha}{4}, \frac{\beta}{4\mu}\left(\frac{\nu P'(\bar\rho)}{\gamma \bar\rho} -\beta\right)\right\}}{\frac{\bar\rho P'(\bar\rho)}{4\mu\nu} +\frac{(2\alpha^2 + \beta^2)\bar\rho}{P'(\bar\rho)} +4\bar\rho}, \sqrt{\frac{\alpha P'(\bar\rho)}{2\mu\bar\rho^2}}\right\}$. Then making the summation \eqref{es-u-varphi-ener1} $+K_3\times $\eqref{es-dispper-varphi-ener1} and letting $\delta$ sufficiently small, one can arrive at
 \begin{equation}\label{es-varrho-u-varphi-ener2}
 \begin{split}
  &\frac{d}{dt}\left(\sum_{k=1}^3\left(\frac12\|\nabla^k{\bf u}\|_{L^2}^2 +\frac\beta\mu \langle\nabla^{k-1}{\bf u}, \nabla^k\varphi\rangle +\frac{\beta P'(\bar\rho)}{2\mu\gamma\bar\rho}\|\nabla^k\varphi\|_{L^2}^2 +\frac{\alpha +\nu}{2\mu}\|\nabla^{k-1}{\bf u}\|_{L^2}^2 +\frac{P'(\bar\rho)}{2\bar\rho^2}\|\nabla^k\varrho\|_{L^2}^2 \right.\right.
  \\
  &\qquad\qquad\left.+\frac{1}{2\bar\rho}\left( \frac{\gamma}{\mu}\left(\frac{\nu P'(\bar\rho)}{\gamma \bar\rho} -\beta\right) +\frac{\alpha P'(\bar\rho)}{\mu\bar\rho}\right)\|\nabla^{k-1}\varrho\|_{L^2}^2\right)
  \\
  &\qquad \left.+ \frac{K_3P'(\bar\rho)}{16\mu\gamma^2\bar\rho}\|\nu\varphi - \gamma\varrho\|_{L^2}^2 + K_3\sum_{k=1}^3\langle\nabla^k\varrho, \nabla^{k-1}{\bf u}\rangle\right)
  \\
  &+\sum_{k=1}^3\left(\frac{\alpha}{8}\|\nabla^k{\bf u}\|_{L^2}^2 +\frac{\alpha(\alpha+\nu)}{2\mu}\| \nabla^{k-1}{\bf u}\|_{L^2}^2 +\frac{\beta P'(\bar\rho)}{\gamma\bar\rho}\|\nabla^{k+1} \varphi\|_{L^2}^2 +\frac{\beta}{4\mu}\left(\frac{\nu P'(\bar\rho)}{\gamma \bar\rho} -\beta\right)\|\nabla^k\varphi\|_{L^2}^2 \right)
  \\
  &+K_3\left(\frac{\nu P'(\bar\rho)}{16\mu\gamma^2\bar\rho}\|\nu\varphi - \gamma\varrho\|_{L^2}^2 +\frac{P'(\bar\rho)}{8\bar\rho}\|\nabla\varrho\|_2^2 \right)
  \\
  &\le 0,
  \end{split}
  \end{equation}
 which implies \eqref{es-priori1} by the Cauchy inequality and the assumption condition that $\frac{\nu P'(\bar\rho)}{\gamma \bar\rho} -\beta>0$.
\end{proof}

Next, we intend to obtain the optimal decay rates of the solution to finish the proof of Theorem \ref{3mainth}.
Define the time--weighted energy functional
\begin{equation}\label{M1}
 \mathcal{M}(t) = \sup_{0\le \tau\le t}\left(\sum_{k=0}^3(1+\tau)^{\frac34+\frac k2}\|\nabla^k(\varrho, \varphi)(\tau)\|_0 + \sum_{k=0}^2(1+\tau)^{\frac54 + \frac k2} \|\nabla^k{\bf u}(\tau)\|_0 +(1+\tau)^\frac94  \|\nabla^3 {\bf u}(\tau)\|_0 \right).
\end{equation}
It is noted that we can not derive the optimal decay rate of the 3--order derivatives of the velocity from the definition of $\mathcal{M}(t)$. This is caused by that we fail to estimate the 3--order derivatives of the solution by the spectral analysis since the nonlinear terms involve the 4--order derivatives, however, we can control the 3--order derivatives of the solution by $\|\nabla^2 {\bf u}\|_{L^2}$ via the pure energy estimate.

We shall prove the following proposition to achieve the optimal decay rate part of Theorem \ref{3mainth}.
\begin{Proposition}\label{es-thm-M}
 Under the assumptions of Theorem \ref{3mainth}, it holds
 \begin{equation}\nonumber
  \mathcal{M}(t) \le C(\mathcal{E}_0, K_0).
 \end{equation}
\end{Proposition}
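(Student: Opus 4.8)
The goal is to close a bootstrap: show that the time-weighted norm $\mathcal{M}(t)$ defined in \eqref{M1} is bounded by the initial data $\mathcal{E}_0+K_0$, uniformly in $t$. The strategy is the classical combination of (i) Duhamel's formula \eqref{so-ex} together with the linear decay estimates of Lemma \ref{li-de-L-H} and the nonlinear-source decay estimates of Lemma \ref{prop-decay-nonli} to control the low-frequency and non-top-order pieces, and (ii) a pure energy estimate using the high-frequency lemma (Lemma \ref{a priori-sol-H}) to control the top-order derivative $\nabla^3{\bf u}$, which cannot be reached by the spectral analysis because the nonlinear terms carry $\nabla^4$. The first step is to estimate the nonlinear terms $N_1,N_2$ from \eqref{3eq-lineari-N} in $L^1$ and $L^2$ in terms of $\mathcal{M}(t)$: schematically $N_1\sim \varrho\nabla{\bf u}+{\bf u}\nabla\varrho$ and $N_2\sim {\bf u}\nabla{\bf u}+(\text{smooth}(\varrho))\nabla\varrho$, so by H\"older and Sobolev embedding one gets, e.g., $\|N_i^L(\tau)\|_{L^1}\lesssim \|(\varrho,{\bf u})\|_{L^2}\|\nabla(\varrho,{\bf u})\|_{L^2}$ and $\|\nabla^k N_i(\tau)\|_{L^2}\lesssim \mathcal{E}(\tau)\|\nabla^{k+?}(\varrho,{\bf u})\|_{L^2}$, all of which are $\lesssim \mathcal{M}(t)^2(1+\tau)^{-r}$ for the appropriate rate $r$ dictated by the definition of $\mathcal{M}$.

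**The low/medium-order bound via Duhamel.** Apply $e^{tB}$ decomposition: write $\nabla^k(\varrho,{\bf u},\varphi)=\nabla^k(\varrho,{\bf u},\varphi)^L+\nabla^k(\cdot)^H$. For the linear part $e^{tB}U_0$, Lemma \ref{li-de-L-H} gives exactly the target rates with $\|U_0^L\|_{L^1}\lesssim K_0$ and $\|U_0^H\|_{H^3}\lesssim \mathcal{E}_0$. For the Duhamel integral $\int_0^t e^{(t-\tau)B}\ast\mathcal{N}(\tau)\,d\tau$, split $\int_0^{t/2}+\int_{t/2}^t$; on $[0,t/2]$ use the $L^1\to L^2$ estimates \eqref{decay-S1-low-k1}, \eqref{decay-S2-low-k1} (time weight $(1+t-\tau)^{-3/4-k/2}$ etc., and $(1+t-\tau)\sim (1+t)$ there), on $[t/2,t]$ use \eqref{decay-S1-low-k2}, \eqref{decay-S2-low-k2} together with the high-frequency exponential bound \eqref{decay-S1-S2-high-k}; in each case the nonlinear factor contributes $\mathcal{M}(t)^2(1+\tau)^{-r}$ and the convolution of the two power weights integrates to the desired rate, yielding
\begin{equation}\nonumber
\sum_{k=0}^3(1+t)^{\frac34+\frac k2}\|\nabla^k(\varrho,\varphi)\|_{L^2}+\sum_{k=0}^2(1+t)^{\frac54+\frac k2}\|\nabla^k{\bf u}\|_{L^2}\lesssim \mathcal{E}_0+K_0+\mathcal{M}(t)^2 .
\end{equation}
One subtlety here: for the highest-order pieces the source term $\nabla^k N_i$ with $k=3$ needs $\nabla^4$ of the solution, so I would keep those pieces only up to $k=2$ in the Duhamel part and handle $\nabla^3$ separately by energy estimates as below; the high-frequency top-order terms decay exponentially in \eqref{decay-S1-S2-high-k}, so there one only needs $\|\nabla^3(N_1^H,N_2^H)\|_{L^2}$, which by the commutator/product estimates is controlled by $\mathcal{E}(\tau)\|\nabla^3(\varrho,{\bf u})\|_{L^2}$.

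**The top-order derivative of the velocity, and closing the bootstrap.** For $\|\nabla^3{\bf u}\|_{L^2}$ I would construct a Lyapunov functional $\mathcal{L}_3(t)\approx \|\nabla^3(\varrho,{\bf u},\varphi)\|_{L^2}^2$ from the high-frequency estimates \eqref{es-varrho-H-3}--\eqref{es-u-varphi-H-3} of Lemma \ref{a priori-sol-H} combined with the full-frequency top-order estimates already in Lemma \ref{a priori-sol}, producing a differential inequality of the form $\frac{d}{dt}\mathcal{L}_3(t)+c\,\|\nabla^3(\varrho,{\bf u},\varphi)\|_{L^2}^2\lesssim \|\nabla^2(\varrho,{\bf u},\varphi)\|_{L^2}^2+(\text{cubic})$, where the low-order forcing term $\|\nabla^2(\cdot)\|_{L^2}^2\lesssim \mathcal{M}(t)^2(1+t)^{-7/2}$ from the already-established lower-order rates. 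Gr\"onwall (with the dissipation absorbing $\mathcal{L}_3$ up to interpolation, or directly with an integrating factor together with the coupling terms controlled via Young's inequality) then gives $\|\nabla^3{\bf u}(t)\|_{L^2}\lesssim (1+t)^{-9/4}(\mathcal{E}_0+K_0+\mathcal{M}(t))$, which is the last term in \eqref{M1}. Collecting all the pieces yields $\mathcal{M}(t)\lesssim \mathcal{E}_0+K_0+\mathcal{M}(t)^2$; since $\mathcal{E}_0,K_0\le\delta_0$ small, a standard continuity argument gives $\mathcal{M}(t)\le C(\mathcal{E}_0,K_0)$. \textbf{The main obstacle} is the top-order velocity estimate: the spectral/Duhamel method is unavailable at $\nabla^3$ because the nonlinearity is $\nabla^4$, so one must carefully design $\mathcal{L}_3$ so that the cross terms $\langle\nabla^3{\bf u}^L,\nabla^4\varrho\rangle$ etc. appearing in \eqref{es-varrho-H-3}--\eqref{es-u-H-3} cancel against each other and the remaining ``bad'' low-frequency forcing is genuinely of lower order and already decaying fast enough; getting the bookkeeping of weights to match the exponent $9/4$ is the delicate point.
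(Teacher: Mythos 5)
Your overall architecture matches the paper's: Duhamel's formula \eqref{so-ex} together with Lemmas \ref{li-de-L-H}--\ref{prop-decay-nonli} for the undifferentiated norms and for $\nabla^2{\bf u}$, a pure Lyapunov--Gronwall energy argument for the third-order derivatives (unreachable by the spectral method because the source would involve $\nabla^4$), and closure of the bootstrap via $\mathcal{M}(t)\lesssim \mathcal{E}_0+K_0+\mathcal{M}^2(t)$; the paper fills the intermediate orders by Sobolev interpolation between the $L^2$ and third-order bounds rather than by extra Duhamel estimates, which is immaterial. However, the decisive step --- the rate produced by the top-order energy estimate --- does not close with the numbers you wrote. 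You allow the forcing in the differential inequality for your $\mathcal{L}_3$ to be $\|\nabla^2(\varrho,{\bf u},\varphi)\|_{L^2}^2\lesssim \mathcal{M}^2(t)(1+t)^{-7/2}$; Gronwall with an exponential kernel then only yields $\mathcal{L}_3(t)\lesssim (1+t)^{-7/2}$, i.e. $\|\nabla^3(\varrho,{\bf u},\varphi)\|_{L^2}\lesssim (1+t)^{-7/4}$, strictly weaker than the rate $(1+t)^{-9/4}$ demanded by the definition \eqref{M1} for both $\nabla^3(\varrho,\varphi)$ and $\nabla^3{\bf u}$. Hence the inequality $\mathcal{M}(t)\lesssim \mathcal{E}_0+K_0+\mathcal{M}^2(t)$ does not follow from your sketch: you correctly name the delicate point (``cross terms must cancel''), but you do not supply the cancellation, and the exponents you state are internally inconsistent.

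The missing ingredient is the precise choice of coefficients in the Lyapunov functional. The paper takes, at $k=3$, the combination $\eqref{es-varrho-k}+\eqref{es-u-k}+\frac{\beta}{\mu}\times\eqref{es-u-varphi-k}+\frac{\beta P'(\bar\rho)}{\gamma\mu\bar\rho}\times\eqref{es-varphi-k}+K_3\times\eqref{es-varrho-u-k}$: the factor $\frac{\beta P'(\bar\rho)}{\gamma\mu\bar\rho}$ is chosen so that the cross terms $\frac{\beta}{\mu}\frac{P'(\bar\rho)}{\bar\rho}\langle\nabla^3\varrho,\nabla^3\varphi\rangle$ and $-\frac{\beta P'(\bar\rho)}{\mu\bar\rho}\langle\nabla^3\varrho,\nabla^3\varphi\rangle$ cancel exactly; after absorbing the cubic terms by the smallness in \eqref{E-t-priori}, the only non-dissipative remainder is $C\|\nabla^2{\bf u}\|_{L^2}^2$, which decays like $(1+t)^{-9/2}$ by the separately established Duhamel estimate for $\nabla^2{\bf u}$ (the paper's Lemma \ref{le-es-loworder1}, which your Duhamel step also provides). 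Only then does Gronwall give $\mathcal{L}_2(t)\lesssim (1+t)^{-9/2}$, i.e. the required $(1+t)^{-9/4}$ for $\|\nabla^3(\varrho,{\bf u},\varphi)\|_{L^2}$; no $\|\nabla^2(\varrho,\varphi)\|_{L^2}^2$ may survive on the right-hand side, which is exactly what your version permits. A secondary point: invoking the high-frequency estimates of Lemma \ref{a priori-sol-H} at this stage is unnecessary and counterproductive --- they are designed for the later upgrade of $\|\nabla^3{\bf u}\|_{L^2}$ to $(1+t)^{-11/4}$ in Proposition \ref{pro-de-high-u} and introduce additional low/high cross terms such as $\langle\nabla^3{\bf u}^L,\nabla^4\varrho\rangle$; for the present proposition the full-frequency estimates of Lemma \ref{a priori-sol} at $k=3$ suffice.
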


Next, we divide the proof of Proposition \ref{es-thm-M} into the following steps: derive the optimal decay rates on the solution and its highest--order derivatives separately. To this end, we first need some tools to dealt with the integration on the time \cite{DUYZ1}.
\begin{lemma}\label{s1s2}
 Assume $s_1 >1$, $s_2\in [0, s_1]$, then we have
 \begin{equation}\nonumber
  \int_0^t(1+t-\tau)^{-s_1}(1+\tau)^{-s_2}d\tau
  \le C(s_1, s_2)(1+t)^{-s_2}.
 \end{equation}
\end{lemma}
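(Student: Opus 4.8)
The plan is to split the integral at the midpoint $\tau=t/2$, writing
\[
\int_0^t(1+t-\tau)^{-s_1}(1+\tau)^{-s_2}\,d\tau = \int_0^{t/2}(1+t-\tau)^{-s_1}(1+\tau)^{-s_2}\,d\tau + \int_{t/2}^t(1+t-\tau)^{-s_1}(1+\tau)^{-s_2}\,d\tau =: I_1 + I_2,
\]
and to bound each piece by freezing whichever of the two factors is comparable to $(1+t)^{-1}$ raised to the appropriate power on that subinterval.

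For $I_1$, on $[0,t/2]$ one has $1+t-\tau\ge 1+t/2\ge \tfrac12(1+t)$, so $(1+t-\tau)^{-s_1}\le 2^{s_1}(1+t)^{-s_1}$, and hence
\[
I_1\le 2^{s_1}(1+t)^{-s_1}\int_0^{t/2}(1+\tau)^{-s_2}\,d\tau.
\]
The remaining integral is bounded by $C(1+t)^{1-s_2}$ if $s_2<1$, by $C\ln(1+t)$ if $s_2=1$, and by $C$ if $s_2>1$. Since $s_1>1$, in the first case $(1+t)^{-s_1}(1+t)^{1-s_2}=(1+t)^{1-s_1-s_2}\le (1+t)^{-s_2}$; in the third case $(1+t)^{-s_1}\le(1+t)^{-s_2}$ because $s_1\ge s_2$; and in the borderline case $s_2=1$ one uses that $(1+t)^{-(s_1-s_2)}\ln(1+t)$ is bounded on $[0,\infty)$, again because $s_1>s_2$. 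In all cases $I_1\le C(s_1,s_2)(1+t)^{-s_2}$.

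For $I_2$, on $[t/2,t]$ one has $1+\tau\ge 1+t/2\ge\tfrac12(1+t)$, so (using $s_2\ge0$) $(1+\tau)^{-s_2}\le 2^{s_2}(1+t)^{-s_2}$, whence
\[
I_2\le 2^{s_2}(1+t)^{-s_2}\int_{t/2}^t(1+t-\tau)^{-s_1}\,d\tau \le 2^{s_2}(1+t)^{-s_2}\int_0^{\infty}(1+\sigma)^{-s_1}\,d\sigma,
\]
and the last integral is finite precisely because $s_1>1$. Adding the two bounds gives the claim. The argument is entirely elementary and presents no real obstacle; the only point needing a little attention is the borderline exponent $s_2=1$ in $I_1$, where a logarithmic factor appears and must be absorbed into the extra decay coming from the strict inequality $s_1>1$.
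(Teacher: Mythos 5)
Your proof is correct and complete; note that the paper itself does not prove this lemma but simply quotes it from the cited reference (Duan--Ukai--Yang--Zhao), and your midpoint splitting at $\tau=t/2$ is exactly the standard argument behind that result. You also handle the only delicate point properly: the borderline case $s_2=1$ can only occur with $s_1>s_2$ strictly (since $s_1>1$), so the logarithmic factor is indeed absorbed by the extra power $(1+t)^{-(s_1-s_2)}$.
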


Now, we estimate the decay rate on the lower--frequent part of the solution as:
\begin{lemma}\label{es-fre-op}
 Assume that the assumptions of Proposition \ref{es-thm-M} are in force. Then it holds
 \begin{equation}\label{low-fre-varrho-varphi1}
  \|(\varrho, \varphi)(t)\|_{L^2}
  \le C\left(K_0 + \mathcal{M}^2(t) \right)(1+t)^{-\frac34},
 \end{equation}
 and
 \begin{equation}\label{low-fre-u1}
  \|{\bf u}(t)\|_{L^2}
  \le C\left(K_0+ \mathcal{M}^2(t) \right)(1+t)^{-\frac54}.
 \end{equation}
\end{lemma}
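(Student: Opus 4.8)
The plan is to use the Duhamel representation \eqref{so-ex} for the reformulated system \eqref{5eq-noli}, combined with the linear decay estimates of Lemma \ref{li-de-L-H} for the low-frequency part, the linear high-frequency estimate \eqref{es-varrho-d-varphi-high1}, the nonlinear estimates of Lemma \ref{prop-decay-nonli}, and the energy decay already encoded in the time-weighted functional $\mathcal{M}(t)$. The point is that $\|f\|_{L^2}\approx \|f^L\|_{L^2}+\|f^H\|_{L^2}$, so I would estimate the two pieces separately. For the high-frequency piece the energy estimates give exponential-in-time decay of $\|(\varrho^H,{\bf u}^H,\varphi^H)\|_{L^2}$ (better than any polynomial rate), so that contribution is harmless; the real work is in the low-frequency piece. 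For $(\varrho,\varphi)^L$ I would write, schematically,
\begin{equation}\nonumber
 \|(\varrho^L,\varphi^L)(t)\|_{L^2}
 \le \|e^{t\mathbb{B}}\ast U_0^L\|_{L^2}
 + \int_0^t \|e^{(t-\tau)\mathbb{B}}\ast \mathcal{N}(\tau)\,^L\|_{L^2}\,d\tau,
\end{equation}
and similarly for ${\bf u}^L$, then apply \eqref{es-varrho-low-k}, \eqref{es-varphi-low-k} with $k=0$ for the linear term (giving $(1+t)^{-3/4}$ from the $\|\varrho_0^L\|_{L^1}$-term, which is bounded by $K_0$), and \eqref{decay-S1-low-k1}, \eqref{decay-S2-low-k1} for the Duhamel term.

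The key quantitative step is to bound the $L^1$-norms of the nonlinear terms $N_1,N_2$ from \eqref{3eq-lineari-N} in terms of $\mathcal{M}(t)$ and then run the convolution integral. By the structure $N_1 \sim \varrho\,{\bf u}$ (after the divergence is absorbed into a derivative, handled by \eqref{decay-S1-low-k2}) and $N_2 \sim {\bf u}\,\nabla{\bf u} + \varrho\,\nabla\varrho$, Hölder's inequality gives $\|N_1(\tau)\|_{L^1}\lesssim \|\varrho\|_{L^2}\|{\bf u}\|_{L^2}$ and $\|N_2(\tau)\|_{L^1}\lesssim \|{\bf u}\|_{L^2}\|\nabla{\bf u}\|_{L^2} + \|\varrho\|_{L^2}\|\nabla\varrho\|_{L^2}$; inserting the decay rates built into $\mathcal{M}(t)$ — namely $\|\nabla^k(\varrho,\varphi)\|_{L^2}\lesssim \mathcal{M}(t)(1+t)^{-3/4-k/2}$ and $\|\nabla^k{\bf u}\|_{L^2}\lesssim \mathcal{M}(t)(1+t)^{-5/4-k/2}$ — yields $\|N_1(\tau)\|_{L^1}+\|N_2(\tau)\|_{L^1}\lesssim \mathcal{M}^2(t)(1+t)^{-2}$. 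Then Lemma \ref{s1s2} with $s_1\in\{5/4,3/4\}$-style exponents (using the better $(1+t-\tau)^{-5/4-\cdots}$ weight on $N_2$ and distributing a portion of the $(1+\tau)^{-2}$ decay onto the $N_1$-term to make $s_1>1$) closes the integral at the rates $(1+t)^{-3/4}$ for $(\varrho,\varphi)$ and $(1+t)^{-5/4}$ for ${\bf u}$. One must be slightly careful that the bare estimate $\|N_1\|_{L^1}\lesssim \|\varrho\|_{L^2}\|{\bf u}\|_{L^2}\lesssim \mathcal{M}^2(1+\tau)^{-3/4-5/4}=\mathcal{M}^2(1+\tau)^{-2}$ pairs with the slowest kernel $(1+t-\tau)^{-3/4}$ from \eqref{decay-S1-low-k1}, which is not integrable; the fix is to split the time integral at $t/2$, or equivalently to move some temporal decay from the $(1+\tau)^{-2}$ factor to the kernel, so that effectively one applies Lemma \ref{s1s2} with $s_1=3/4+\epsilon>1$ and $s_2=3/4$ obtaining $(1+t)^{-3/4}$, which is precisely the claimed rate.

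I expect the main obstacle to be exactly this borderline integrability bookkeeping in the Duhamel term for $(\varrho,\varphi)$: the linear kernel for the $\varrho$-component only decays like $(1+t)^{-3/4}$, which barely fails to be time-integrable, so one has to exploit the surplus decay $(1+\tau)^{-2}$ of the nonlinearity (two full powers, coming from the product of the $\varrho$-rate and the ${\bf u}$-rate) and redistribute it carefully — typically via a split $\int_0^{t/2}+\int_{t/2}^t$ — to land on the sharp rate rather than losing a logarithm. The ${\bf u}$-estimate is easier because its linear kernel already decays like $(1+t)^{-5/4}$, which is integrable, so \eqref{decay-S2-low-k1} together with Lemma \ref{s1s2} applies directly. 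Collecting the linear contribution (bounded by $C K_0$) and the nonlinear contribution (bounded by $C\mathcal{M}^2(t)$) gives \eqref{low-fre-varrho-varphi1} and \eqref{low-fre-u1}.
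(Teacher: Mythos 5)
Your route is essentially the paper's: low/high frequency splitting, the Duhamel formula \eqref{so-ex}, the linear estimates of Lemma \ref{li-de-L-H}, the nonlinear estimates of Lemma \ref{prop-decay-nonli}, the bound $\|(N_1,N_2)(\tau)\|_{L^1}\lesssim \mathcal{M}^2(t)(1+\tau)^{-2}$, and Lemma \ref{s1s2}. The one step that is wrong as stated is your dismissal of the high-frequency part: for the \emph{nonlinear} solution, $\|(\varrho^H,{\bf u}^H,\varphi^H)\|_{L^2}$ does \emph{not} decay exponentially, because the high-frequency modes are continually forced by the nonlinearity, whose decay is only polynomial; "energy estimates" cannot give more than that. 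The correct treatment, and the one the paper carries out explicitly, is to keep the high-frequency Duhamel term coming from \eqref{es-varrho-d-varphi-high1} and \eqref{decay-S1-S2-high-k}, namely $\int_0^t e^{-\frac{\alpha}{4}(t-\tau)}\|(N_1,N_2)(\tau)\|_{L^2}\,d\tau$, estimate $\|(N_1,N_2)(\tau)\|_{L^2}\lesssim \|(\varrho,{\bf u})(\tau)\|_{L^\infty}\|\nabla(\varrho,{\bf u})(\tau)\|_{L^2}\lesssim \|(\varrho,{\bf u})\|_{L^2}^{\frac14}\|\nabla^2(\varrho,{\bf u})\|_{L^2}^{\frac34}\|\nabla(\varrho,{\bf u})\|_{L^2}\lesssim \mathcal{M}^2(t)(1+\tau)^{-\frac{11}4}$, bound $e^{-\frac{\alpha}{4}(t-\tau)}\lesssim (1+t-\tau)^{-\frac34}$, and conclude by Lemma \ref{s1s2} that this contribution is $\lesssim \mathcal{M}^2(t)(1+t)^{-\frac34}$ (and faster than needed for ${\bf u}$). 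You list \eqref{decay-S1-S2-high-k} among your tools, so the repair is immediate, but as written the justification would not stand and $\|(N_1,N_2)\|_{L^2}$ is never estimated.

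Two smaller points. First, $N_1=-\frac{\bar\rho}{\sqrt{P'(\bar\rho)}}\,{\rm div}(\varrho{\bf v})$ contains a derivative, so the honest bound is $\|N_1\|_{L^1}\lesssim \|\varrho\|_{L^2}\|\nabla{\bf u}\|_{L^2}+\|{\bf u}\|_{L^2}\|\nabla\varrho\|_{L^2}$ rather than $\|\varrho\|_{L^2}\|{\bf u}\|_{L^2}$; the rate is still at least $(1+\tau)^{-2}$ (the $(1+\tau)^{-2}$ bottleneck actually comes from the $\varrho\nabla\varrho$ part of $N_2$), so your conclusion is unaffected. Second, the borderline convolution $\int_0^t(1+t-\tau)^{-\frac34}(1+\tau)^{-2}d\tau$ needs neither a split nor any ``transfer of decay'': after the change of variables $\tau\mapsto t-\tau$ it is exactly Lemma \ref{s1s2} with $s_1=2>1$, $s_2=\frac34$, giving $(1+t)^{-\frac34}$ — this is how the paper closes it; your split at $t/2$ is a valid alternative, but moving decay between factors in different variables is not literally meaningful. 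Finally, for \eqref{low-fre-u1} remember that ${\bf u}$ is reconstructed from both $d$ and the incompressible part $\Omega$, so the ODE \eqref{4eq-incom-omega} (exponential kernel against $\|N_2\|_{L^2}$) must also be invoked, as the paper does in the proof of the subsequent lemma.
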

\begin{proof}
First, by using the estimates \eqref{es-varrho-low-k}, \eqref{es-varphi-low-k} and \eqref{es-varrho-d-varphi-high1} in Lemma \ref{li-de-L-H} and the estimates \eqref{decay-S1-low-k1} and \eqref{decay-S1-S2-high-k} in Lemma \ref{prop-decay-nonli}, we have from \eqref{so-ex} that
\begin{align}\label{es-de-varrho-varphi-or-k}
 \|(\varrho, \varphi)\|_{L^2}
 &
 \le \|(\varrho^L, \varphi^L)\|_{L^2} +\|(\varrho^H, \varphi^H)\|_{L^2}
 \notag\\
 &
 \lesssim (1+t)^{-\frac34}\|(\varrho_0^L, d_0^L, \varphi_0^L)\|_{L^1}
 + \int_0^t (1+t-\tau)^{-\frac34}\|(N_1, N_2)(\tau)\|_{L^1}d\tau
 \notag\\
 &
 \quad + e^{-\frac{\alpha}{4}t}\|(\varrho_0^H, d_0^H, \varphi_0^H)\|_{L^2}
 + \int_0^te^{-\frac{\alpha}4 (t-\tau)}\|(N_1, N_2)(\tau)\|_{L^2}d\tau
 \notag\\
 &
 \lesssim (1+t)^{-\frac34}\|(\varrho_0, d_0, \varphi_0)\|_{L^1\cap L^2}
 \notag\\
 &
 \quad
 + \int_0^t(1+t-\tau)^{-\frac34}\|(\varrho, {\bf u})(\tau)\|_{L^2}\|\nabla(\varrho, {\bf u})(\tau)\|_{L^2}d\tau
\notag \\
 &
 \quad +\int_0^t(1+t-\tau)^{-\frac34}\|(\varrho, {\bf u})(\tau)\|_{L^\infty}\|\nabla(\varrho, {\bf u})(\tau)\|_{L^2}d\tau
 \\
 &
 \lesssim (K_0 +\mathcal{E}_0)(1+t)^{-\frac34}
 + \int_0^t(1+t-\tau)^{-\frac34}(1+\tau)^{-\frac34}\mathcal{M}(\tau)(1+\tau)^{-\frac54}\mathcal{M}(\tau)d\tau
 \notag\\
 &
 \quad +\int_0^t(1+t-\tau)^{-\frac34}\|(\varrho, {\bf u})(\tau)\|_{L^2}^\frac14\|\nabla^2(\varrho, {\bf u})(\tau)\|_{L^2}^\frac34\|\nabla(\varrho, {\bf u})(\tau)\|_{L^2}d\tau
 \notag\\
 &
 \lesssim (K_0 +\mathcal{E}_0)(1+t)^{-\frac34}
 + \mathcal{M}^2(t)\int_0^t(1+t-\tau)^{-\frac34}(1+\tau)^{-2}d\tau
 \notag\\
 &
 \quad + \mathcal{M}^2(t)\int_0^t(1+t-\tau)^{-\frac34}(1+\tau)^{-\frac34\times\frac14 - \frac74\times\frac34}(1+\tau)^{-\frac54}d\tau\notag
 \\
 &\lesssim (1+t)^{-\frac34}\left(K_0 +\mathcal{E}_0 +\mathcal{M}^2(t)\right),\notag
 \end{align}
 where the H\"older inequality, Lemma \ref{s1s2}, the definition \eqref{M1} and the monotonicity of $\mathcal{M}(t)$ are used. This yields \eqref{low-fre-varrho-varphi1}. Similarly, we can derive \eqref{low-fre-u1} from \eqref{so-ex} and Lemma \ref{li-de-L-H}--Lemma \ref{prop-decay-nonli}, or we can refer to the details in the proof of next lemma.
\end{proof}

\begin{lemma}\label{le-es-loworder1}
 Assume that the assumptions of Proposition \ref{es-thm-M} are in force. Then, it holds
 \begin{equation}\nonumber
  \|\nabla^2 {\bf u}(t)\|_{L^2}
  \le C(1+t)^{-\frac94}\left(K_0 +\mathcal{E}_0 + \mathcal{M}^2(t)\right).
 \end{equation}
\end{lemma}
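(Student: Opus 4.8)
The plan is to bound $\nabla^{2}{\bf u}$ through the decomposition ${\bf u}=-\Lambda^{-1}\nabla d-\Lambda^{-1}{\rm div}\,\Omega$, the Duhamel formula \eqref{so-ex} for the $(\varrho,d,\varphi)$--system, and the ODE \eqref{4eq-incom-omega} for $\Omega$. Since $\Lambda^{-1}\nabla$ and $\Lambda^{-1}{\rm div}$ are zero--order Fourier multipliers bounded on $L^{2}$, it suffices to control $\|\nabla^{2}d(t)\|_{L^{2}}$ and $\|\nabla^{2}\Omega(t)\|_{L^{2}}$. For $\Omega$, \eqref{4eq-incom-omega} gives $\Omega(t)=e^{-\alpha t}\Omega_{0}+\int_{0}^{t}e^{-\alpha(t-\tau)}\Lambda^{-1}{\rm curl}\,N_{2}(\tau)\,d\tau$, so $\|\nabla^{2}\Omega(t)\|_{L^{2}}\lesssim e^{-\alpha t}\mathcal{E}_{0}+\int_{0}^{t}e^{-\alpha(t-\tau)}\|\nabla^{2}N_{2}(\tau)\|_{L^{2}}\,d\tau$; using the weighted bounds encoded in $\mathcal{M}(t)$ together with Gagliardo--Nirenberg interpolation one checks $\|\nabla^{2}N_{2}(\tau)\|_{L^{2}}\lesssim\mathcal{M}^{2}(t)(1+\tau)^{-r}$ with $r>\frac94$, whence $\|\nabla^{2}\Omega(t)\|_{L^{2}}\lesssim(\mathcal{E}_{0}+\mathcal{M}^{2}(t))(1+t)^{-\frac94}$. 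So $\Omega$ is harmless and the real work is the bound on $\nabla^{2}d$.

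For the compressible part, I would write $d=d^{L}+d^{H}$ and use \eqref{so-ex}. The linear flow is handled by \eqref{es-d-low-k} with $k=2$ (the $d$--component of $e^{t\mathbb{B}}\ast U_{0}$ being $\lesssim(1+t)^{-\frac94}\|\varrho_{0}^{L}\|_{L^{1}}+(1+t)^{-\frac{11}4}\|(d_{0}^{L},\varphi_{0}^{L})\|_{L^{1}}$) and by \eqref{es-varrho-d-varphi-high1} for the high frequencies, yielding a contribution $\lesssim(K_{0}+\mathcal{E}_{0})(1+t)^{-\frac94}$. For the Duhamel term $\int_{0}^{t}e^{(t-\tau)\mathbb{B}}\ast\mathcal{N}(\tau)\,d\tau$ I split $\int_{0}^{t}=\int_{0}^{t/2}+\int_{t/2}^{t}$. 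On $[0,t/2]$, where $t-\tau\ge t/2$, I apply \eqref{decay-S2-low-k1} and \eqref{decay-S1-S2-high-k}: the kernels are $\lesssim(1+t)^{-\frac94}$ (resp. $\lesssim e^{-\alpha t/8}$), and since the a priori decay gives $\|N_{1}(\tau)\|_{L^{1}}\lesssim\mathcal{M}^{2}(t)(1+\tau)^{-\frac52}$, $\|N_{2}(\tau)\|_{L^{1}}\lesssim\mathcal{M}^{2}(t)(1+\tau)^{-2}$, and $\|\nabla^{2}(N_{1},N_{2})(\tau)\|_{L^{2}}$ is time--integrable, this piece is $\lesssim\mathcal{M}^{2}(t)(1+t)^{-\frac94}$. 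On $[t/2,t]$, where $1+\tau\sim 1+t$, I instead apply \eqref{decay-S2-low-k2} and \eqref{decay-S1-S2-high-k}: here one needs $\|\nabla^{2}(N_{1},N_{2})(\tau)\|_{L^{1}\cap L^{2}}\lesssim\mathcal{M}^{2}(t)(1+\tau)^{-3}$, after which integrating the kernels $(1+t-\tau)^{-\frac54}$, $(1+t-\tau)^{-\frac74}$, $e^{-\alpha(t-\tau)/4}$ over $[t/2,t]$ gives $\lesssim\mathcal{M}^{2}(t)(1+t)^{-3}$. Adding the two pieces, with the help of Lemma \ref{s1s2} where convenient, produces $\|\nabla^{2}d(t)\|_{L^{2}}\lesssim(K_{0}+\mathcal{E}_{0}+\mathcal{M}^{2}(t))(1+t)^{-\frac94}$, and combined with the $\Omega$--bound this proves the lemma.

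The main obstacle is verifying that the nonlinear sources decay strictly faster than $(1+t)^{-9/4}$ on $[t/2,t]$: one must bound $\|\nabla^{2}N_{1}\|_{L^{1}\cap L^{2}}$ and $\|\nabla^{2}N_{2}\|_{L^{1}\cap L^{2}}$ by pairing the \emph{third}--order derivatives (of $\varrho$ coming from the term $\bigl(\tfrac{P'(\varrho+\bar\rho)}{\varrho+\bar\rho}-\tfrac{P'(\bar\rho)}{\bar\rho}\bigr)\nabla\varrho$, and of ${\bf u}$ coming from ${\rm div}(\varrho{\bf v})$ and ${\bf v}\cdot\nabla{\bf v}$) with the \emph{low}--order factors, e.g. $\|\varrho\|_{L^{2}}\|\nabla^{3}\varrho\|_{L^{2}}$ and $\|\varrho\|_{L^{2}}\|\nabla^{3}{\bf u}\|_{L^{2}}$, rather than the reverse, and then reading off the rates from the weighted norms in $\mathcal{M}(t)$ --- crucially including the weighted control $(1+t)^{9/4}\|\nabla^{3}{\bf u}\|_{L^{2}}$. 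The terms carrying an $L^{\infty}$ norm are handled by Gagliardo--Nirenberg interpolation between the weighted $L^{2}$ quantities in $\mathcal{M}(t)$; all resulting contributions are quadratic in $\mathcal{M}(t)$ with temporal decay $\geq(1+t)^{-\frac94}$, so the estimate closes and feeds into the bootstrap for $\mathcal{M}(t)$.
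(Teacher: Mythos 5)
Your proposal is correct and follows essentially the same route as the paper: the ``div--curl'' decomposition ${\bf u}=-\Lambda^{-1}\nabla d-\Lambda^{-1}{\rm div}\,\Omega$, Duhamel's formula \eqref{so-ex} with the linear decay of Lemma \ref{li-de-L-H} and the nonlinear-source bounds of Lemma \ref{prop-decay-nonli}, the time splitting $\int_0^{t/2}+\int_{t/2}^{t}$, and the key pairing $\|(\varrho,{\bf u})\|_{L^2}\|\nabla^3(\varrho,{\bf u})\|_{L^2}\lesssim\mathcal{M}^2(t)(1+\tau)^{-3}$ on $[t/2,t]$, exactly as in the paper's estimate \eqref{es-de-u-2-1}. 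The only differences are cosmetic (you track the $L^1$ rates of $N_1$ and $N_2$ separately where the paper uses the cruder common bound $(1+\tau)^{-2}$), so no further comment is needed.
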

\begin{proof}
 As in the proof of Lemma \ref{es-fre-op}, by using the estimates \eqref{es-d-low-k}, \eqref{es-varrho-d-varphi-high1} and \eqref{es-Omega-all-k} in Lemma \ref{li-de-L-H} and the estimates \eqref{decay-S2-low-k1} and \eqref{decay-S1-S2-high-k} in Lemma \ref{prop-decay-nonli}, we have from \eqref{so-ex}
 \begin{align}
 \|\nabla^2 {\bf u}\|_{L^2}
 \nonumber
 &\le \|\nabla^2d\|_{L^2} +\|\nabla^2\Omega\|_{L^2}
 \nonumber
 \\
 &
 \le \|\nabla^2d^L\|_{L^2} +\|\nabla^2d^H\|_{L^2} +\|\nabla^2\Omega\|_{L^2}
 \nonumber
 \\
 &
 \lesssim (1+t)^{-\frac94}\|(\varrho_0^L, d_0^L, \varphi_0^L)\|_{L^1}
 + \int_0^\frac t2(1+t-\tau)^{-\frac94}\|(N_1, N_2)(\tau)\|_{L^1}d\tau
 \nonumber
 \\
 &
 \quad +\int_\frac t2^t(1+t-\tau)^{-\frac54}\|\nabla^2(N_1, N_2)(\tau)\|_{L^1}d\tau
\nonumber
 \\
 &
 \quad
 +e^{-\frac{\alpha t}{4}}\|\nabla^2(\varrho_0^H, d_0^H, \varphi_0^H)\|_{L^2} +\int_0^te^{-\frac{\alpha }{4}(t-\tau)}\|\nabla^2(N_1, N_2)(\tau)\|_{L^2}d\tau
 \nonumber
 \\
 &
 \quad
 +e^{-\alpha t}\|\nabla^2\Omega_0\|_{L^2} +\int_0^te^{-\alpha (t-\tau)}\|\nabla^2 N_2(\tau)\|_{L^2}d\tau
 \nonumber
 \\
 &
 \lesssim (1+t)^{-\frac94}\|(\varrho_0, d_0, \Omega_0, \varphi_0)\|_{L^1\cap H^2}
 + \int_0^\frac t2(1+t-\tau)^{-\frac94}\|(N_1, N_2)(\tau)\|_{L^1}d\tau
\nonumber
 \\
 &
 \quad + \int_\frac t2^t(1+t-\tau)^{-\frac54}\|\nabla^2(N_1, N_2)(\tau)\|_{L^1}d\tau
 + \int_0^te^{-\frac{\alpha }{4}(t-\tau)}\|\nabla^2(N_1, N_2)(\tau)\|_{L^2}d\tau
 \label{es-de-u-2-1}
 \\
 &
 \lesssim (1+t)^{-\frac94}(K_0 +\mathcal{E}_0)
 + \int_0^\frac t2(1+t-\tau)^{-\frac94}\|(\varrho, {\bf u})(\tau)\|_{L^2}\|\nabla(\varrho, {\bf u})(\tau)\|_{L^2}d\tau
 \nonumber
 \\
 &
 \quad + \int_\frac t2^t(1+t-\tau)^{-\frac54}\|(\varrho, {\bf u})(\tau)\|_{L^2}\|\nabla^3(\varrho, {\bf u})(\tau)\|_{L^2}d\tau
 \nonumber
 \\
 &
 \quad + \int_0^te^{-\frac{\alpha }{4}(t-\tau)}(\|(\varrho, {\bf u})(\tau)\|_{L^\infty} +\|\nabla(\varrho, {\bf u})(\tau)\|_{L^3})\|\nabla^3(\varrho, {\bf u})(\tau)\|_{L^2}d\tau
 \nonumber
 \\
 &
 \lesssim (1+t)^{-\frac94}(K_0 +\mathcal{E}_0)
 + \mathcal{M}^2(t)\int_0^\frac t2(1+t-\tau)^{-\frac94}(1+\tau)^{-2}d\tau
 \nonumber
 \\
 &
 \quad + \mathcal{M}^2(t)\int_\frac t2^t(1+t-\tau)^{-\frac54}(1+\tau)^{-3}d\tau
 + \mathcal{M}^2(t)\int_0^t(1+t-\tau)^{-\frac{13}4}(1+\tau)^{-\frac{13}4}d\tau
 \nonumber
 \\
 &
 \lesssim (1+t)^{-\frac94}(K_0 +\mathcal{E}_0)
 + \mathcal{M}^2(t)(1+t)^{-\frac94}\int_0^\frac t2(1+\tau)^{-2}d\tau
 \nonumber
 \\
 &
 \quad + \mathcal{M}^2(t)(1+t)^{-3}\int_\frac t2^t(1+t-\tau)^{-\frac54}d\tau
 + \mathcal{M}^2(t)(1+t)^{-\frac{13}4}
 \nonumber
 \\
 &
 \lesssim (1+t)^{-\frac94}(K_0 +\mathcal{E}_0 +\mathcal{M}^2(t)).
 \nonumber
 \end{align}
\end{proof}

Now, we turn to estimate the decay rate on the highest--order derivative of the solution, and we state the result in the following:
\begin{lemma}\label{le-es-highorder1}
 Assume that the assumptions of Proposition \ref{es-thm-M} are in force. Then it holds
 \begin{equation}\label{high-fre-varrho-m1}
  \|\nabla^3(\varrho, {\bf u}, \varphi)(t)\|_{L^2}
  \le C(1+t)^{-\frac94}\left(K_0 + \mathcal{E}_0 + \mathcal{M}^2(t)\right).
 \end{equation}
\end{lemma}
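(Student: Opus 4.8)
The plan is to split $\nabla^3(\varrho,{\bf u},\varphi)$ into its low- and high-frequency parts, treat the low part by the Duhamel formula together with the decay estimates already established for the linearized flow, and treat the high part by a weighted energy estimate. Throughout I will use the elementary facts $\|\nabla^{k}f^L\|_{L^2}\lesssim\|\nabla^{j}f^L\|_{L^2}$ for $k\ge j$ and $\|\nabla^{k}f^H\|_{L^2}\lesssim\|\nabla^{k+1}f^H\|_{L^2}$, as well as $\|f\|_{L^2}\approx\|f^L\|_{L^2}+\|f^H\|_{L^2}$.

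For the low-frequency part I would argue exactly as in the proofs of Lemma~\ref{es-fre-op} and Lemma~\ref{le-es-loworder1}: starting from \eqref{so-ex}, apply \eqref{es-varrho-low-k}, \eqref{es-d-low-k}, \eqref{es-varphi-low-k} to the linear part and \eqref{decay-S1-low-k1}, \eqref{decay-S2-low-k1} to the Duhamel part. It is essential here to use the versions of Lemma~\ref{prop-decay-nonli} that place \emph{no} derivative on $N_1,N_2$ in $L^1$, since $\nabla^4$ of the solution is not under control; for the high-frequency Duhamel contribution we use \eqref{decay-S1-S2-high-k}. Bounding $\|(N_1,N_2)\|_{L^1}\lesssim\|(\varrho,{\bf u})\|_{L^2}\|\nabla(\varrho,{\bf u})\|_{L^2}$ and $\|(N_1,N_2)\|_{L^2}$ by Sobolev/interpolation inequalities and the definition \eqref{M1}, and invoking Lemma~\ref{s1s2} for the time integrals, this yields
\[
\|\nabla^3(\varrho^L,\varphi^L)(t)\|_{L^2}\lesssim(1+t)^{-\frac94}\big(K_0+\mathcal{E}_0+\mathcal{M}^2(t)\big),\qquad
\|\nabla^3{\bf u}^L(t)\|_{L^2}\lesssim(1+t)^{-\frac{11}4}\big(K_0+\mathcal{E}_0+\mathcal{M}^2(t)\big),
\]
with the $\varphi$-component also using the $\varphi$-equation in \eqref{2eq-var} to express $\varphi$ in terms of $\varrho$ via the heat semigroup.

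For the high-frequency part I would build a Lyapunov functional out of Lemma~\ref{a priori-sol-H}. Taking the combination $\eqref{es-varrho-H-3}+\eqref{es-u-H-3}+\tfrac{\beta}{\mu}\,\eqref{es-u-varphi-H-3}+c_a\,\eqref{es-varphi-H-3}+c_b\,\eqref{es-varrho-u-H-3}$ with suitable positive constants $c_a$ of order one and $c_b$ small, the interaction terms $\mp\big\langle\tfrac{P'(\varrho+\bar\rho)}{\varrho+\bar\rho}\nabla^3{\bf u}^H,\nabla^4\varrho^H\big\rangle$ and $\mp\beta\langle\nabla^4\varphi^H,\nabla^3{\bf u}^H\rangle$ cancel, the resulting quadratic form $\mathcal{L}_3^H(t)$ is equivalent to $\|\nabla^3(\varrho^H,{\bf u}^H,\varphi^H)\|_{L^2}^2$ (the lower-order cross terms $\langle\nabla^3\varrho^H,\nabla^2{\bf u}^H\rangle$, $\langle\nabla^2{\bf u}^H,\nabla^3\varphi^H\rangle$ being absorbed by Young's inequality), and one is left with
\[
\frac{d}{dt}\mathcal{L}_3^H(t)+c_3\Big(\|\nabla^3(\varrho^H,{\bf u}^H,\varphi^H)\|_{L^2}^2+\|\nabla^4\varphi^H\|_{L^2}^2\Big)
\lesssim \|\nabla^2{\bf u}^H\|_{L^2}^2+\|\nabla^3(\varrho^L,{\bf u}^L)\|_{L^2}^2,
\]
after using $\|\nabla^4\varrho^L\|_{L^2}\lesssim\|\nabla^3\varrho^L\|_{L^2}$, $\|\nabla^4{\bf u}^L\|_{L^2}\lesssim\|\nabla^3{\bf u}^L\|_{L^2}$, $\|\nabla^2{\rm div}{\bf u}^H\|_{L^2}\le\|\nabla^3{\bf u}^H\|_{L^2}$, and the a priori smallness \eqref{E-t-priori} together with the decaying prefactor $\|(\varrho,{\bf u},\nabla\varrho,\nabla{\bf u})\|_{L^\infty}+\|\nabla\varrho\|_{L^3}$ to absorb the genuinely nonlinear contributions and the high-frequency part of $\mathcal{E}(t)\|\nabla^3(\varrho,{\bf u})\|^2$ into $c_3$. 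By the low-frequency bounds above and Lemma~\ref{le-es-loworder1}, the right-hand side is $\lesssim(1+t)^{-9/2}\big(K_0+\mathcal{E}_0+\mathcal{M}^2(t)\big)^2$; solving this linear ODE, so that $\mathcal{L}_3^H(t)\lesssim e^{-c_3t}\mathcal{L}_3^H(0)+\int_0^te^{-c_3(t-\tau)}(1+\tau)^{-9/2}\,d\tau\,\big(\cdots\big)^2$, gives $\|\nabla^3(\varrho^H,{\bf u}^H,\varphi^H)(t)\|_{L^2}\lesssim(1+t)^{-9/4}\big(K_0+\mathcal{E}_0+\mathcal{M}^2(t)\big)$. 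Adding this to the low-frequency estimate yields \eqref{high-fre-varrho-m1}.

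The main obstacle is the bookkeeping in the high-frequency step: one must choose the coefficients in $\mathcal{L}_3^H$ so that simultaneously the $\nabla^4$-interaction terms cancel, $\mathcal{L}_3^H$ remains positive definite, and every remaining term on the right is either dissipative and absorbable by $c_3$ (using the high-frequency smoothing $\|\nabla^2 f^H\|\le\|\nabla^3 f^H\|$ and the a priori smallness) or a genuine source decaying at least like $(1+t)^{-9/2}$. What makes this work is the structure of Lemma~\ref{a priori-sol-H}: the low-frequency correction terms appearing in \eqref{es-varrho-H-3} and \eqref{es-u-H-3} involve only $\nabla^3{\bf u}^L$ and $\nabla^4\varrho^L$, hence — by the low-frequency gain of derivatives — only the already-proven $(1+t)^{-9/4}$-decay, and $\|\nabla^2{\bf u}^H\|$ is handled by the previously established Lemma~\ref{le-es-loworder1} rather than needing to be absorbed into the dissipation.
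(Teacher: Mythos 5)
Your architecture (low/high frequency splitting, Duhamel for the low part, a Lyapunov functional built from Lemma \ref{a priori-sol-H} for the high part) is genuinely different from the paper, which proves this lemma with no frequency decomposition at all: it sums \eqref{es-varrho-k}, \eqref{es-u-k}, \eqref{es-u-varphi-k}, \eqref{es-varphi-k} and \eqref{es-varrho-u-k} at $k=3$ to obtain a functional $\mathcal{L}_2$ satisfying $\frac{d}{dt}\mathcal{L}_2+C_4\mathcal{L}_2\le C\|\nabla^2{\bf u}\|_{L^2}^2$ together with $\mathcal{L}_2+C_3\|\nabla^2{\bf u}\|_{L^2}^2\approx\|\nabla^3(\varrho,{\bf u},\varphi)\|_{L^2}^2+\|\nabla^2{\bf u}\|_{L^2}^2$, and then concludes by Gronwall and Lemma \ref{le-es-loworder1}. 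Your high-frequency step is essentially the paper's later proof of Lemma \ref{le-de-u-3-high} and is workable, but everything hinges on your low-frequency claim, and that step has a genuine quantitative gap.

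Concretely: with only the no-derivative estimates \eqref{decay-S1-low-k1}, \eqref{decay-S2-low-k1}, which you declare ``essential'', the Duhamel contribution to $\|\nabla^3(\varrho^L,\varphi^L)\|_{L^2}$ is controlled by $\int_0^t(1+t-\tau)^{-\frac94}\|N_1(\tau)\|_{L^1}d\tau+\int_0^t(1+t-\tau)^{-\frac{11}4}\|N_2(\tau)\|_{L^1}d\tau$, and the best bound $\mathcal{M}(t)$ provides for the pressure part of $N_2$ is $\|\varrho\nabla\varrho\|_{L^1}\lesssim\mathcal{M}^2(t)(1+\tau)^{-\frac34-\frac54}=\mathcal{M}^2(t)(1+\tau)^{-2}$. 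Lemma \ref{s1s2} then yields only $(1+t)^{-2}$, and this is sharp: the portion of the integral over $\tau\in[t-1,t]$ is already of order $(1+t)^{-2}$. Hence your claims $\|\nabla^3(\varrho^L,\varphi^L)\|_{L^2}\lesssim(1+t)^{-\frac94}(\cdots)$ and $\|\nabla^3{\bf u}^L\|_{L^2}\lesssim(1+t)^{-\frac{11}4}(\cdots)$ do not follow as written, and the lemma is not reached. The repair is exactly what you rule out: split the Duhamel integral at $t/2$ and, on $[t/2,t]$, put derivatives on the nonlinearity; this costs no $\nabla^4$ of the solution, since the low-frequency cut-off trades derivatives, $\|\nabla^3N^L\|_{L^1}\lesssim\|\nabla^2N\|_{L^1}\lesssim\mathcal{M}^2(t)(1+\tau)^{-3}$, so your stated reason for avoiding the differentiated versions is unfounded. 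Moreover, for the $\varrho,\varphi$ components one then needs a variant of \eqref{decay-S1-low-k2} carrying derivatives on $N_2^L$ as well (derivable from Lemma \ref{li-de-L-H}, but not among the stated estimates), because the undifferentiated $\|N_2\|_{L^1}$ only decays like $(1+\tau)^{-2}$. This splitting mechanism is precisely what the paper uses in Lemmas \ref{le-es-loworder1} and \ref{le-de-u-3-low}; alternatively, the paper's purely energetic proof of the present lemma sidesteps the low-frequency Duhamel analysis entirely, which is what makes it the simpler route here.
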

\begin{proof}
As in the proof of Lemma but a little modification, we take the summation $\eqref{es-varrho-k} + \eqref{es-u-k} +\frac\beta\mu\times\eqref{es-u-varphi-k} +\frac{\beta P'(\bar\rho)}{\gamma\mu\bar\rho}\times\eqref{es-varphi-k} +K_3\times\eqref{es-varrho-u-k}$ with $k=3$, and then we can deduce
\begin{equation}\nonumber
\begin{split}
 &\frac{d}{dt}\left( \left\langle\frac{P'(\varrho +\bar\rho)}{2(\varrho +\bar\rho)^2}\nabla^3\varrho, \nabla^3\varrho\right\rangle +\frac12\|\nabla^3 {\bf u}\|_{L^2}^2 +\frac\beta{\mu}\langle\nabla^2 {\bf u}, \nabla^3\varphi\rangle +\frac{\beta P'(\bar\rho)}{\gamma\mu\bar\rho}\|\nabla^3\varphi\|_{L^2}^2 +K_3\langle\nabla^3\varrho, \nabla^2 {\bf u}\rangle\right)
 \\
 &
 \quad+\frac\alpha2\|\nabla^3 {\bf u}\|_{L^2}^2 +\frac{\beta P'(\bar\rho)}{\gamma\bar\rho}\|\nabla^4\varphi\|_{L^2}^2 +\frac{\beta}{2\mu}\left(\frac{\nu P'(\bar\rho)}{\gamma\bar\rho} -\beta\right)\|\nabla^3\varphi\|_{L^2}^2
 +\frac{K_3P'(\bar\rho)}{4\bar\rho}\|\nabla^3\varrho\|_{L^2}^2
 \\
 &
 \le C\|\nabla^2 {\bf u}\|_{L^2}^2,
\end{split}
\end{equation}
where the a priori assumption \eqref{E-t-priori} and the definition of $K_3$ in Proposition \ref{priori1} are used.

Define
 \begin{equation}\nonumber
  \begin{split}
  \mathcal{L}_2(t)
  =\left\langle\frac{P'(\varrho +\bar\rho)}{2(\varrho +\bar\rho)^2}\nabla^3\varrho, \nabla^3\varrho\right\rangle +\frac12\|\nabla^3 {\bf u}\|_{L^2}^2 +\frac\beta{\mu}\langle\nabla^2 {\bf u}, \nabla^3\varphi\rangle +\frac{\beta P'(\bar\rho)}{2\gamma\mu\bar\rho}\|\nabla^3\varphi\|_{L^2}^2 +K_3\langle\nabla^3\varrho, \nabla^2 {\bf u}\rangle.
  \end{split}
 \end{equation}
 By the Cauchy inequality and the fact that $\|f^H\|_{L^2}\le \|\nabla f^H\|_{L^2}$, we can get the following equivalent relationship:
 \begin{equation}\label{L2-u-2}
  \mathcal{L}_2(t) +C_3\|\nabla^2 {\bf u}(t)\|_{L^2}^2
  \approx \|\nabla^3(\varrho, {\bf u}, \varphi)\|_{L^2}^2 + \|\nabla^2 {\bf u}(t)\|_{L^2}^2.
 \end{equation}
Moreover, we have
 \begin{equation}\nonumber
  \frac{d}{dt}\mathcal{L}_2(t) +C_4\mathcal{L}_2(t)
  \le C\|\nabla^2 {\bf u}(t)\|_{L^2}^2.
 \end{equation}
 Then by the Gronwall inequality and Lemma \ref{s1s2}, we can arrive at
 \begin{equation}\nonumber
  \begin{split}
  \mathcal{L}_2(t)
  &\le e^{-C_4t}\mathcal{L}_2(0) + C\int_0^t e^{-C_4(t-\tau)}\|\nabla^2 {\bf u}(\tau)\|_{L^2}^2d\tau
  \\
  &
  \le e^{-C_4t}\left(\|\nabla^3(\varrho_0, {\bf u}_0, \varphi_0)\|_{L^2}^2 + \|\nabla^2{\bf u}_0\|_{L^2}^2\right) + C\int_0^t e^{-C_4(t-\tau)}(1+\tau)^{-\frac92}(K_0 +\mathcal{E}_0 + \mathcal{E}^2(\tau))^2d\tau
  \\
  &
  \le (1+ t)^{-\frac92}(K_0 +\mathcal{E}_0 + \mathcal{E}^2(\tau))^2,
  \end{split}
 \end{equation}
 which together with the relationship \eqref{L2-u-2} yields \eqref{high-fre-varrho-m1}.
\end{proof}

\underline{Proof of Proposition \ref{es-thm-M}}. By combining with Lemma \ref{es-fre-op}, Lemma \ref{le-es-loworder1} and Lemma \ref{le-es-highorder1}, using the definition \eqref{M1} of $\mathcal{M}(t)$, and the Sobolev interpolation inequality, we can finally get
 \begin{equation}\nonumber
  \mathcal{M}(t) \le C(\mathcal{E}_0, K_0) + C\mathcal{M}^2(t),
 \end{equation}
 which together with the smallness of $\mathcal{E}_0$ and $K_0$ implies that $\mathcal{M}(t) \le C(\mathcal{E}_0 +K_0)$. This complete the proof of Proposition \ref{es-thm-M}.

Now we turn to state the optimal decay rate of the highest--order derivatives of the velocity.
\begin{Proposition}\label{pro-de-high-u}
 Under the assumptions of Theorem \ref{3mainth}, it holds
 \begin{equation}\nonumber
  \|\nabla^3 {\bf u}\|_{L^2} \le C(\mathcal{E}_0, K_0)(1+t)^{-\frac{11}4}.
 \end{equation}
 \end{Proposition}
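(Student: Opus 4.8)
The plan is to split $\|\nabla^3{\bf u}\|_{L^2}\le\|\nabla^3{\bf u}^L\|_{L^2}+\|\nabla^3{\bf u}^H\|_{L^2}$ and treat the two pieces by different mechanisms, exploiting that Proposition \ref{es-thm-M} already supplies all of \eqref{de-rho-phi} together with $\|\nabla^k{\bf u}(t)\|_{L^2}\le C(1+t)^{-5/4-k/2}$ for $k=0,1,2$ and the non-optimal bound $\|\nabla^3{\bf u}(t)\|_{L^2}\le C(1+t)^{-9/4}$, the last of which is what we want to upgrade to $(1+t)^{-11/4}$.

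For the low-frequency part I would write ${\bf u}=-\Lambda^{-1}\nabla d-\Lambda^{-1}{\rm div}\,\Omega$ and use the Duhamel formula \eqref{so-ex} for $(\varrho,d,\varphi)$ together with the $\Omega$-ODE \eqref{4eq-incom-omega}. The linear part of $\nabla^3 d^L$ is controlled by \eqref{es-d-low-k} with $k=3$, whose leading term already decays at rate $(1+t)^{-11/4}$; the nonlinear Duhamel integral is handled by Lemma \ref{prop-decay-nonli} with the splitting $\int_0^t=\int_0^{t/2}+\int_{t/2}^t$, using \eqref{decay-S2-low-k1} and $\|(N_1,N_2)(\tau)\|_{L^1}\lesssim(1+\tau)^{-2}$ (from \eqref{de-rho-phi}--\eqref{de-u-low-high}) on $[0,t/2]$, and \eqref{decay-S2-low-k2} on $[t/2,t]$, where the low-frequency smoothing $\|\nabla f^L\|_{L^1}\lesssim\|f\|_{L^1}$ trades the top derivatives for integrability and reduces $\|\nabla^3(N_1,N_2)^L\|_{L^1}$ to quantities controlled by \eqref{de-rho-phi}--\eqref{de-u-low-high} within $H^3$ and of size $(1+\tau)^{-3}$. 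The $\Omega$-part is even easier: exponential linear decay via \eqref{es-Omega-all-k}, and nonlinear part dominated by $\int_0^te^{-\alpha(t-\tau)}\|\nabla^2N_2(\tau)\|_{L^2}\,d\tau\lesssim(1+t)^{-15/4}$. Lemma \ref{s1s2} then yields
\begin{equation}\nonumber
 \|\nabla^3{\bf u}^L(t)\|_{L^2}\le C(\mathcal{E}_0,K_0)(1+t)^{-11/4},\qquad \|\nabla^4(\varrho,\varphi)^L(t)\|_{L^2}\le C(\mathcal{E}_0,K_0)(1+t)^{-11/4},
\end{equation}
the second bound being proved identically and used below.

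For the high-frequency part the semigroup approach fails, since $\nabla^3N_i$ loses one derivative beyond $H^3$, so I would run an energy argument on the high-frequency system using Lemma \ref{a priori-sol-H}. A suitable linear combination of \eqref{es-varrho-H-3}--\eqref{es-u-varphi-H-3} --- with small weights on the cross-term identities, as in the construction of $\mathcal{L}_1$ and $\mathcal{L}_2$ --- makes the low--high mixed terms $\langle\nabla^3{\bf u}^L,\nabla^4\varrho\rangle$, $\langle\nabla^4\varrho^L,\nabla^3{\bf u}\rangle$, $\langle\nabla^3{\bf u}^H,\nabla^4\varrho^L\rangle$, $\langle\nabla^4\varrho^H,\nabla^3{\bf u}^L\rangle$ cancel up to quantities bounded by $\|\nabla^3{\bf u}^L\|_{L^2}^2+\|\nabla^4\varrho^L\|_{L^2}^2$; the leftover $\|\nabla^2{\bf u}^H\|_{L^2}$, $\|\nabla^2{\rm div}\,{\bf u}^H\|_{L^2}$ and $\|\nabla^3\varphi^H\|_{L^2}$ terms are absorbed into the dissipation using $\|\nabla^2f^H\|_{L^2}\lesssim\|\nabla^3f^H\|_{L^2}$ and the hypothesis $\frac{\nu P'(\bar\rho)}{\gamma\bar\rho}>\beta$; and the nonlinear remainders, whose prefactor $\|(\varrho,{\bf u},\nabla\varrho,\nabla{\bf u})\|_{L^\infty}+\|\nabla\varrho\|_{L^3}\lesssim(1+t)^{-3/2}$ multiplies $\|\nabla^3(\varrho,{\bf u})\|_{L^2}^2\lesssim(1+t)^{-9/2}$, decay like $(1+t)^{-6}$. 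This produces a Lyapunov functional $\mathcal{L}_3(t)\approx\|\nabla^3(\varrho,{\bf u},\varphi)^H\|_{L^2}^2$ with
\begin{equation}\nonumber
 \frac{d}{dt}\mathcal{L}_3(t)+C_5\mathcal{L}_3(t)\le C\big(\|\nabla^3{\bf u}^L\|_{L^2}^2+\|\nabla^4(\varrho,\varphi)^L\|_{L^2}^2\big)+C(1+t)^{-6}\le C(\mathcal{E}_0,K_0)(1+t)^{-11/2},
\end{equation}
so Gronwall's inequality and Lemma \ref{s1s2} give $\mathcal{L}_3(t)\lesssim(1+t)^{-11/2}$, hence $\|\nabla^3{\bf u}^H(t)\|_{L^2}\le C(\mathcal{E}_0,K_0)(1+t)^{-11/4}$; adding the two parts finishes the proof.

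The main obstacle is precisely this high-frequency step: the improved $t^{-11/4}$ rate has to be squeezed out of the energy inequalities of Lemma \ref{a priori-sol-H}, which works only if the low--high cross terms collapse exactly onto the low-frequency quantities $\|\nabla^3{\bf u}^L\|_{L^2}^2$ and $\|\nabla^4(\varrho,\varphi)^L\|_{L^2}^2$ --- these, being low-frequency, enjoy the faster rate $(1+t)^{-11/2}$ --- while every other term is either absorbed or decays strictly faster. Verifying this cancellation, and that the weights in the linear combination can be chosen admissibly (notably for the $\|\nabla^3\varphi^H\|_{L^2}^2$ terms, where the condition $\frac{\nu P'(\bar\rho)}{\gamma\bar\rho}>\beta$ enters), together with the bookkeeping that keeps all nonlinear contributions within $H^3$-controlled quantities via the low-frequency smoothing, is the delicate part of the argument.
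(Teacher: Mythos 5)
Your low--frequency argument is essentially the paper's Lemma \ref{le-de-u-3-low}: Duhamel with \eqref{es-d-low-k}, \eqref{es-Omega-all-k}, the splitting at $t/2$ with \eqref{decay-S2-low-k1}--\eqref{decay-S2-low-k2}, and the low--frequency smoothing $\|\nabla f^L\|\lesssim\|f^L\|$; that part is fine. Your treatment of the mixed low--high terms is also acceptable (though unnecessary: with unit weights on \eqref{es-varrho-H-3} and \eqref{es-u-H-3} the terms $\langle\nabla^3{\bf u}^L,\nabla^4\varrho\rangle+\langle\nabla^3{\bf u}^H,\nabla^4\varrho^L\rangle-\langle\nabla^4\varrho^L,\nabla^3{\bf u}\rangle-\langle\nabla^4\varrho^H,\nabla^3{\bf u}^L\rangle$ cancel exactly, so no bound on $\|\nabla^4(\varrho,\varphi)^L\|_{L^2}$ is needed).

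The genuine gap is in the high--frequency step, precisely at the point you yourself flag as delicate: the claim that the leftover $\|\nabla^2{\bf u}^H\|_{L^2}$ terms can be absorbed into the dissipation via $\|\nabla^2 f^H\|_{L^2}\le\|\nabla^3 f^H\|_{L^2}$. The contribution $K_4\bigl(\tfrac{\alpha^2\bar\rho}{P'(\bar\rho)}\|\nabla^2{\bf u}^H\|_{L^2}^2+\bar\rho\|\nabla^2{\rm div}\,{\bf u}^H\|_{L^2}^2\bigr)$ from \eqref{es-varrho-u-H-3} is indeed absorbable because $K_4$ is at your disposal, but the term $C\|\nabla^2{\bf u}^H\|_{L^2}^2$ in \eqref{es-u-varphi-H-3} enters the combination with the weight $\beta/\mu$, which is forced (it is needed to cancel $\mu\langle\nabla^4\varphi^H,\nabla^3{\bf u}^H\rangle$ against $-\beta\langle\nabla^4\varphi^H,\nabla^3{\bf u}^H\rangle$ in \eqref{es-u-H-3}), and the constant $C$ there is large by construction, since it comes from Cauchy--Schwarz with deliberately small weights on $\|\nabla^3\varrho^H\|_{L^2}^2$ and $\|\nabla^3\varphi^H\|_{L^2}^2$. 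There is no smallness parameter left to guarantee $\tfrac{\beta}{\mu}C<\tfrac{3\alpha}{4}$, so the absorption into the $\|\nabla^3{\bf u}^H\|_{L^2}^2$ damping fails for general $\alpha,\beta,\mu,\nu,\gamma,\bar\rho$; the hypothesis $\tfrac{\nu P'(\bar\rho)}{\gamma\bar\rho}>\beta$ helps only with the $\|\nabla^3\varphi^H\|_{L^2}^2$ terms, not with this one. Note also that the crude substitute $\|\nabla^2{\bf u}^H\|_{L^2}\le\|\nabla^2{\bf u}\|_{L^2}\lesssim(1+t)^{-9/4}$ from Proposition \ref{es-thm-M} is insufficient: it would give only $(1+t)^{-9/2}$ forcing in the Gronwall inequality, hence only $\|\nabla^3{\bf u}^H\|_{L^2}\lesssim(1+t)^{-9/4}$. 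The missing ingredient, and the way the paper closes the argument (Lemma \ref{le-de-u-3-high}), is a separate Duhamel estimate for the high--frequency second derivative, $\|\nabla^2{\bf u}^H\|_{L^2}\lesssim(1+t)^{-3}$ (exponential high--frequency semigroup decay plus $\|\nabla^2(N_1,N_2)\|_{L^2}$, which stays within $H^3$ control); this bound is then kept as a forcing term $C\|\nabla^2{\bf u}^H\|_{L^2}^2\lesssim(1+t)^{-6}$ in the Gronwall inequality for $\mathcal{L}_3$, yielding $\|\nabla^3{\bf u}^H\|_{L^2}\lesssim(1+t)^{-3}$, which combined with the low--frequency bound gives the stated rate.
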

  We divide the proof of Proposition \ref{pro-de-high-u} into the following steps: derive the optimal decay rates on the low--frequent part and the high--frequent part of the 3--order derivative of the velocity respectively.

  \begin{lemma}\label{le-de-u-3-low}
   Under the assumptions of Theorem \ref{3mainth}, it holds
   \begin{equation}\label{e-u-3-low}
    \|\nabla^3 {\bf u}^L\|_{L^2}\le C(\mathcal{E}_0, K_0)(1+t)^{-\frac{11}{4}}.
   \end{equation}
  \end{lemma}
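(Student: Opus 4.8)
The plan is to rerun the Duhamel plus frequency--splitting scheme from the proof of Lemma \ref{le-es-loworder1}, carried one derivative higher and restricted to the low--frequency regime, which is precisely where the linear estimate \eqref{es-d-low-k} and the nonlinear estimates \eqref{decay-S2-low-k1}--\eqref{decay-S2-low-k2} supply the extra time decay. For $t$ in any bounded interval the bound is immediate from Proposition \ref{priori1}, so one may assume $t$ large. First I would invoke the ``${\rm div}$--${\rm curl}$'' decomposition ${\bf v}=-\Lambda^{-1}\nabla d-\Lambda^{-1}{\rm div}\,\Omega$, which together with ${\bf u}=\frac{\sqrt{P'(\bar\rho)}}{\bar\rho}{\bf v}$ gives $\|\nabla^3{\bf u}^L\|_{L^2}\lesssim\|\nabla^3 d^L\|_{L^2}+\|\nabla^3\Omega^L\|_{L^2}$, since $\Lambda^{-1}\nabla$ and $\Lambda^{-1}{\rm div}$ are $L^2$--bounded operators of order zero. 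For $d$ I use the mild formulation \eqref{so-ex}; for $\Omega$ the Duhamel formula coming from the ODE \eqref{4eq-incom-omega}, namely $\Omega(t)=e^{-\alpha t}\Omega_0+\int_0^t e^{-\alpha(t-\tau)}\Lambda^{-1}{\rm curl}\,N_2(\tau)\,d\tau$.

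The linear contributions are immediate: \eqref{es-d-low-k} with $k=3$ gives $(1+t)^{-11/4}\|\varrho_0\|_{L^1}+(1+t)^{-13/4}\|(d_0,\varphi_0)\|_{L^1}\lesssim(1+t)^{-11/4}(K_0+\mathcal E_0)$ for the $d$--part, and \eqref{es-Omega-all-k} gives $e^{-\alpha t}\|\nabla^3\Omega_0\|_{L^2}\lesssim(1+t)^{-11/4}\mathcal E_0$ for the $\Omega$--part. For the nonlinear part of $d^L$ I split $\int_0^t=\int_0^{t/2}+\int_{t/2}^t$. On $[0,t/2]$ I apply \eqref{decay-S2-low-k1} with $k=3$; since $1+t-\tau\gtrsim 1+t$ the factor $(1+t-\tau)^{-11/4}$ comes out, and $\int_0^{t/2}\|(N_1,N_2)^L(\tau)\|_{L^1}\,d\tau$ is finite because, by Leibniz, Hölder and the definition \eqref{M1} of $\mathcal M(t)$, $\|(N_1,N_2)(\tau)\|_{L^1}\lesssim\mathcal M(t)^2(1+\tau)^{-2}$; this piece is $\lesssim(1+t)^{-11/4}\mathcal M(t)^2$. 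On $[t/2,t]$ I apply \eqref{decay-S2-low-k2} with $k=3$; here $1+\tau\gtrsim 1+t$, and with the bound $\|\nabla^3(N_1,N_2)^L(\tau)\|_{L^1}\lesssim\mathcal M(t)^2(1+\tau)^{-3}$ one pulls out $(1+t)^{-3}$ and is left with $\int_{t/2}^t\big[(1+t-\tau)^{-5/4}+(1+t-\tau)^{-7/4}\big]\,d\tau\le C$, giving $\lesssim(1+t)^{-3}\mathcal M(t)^2$. For the nonlinear part of $\Omega^L$, since $\Lambda^{-1}{\rm curl}$ is order zero and one derivative of $\psi(D)\nabla^3$ may be absorbed into the cutoff kernel, $\|\nabla^3(\Lambda^{-1}{\rm curl}\,N_2)^L\|_{L^2}\lesssim\|\nabla^2 N_2\|_{L^2}\lesssim\big(\|(\varrho,{\bf u})\|_{L^\infty}+\|\nabla(\varrho,{\bf u})\|_{L^3}\big)\|\nabla^3(\varrho,{\bf u})\|_{L^2}\lesssim\mathcal M(t)^2(1+\tau)^{-15/4}$, and convolving with $e^{-\alpha(t-\tau)}$ yields $\lesssim(1+t)^{-15/4}\mathcal M(t)^2$. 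Summing all the contributions and invoking Proposition \ref{es-thm-M}, which gives $\mathcal M(t)\le C(\mathcal E_0,K_0)$, yields \eqref{e-u-3-low}.

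The main obstacle, and the only genuine difference from the $\nabla^2{\bf u}$ case, is the apparent loss of regularity in the nonlinearity: $\nabla^3 N_1\sim\nabla^4(\varrho{\bf v})$ and $\nabla^3 N_2$ involves $\nabla^4$ of quadratic terms, that is $\nabla^4$ of the solution, which is not among the norms built into $\mathcal M(t)$ (that functional only reaches $\nabla^3$). This is exactly why one stays in the low--frequency band: writing $\nabla^3 N_i^L=\psi(D)\nabla^3 N_i$, one derivative may be shifted onto $\mathcal F^{-1}[\psi(\xi)\,i\xi]$, a Schwartz function and hence bounded from $L^1$ to $L^1$ (and from $L^2$ to $L^2$), so that $\|\nabla^3 N_i^L\|_{L^1}\lesssim\|\nabla^2 N_i\|_{L^1}$ and $\|\nabla^3 N_2^L\|_{L^2}\lesssim\|\nabla^2 N_2\|_{L^2}$, and $\nabla^2 N_i$ involves only derivatives of $(\varrho,{\bf u})$ up to order three. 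Everything else is bookkeeping of the time exponents, all of which are strictly faster than $-11/4$.
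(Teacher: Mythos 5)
Your argument is correct and follows essentially the same route as the paper: the same $d$--$\Omega$ splitting, Duhamel with the linear bounds \eqref{es-d-low-k}, \eqref{es-Omega-all-k} and the nonlinear bounds \eqref{decay-S2-low-k1}--\eqref{decay-S2-low-k2}, the same $\int_0^{t/2}+\int_{t/2}^t$ splitting, and the same key reduction $\|\nabla^3 N_i^L\|\lesssim\|\nabla^2 N_i\|$ (the paper phrases it as $\|\nabla f^L\|_{L^2}\lesssim\|f^L\|_{L^2}$) combined with the rates of Proposition \ref{es-thm-M} to reach $(1+t)^{-11/4}$. Your exponent bookkeeping ($-2$, $-3$, $-15/4$ for the nonlinear norms) matches the paper's.
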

  \begin{proof}
 As in the proof of Lemma \ref{es-fre-op}, by using the estimates \eqref{es-d-low-k} and \eqref{es-Omega-all-k} in Lemma \ref{li-de-L-H} and the estimates \eqref{decay-S2-low-k1} and \eqref{decay-S2-low-k2} in Lemma \ref{prop-decay-nonli}, we have from \eqref{so-ex} that
 \begin{equation*}
 \begin{split}
 \|\nabla^3 {\bf u}^L\|_{L^2}
 &\le \|\nabla^3d^L\|_{L^2} +\|\nabla^3\Omega^L\|_{L^2}
 \\
 &
 \lesssim (1+t)^{-\frac{11}4}\|(\varrho_0^L, d_0^L, \varphi_0^L)\|_{L^1}
 + \int_0^\frac t2(1+t-\tau)^{-\frac{11}4}\|(N_1, N_2)(\tau)\|_{L^1}d\tau
 \\
 &
 \quad
 +\int_\frac t2^t(1+t-\tau)^{-\frac54}\|\nabla^3(N_1^L, N_2^L)(\tau)\|_{L^1}d\tau
 \\
 &
 \quad
 + (1+t)^{-\alpha t}\|\nabla^3\Omega_0\|_{L^2}
 + \int_0^te^{-\alpha (t-\tau)}\|\nabla^3(N_1^L, N_2^L)(\tau)\|_{L^2}d\tau
 \\
 &
 \lesssim (1+t)^{-\frac{11}4}(K_0 +\mathcal{E}_0) + \int_0^\frac t2(1+t-\tau)^{-\frac{11}4}(1+\tau)^{-2}C(\mathcal{E}_0, K_0)d\tau
 \\
 &
 \quad + \int_\frac t2^t(1+t-\tau)^{-\frac54}\|\nabla^2(N_1, N_2)\|_{L^1}d\tau
 + \int_0^te^{-\alpha (t-\tau)}\|\nabla^2(N_1, N_2)\|_{L^2}d\tau
 \\
 & \lesssim C(\mathcal{E}_0, K_0)(1+t)^{-\frac{11}4}
 +\int_\frac t2^t(1+t-\tau)^{-\frac54}(1+\tau)^{-3}C(\mathcal{E}_0, K_0) d\tau
  \\
 &
 \quad + \int_0^te^{-\alpha (t-\tau)}(1+\tau)^{-\frac{15}4}C(\mathcal{E}_0, K_0)d\tau
 \\
 &\le C(\mathcal{E}_0, K_0)(1+t)^{-\frac{11}4},
 \end{split}
 \end{equation*}
 where we used Proposition \ref{es-thm-M} and the fact that $\|\nabla f^L\|_{L^2} \lesssim \| f^L\|_{L^2} \lesssim \|f\|_{L^2}$ again.
  \end{proof}

 \begin{lemma}\label{le-de-u-3-high}
   Under the assumptions of Theorem \ref{3mainth}, it holds
   \begin{equation}\label{e-u-3-high}
    \|\nabla^3 {\bf u}^H\|_{L^2}\le C(1+t)^{-3}.
   \end{equation}
 \end{lemma}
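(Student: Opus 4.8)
The strategy is to build, out of the third–order high–frequency energy identities of Lemma~\ref{a priori-sol-H}, a Lyapunov functional $\mathcal{L}_3(t)$ equivalent to $\|\nabla^3(\varrho^H,{\bf u}^H,\varphi^H)(t)\|_{L^2}^2$ which satisfies a differential inequality of the form $\frac{d}{dt}\mathcal{L}_3(t)+c\,\mathcal{L}_3(t)\le R(t)$ with a source $R(t)$ decaying like $(1+t)^{-6}$. Then Gronwall's inequality together with $\mathcal{L}_3(0)\lesssim\|\nabla^3(\varrho_0,{\bf u}_0,\varphi_0)\|_{L^2}^2\lesssim\mathcal{E}_0^2$ and $\int_0^t e^{-c(t-\tau)}(1+\tau)^{-6}\,d\tau\lesssim(1+t)^{-6}$ give $\mathcal{L}_3(t)\lesssim(1+t)^{-6}$, and taking square roots yields $\|\nabla^3{\bf u}^H(t)\|_{L^2}\le C(1+t)^{-3}$. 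This mirrors the scheme used in Proposition~\ref{priori1} and Lemma~\ref{le-es-highorder1}, but now localized to the high frequencies at the top order.

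Concretely, I would form the combination $\eqref{es-varrho-H-3}+\eqref{es-u-H-3}+\frac{\beta}{\mu}\eqref{es-u-varphi-H-3}+\frac{\beta P'(\bar\rho)}{\mu\gamma\bar\rho}\eqref{es-varphi-H-3}+K_4\eqref{es-varrho-u-H-3}$ for a suitable constant $K_4>0$. The weight $\frac{\beta}{\mu}$ on \eqref{es-u-varphi-H-3} is forced by the cancellation of the coupling terms $\mp\beta\langle\nabla^4\varphi^H,\nabla^3{\bf u}^H\rangle$, the weight $\frac{\beta P'(\bar\rho)}{\mu\gamma\bar\rho}$ on \eqref{es-varphi-H-3} is forced by the cancellation of $\langle\nabla^3\varrho^H,\nabla^3\varphi^H\rangle$, and $K_4$ is then chosen in the window that keeps $\mathcal{L}_3$ coercive (via Young's inequality on $\langle\nabla^3\varrho^H,\nabla^2{\bf u}^H\rangle$) while still producing enough $\|\nabla^3\varrho^H\|_{L^2}^2$ dissipation to dominate the $\frac{K_1P'(\bar\rho)}{4\bar\rho}\|\nabla^3\varrho^H\|_{L^2}^2$ appearing in \eqref{es-u-varphi-H-3}. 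The key structural point is that, after writing each quantity $\nabla^j(\cdot)=\nabla^j(\cdot)^L+\nabla^j(\cdot)^H$ and using the symmetry of the $L^2$ pairing, \emph{all} the low--high interaction terms in \eqref{es-varrho-H-3} and \eqref{es-u-H-3}, namely $\frac{P'(\bar\rho)}{\bar\rho}\langle\nabla^3{\bf u}^L,\nabla^4\varrho\rangle$, $\frac{P'(\bar\rho)}{\bar\rho}\langle\nabla^3{\bf u}^H,\nabla^4\varrho^L\rangle$, $-\frac{P'(\bar\rho)}{\bar\rho}\langle\nabla^4\varrho^L,\nabla^3{\bf u}\rangle$ and $-\frac{P'(\bar\rho)}{\bar\rho}\langle\nabla^4\varrho^H,\nabla^3{\bf u}^L\rangle$, cancel against one another, so that no slowly decaying low-frequency data leaks into the estimate. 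Using $\frac{\nu P'(\bar\rho)}{\gamma\bar\rho}>\beta$ to keep the coefficient of $\|\nabla^3\varphi^H\|_{L^2}^2$ positive, this produces $\frac{d}{dt}\mathcal{L}_3+c\big(\|\nabla^3(\varrho^H,{\bf u}^H,\varphi^H)\|_{L^2}^2+\|\nabla^4\varphi^H\|_{L^2}^2\big)\le C\big(\|\nabla^2{\bf u}^H\|_{L^2}^2+\|\nabla^2{\rm div}\,{\bf u}^H\|_{L^2}^2\big)+R_{\rm nl}(t)$.

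The nonlinear source $R_{\rm nl}(t)$ is controlled exactly as in the proofs of Lemmas~\ref{a priori-sol-H} and \ref{le-es-highorder1}: every contribution has the schematic form $\big(\|(\varrho,{\bf u})\|_{L^\infty}+\|\nabla(\varrho,{\bf u})\|_{L^\infty}+\|\nabla\varrho\|_{L^3}\big)\|\nabla^3(\varrho,{\bf u},\varphi)\|_{L^2}^2$, and combining Gagliardo--Nirenberg interpolation with the decay rates of Proposition~\ref{es-thm-M} (so the bracket is $O\big((1+t)^{-3/2}\big)$ while $\|\nabla^3(\varrho,{\bf u},\varphi)\|_{L^2}^2=O\big((1+t)^{-9/2}\big)$ by Lemma~\ref{le-es-highorder1}) gives $R_{\rm nl}(t)\lesssim(1+t)^{-6}$. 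Since $\|\nabla^2{\bf u}^H\|_{L^2}$ and $\|\nabla^2{\rm div}\,{\bf u}^H\|_{L^2}$ are high-frequency quantities they are bounded by $\|\nabla^3{\bf u}^H\|_{L^2}$, so I would absorb them into the $\|\nabla^3{\bf u}^H\|_{L^2}^2$ dissipation by arranging that the damping $\frac{3\alpha}{4}\|\nabla^3{\bf u}^H\|_{L^2}^2$ from \eqref{es-u-H-3} dominates the accumulated coefficient of $\|\nabla^2{\bf u}^H\|_{L^2}^2$ (choosing $K_4$ small in that sum, and if needed rescaling \eqref{es-u-H-3}); \emph{this balancing is the step I expect to be the main obstacle}, because that coefficient is inherited from the fixed Cauchy--Schwarz splittings already performed in \eqref{es-u-varphi-H-3} and \eqref{es-varrho-u-H-3} and is not free to be taken arbitrarily small. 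A fallback, should the direct absorption be delicate, is to eliminate $\|\nabla^2{\bf u}^H\|_{L^2}$ through the momentum equation $\eqref{2eq-var-H}_2$, replacing it by $\|\nabla^2{\bf u}^H_t\|_{L^2}+\|\nabla^3\varphi^H\|_{L^2}+\cdots$ and trading it for parabolic $\varphi$-dissipation plus time-derivative energy estimates. Once the residual terms are absorbed one has $\frac{d}{dt}\mathcal{L}_3+c\,\mathcal{L}_3\le C(1+t)^{-6}$, and the Gronwall argument described above finishes the proof.
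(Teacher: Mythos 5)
Your Lyapunov-functional setup is exactly the paper's (same combination $\eqref{es-varrho-H-3}+\eqref{es-u-H-3}+\frac{\beta}{\mu}\eqref{es-u-varphi-H-3}+\frac{\beta P'(\bar\rho)}{\mu\gamma\bar\rho}\eqref{es-varphi-H-3}+K_4\eqref{es-varrho-u-H-3}$, same cancellation of the low--high cross terms), but the step you yourself flag as the main obstacle is a genuine gap, and it is not resolvable the way you propose. The coefficient of $\|\nabla^2{\bf u}^H\|_{L^2}^2$ on the right-hand side is inherited from the fixed Cauchy--Schwarz splittings inside \eqref{es-varrho-u-H-3} (the term $\frac{\alpha^2\bar\rho}{P'(\bar\rho)}\|\nabla^2{\bf u}^H\|_{L^2}^2+\bar\rho\|\nabla^2{\rm div}\,{\bf u}^H\|_{L^2}^2$) and, worse, from \eqref{es-u-varphi-H-3}, where the generic constant $C$ in $C\|\nabla^2{\bf u}^H\|_{L^2}^2$ is necessarily large (it comes from pushing the $\varrho^H$ and $\varphi^H$ factors of $(\alpha+\nu)\langle\nabla^2{\bf u}^H,\nabla^3\varphi^H\rangle$ and $\gamma\langle\nabla^3\varrho^H,\nabla^2{\bf u}^H\rangle$ onto small coefficients so that they can be absorbed by the $\varrho^H,\varphi^H$ dissipation) and is then multiplied by $\frac{\beta}{\mu}$. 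The only damping available against it after using $\|\nabla^2{\bf u}^H\|_{L^2}\le\|\nabla^3{\bf u}^H\|_{L^2}$ is at most $\frac{3\alpha}{4}\|\nabla^3{\bf u}^H\|_{L^2}^2$ from \eqref{es-u-H-3}, and you cannot rescale \eqref{es-u-H-3} upward without simultaneously inflating the coupling terms it carries. So the absorption cannot be arranged from the stated lemmas, and your fallback via $\|\nabla^2{\bf u}^H_t\|_{L^2}$ is not developed. If the absorption is dropped and you fall back on the rates of Proposition \ref{es-thm-M}, you only know $\|\nabla^2{\bf u}^H\|_{L^2}^2\lesssim(1+t)^{-9/2}$, and the Gronwall argument then yields $\|\nabla^3{\bf u}^H\|_{L^2}\lesssim(1+t)^{-9/4}$, short of the claimed $(1+t)^{-3}$.

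The paper closes precisely this hole by a separate preliminary step you are missing: it first proves $\|\nabla^2{\bf u}^H(t)\|_{L^2}\le C(\mathcal{E}_0,K_0)(1+t)^{-3}$ directly from the Duhamel representation \eqref{so-ex}, using the exponential decay of the high-frequency linear semigroup (estimates \eqref{es-varrho-d-varphi-high1}, \eqref{es-Omega-all-k} and \eqref{decay-S1-S2-high-k}) together with the already-established rates for the nonlinearities; see \eqref{es-de-u-or-2-H}. With that in hand, $C\|\nabla^2{\bf u}^H\|_{L^2}^2\lesssim(1+t)^{-6}$ is simply kept as a source term on the right of the differential inequality for $\mathcal{L}_3$, alongside the nonlinear source you estimated correctly, and Gronwall plus the equivalence \eqref{L3-u-2} (which, note, includes an extra $C_5\|\nabla^2{\bf u}^H\|_{L^2}^2$ rather than asserting equivalence to $\|\nabla^3(\varrho^H,{\bf u}^H,\varphi^H)\|_{L^2}^2$ alone) gives \eqref{e-u-3-high}. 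In short: no delicate constant balancing is needed once the decay of the lower-order high-frequency velocity is obtained spectrally; your argument as written does not reach the stated rate without that ingredient.
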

 \begin{proof}
 First, we estimate the decay rate of $\|\nabla^2 {\bf u}^H\|_{L^2}$. As in the proof of \eqref{es-de-u-2-1}, by Lemmas \ref{li-de-L-H}--\ref{prop-decay-nonli}, we have from \eqref{es-de-u-2-1} that
 \begin{equation}\label{es-de-u-or-2-H}
 \begin{split}
 \|\nabla^2 {\bf u}^H\|_{L^2}
 &
 \le \|\nabla^2d^H\|_{L^2} +\|\nabla^2\Omega^H\|_{L^2}
 \\
 &
 \le
 e^{-\frac{\alpha t}{4}}\|\nabla^2(\varrho_0^H, d_0^H, \varphi_0^H)\|_{L^2} +C\int_0^te^{-\frac{\alpha}{4} (t-\tau)}\|\nabla^2(N_1^H(\tau), N_2^H(\tau))\|_{L^2}d\tau
 \\
 &
 \quad
 +e^{-\alpha t}\|\nabla^2\Omega_0\|_{L^2} +C\int_0^te^{-\alpha (t-\tau)}\|\nabla^2 N_2^H(\tau)\|_{L^2}d\tau
 \\
 &
 \le e^{-\frac{\alpha t}{4}}\|\nabla^2(\varrho_0, d_0, \Omega_0, \varphi_0)\|_{L^2} + C\int_0^\frac t2e^{-\frac{\alpha}{4} (t-\tau)}\|\nabla^2(N_1(\tau), N_2(\tau))\|_{L^2}d\tau
  \\
 &
 \le C(\mathcal{E}_0, K_0)(1+t)^{-3}.
 \end{split}
 \end{equation}

Next, taking the summation $\eqref{es-varrho-H-3} +\eqref{es-u-H-3} +\frac{\beta}{\mu}\times \eqref{es-u-varphi-H-3} +\frac{\beta P'(\bar\rho)}{\mu\gamma\bar\rho}\times\eqref{es-varphi-H-3} +K_4\eqref{es-varrho-u-H-3}$ with $K_4 = \min\left\{\frac{\alpha}{4\bar\rho}, \frac{P'(\bar\rho)}{4\mu\beta\bar\rho}\left(\frac{\nu P'(\bar\rho)}{\gamma \bar\rho} -\beta\right)\right\}$ gives rise to
  \begin{align}\label{de-high-u1}
   &\frac{d}{dt}\left(\left\langle \frac{P'(\varrho +\bar\rho)}{2(\varrho +\bar\rho)^2}\nabla^3\varrho^H, \nabla^3\varrho^H\right\rangle
   +\frac12\|\nabla^3 {\bf u}^H\|_{L^2}^2
   + \frac\beta\mu \langle\nabla^2 {\bf u}^H, \nabla^3\varphi^H\rangle
   +\frac{\beta P'(\bar\rho)}{2\mu\gamma\bar\rho}\|\nabla^3\varphi^H\|_{L^2}^2\right.
   \notag\\
   &\quad+
   K_4\langle\nabla^3\varrho^H, \nabla^2 {\bf u}^H\rangle
  \bigg)
  + \frac\alpha2\|\nabla^3 {\bf u}^H\|_{L^2}^2
  +\frac{\beta P'(\bar\rho)}{\gamma\bar\rho}\|\nabla^4\varphi^H\|_{L^2}^2
  \\
  &\quad +\frac{\beta}{4\mu}\left(\frac{\nu P'(\bar\rho)}{\gamma\bar\rho} - \beta\right)\|\nabla^3\varphi^H\|_{L^2}^2
  +\frac{K_4P'(\bar\rho)}{4\bar\rho}\|\nabla^3\varrho^H\|_{L^2}^2
  \notag\\
  &\le C\|\nabla^2 {\bf u}^H\|_{L^2}^2 + C(\|(\varrho, {\bf u})\|_{L^\infty} +\|\nabla(\varrho, {\bf u})\|_{L^3\cap L^\infty})\|\nabla^3(\varrho, {\bf u}, \varphi)\|_{L^2}^2.
  \notag
  \end{align}

  Define
  \begin{equation}\label{L3}
  \begin{split}
   \mathcal{L}_3(t)
   &= \left\langle \frac{P'(\varrho +\bar\rho)}{2(\varrho +\bar\rho)^2}\nabla^3\varrho^H, \nabla^3\varrho^H\right\rangle
   +\frac12\|\nabla^3 {\bf u}^H\|_{L^2}^2
   + \frac\beta\mu \langle\nabla^2 {\bf u}^H, \nabla^3\varphi^H\rangle
   +\frac{\beta P'(\bar\rho)}{2\mu\gamma\bar\rho}\|\nabla^3\varphi^H\|_{L^2}^2
   \\
   &\quad+
   K_4\langle\nabla^3\varrho^H, \nabla^2 {\bf u}^H\rangle.
   \end{split}
  \end{equation}
  We can get the following equivalent relationship:
 \begin{equation}\label{L3-u-2}
  \mathcal{L}_3(t) +C_5\|\nabla^2 {\bf u}^H(t)\|_{L^2}^2
  \approx \|\nabla^3(\varrho^H, {\bf u}^H, \varphi^H)\|_{L^2}^2 + \|\nabla^2 {\bf u}^H(t)\|_{L^2}^2.
 \end{equation}
Moreover, we have
 \begin{equation}\nonumber
  \frac{d}{dt}\mathcal{L}_3(t) +C_6\mathcal{L}_3(t)
  \le C\|\nabla^2 {\bf u}^H(t)\|_{L^2}^2 + C(\|(\varrho, {\bf u})\|_{L^\infty} +\|\nabla(\varrho, {\bf u})\|_{L^3\cap L^\infty})\|\nabla^3(\varrho, {\bf u}, \varphi)\|_{L^2}^2.
 \end{equation}
 Then by the Gronwall inequality and Lemma \ref{s1s2}, we can arrive at
 \begin{equation}\nonumber
  \begin{split}
  \mathcal{L}_3(t)
  &\le e^{-C_6t}\mathcal{L}_3(0) + C\int_0^t e^{-C_6(t-\tau)}\|\nabla^2 {\bf u}^H(\tau)\|_{L^2}^2d\tau
  \\
  &
  \quad + C\int_0^t e^{-C_6(t-\tau)}(\|(\varrho, {\bf u})(\tau)\|_{L^\infty} +\|\nabla(\varrho, {\bf u})(\tau)\|_{L^3\cap L^\infty})\|\nabla^3(\varrho, {\bf u}, \varphi)(\tau)\|_{L^2}^2d\tau
  \\
  &
  \le e^{-C_6t}\mathcal{E}_0^2 + C\int_0^t e^{-C_6(t-\tau)}C^2(\mathcal{E}_0, K_0)(1+t)^{-6}d\tau
  \\
  &
  \le C(\mathcal{E}_0, K_0)(1+ t)^{-6},
  \end{split}
 \end{equation}
 which together with the relationship \eqref{L3-u-2} yields \eqref{e-u-3-high}.
 \end{proof}

 Proposition \ref{pro-de-high-u} can be deduced immediately by combining with Lemma \ref{le-de-u-3-low} and Lemma \ref{le-de-u-3-high}.

\section{Analytic tools}\label{S6}\label{S6}

We will extensively use the Sobolev interpolation of the Gagliardo--Nirenberg inequality; the proof can be seen in \cite{N3}.
\begin{lemma}\label{1interpolation}
 Let $0\le i, j\le k$, then we have
 \begin{equation}\nonumber
  \|\nabla^i f\|_{L^p}\lesssim \|\nabla^jf\|_{L^q}^{1-a}\| \nabla^k f\|_{L^r}^a,
 \end{equation}
 where $a$ belongs to $\left[\frac ik, 1\right]$ and satisfies
 \begin{equation}\nonumber
  \frac{i}{3}-\frac{1}{p}= \left(\frac{j}{3}-\frac{1}{q}\right)(1-a)+ \left(\frac{k}{3}-\frac{1}{r}\right)a.
 \end{equation}

 Especially, while $p=q=r=2$, we have
 \begin{equation}\nonumber
  \|\nabla^if\|_{L^2}\lesssim \|\nabla^jf\|_{L^2}^\frac{k-i}{k-j}
  \|\nabla^kf\|_{L^2}^\frac{i-j}{k-j}.
 \end{equation}
\end{lemma}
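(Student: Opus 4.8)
\emph{Proof plan.} This is the classical Gagliardo--Nirenberg interpolation inequality; the argument below reconstructs it. By density of $C_c^\infty(\mathbb{R}^3)$ it suffices to prove the estimate for $f\in C_c^\infty(\mathbb{R}^3)$, the general case following by approximation whenever the right--hand side is finite. The identity linking $i,j,k,p,q,r,a$ is exactly the scaling condition: testing the inequality on $f_\lambda(x):=f(\lambda x)$ and using $\|\nabla^\ell f_\lambda\|_{L^s}=\lambda^{\ell-3/s}\|\nabla^\ell f\|_{L^s}$ forces $i-\frac3p=(j-\frac3q)(1-a)+(k-\frac3r)a$; I would first record this as the necessary condition that fixes the admissible $a$.

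\emph{The $L^2$ case ($p=q=r=2$)}, which is all that is used in the rest of the paper, I would prove in full by a one--line Fourier argument. With $a=\frac{i-j}{k-j}\in[0,1]$ (well defined since $j\le i\le k$) one has $2i=2j(1-a)+2ka$, so by Plancherel and H\"older's inequality in the frequency variable with exponents $\frac1{1-a},\frac1a$,
\begin{equation}\nonumber
 \|\nabla^i f\|_{L^2}^2=\int_{\mathbb{R}^3}|\xi|^{2i}|\widehat f|^2\,d\xi=\int_{\mathbb{R}^3}\big(|\xi|^{2j}|\widehat f|^2\big)^{1-a}\big(|\xi|^{2k}|\widehat f|^2\big)^{a}\,d\xi\le\|\nabla^j f\|_{L^2}^{2(1-a)}\,\|\nabla^k f\|_{L^2}^{2a}.
\end{equation}
Taking square roots and using $1-a=\frac{k-i}{k-j}$ gives precisely the displayed special case, with implicit constant $1$.

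\emph{The general $L^p$ case.} For the parameter ranges relevant here I would argue by Littlewood--Paley decomposition: writing $f=\sum_{q\in\mathbb{Z}}\Delta_q f$ with $\Delta_q$ a dyadic block at frequency $\sim2^q$, one has $\|\nabla^\ell\Delta_q f\|_{L^s}\sim2^{\ell q}\|\Delta_q f\|_{L^s}$, and Bernstein's inequality bounds $\|\Delta_q f\|_{L^p}$ by $2^{3q(1/s-1/p)}\|\Delta_q f\|_{L^s}$ when $s\le p$; combining these, controlling the low and high frequencies of $\nabla^i f$ by $\nabla^j f$ in $L^q$ and $\nabla^k f$ in $L^r$ respectively, and splitting the sum over $q$ at the frequency balancing the two bounds yields the interpolated estimate with the exponent $a$ dictated by scaling.

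\emph{Main obstacle.} The self--contained part of the proof only covers $p=q=r=2$. The fully general statement --- including the borderline situations ($a=i/k$, critical Sobolev exponents, $r=\infty$), which require either the refined Littlewood--Paley bookkeeping or Nirenberg's original one--dimensional slicing argument (successive integration along coordinate directions followed by H\"older) --- is exactly the content of Nirenberg's theorem, so for that I would simply appeal to \cite{N3}; the honest plan is to give the Fourier proof of the $L^2$ specialization (the only form invoked elsewhere in the paper) and cite \cite{N3} for the remainder.
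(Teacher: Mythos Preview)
Your proposal is correct and in fact goes well beyond what the paper does: the paper gives no proof at all for this lemma, simply stating that ``the proof can be seen in \cite{N3}.'' Your Fourier argument for the $L^2$ specialization is clean and rigorous, and your plan to cite \cite{N3} for the general $L^p$ case is exactly what the paper itself does for the entire statement.
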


To estimate the product of two functions, we shall record the following estimate, cf. \cite{J}:
\begin{lemma}\label{es-product}
 It holds that for $k\geq0$,
 \begin{equation}\nonumber
  \|\nabla ^k(gh)\|_{L^{p_0}} \lesssim \|g\|_{L^{p_1}}\|\nabla^kh\|_{L^{p_2}} +\|\nabla^kg\|_{L^{p_3}}\|h\|_{L^{p_4}}.
 \end{equation}
 Here $p_0, p_2, p_3\in (1, \infty)$ and
 \begin{equation}\nonumber
  \frac1{p_0} = \frac1{p_1} +\frac1{p_2} = \frac1{p_3} +\frac1{p_4}.
 \end{equation}
\end{lemma}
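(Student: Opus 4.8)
The plan is to deduce the estimate from the Leibniz rule, Hölder's inequality, and the Gagliardo--Nirenberg interpolation inequality of Lemma \ref{1interpolation}. First I would expand, using the generalized Leibniz rule for multi-indices and then grouping the resulting terms according to the number of derivatives falling on $g$,
\[
 |\nabla^k(gh)| \lesssim \sum_{j=0}^{k} |\nabla^j g|\,|\nabla^{k-j} h|,
\]
so that by the triangle inequality it suffices to bound $\big\||\nabla^j g|\,|\nabla^{k-j} h|\big\|_{L^{p_0}}$ for each $0 \le j \le k$, the harmless combinatorial factors being absorbed into the implicit constant.

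The two endpoint terms are handled by Hölder's inequality alone. For $j=0$, since $\frac1{p_0} = \frac1{p_1} + \frac1{p_2}$, we get $\|g\,\nabla^k h\|_{L^{p_0}} \le \|g\|_{L^{p_1}}\|\nabla^k h\|_{L^{p_2}}$, and for $j=k$, using $\frac1{p_0} = \frac1{p_3} + \frac1{p_4}$, we get $\|\nabla^k g\, h\|_{L^{p_0}} \le \|\nabla^k g\|_{L^{p_3}}\|h\|_{L^{p_4}}$; both are already of the claimed form. For an intermediate term $1 \le j \le k-1$ I would introduce exponents $a_j,b_j$ defined by
\[
 \frac1{a_j} = \frac{k-j}{k\,p_1} + \frac{j}{k\,p_3}, \qquad \frac1{b_j} = \frac{k-j}{k\,p_2} + \frac{j}{k\,p_4},
\]
and observe that $\frac1{a_j} + \frac1{b_j} = \frac{k-j}{k}\big(\frac1{p_1}+\frac1{p_2}\big) + \frac{j}{k}\big(\frac1{p_3}+\frac1{p_4}\big) = \frac1{p_0}$, so Hölder gives $\big\||\nabla^j g|\,|\nabla^{k-j}h|\big\|_{L^{p_0}} \le \|\nabla^j g\|_{L^{a_j}}\|\nabla^{k-j}h\|_{L^{b_j}}$.

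Next I would apply Lemma \ref{1interpolation} to each factor with interpolation parameter $j/k$: interpolating $\nabla^j g$ between $g\in L^{p_1}$ and $\nabla^k g\in L^{p_3}$ gives $\|\nabla^j g\|_{L^{a_j}} \lesssim \|g\|_{L^{p_1}}^{1-j/k}\|\nabla^k g\|_{L^{p_3}}^{j/k}$ — one checks that the dimensional balance relation in Lemma \ref{1interpolation} for this choice of parameter is precisely the defining identity for $a_j$ — and symmetrically $\|\nabla^{k-j}h\|_{L^{b_j}} \lesssim \|h\|_{L^{p_4}}^{j/k}\|\nabla^k h\|_{L^{p_2}}^{1-j/k}$. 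Multiplying, the $j$-th term is dominated by $X^{1-j/k}Y^{j/k}$ with $X := \|g\|_{L^{p_1}}\|\nabla^k h\|_{L^{p_2}}$ and $Y := \|\nabla^k g\|_{L^{p_3}}\|h\|_{L^{p_4}}$, and the weighted Young inequality $X^{1-j/k}Y^{j/k} \le \frac{k-j}{k}X + \frac{j}{k}Y \le X+Y$ bounds it by $X+Y$. Summing over $j=0,\dots,k$ then yields exactly the asserted inequality.

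The routine ingredients are Leibniz, Hölder, and Young; the one point that genuinely requires care — and the step I would flag as the main obstacle — is the bookkeeping of admissible exponents: one must verify that the intermediate exponents $a_j,b_j$, as well as the endpoint exponents $p_1 = (p_0^{-1}-p_2^{-1})^{-1}$ and $p_4 = (p_0^{-1}-p_3^{-1})^{-1}$ implicitly determined by the hypotheses, lie in a range to which Lemma \ref{1interpolation} applies, and that the interpolation parameter $j/k$ (the left endpoint of the admissible interval) is compatible with the dimensional balance identity used. An alternative route that avoids this exponent arithmetic is simply to invoke the corresponding statement from \cite{J}.
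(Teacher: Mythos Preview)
Your proof is correct and follows the standard Leibniz--H\"older--Gagliardo--Nirenberg route to Kato--Ponce type product estimates; the exponent bookkeeping is done carefully and you rightly flag the admissibility of the intermediate exponents as the only delicate point. The paper, however, does not give its own proof of this lemma at all: it simply records the statement with a citation to \cite{J}, which is exactly the ``alternative route'' you mention in your last sentence. So your argument is strictly more detailed than what the paper provides, and there is nothing further to compare.
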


Thus we can easily deduce from Lemma \ref{es-product} the following commutator estimate:
\begin{lemma}\label{1commutator}
 Let $f$ and $g$ be smooth functions belonging to $H^k\cap L^\infty$ for any integer $k\ge1$ and  define the commutator
 \begin{equation}\nonumber
  [\nabla ^k,f]g=\nabla ^k(fg)-f\nabla ^kg.
 \end{equation}
 Then we have
 \begin{equation}\nonumber
  \|[\nabla ^k,f]g\|_{L^{p_0}} \lesssim \|\nabla  f\|_{L^{p_1}}\|\nabla ^{k-1}g\|_{L^{p_2}}+\|\nabla ^k f\|_{L^{p_3}}\| g\|_{L^{p_4}}.
 \end{equation}
 Here $p_i~(i=0,1,2,3,4)$ are defined in Lemma \ref{es-product}.
\end{lemma}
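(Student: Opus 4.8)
The plan is to reduce the commutator to a finite sum of products $(\nabla^{\ell}f)(\nabla^{k-\ell}g)$ with $1\le\ell\le k$ and then bound each summand by Hölder's inequality followed by the Gagliardo--Nirenberg interpolation of Lemma \ref{1interpolation}; this mirrors the argument that proves the Moser-type product estimate in Lemma \ref{es-product}. The Leibniz formula (read in the usual multi-index sense) gives $\nabla^{k}(fg)=\sum_{\ell=0}^{k}\binom{k}{\ell}(\nabla^{\ell}f)(\nabla^{k-\ell}g)$, whose $\ell=0$ term is precisely $f\nabla^{k}g$; hence
\begin{equation}\nonumber
 [\nabla^{k},f]g=\sum_{\ell=1}^{k}\binom{k}{\ell}(\nabla^{\ell}f)(\nabla^{k-\ell}g),
\end{equation}
and it suffices to bound $\|(\nabla^{\ell}f)(\nabla^{k-\ell}g)\|_{L^{p_0}}$ for each $\ell\in\{1,\dots,k\}$ by the right-hand side of the claimed inequality. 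For $k=1$ the sum has the single term $(\nabla f)g$, and Hölder's inequality with $\tfrac1{p_0}=\tfrac1{p_3}+\tfrac1{p_4}$ settles it immediately.

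For $k\ge2$ and fixed $\ell$, set $\theta=\tfrac{\ell-1}{k-1}$ and $\sigma=\tfrac{k-\ell}{k-1}$, so $\theta,\sigma\in[0,1]$ and $\theta+\sigma=1$. Choose $r_1,r_2\in(1,\infty)$ with $\tfrac1{r_1}+\tfrac1{r_2}=\tfrac1{p_0}$ by taking $r_1$ to be the exponent dictated by interpolating $\nabla^{\ell}f=\nabla^{\ell-1}(\nabla f)$ between $\nabla f$ and $\nabla^{k}f=\nabla^{k-1}(\nabla f)$ via Lemma \ref{1interpolation} (applied to $\nabla f$ with orders $0$, $\ell-1$, $k-1$ and exponent $\theta$, which is the lower endpoint of the admissible range and hence allowed), and let $r_2$ be fixed by the Hölder relation. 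Then Hölder's inequality together with Lemma \ref{1interpolation} applied to $\nabla f$ as above and to $g$ (with orders $0$, $k-\ell$, $k-1$ and exponent $\sigma$) yields
\begin{equation}\nonumber
 \|(\nabla^{\ell}f)(\nabla^{k-\ell}g)\|_{L^{p_0}}
 \le \|\nabla^{\ell}f\|_{L^{r_1}}\|\nabla^{k-\ell}g\|_{L^{r_2}}
 \lesssim \|\nabla f\|_{L^{p_1}}^{1-\theta}\|\nabla^{k}f\|_{L^{p_3}}^{\theta}\,\|g\|_{L^{p_4}}^{1-\sigma}\|\nabla^{k-1}g\|_{L^{p_2}}^{\sigma}.
\end{equation}
Using $1-\theta=\sigma$ and $1-\sigma=\theta$, the right-hand side equals $\big(\|\nabla f\|_{L^{p_1}}\|\nabla^{k-1}g\|_{L^{p_2}}\big)^{\sigma}\big(\|\nabla^{k}f\|_{L^{p_3}}\|g\|_{L^{p_4}}\big)^{\theta}$, and Young's inequality with conjugate exponents $\tfrac1\sigma,\tfrac1\theta$ bounds it by $\|\nabla f\|_{L^{p_1}}\|\nabla^{k-1}g\|_{L^{p_2}}+\|\nabla^{k}f\|_{L^{p_3}}\|g\|_{L^{p_4}}$. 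Summing over $\ell=1,\dots,k$ (absorbing the constants $\binom{k}{\ell}$) gives the assertion; the endpoint cases $\ell=1$ (then $\theta=0$, $r_1=p_1$, $r_2=p_2$, only the first term occurs) and $\ell=k$ (then $\sigma=0$, only the second term occurs) are handled directly, without interpolation.

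The only genuine point to verify is the consistency of the exponent choices: that the value of $r_1$ forced above lies in $(1,\infty)$, that the resulting $r_2$ does as well, and that the scaling constraint of Lemma \ref{1interpolation} for the $g$-factor is then automatically satisfied. This is exactly where the hypotheses $p_0,p_2,p_3\in(1,\infty)$ and $\tfrac1{p_0}=\tfrac1{p_1}+\tfrac1{p_2}=\tfrac1{p_3}+\tfrac1{p_4}$ enter: adding the two scaling relations required by Lemma \ref{1interpolation} for the $f$- and $g$-factors and invoking $\theta+\sigma=1$ together with $\tfrac1{p_1}+\tfrac1{p_2}=\tfrac1{p_3}+\tfrac1{p_4}$ reproduces precisely the Hölder identity $\tfrac1{r_1}+\tfrac1{r_2}=\tfrac1{p_0}$, so the $g$-constraint is redundant, and the admissibility $r_1,r_2\in(1,\infty)$ becomes a routine check. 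I expect this bookkeeping, rather than any substantive estimate, to be the main (and essentially only) obstacle — exactly as in the proof of Lemma \ref{es-product}, whose argument this proof mirrors once the Leibniz identity for $[\nabla^{k},f]g$ displayed above is in hand.
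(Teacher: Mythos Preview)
Your proposal is correct and follows the same route the paper has in mind: the paper gives no detailed proof, stating only that the commutator estimate ``can easily [be] deduce[d] from Lemma~\ref{es-product}'', and your argument is precisely the standard deduction (Leibniz expansion to drop the $\ell=0$ term, then H\"older $+$ Gagliardo--Nirenberg $+$ Young on each remaining summand, exactly as in the proof of Lemma~\ref{es-product}). The exponent bookkeeping you flag is indeed the only thing to check and works out as you indicate.
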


Next, to estimate the $L^2$--norm of the spatial derivatives of some smooth function $F(f)$, we shall introduce some estimates which follow from Lemma \ref{1interpolation} and Lemma \ref{es-product}:
\begin{lemma}\label{infty}
 Let $F(f)$ be a smooth function of $f$ with bounded derivatives of any order and $f$ belong to $H^k$ for any integer $k\ge3$ , then we have
 \begin{equation}\nonumber
  \|\nabla ^k(F(f))\|_{L^2} \lesssim \sup_{0\le i\le k}\|F^{(i)}(f)\|_{L^\infty}
  \left(\sum_{j=2}^k\|f\|_{L^2}^{j-1-\frac{3(j-1)}{2k}}\|\nabla ^kf\|_{L^2}^{1+\frac{3(j-1)}{2k}}+ \|\nabla ^kf\|_{L^2}\right).
 \end{equation}

 Moreover, if $f$ has the lower and upper bounds, and $\|f\|_k\le 1$, we have
 \begin{equation}\nonumber
  \|\nabla ^k(F(f))\|_{L^2} \lesssim \|\nabla ^kf\|_{L^2}.
 \end{equation}
\end{lemma}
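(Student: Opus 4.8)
The plan is to deduce Lemma~\ref{infty} from the Gagliardo--Nirenberg inequality of Lemma~\ref{1interpolation} together with the product (Moser-type) estimate of Lemma~\ref{es-product}, applied to the expansion of $\nabla^k(F(f))$ given by the higher-order chain rule. Concretely, $\nabla^k(F(f))$ is a finite linear combination of terms $F^{(m)}(f)\,\nabla^{i_1}f\cdots\nabla^{i_m}f$ with $1\le m\le k$, each $i_l\ge1$, and $i_1+\cdots+i_m=k$. The single term with $m=1$ equals $F'(f)\nabla^k f$, and its $L^2$-norm is at most $\|F'(f)\|_{L^\infty}\|\nabla^k f\|_{L^2}$; this is exactly the last summand on the right-hand side of the claimed estimate.

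For a term with $2\le m\le k$ factors I would apply H\"older's inequality with exponents $p_1,\dots,p_m\in(1,\infty]$ satisfying $\sum_{l=1}^m p_l^{-1}=\tfrac12$, reducing it to $\|F^{(m)}(f)\|_{L^\infty}\prod_{l}\|\nabla^{i_l}f\|_{L^{p_l}}$, and then bound each factor by Lemma~\ref{1interpolation} interpolating $\nabla^{i_l}$ between $L^2$ and $\dot H^k$ (that is, with $j=0$, $q=r=2$): $\|\nabla^{i_l}f\|_{L^{p_l}}\lesssim\|f\|_{L^2}^{1-a_l}\|\nabla^k f\|_{L^2}^{a_l}$ with $a_l\in[i_l/k,1]$ and $\tfrac{i_l}{3}-\tfrac1{p_l}=-\tfrac12(1-a_l)+(\tfrac k3-\tfrac12)a_l$. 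The normalization $\sum_l p_l^{-1}=\tfrac12$ forces $\sum_l a_l=1+\tfrac{3(m-1)}{2k}$, so $\prod_l\|\nabla^{i_l}f\|_{L^{p_l}}\lesssim\|f\|_{L^2}^{m-1-\frac{3(m-1)}{2k}}\|\nabla^k f\|_{L^2}^{1+\frac{3(m-1)}{2k}}$. Summing over all admissible multi-indices with $m$ from $2$ to $k$, bounding each $\|F^{(m)}(f)\|_{L^\infty}$ by $\sup_{0\le i\le k}\|F^{(i)}(f)\|_{L^\infty}$, and renaming $m$ as $j$, gives the first displayed inequality. For the second, a uniform lower and upper bound on $f$ makes $\sup_{0\le i\le k}\|F^{(i)}(f)\|_{L^\infty}$ finite, and $\|f\|_k\le1$ gives $\|f\|_{L^2}\le1$ and $\|\nabla^k f\|_{L^2}\le1$; since the exponent $1+\tfrac{3(j-1)}{2k}$ of $\|\nabla^k f\|_{L^2}$ is $\ge1$ while the accompanying power of $\|f\|_{L^2}$ is nonnegative, every summand is $\le\|\nabla^k f\|_{L^2}$, which yields $\|\nabla^k(F(f))\|_{L^2}\lesssim\|\nabla^k f\|_{L^2}$.

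The part that needs genuine care is the admissibility of the exponents: for each partition $i_1+\cdots+i_m=k$ one must exhibit $a_l\in[i_l/k,1]$ with $\sum_l a_l=1+\tfrac{3(m-1)}{2k}$ and check that the resulting exponents $p_l$ (read off from the Gagliardo--Nirenberg relation) lie in $(1,\infty]$ with $\sum_l p_l^{-1}=\tfrac12$. Since $\sum_l a_l$ sweeps out the full interval $\bigl[\sum_l i_l/k,\,m\bigr]=[1,m]$ as the $a_l$ vary in their boxes, and the target exceeds $1$ by only $\tfrac{3(m-1)}{2k}$, such a choice always exists; the hypothesis $k\ge3$ (which in particular gives $H^k(\mathbb{R}^3)\hookrightarrow L^\infty$) is comfortably enough for all the interpolation inequalities invoked to be valid in $\mathbb{R}^3$. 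A tidier alternative, which avoids the combinatorics of the chain-rule expansion and is the route suggested by the sentence just before the statement, is induction on $k$: write $\nabla^k(F(f))=\nabla^{k-1}\!\bigl(F'(f)\nabla f\bigr)$, apply Lemma~\ref{es-product} with a suitable H\"older triple, and close with the inductive hypothesis on $\nabla^{k-1}(F'(f))$.
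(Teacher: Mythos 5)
Your proposal is correct and follows exactly the route the paper indicates: the paper gives no detailed proof of Lemma \ref{infty}, stating only that it follows from Lemma \ref{1interpolation} and Lemma \ref{es-product}, and your argument (Fa\`a di Bruno expansion, H\"older with $\sum_l p_l^{-1}=\tfrac12$, Gagliardo--Nirenberg interpolation of each $\nabla^{i_l}f$ between $L^2$ and $\dot H^k$, yielding $\sum_l a_l=1+\tfrac{3(m-1)}{2k}$) is precisely the standard way to fill in those details, including the correct treatment of the second assertion via $\|f\|_k\le 1$ and the exponent $1+\tfrac{3(j-1)}{2k}\ge 1$.
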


\subsection*{Acknowledgments}
Qing Chen's research is supported in part by National Natural Science Foundation of China
(No. 12271114), National Natural Science Foundation of China (No. 12171401) and National Science Foundation of Fujian Province, China (No. 2022J011241). Guochun Wu's research is supported in part by National Science Foundation of Fujian Province, China (No. 2022J01304). H. Wang's research is supported the National Natural Science Foundation of China (No. 11901066), the Natural Science Foundation of Chongqing (No. cstc2019jcyj-msxmX0167) and projects Nos. 2022CDJXY-001, 2020CDJQY-A040 supported by the Fundamental Research Funds for the Central Universities.

\end{document}